\documentclass{amsart}

\usepackage{amssymb}
\usepackage{amsthm}
\usepackage[style=alphabetic, url=false, isbn = false, backend=biber]{biblatex}
\usepackage{tikz-cd}
\usepackage{mathtools}
\usepackage{framed}
\usepackage[mathscr]{eucal}
\usepackage{fullpage}
\usepackage{hyperref}
\usepackage{enumerate}
\usepackage{wasysym}
\usepackage{xcolor}
\usepackage{scalerel}
\usepackage{mathdots}
\hypersetup{colorlinks=true,citecolor=blue,urlcolor =black,linkbordercolor={1 0 0}}
\mathtoolsset{centercolon}

\newtheorem{theorem}{Theorem}[section]

\newtheorem{proposition}[theorem]{Proposition}
\newtheorem{lemma}[theorem]{Lemma}
\newtheorem{corollary}[theorem]{Corollary}
\newtheorem{conjecture}[theorem]{Conjecture}

\theoremstyle{definition}

\theoremstyle{remark}
\newtheorem*{remark}{Remark}

\newcommand{\on}{\operatorname}
\newcommand{\abs}[1]{\left|#1\right|}

\newcommand{\emptyinput}{\underline{\,\,\,\,}}

\newcommand{\Db}{\mathrm{D}^b}

\newcommand{\Cat}{\mathrm{Cat}}

\newcommand{\colim}{\on{colim}}

\newcommand{\im}{\on{im}}
\newcommand{\coker}{\on{coker}}
\newcommand{\Hom}{\on{Hom}}
\newcommand{\End}{\on{End}}

\newcommand{\homsheaf}{\cH\!\on{om}}

\newcommand{\Endsheaf}{\cE\!\on{nd}}
\newcommand{\Sym}{\on{Sym}}

\newcommand{\cO}{\mathcal{O}}
\newcommand{\cA}{\mathcal{A}}
\newcommand{\cB}{\mathcal{B}}
\newcommand{\cC}{\mathscr{C}}

\newcommand{\cE}{\mathcal{E}}
\newcommand{\cF}{\mathcal{F}}
\newcommand{\cG}{\mathcal{G}}
\newcommand{\cH}{\mathcal{H}}

\newcommand{\cL}{\mathcal{L}}
\newcommand{\cM}{\mathcal{M}}

\newcommand{\cQ}{\mathcal{Q}}
\newcommand{\cR}{\mathcal{R}}
\newcommand{\cS}{\mathcal{S}}

\newcommand{\cU}{\mathcal{U}}
\newcommand{\cV}{\mathcal{V}}

\newcommand{\cZ}{\mathcal{Z}}

\newcommand{\bbC}{\mathbb{C}}

\newcommand{\bbP}{\mathbb{P}}

\newcommand{\bbS}{\mathbb{S}}

\newcommand{\bbZ}{\mathbb{Z}}

\newcommand{\Gr}{\on{Gr}}
\newcommand{\OGr}{\on{OGr}}

\newcommand{\Spec}{\on{Spec}}

\newcommand{\cl}{\mathcal{C}l}
\newcommand{\Cl}{\mathfrak{C}l}
\newcommand{\textcl}{\mathrm{Cl}}
\newcommand{\even}{\mathrm{even}}

\newcommand{\GL}{\on{GL}}

\newcommand{\SO}{\on{SO}}
\newcommand{\Spin}{\on{Spin}}

\newcommand\iso{\xrightarrow{
		\,\smash{\raisebox{-0.3ex}{\ensuremath{\scriptstyle\sim}}}\,}}
\newcommand{\parity}{\on{parity}}
\newcommand{\sgn}{\on{sgn}}

\title{Semiorthogonal decompositions and components of derived categories of orthogonal Grassmannian fibrations} 
\author{Saket Shah}

\begin{document}
	\maketitle 
	\begin{abstract}
		Kuznetsov showed that for a flat quadric fibration $\cQ$ over a smooth base $S$, $\Db(\cQ)$ admits a semiorthogonal decomposition where one of the components is the derived category of the sheaf of even parts of a Clifford algebra $\Db(S,\cl_0)$. \par 
		As progress towards a generalization, we show that for a quadric fibration with a fairly minor condition on the rank of the quadric fibers, the category $\Db(S,\cl_0)$ embeds fully faithfully into the derived category of the relative orthogonal Grassmannian $\Db(\OGr(k,\cQ))$. When $k = 2$, we use this to produce a semiorthogonal decomposition of $\Db(\OGr(2,\cQ))$ up to a residual category; we compute this residual category in the smooth case and produce a conjecture for in the case of a pencil of quadrics with smooth base locus. 
	\end{abstract}
	\tableofcontents
	\section{Introduction}
	The study of derived categories of quadric hypersurfaces and quadric fibrations goes back to Kapranov, who showed that smooth quadric hypersurfaces always admit a full exceptional collection \cite{kapranovcoll}. For a smooth quadric $Q \subset \bbP(V)$, this collection can be written as 
	\begin{equation}
		\Db(Q) = \begin{cases}
				\langle \cO(-n + 1), \cO(-n+2),\cdots, \cO, \cS_+, \cS_- \rangle &\text{if }\dim Q \text{ is even} \\
				\langle \cO(-n + 1),\cO(-n+2),\cdots,\cO, \cS\rangle &\text{if }\dim Q \text{ is odd}.
			\end{cases}
	\end{equation}
	While most bundles appearing in these decompositions are line bundles pulled back from the ambient $\bbP(V)$, the interesting bundles here are the \textit{spinor bundles} $\cS_{\pm}$ and $\cS$. Constructed by Ottaviani \cite{ott1}, these bundles are equivariant for the transitive action of $\Spin(V)$ on $Q$ and can be viewed as incarnations on $Q$ of the half-spin and spin representations. Kapranov's arguments for the full exceptional collection, on the other hand, construct these spinor bundles using the \textit{Clifford algebra}: 
	\begin{equation}
		\textcl(V) = T(V)/(v \otimes v - Q(v,v)),
	\end{equation}
	where $T(V) = \bigoplus_{k \geq 0} V^{\otimes k}$ is the tensor algebra of $V$. 
	\par  
	When we allow the quadric $Q$ to degenerate in a family $\cQ \xrightarrow{p} S$ with possibly singular fibers, the fibers are no longer homogeneous spaces for Spin groups and it is more difficult to leverage representation-theoretic methods. Nonetheless, the utility of spinor sheaves and Clifford algebras persists. Kuznetsov shows \cite{quadfibs} that even in the relative setting, there is a natural semiorthogonal decomposition (which is moreover $S$-linear in the sense of \cite{basechange}, or any other reasonable sense):
	\begin{equation}
		\Db(\cQ) = \begin{cases}
			\langle p^*\Db(S) \otimes \cO(-n + 1), p^*\Db(S) \otimes \cO(-n+2),\cdots, p^*\Db(S), \Db(S,\cl_0)\rangle &\text{if }\dim Q \text{ is even} \\
			\langle p^*\Db(S) \otimes \cO(-n + 1),p^*\Db(S) \otimes \cO(-n+2),\cdots,p^*\Db(S), \Db(S,\cl_0)\rangle &\text{if }\dim Q \text{ is odd}.
		\end{cases}
	\end{equation}
	where $\cl_0$ is the \textit{sheaf of even parts for the Clifford algebra}. Taking $S$ to be a point and $\cQ\to S$ to be just a single smooth quadric, the observation that $\Db(S,\cl_0)$ consists of one or two exceptional objects (depending on the parity of $\dim \cQ$) recovers the collection of Kapranov. When $S$ is a point but $\cQ\to S$ is not necessarily smooth, Addington gives a different proof that $\Db(S,\cl_0)$ embeds into $\Db(\cQ)$, and at the same time defines a good notion of spinor sheaves on singular quadrics \cite{singularquadrics}. \par 
	In a slightly different setting, spinor bundles also show up as exceptional vector bundles arising in full exceptional collections of orthogonal Grassmannians, which are also homogeneous spaces for the action of $\Spin(V)$ \cite{kuznetsovodd, kuznetsoveven}. The natural question to ask is whether these spinor bundles can be relativized, just as they were in the case of quadric fibrations, to provide embedding of derived categories of sheaves of even parts for Clifford algebras. For smooth fibrations, it may be possible to leverage directly the existence of the fiberwise full exceptional collections, coming from the representation theory of Spin groups, but we will provide a somewhat different approach that allows us to handle the singular fibers. 
	\par 
	With that said, we begin in earnest by fixing a smooth base $S$ over $\bbC$ admitting a flat quadric fibration $p  : \cQ \to S$ of relative dimension $n - 2$. Taking the relative orthogonal Grassmannian of this family gives a map $\tau : \OGr(k,\cQ) \to S$. The first theorem of this paper is the following embedding theorem, under a mild assumption on the singularities of the fibers of $p$, which are used mainly to ensure the relative orthogonal Grassmannian is flat over the base. 
	\begin{theorem}\label{embeddingtheorem}
		Take any integer $2 \leq k \leq n/2$. Suppose that every quadric in the fibration $\cQ \to S$ has rank at least $2k-1$. Then there exists a fully faithful functor $\Phi : \Db(S,\cl_0) \to \Db(\OGr(k,\cQ))$ linear over $\Db(S)$. 
	\end{theorem}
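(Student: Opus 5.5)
The plan is to build $\Phi$ as a Fourier--Mukai functor whose kernel is a relative spinor sheaf on $\OGr(k,\cQ)$, and then to reduce full faithfulness to a statement about the pushforward of an endomorphism sheaf.

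\textbf{Construction of the kernel.} Let $\cE \to S$ be the rank $n$ bundle with $\cQ \subset \bbP_S(\cE)$, and let $\cU \subset \tau^*\cE$ be the tautological rank $k$ isotropic subbundle on $\OGr(k,\cQ)$. Consider the sheaf of right $\tau^*\cl_0$-modules
\[
\cM \;=\; \tau^*\cl_0 \big/ \bigl(\cU\cdot \tau^*\cl_{1}\bigr),
\]
or, more concretely, the cokernel of the Clifford multiplication map $\cU \otimes \tau^*\cl_{1} \to \tau^*\cl_0$, suitably twisted by $\det \cU$. Fiberwise over a point $U \in \OGr(k,Q)$ this is $\textcl_0(V)/U\cdot\textcl_1(V)$. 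Because $U$ is isotropic, the elements of $U$ anticommute and square to zero, so the quotient has the expected rank $2^{n-1-k}$ and is a spinor-type module.

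\textbf{Local freeness and flatness.} First I will use the assumption that every fiber has rank at least $2k-1$ to show that $\OGr(k,\cQ)\to S$ is flat with fibers of the expected dimension. By a Koszul-type resolution, $\cM$ is then locally free over $\OGr(k,\cQ)$ (a rank computation on fibers suffices), and it carries a left action of $\tau^*\cl_0$. Define
\[
\Phi(F) \;=\; \tau^*F \otimes_{\tau^*\cl_0} \cM .
\]
This functor is manifestly $\Db(S)$-linear by the projection formula. Its right adjoint is $G \mapsto \tau_*\homsheaf(\cM,G)$.

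\textbf{Reduction of full faithfulness.} Full faithfulness is equivalent to the unit $\mathrm{id}\to \Phi^!\Phi$ being an isomorphism. By linearity and projection formula, this reduces to the single statement that the natural map
\[
\cl_0 \;\to\; R\tau_*\Endsheaf_{\tau^*\cl_0}(\cM)
\]
is an isomorphism, or more precisely that $R\tau_*(\cM^\vee\otimes\cM) \cong \cl_0$ compatibly with the bimodule structure. By flatness and cohomology and base change, it suffices to check this on each fiber $\OGr(k,Q_s)$.

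\textbf{The fiberwise computation (main obstacle).} On each fiber I must show
\[
H^\bullet\bigl(\OGr(k,Q),\,\cM^\vee\otimes\cM\bigr) \;=\; \textcl_0(V) \quad\text{in degree } 0,
\]
including for singular $Q$. For smooth $Q$ this should follow from Borel--Weil--Bott: $\cM$ splits into the spinor bundles, which are exceptional and mutually orthogonal in the appropriate way, as in \cite{kuznetsovodd, kuznetsoveven}. For singular $Q$, with kernel $K$, I plan to stratify $\OGr(k,Q)$ by $\dim(U\cap K)$, or alternatively to argue by semicontinuity. The semicontinuity argument goes as follows: the Euler characteristic is constant, $\textcl_0$ has constant rank, and the degree-$0$ map is injective because it is injective on the open smooth stratum. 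The hard part is vanishing of the higher cohomology on singular fibers. My first attempt would be to reduce to Addington's result for $k=1$ \cite{singularquadrics}, using the flag correspondence $\mathrm{OFl}(1,k)$ and induction on $k$ via the quadric $U_1^\perp/U_1$ of rank lowered by $2$. This is exactly where the bound of rank at least $2k-1$ keeps the induction within the range where the quadrics stay nondegenerate enough.
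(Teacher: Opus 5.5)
\textbf{There is a genuine gap: the singular-fiber computation is not proved.} Your setup matches the paper's. Your $\cM$ is, up to twist, the paper's $\cF=\coker(\cU_k\otimes\tau^*\cl_{-1}\to\tau^*\cl_0)$, and full faithfulness is likewise reduced to $R\tau_*(\cF^\vee\otimes\cF)\simeq\cl_0$, checked fiber by fiber. But the fiberwise statement on singular fibers is the heart of the theorem, and your proposal does not supply it.

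The semicontinuity argument cannot close it. Upper semicontinuity plus constancy of $\chi$ only yields $h^0(\cF_s^\vee\otimes\cF_s)\geq\on{rk}\cl_0$. (Since $S$ may contain no smooth fiber, constancy of $\chi$ must itself come from passing to the family of all quadrics of bounded corank, as in Corollary~\ref{constancy}.) Injectivity of $\textcl_{\even}\to\End(\cF_s)$, however justified, only reproduces that same lower bound. Since $h^0-h^1+h^2-\cdots=\on{rk}\cl_0$, you need two further, independent inputs on every singular fiber:
\begin{itemize}
\item the upper bound $h^0\leq\on{rk}\cl_0$, i.e.\ surjectivity of $\textcl_{\even}\to\End(\cF_s)$;
\item the vanishing of $H^{\geq 2}$.
\end{itemize}
Only then does $\chi$ kill $H^1$.

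Your proposed source for these inputs is an induction on $k$ through $\mathrm{OFl}(1,k)$ down to Addington's $k=1$ case, and as stated it is only a hope. For singular $Q$ with kernel $K$, the projection $\mathrm{OFl}(1,k;Q)\to Q$ is not flat. Over the vertex $\bbP(K)$ the fiber is $\OGr(k-1,V/v)$, while elsewhere it is $\OGr(k-1,v^\perp/v)$, which has smaller dimension. The relevant Clifford algebra also changes. You give no mechanism for transporting $\on{Ext}$ computations on $\OGr(k,Q)$ to ones on $Q$. The paper itself remarks that a closely related reduction through the incidence correspondence $\bbP(\cU_k)\to\cQ$, suggested by Kuznetsov, could not be carried out.

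\textbf{How the paper obtains both inputs.} It works directly, with no stratification or induction.
\begin{itemize}
\item Isotropy of $U$ gives the exact complex of Proposition~\ref{LES}. This yields infinite left and right resolutions of $\cF$ by $\Sym^p\cU_k\otimes\tau^*\cl_{-p}$ and $\Sym^p\cU_k^\vee(1)\otimes\tau^*\cl_{k+p}$.
\item $\OGr(k,Q_s)\subset\Gr(k,V)$ is the zero locus of a regular section of $\Sym^2\cU_k^\vee$; this is where rank $\geq 2k-1$ is really used. The Koszul complex and Borel--Weil--Bott on $\Gr(k,V)$ then give $H^i(\Sym^p\cU_k\otimes\Sym^q\cU_k(-1))=0$ for $i>p+q+1$. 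Hence $H^{\geq 2}(\cF_s^\vee\otimes\cF_s)=0$ on every fiber, singular or not.
\item The right resolution, together with $\tau_*\cO(1)\simeq\bigwedge^k\cE^\vee$ and $\tau_*\cU_k^\vee(1)\simeq\bbS^{(2,1,\dots,1)}_{\SO}\cE^\vee$, presents $\End(\cF_s)$ as an explicit kernel. An orthogonal-basis computation shows this kernel is exactly the image of $\textcl_{\even}$.
\end{itemize}

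\textbf{Two smaller omissions.}
\begin{itemize}
\item Your quotient of $\tau^*\cl_0$ by $\cU$ times the odd part is a quotient by a right ideal. So it is a right, not left, $\tau^*\cl_0$-module, and the functor should be $\cG\mapsto\cF\otimes^L_{\tau^*\cl_0}\tau^*\cG$.
\item You must check that this functor lands in $\Db$, since $\cF$ need not be flat over $\tau^*\cl_0$ at singular fibers. The paper does this via $\cF\simeq Rg_*f^*\cE_{-1,k-1}\otimes\cO(1)$, the local flatness of Kuznetsov's $\cE_{-1,k-1}$ over $\cl_0$, and finite Tor-dimension of $f$.
\end{itemize}
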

	Rather than constructing spinor sheaves in the style of Addington, we produce an explicit sheaf of $\cl_0$-modules on $\OGr(k,\cQ)$ generalizing the sheaves of $\cl_0$-modules Kuznetsov constructs on $\cQ$ in \cite{quadfibs} and use it as a Fourier-Mukai kernel in order to define the embedding functor. \par
	As an application of the above, we are able to relativize most of the full exceptional collections for orthogonal Grassmannians of 2-dimensional subspaces. To state the result, we define the subcategories on $\OGr(2,\cQ)$ when $n = 2m$ is even: 
	\begin{equation}
		\begin{aligned}
			\cA &\coloneq \langle \tau^*\Db(S), \tau^*\Db(S) \otimes \cU_2^\vee, \tau^*\Db(S) \otimes \Sym^2 \cU_2^\vee, \cdots, \tau^*\Db(S) \otimes \Sym^{m-3} \cU_2^\vee, \Phi(\Db(S,\cl_0))\rangle, \\ 
			\cB &\coloneq \langle \tau^*\Db(S), \tau^*\Db(S) \otimes \cU_2^\vee, \tau^*\Db(S) \otimes \Sym^2 \cU_2^\vee, \cdots,\tau^*\Db(S) \otimes \Sym^{m-2} \cU_2^\vee, \Phi(\Db(S,\cl_0))\rangle.
		\end{aligned}
	\end{equation}
	When $n = 2m + 1$ is odd, we only take one subcategory: 
	\begin{equation}
		\cB \coloneq \langle \tau^*\Db(S), \tau^*\Db(S) \otimes \cU_2^\vee, \tau^*\Db(S) \otimes \Sym^2 \cU_2^\vee, \cdots,\tau^*\Db(S) \otimes \Sym^{m-2} \cU_2^\vee, \Phi(\Db(S,\cl_0))\rangle.
	\end{equation}
	Given these definitions, the semiorthogonal decompositions of $\OGr(2,\cQ)$ can be written as follows: 
	\begin{theorem} \label{sodtheorem}
		Suppose that every quadric in the fibration $\cQ \to S$ has rank at least $3$. Let $\cO_{\OGr(2,\cQ)}(-1)$ denote the determinant of the tautological subbundle on $\OGr(2,\cQ)$. If $n = 2m$, then the subcategories $\cA$ and $\cB$ are $S$-linear semiorthogonal sequences and there is an $S$-linear semiorthogonal decomposition 
		\begin{equation}
			\Db(\OGr(2,\cQ)) = \langle \cR_{\text{even}}, \cA, \cB(1), \cdots, \cB(m-2),\cA(m-1),\cdots,\cA(2m-4)\rangle,
		\end{equation}
		where $\cR_{\text{even}}$ is the residual category defined as the right orthogonal to the other components. If $n = 2m + 1$, then the subcategory $\cQ$ is an $S$-linear semiorthogonal sequence and there is an $S$-linear semiorthogonal decomposition 
		\begin{equation}
			\Db(\OGr(2,\cQ)) = \langle \cR_{\text{odd}}, \cB, \cB(1), \cdots, \cB(2m-3)\rangle,
		\end{equation}
		where $\cR_{\text{odd}}$ is the residual category.
	\end{theorem}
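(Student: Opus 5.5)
The plan is to reduce every required semiorthogonality to Hom-vanishing along $\tau$. Since all components are $S$-linear and $\tau:\OGr(2,\cQ)\to S$ is flat (this is where rank at least $3$ enters, as in Theorem~\ref{embeddingtheorem}), it suffices to check the vanishing of relative Homs. Concretely, for two pieces $\tau^*\Db(S)\otimes E$ and $\tau^*\Db(S)\otimes F$ with $E,F$ vector bundles, semiorthogonality follows from
\[
R\tau_*\homsheaf(E,F)=0 .
\]
By base change this can be tested fiberwise over each point $s\in S$, on $\OGr(2,Q_s)$ for a possibly singular quadric $Q_s$ of rank at least $3$. The full faithfulness of each piece $\tau^*\Db(S)\otimes \Sym^j\cU_2^\vee(t)$ is the statement
\[
R\tau_*\Endsheaf(\Sym^j\cU_2^\vee)=\cO_S .
\]
The piece $\Phi(\Db(S,\cl_0))$ is already handled by Theorem~\ref{embeddingtheorem}. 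The residual categories $\cR_{\text{even}}$ and $\cR_{\text{odd}}$ are by definition orthogonal complements, so they exist once we know the listed components are admissible. Admissibility is automatic: each component is the image of a fully faithful Fourier--Mukai functor with kernel of finite Tor-amplitude between smooth projective-over-$S$ varieties, hence it has both adjoints.

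The key steps, in order, are these. First, I compute on a fiber the cohomology $H^\bullet(\OGr(2,Q_s),\Sym^a\cU_2\otimes\Sym^b\cU_2^\vee(t))$ for the relevant ranges of $a$, $b$ and $t$. Using the Littlewood--Richardson decomposition, this reduces to the vanishing of $H^\bullet(\Sigma^\lambda\cU_2^\vee(t))$ for the relevant weights $\lambda$. In the smooth case I would read these off Borel--Weil--Bott on $\OGr(2,V)$, exactly as in Kuznetsov's exceptional collections on isotropic Grassmannians. In the singular case I would realize $\OGr(2,Q_s)$ as the zero locus of the section of $\Sym^2\cU_2^\vee$ on $\Gr(2,V)$ induced by the quadratic form. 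Flatness gives a Koszul resolution
\[
0\to \det(\Sym^2\cU_2)\to \wedge^2\Sym^2\cU_2\to \Sym^2\cU_2\to \cO_{\Gr}\to \cO_{\OGr}\to 0 ,
\]
with all terms independent of $s$. The needed vanishings then become Bott vanishings on $\Gr(2,n)$ for finitely many explicit weights. Because these depend only on $n$ and not on the rank of $Q_s$, the smooth-case answer transfers to all fibers. This Koszul argument is what I would do uniformly, and it also gives $\cO_S$ for the endomorphism computations.

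Second, I need the mixed semiorthogonality between $\Phi(\Db(S,\cl_0))(t)$ and the bundle pieces $\tau^*\Db(S)\otimes\Sym^j\cU_2^\vee(t')$, in both directions as the ordering demands. Here I use the explicit Fourier--Mukai kernel $\cE$ of Theorem~\ref{embeddingtheorem}, a $\cl_0$-module sheaf on $\OGr(2,\cQ)$ built from the Clifford multiplication. I would express the relevant adjoint, for instance $\Phi^!(\Sym^j\cU_2^\vee(t'))$, as $R\tau_*$ of $\cE^\vee\otimes\Sym^j\cU_2^\vee(t')$. I expect $\cE$ to be given by (or resolved via) a complex of the shape $\cl_{\text{odd}}\otimes\cU_2\to\cl_0\otimes\cO$, modeled on Kuznetsov's $\cO(-1)\otimes\cl_1\to\cO\otimes\cl_0$ on $\cQ$. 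With that, the computation again reduces to the Bott-type vanishings from the first step, now twisted by $\cl_0$ or $\cl_1$, which are just locally free $\cO_S$-modules pulled back.

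Third, I assemble everything. The twists run through $\cA,\cB(1),\dots,\cB(m-2),\cA(m-1),\dots,\cA(2m-4)$ in the even case, and through $\cB,\dots,\cB(2m-3)$ in the odd case. The ranges of $(j,t)$ that must vanish are exactly $0<t-t'\le 2m-4$ (respectively $2m-3$), combined with symmetric powers of degree at most $m-2$. So I must check that these lie inside the acyclic window coming from the canonical class $\omega_{\OGr(2,Q)}=\cO(-(n-3))$ (relative, since $\dim Q=n-2$). The main obstacle I anticipate is the mixed vanishing between the spinor piece and the bundle pieces at the extreme twists, in particular the boundary where $\cB$ switches to $\cA$ in the even case. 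There the weights sit right at the edge of Bott's singular region, and the singular fibers prevent a direct representation-theoretic argument. My first attempt would be to push the Clifford-kernel computation down to the Grassmannian via the Koszul complex and check the finitely many boundary weights by hand. If that fails, I would restrict $\cE$ along the incidence correspondence $\{(U,x):x\in U\}$ to $\cQ$, and use Kuznetsov's vanishings for the spinor kernel on $\cQ$ from \cite{quadfibs}.
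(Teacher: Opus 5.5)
Your framework matches the paper in three places: the reduction to fiberwise vanishing of $R\tau_*$, the computation $R\tau_*\Endsheaf(\Sym^j\cU_2^\vee)\simeq\cO_S$, and the Koszul-plus-Borel--Weil--Bott treatment of the bundle--bundle orthogonalities on $\Gr(2,V)$. One caveat on that part: the transfer to singular fibers is not justified by the Koszul terms ``depending only on $n$'', since the differentials depend on $s$. It works because, after a Serre duality reduction, every Koszul term is checked to be acyclic. There are, however, two genuine gaps. The first is that you never treat orthogonality between different twists of the Clifford piece itself. Each of $\cA(i)$ and $\cB(i)$ contains $\Phi(\Db(S,\cl_0))(i)$, so you also need $R\tau_*(\cF^\vee\otimes\cF(-t))=0$ for $0<t\le n-4$, where $\cF$ is your kernel $\cE$. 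This is Lemma~\ref{cliffclifflemma}. Your Step~2 does not cover it, since it only pairs the Clifford piece with the bundle pieces, and it is in fact the most involved computation in the paper.

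The second gap is that the mixed vanishing does not reduce to Bott vanishing term by term. The map $\cU_2\otimes\tau^*\cl_{-1}\to\tau^*\cl_0$ goes from rank $2^n$ to rank $2^{n-1}$, so it is not injective and $\cF$ has no two-term resolution. The available resolutions \eqref{leftres} and \eqref{rightres} are infinite (Proposition~\ref{LES}). After pushing forward, the bounds of Lemmas~\ref{vanishinglemma2} and \ref{vanishinglemma3} only confine the cohomology:
\begin{itemize}
\item $R^i\tau_*(\cF^\vee(-t)\otimes\Sym^q\cU_2^\vee)$ can be nonzero only for $i\in\{t,t+1\}$;
\item $R^i\tau_*(\cF^\vee\otimes\cF(-t))$ can be nonzero only for $2t-2\le i\le 2t+1$.
\end{itemize}
The terms $\tau^*\cl_{-p}^\vee\otimes\Sym^p\cU_2^\vee\otimes\Sym^q\cU_2^\vee(-t)$ with $p,q\ge t$ genuinely have nonzero pushforward, for instance $\Sym^{q-t}\cE^\vee$ modulo multiples of the quadric. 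So any cancellation must come from the differentials, which are Clifford multiplications depending on $s$.

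The paper needs two ingredients here that are missing from your plan. For the lower remaining degrees, it identifies the pushforwards with ``orthogonal Schur functor'' quotients via Lemma~\ref{vanishinglemma5}. It then checks explicitly, fiber by fiber, that the induced Clifford-multiplication map is injective in the mixed case, and that the complex $d_0,d_1,d_2$ is exact in the Clifford--Clifford case. This is where the rank $\ge 3$ hypothesis is used, to choose diagonalizing vectors with $e_i^2=1$. For the top degree, it uses constancy of the Euler characteristic (Proposition~\ref{constancy2}). This comes from deforming, within the family of all quadrics of bounded corank, to smooth ones where Kuznetsov's exceptional collections give the vanishing, combined with cohomology and base change. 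Your fallback through the incidence correspondence to $\cQ$ is not a worked-out substitute; the paper mentions it as a suggested strategy that it could not carry through.
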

	The key insight is that all of the necessarily cohomological vanishings can be checked on the ambient Grassmannian by exploiting the Koszul resolution. The resulting semiorthogonal sequences are direct generalizations of exceptional collections in \cite{kuznetsoveven} and \cite{kuznetsovodd} respectively. \par 
	 In Theorem \ref{sodtheorem}, we restrict to $k = 2$ as it simplifies the necessary cohomological calculations, but it is possible that the sufficiently motivated reader can use these methods to show the existence of similar semiorthogonal decompositions for $k > 2$. 
	\subsection*{Outline of the paper}
	We review the language of Clifford algebras in the relative setting in Section \ref{backgroundsection}, before constructing the Fourier-Mukai kernel for the embedding in Section \ref{kernelsection}. The key technical cohomological vanishings used in the proofs for both theorems follow from the Borel-Weil-Bott theorem (but as the fibers need not be smooth, we can only use it on the ambient Grassmannian), and are handled in Section \ref{bwbsection}. To understand the smooth case, we make explicit the connection between our kernel and spinor bundles on smooth orthogonal Grassmannians; this is done in Section \ref{smoothsection}. Using the cohomological calculations and deformation to the smooth case, we reduce the proof of Theorem \ref{embeddingtheorem} to a fiberwise linear-algebraic argument, which is done in Section \ref{proofsection}. After a similar reduction to linear algebra, we prove Theorem \ref{sodtheorem} in Section \ref{semiorthogonalitysection}. Finally, in Section \ref{residualsection} we study the residual categories $\cR_{\text{even}}$ and $\cR_{\text{odd}}$ in the smooth case and suggest a conjecture in more generality, motivated by calculations coming from the case of a pencil of quadrics with smooth base locus. 
	\subsection*{Acknowledgments}
	We are thankful to Calvin Yost-Wolff and James Hotchkiss for useful conversations, as well as Pieter Belmans and Alexander Kuznetsov for discussing a prior draft of this paper. Additionally, an unpublished library of Sage code written by Pieter Belmans \cite{BWB} was very helpful in preliminary Borel-Weil-Bott cohomology calculations. As always, many thanks go to my advisor Alex Perry for his continued support. \par 
	During the preparation of this paper, the author was informed that Aporva Varshney was able to independently prove many related results by different methods when the quadric fibration is a nonsingular pencil of quadrics. 
	\section*{Conventions}
	We work over $\bbC$. Fix a line bundle $\cL$ and a rank $n$ vector bundle $\cE$ on a smooth base variety $S$. Throughout, we consider a flat quadric fibration $p : \cQ \to S$ of relative dimension $n - 2$, by which we mean that $\cQ \subset \bbP(\cE) \to S$ is cut out by a section $H^0(\bbP(\cE),\cL^\vee \otimes \cO_{\bbP(\cE)}(2)) = H^0(S, \cL^\vee \otimes \Sym^2 \cE^\vee)$ such that $\cL \to \Sym^2 \cE^\vee$ is fiberwise injective. We denote the associated orthogonal Grassmannian fibration as $\tau : \OGr(k,\cQ) \to S$, where $\OGr(k,\cQ) \subset \Gr(k,\cE)$ is cut out by the associated section of $\cL^\vee \otimes \Sym^2 \cE^\vee$. On this fibration we denote the tautological subbundle by $\cU_k$ and its determinant by $\cO(-1) \coloneq \cO_{\OGr(k,\cQ)}(-1)$. 
	\section{Background} \label{backgroundsection}
	\subsection{Clifford algebras}
	We recall for the reader some of the theory of sheaves of even parts of Clifford algebras, as expounded in Kuznetsov's work on quadric fibrations \cite{quadfibs}. \par 
	In the aforementioned paper, Kuznetsov constructs over $S$ two natural sheaves of noncommutative algebras: 
	\begin{itemize}
		\item A sheaf of $\bbZ$-graded algebras $\Cl_\bullet(\cQ)$ called the \textit{graded Clifford algebra} of $\cQ$, which relativizes the construction of Kapranov \cite{kapranovcoll}. By definition, 
		\begin{equation}
			\Cl_\bullet(\cQ) := T^\bullet(\cE)/(\ker(\Sym^2 \cE \to \cL^\vee)
		\end{equation}
		is a quotient of the graded tensor algebra $T^\bullet(\cE) := \bigoplus_{m=0}^\infty \cE^{\otimes m}$. We will consider the abelian category $\on{qgr} \Cl_\bullet(\cQ)$, the quotient of the category of sheaves of finitely generated graded left $\Cl_\bullet(\cQ)$-modules by the modules of finite rank over $\cO_S$. 
		\item A sheaf of algebras $\cl_0 := \varinjlim \Cl_{2k}(\cQ) \otimes \cL^{k}$ called the \textit{sheaf of even parts of the Clifford algebra of $\cQ$}. We will consider $\on{mod} \cl_0$, the abelian category of finitely generated left $\cl_0$-modules. 
	\end{itemize}
	The most important relation between these two algebras is the following equivalence: 
	\begin{proposition}[\cite{quadfibs}] \label{qgrprop}
		There is an exact equivalence of abelian categories
		\begin{equation}
			q_* : \on{qgr} \Cl_\bullet(\cQ) \simeq \on{mod} \cl_0,
		\end{equation}
		given by $M_\bullet \mapsto \varinjlim M_{2k} \otimes \cL^k$. 
	\end{proposition}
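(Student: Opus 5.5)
The proof is a Serre-type comparison between graded modules and their even-degree colimits, carried out relative to $S$. I will define the functor $q_*$ on graded modules, show it kills torsion so that it descends to $\on{qgr}$, and then build an explicit quasi-inverse.

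\textbf{Step 1: the functor is defined and exact.} First, $\cl_0$ is indeed a sheaf of algebras: the transition maps $\Cl_{2k}\otimes\cL^k\to\Cl_{2k+2}\otimes\cL^{k+1}$ come from multiplication by the canonical element of $\Cl_2\otimes\cL$. This element is the image of $\cL\to\Sym^2\cE^\vee$, identified through the quadratic form as lying in the "central" part of $\Cl_2$. Locally on $S$ we trivialize $\cL$ and $\cE$, so that $\Cl_\bullet$ becomes a Koszul-type extension of the ungraded Clifford algebra by a central variable $h$ of degree $2$ (the quadric equation). Then $\Cl_\bullet\simeq \textcl(\cE,q)[h]$ as a graded algebra, with $\cl_0=\textcl_0(\cE,q)$ the classical even Clifford algebra. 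In fact the maps $\Cl_{2k}\otimes \cL^k\to \cl_0$ are isomorphisms for $k\gg0$ (eventually for $k\ge n/2$), since the filtration by degree stabilizes at rank $2^{n-1}$.

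For a graded module $M_\bullet$, the colimit $\varinjlim M_{2k}\otimes\cL^k$ is then a module over $\cl_0$. It is coherent because $M$ is finitely generated: locally $M$ is generated in bounded degrees, so the colimit is generated by the images of finitely many $M_{2k}$. Filtered colimits are exact, so $q_*$ is exact. Torsion modules, meaning those of finite rank over $\cO_S$, have $M_{2k}=0$ for $k\gg0$ and are sent to $0$. Hence $q_*$ factors through $\on{qgr}$.

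\textbf{Step 2: the quasi-inverse.} Given a $\cl_0$-module $N$, set
\[
  q^*N := \Cl_\bullet \otimes_{\Cl_{2\bullet}} (\text{graded }N),
\]
or more concretely $(q^*N)_j:=\varinjlim_k \Cl_{j+2k}\otimes\cL^{k}\otimes_{\cl_0}N$, suitably twisted. Equivalently, $q^*N$ is the graded module whose even part is $N\otimes\cL^{-\bullet}$ and whose odd part is $\cl_1\otimes_{\cl_0}N$ twisted, where $\cl_1:=\varinjlim\Cl_{2k+1}\otimes\cL^k$ is the odd part bimodule. This is finitely generated and exact in $N$. The relation $q_*q^*\simeq\mathrm{id}$ holds directly from $\cl_0\otimes_{\cl_0}N=N$.

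For the other composite, there is a natural map $M_\bullet\to q^*q_*M$ in high degrees. Working locally with $\Cl_\bullet\simeq \textcl[h]$, this map is an isomorphism in degrees $\gg0$ exactly when multiplication by $h$ is an isomorphism on $M_j$ for $j\gg0$. That is the statement that $h$-torsion and $h$-cokernel are eventually zero in degree. It follows because $M/hM$ is a finitely generated module over $\Cl_\bullet/h$, which is finite-dimensional in each bounded range and zero in degrees above $n$ (the exterior-algebra-like quotient), and because $h$-torsion is a finitely generated submodule killed by a power of $h$, hence bounded. So the kernel and cokernel are torsion, and the map is an isomorphism in $\on{qgr}$.

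\textbf{Main obstacle.} I expect the hard part to be the local structure claim in Step 2: that after trivializing, $\Cl_\bullet$ is finite over the central polynomial subalgebra $\cO_S[h]$, and that $\Cl_\bullet/(h)$ vanishes in high degree. The relations $v\otimes v\mapsto q(v)h$ should show that $\Cl_\bullet/h$ is the exterior algebra $\Lambda^\bullet\cE$, which vanishes above degree $n$. With that in hand, everything else is a formal Serre-type argument, and the local isomorphisms glue because all the constructions are canonical.
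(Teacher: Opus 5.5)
The paper quotes this result from \cite{quadfibs} without proof, so the comparison is with the standard argument of localizing at $h$, which your outline follows. Your even-degree argument is sound: centrality of $h$, finiteness of $\Cl_\bullet$ over $\cO_S[h]$, and $\Cl_\bullet/h\simeq\bigwedge^\bullet\cE$ do force $h$ to act bijectively on a finitely generated $M_\bullet$ in high degrees. There is, however, a genuine gap in odd degrees, and it is hidden by a false local model.

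\textbf{The local model.} The identification $\Cl_\bullet\simeq\textcl(\cE,q)[h]$ is wrong, since $\Cl_0=\cO_S$ and $\Cl_1=\cE$. What PBW actually gives is that $\Cl_\bullet$ is locally free over $\cO_S[h]$ on lifts of $\bigwedge^\bullet\cE$, so that
\[\Cl_j\simeq\bigoplus_{i\le j,\ i\equiv j\,(2)}\bigwedge^i\cE\otimes(\cL^\vee)^{\otimes(j-i)/2}.\]
This is a parity-refined Rees algebra of the degree filtration on $\textcl$. It is consistent with your correct remark that $\Cl_{2k}\otimes\cL^k\to\cl_0$ is an isomorphism only for $k\gg0$. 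In particular, $\Cl_\bullet$ has nonzero odd-degree parts, and your Step 2 never treats them.

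\textbf{The missing step.} In odd degrees, eventual bijectivity of $h$ identifies $M_{2a+1}$ with $\varinjlim_k M_{2k+1}\otimes\cL^k$, up to a twist. By contrast, $(q^*q_*M)_{2a+1}$ is $\cl_1\otimes_{\cl_0}q_*M$, up to the same twist. To compare them you need the multiplication map
\[\cl_1\otimes_{\cl_0}\varinjlim_k M_{2k}\otimes\cL^k\longrightarrow\varinjlim_k M_{2k+1}\otimes\cL^k\]
to be an isomorphism. Without it you cannot even define $M_\bullet\to q^*q_*M$ in odd degrees. This is not formal: it amounts to $\cl_1\cdot\cl_{-1}=\cl_0$. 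For example, take $\cO_S[h]\otimes\bigwedge^\bullet\cE$, the zero form with $h$ adjoined formally. There, the module $\cO_S[h]$ placed in odd degrees is not torsion, yet $q_*$ kills it.

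\textbf{How to fix it.} In the present setting the comparison holds because $\cL\to\Sym^2\cE^\vee$ is fiberwise injective, a hypothesis your proposal never uses. Locally on $S$, choose a section $e$ of $\cE$ with $q(e)$ a unit. Since $e\cdot e=q(e)h$, both composites of left multiplication by $e$ between the even and odd colimits equal multiplication by $q(e)h$, which is invertible on the colimits. Hence left multiplication by $e$ identifies $\varinjlim M_{2k}\otimes\cL^k$ with $\varinjlim M_{2k+1}\otimes\cL^k$, and identifies $\cl_0$ with $\cl_1$ as right $\cl_0$-modules. Together these give the isomorphism above.

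The same element $e$ shows $\cl_1$ is flat over $\cl_0$, which is what your unexplained claim that $q^*$ is exact would need; exactness is not actually required for the equivalence. Finally, $q^*N$ must be truncated to degrees $\ge0$, or it is not finitely generated over $\Cl_\bullet$.
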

	\begin{remark}
		In fact the equivalence of the proposition above holds also for the categories of left modules; the same argument works without modification. 
	\end{remark}
	In fact, there is a series of $\cl_0$-bimodules which are, on the level of graded $\Cl_\bullet(\cQ)$-modules, given simply by shifts of $\Cl_\bullet(\cQ)$-itself: 
	\begin{equation}
		\cl_k := q_* \Cl_{\bullet+k}(\cQ),
	\end{equation}
	satisfying the compatibilities $\cl_{k+2} \simeq \cl_k \otimes \cL^\vee$. \par 
	It is worth noting that, as is true for any module over a sheaf of noncommutative algebras over $S$, taking the $\cO$-linear dual $\cM^\vee$ of any left (resp. right) $\cl_0$-module $\cM$ admits a natural right (resp. left) $\cl_0$-module structure inherited from that of $\cM$. To be precise, in the case of right $\cl_0$-modules, we take the $\cl_0$-module multiplication map 
	\[\cM \otimes \cl_0\to \cM,\]
	observe that it induces a natural map 
	\[\cl_0 \to \Endsheaf_\cO(\cM,\cM)\]
	and observe that the left module structure on the dual $\cM^\vee := \homsheaf_\cO(\cM,\cO)$ is given by 
	\begin{equation}
		\cl_0 \otimes \homsheaf_\cO(\cM,\cO) \to \Endsheaf_\cO(\cM) \otimes \homsheaf_{\cO}(\cM,\cO) \to \homsheaf_\cO(\cM,\cO).
	\end{equation}
	\par
	When $\cM = \cl_0$ itself, the image of this map is quite easily described by the following observation. 
	\begin{proposition}[\cite{quadfibs}] \label{endomorphismprop}
		The map $\cl_0 \to \Endsheaf_\cO(\cl_0) \simeq (\cl_0)^\vee \otimes \cl_0$ given by right Clifford multiplication of $\cl_0$ onto itself is an isomorphism of $\cl_0$-bimodules onto the left $\cl_0$-linear endomorphisms of $\cl_0$. 
	\end{proposition}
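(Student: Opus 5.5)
The map $\cl_0 \to \Endsheaf_\cO(\cl_0)$, $a \mapsto (x \mapsto xa)$, is a morphism of sheaves of $\cO_S$-modules. Both sides are locally free, so I will check the claims locally on $S$ and then fiberwise. The plan has three steps.

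First, the bimodule structure and the image. Right multiplication by $a$ commutes with left multiplication by $\cl_0$, by associativity. So the map lands in the subsheaf $\Endsheaf_{\cl_0}(\cl_0)$ of left $\cl_0$-linear endomorphisms. Regard $\Endsheaf_\cO(\cl_0) \simeq (\cl_0)^\vee \otimes \cl_0$ as a $\cl_0$-bimodule via the dual module structures recalled above. Then compatibility of the map with both actions is a formal identity: for $(b\cdot\phi\cdot c)(x)$ one unwinds the definitions, which again comes down to associativity. I will state precisely which actions are meant so that $a \mapsto R_a$ intertwines $bac$ with the induced action on $R_a$.

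Second, the isomorphism onto $\Endsheaf_{\cl_0}(\cl_0)$. This is the standard fact that for any sheaf of unital algebras $A$ one has $\Endsheaf_{A}({}_AA) \simeq A^{op}$ via $a \mapsto R_a$. The inverse sends $\phi$ to $\phi(1)$. Injectivity holds because $R_a(1) = a$. Surjectivity holds because a left-linear $\phi$ satisfies $\phi(x) = \phi(x\cdot 1) = x\phi(1)$. This argument works on local sections, so it gives an isomorphism of sheaves with no flatness or regularity input. The only care needed is that $\cl_0$ is unital and associative as a sheaf of algebras. That follows from its description as $\varinjlim \Cl_{2k}\otimes\cL^k$, or locally as the even Clifford algebra of a quadratic form on $\cE$ with values in $\cL^\vee$. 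In particular it is locally free of rank $2^{n-1}$, which ensures $\Endsheaf_\cO(\cl_0) \simeq (\cl_0)^\vee\otimes\cl_0$.

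Third, the main obstacle is bookkeeping rather than mathematics. The equivalence of Proposition~\ref{qgrprop} and the definition $\cl_0 = \varinjlim \Cl_{2k}\otimes \cL^k$ produce the multiplication only after twisting by powers of $\cL$. I would verify that the unit $1 \in \Cl_0 = \cO_S$ survives the colimit as a global unit. I would also verify that the isomorphism is untwisted, with no stray $\cL$ factor, since both source and target carry the same twists. Once this is checked locally, where $\cL$ is trivial and $\cl_0$ is literally the even Clifford algebra of a quadratic form, the statement reduces to the elementary algebraic fact of the second step.
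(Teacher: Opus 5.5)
Your proof is correct, but it takes a different route from the paper. The paper gives no direct argument. It cites \cite[Lemma 3.8]{quadfibs} for the isomorphism as right modules, and remarks that compatibility with the left structures follows from the definition of the left $\cl_0$-module structure on $(\cl_0)^\vee$.

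You instead give the self-contained argument that $a \mapsto R_a$ has inverse $\phi \mapsto \phi(1)$ on left $\cl_0$-linear endomorphisms. This uses nothing about Clifford algebras beyond unitality and associativity. It also works on local sections, so the fiberwise check you announce at the start is never needed.

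To pin down the actions you promise to specify in your first step: with the paper's convention for duals, the left action on $\Endsheaf_\cO(\cl_0) \simeq (\cl_0)^\vee \otimes \cl_0$ comes from the $(\cl_0)^\vee$ factor, $(b\cdot\phi)(x) = \phi(xb)$. The right action comes from the target, $(\phi\cdot c)(x) = \phi(x)c$. Then $b\cdot R_a\cdot c = R_{bac}$, and the left-linear endomorphisms form a sub-bimodule. So the bimodule claim is exactly associativity.

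Your route makes explicit the bimodule structure that the paper leaves implicit. It also handles both sides at once, rather than relying on an external lemma for one side and a remark for the other. And it applies verbatim to the fiber algebra $\textcl_{\even}$, which is where the paper uses the proposition in the proof of Lemma~\ref{keylemma}. Your third step's bookkeeping about $\cL$-twists is unnecessary. Only the unital associative algebra structure of $\cl_0$ matters, and the paper's explicit local description already provides it.
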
 
	\begin{proof}
		At least as right modules, this is \cite[Lemma 3.8]{quadfibs}, but it essentially follows from the definition of the left $\cl_0$-module structure on $(\cl_0)^\vee$ that the isomorphism preserves the left module structures. 
	\end{proof}	
	A useful observation in \cite{quadfibs} is that both $\Cl_\bullet$ and $\cl_0$ admit explicit descriptions in terms of $\cE$ and $\cL$: 
	\begin{equation}
		\Cl_\bullet = \cO_S \oplus \cE \oplus \left(\bigwedge^2 \cE \oplus \cL^\vee\right)\oplus\left(\bigwedge^3 \cE \oplus \cE \otimes \cL^\vee\right) \oplus \left(\bigwedge^4 \cE\oplus \bigwedge^2 \cE \otimes \cL^\vee \oplus \cE \otimes (\cL^\vee)^{\otimes 2}\right) \oplus \cdots 
	\end{equation}
	and 
	\begin{equation}
		\cl_0 = \cO_S \oplus \bigwedge^2 \cE \otimes \cL\oplus\bigwedge^4 \cE \otimes \cL^{\otimes 2} \oplus \cdots 
	\end{equation}
	and in particular the fibers of $\cl_0$ are given precisely by the even subalgebra of the usual Clifford algebra of a single quadratic form. In other words, for any $s \in S$, 
	\begin{equation}
		\cl_0|_s = T^{\even}(\cE|_s)/\ker(\Sym^2 \cE|_s \to \cL^\vee|_s) =: \textcl_{\even},
	\end{equation}
	where $\Sym^2 \cE|_s \to \cL^\vee|_s$ is the fiber of the relative quadratic form. Note that $\textcl_{\even}$ depends on the quadric $\cQ_s$, but we will omit this dependence when it is clear from context. 
	\subsection{Relative orthogonal Grassmannians}
	For a quadric fibration $p : \cQ \to S$ as in our assumptions, by the orthogonal Grassmannian $\OGr(k,\cQ)$ we mean simply the relative Hilbert scheme of lines for $p$. The reason that we will restrict to the case where the rank of the quadrics occurring in the quadric fibration is $\geq 2k - 1$, or equivalently corank $\leq n - 2k + 1$, is for the following technical input.
	\begin{lemma}
		\label{coranklemma}
		% note to the author: the corank assumption changes for higher-dimensional linear spaces.
		Suppose that for any point $s \in S$, the fiber $\cQ_s \subset \bbP(\cE|_s)$  of the quadric fibration $p : \cQ \to S$ has corank $\leq n - 2k + 1$. Then $\OGr(k,\cQ) \subset \Gr(k,\cE)$ is cut out by a regular section of the vector bundle \[\cV := \Sym^2 \cU_k^\vee \otimes \tau^*\cL^\vee\] where $\cU_k$ is the tautological subbundle on $\Gr(k,\cE)$, and the morphism $\tau : \OGr(k,\cQ) \to S$ is flat. Moreover for every $s \in S$ the section of $\cV|_s$ which cuts out $\OGr(k,\cQ_s) \subset \Gr(k,\cE|_s)$ remains regular.  
	\end{lemma}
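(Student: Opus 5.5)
The plan is to reduce everything to a fiberwise dimension count. Then I use the standard facts about regular sections of vector bundles on Cohen--Macaulay schemes, together with miracle flatness.

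\textbf{Reduction.} Set $\cV = \Sym^2\cU_k^\vee\otimes\tau^*\cL^\vee$. It has rank $k(k+1)/2$, and the quadratic form induces a section $\sigma$ of it on $\Gr(k,\cE)$ whose zero locus is $\OGr(k,\cQ)$ by definition.
\begin{enumerate}
\item The fiber $\Gr(k,\cE|_s)$ is smooth, so in particular Cohen--Macaulay. Hence the restriction $\sigma|_s$ is regular if and only if its zero locus $\OGr(k,\cQ_s)$ has codimension exactly $k(k+1)/2$ at every point. Equivalently, every component must have dimension at most $d := k(n-k)-k(k+1)/2$; the inequality $\geq$ is automatic.
\item Suppose this holds for every $s$. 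Then $\OGr(k,\cQ)\subset\Gr(k,\cE)$ has dimension at most $\dim S + d$, since it is fibered over $S$ with fibers of dimension at most $d$. Because $\Gr(k,\cE)$ is smooth, $\sigma$ is then regular.
\item Regularity makes $\OGr(k,\cQ)$ a local complete intersection, hence Cohen--Macaulay. Its fibers over the smooth base $S$ are equidimensional of dimension $\dim \OGr(k,\cQ)-\dim S$. Miracle flatness then gives flatness of $\tau$.
\end{enumerate}
One small point needs care: if a fiber is empty, flatness is not in question. In fact fibers are nonempty here, because $k\le n/2$ and a quadric of any corank contains isotropic $k$-planes when $k\le n/2$.

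\textbf{The dimension count.} Fix $s$ and write $V=\cE|_s$, with $\dim V = n$. Let $K$ be the kernel of the form, of dimension $c\le n-2k+1$, and let $r=n-c\ge 2k-1$ be the rank. I stratify $\OGr(k,\cQ_s)$ by $j=\dim(W\cap K)$ and set $i=k-j$.
\begin{itemize}
\item The image $W'$ of $W$ in $V/K$ is an isotropic $i$-plane for a nondegenerate form of rank $r$. This forces $i\le r/2$, and such planes form a family of dimension $i(r-i)-i(i+1)/2$.
\item The intersection $J = W\cap K$ is a point of $\Gr(j,K)$, contributing $j(c-j)$.
\item Given $J$ and $W'$, the plane $W$ is the graph of a map $W'\to K/J$, contributing $i(c-j)$.
\end{itemize}
So the stratum has dimension
\[
D(j)=j(c-j)+i(c-j)+i(r-i)-\tfrac{i(i+1)}{2}=k(c-j)+i(r-i)-\tfrac{i(i+1)}{2}, \qquad i = k-j.
\]
I then need to verify $D(j)\le d$ for all admissible $j$, using $c\le n-2k+1$. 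Writing $n=r+c$, this is an elementary inequality in $i$, $k$, $r$, $c$. I expect it to reduce to a statement like $(k-i)\bigl(r-(\text{something linear in }k,i)\bigr)\ge0$. The hypothesis $r\ge 2k-1$ should be exactly what makes the bracket nonnegative, with equality (sharpness) attained at the extreme stratum.

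\textbf{Main obstacle.} The difficult step is this inequality, and in particular identifying which stratum is extremal. I would first check the two ends, $j=0$ (which gives $D=d$ when $c=0$) and $j$ maximal, and then argue that $D(j)$ is concave (or monotone) in $j$. Concavity would reduce the verification to those endpoints, where the corank bound is used. The last claim of the lemma, regularity of the restricted section for every $s$, is exactly the fiberwise statement established in step 1.
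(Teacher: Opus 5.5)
Your outer argument is the same as the paper's: use that $\Gr(k,\cE)$ and its fibers are smooth to reduce regularity of the section to a codimension count, deduce that $\OGr(k,\cQ)$ is l.c.i.\ and hence Cohen--Macaulay, and conclude flatness of $\tau$ by miracle flatness. The difference is the fiberwise dimension bound. The paper cites \cite[Lemma 2.2]{coskunflagvar} for the fact that $\OGr(k,\cQ_s)$ is pure of dimension $d=k(2n-3k-1)/2$ (with one or two components). You instead prove the upper bound $\dim \OGr(k,\cQ_s)\le d$ directly, stratifying by $j=\dim(W\cap K)$. Your stratification and the formula $D(j)=k(c-j)+i(r-i)-i(i+1)/2$ are correct. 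Krull's bound supplies the reverse inequality, so the upper bound is all you need. This makes the lemma self-contained and shows exactly where $r\ge 2k-1$ enters. It does not recover the component count, but the lemma never uses that.

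However, you stop at the step you call the main obstacle, and the plan you sketch for it is invalid as stated. $D(j)$ is concave in $j$: its $j^2$-coefficient is $-3/2$. Concavity bounds a function from \emph{below} by its endpoint values, not from above. So checking $j=0$ and $j$ maximal says nothing about the intermediate strata. Also, the extremal stratum is $j=1$, not $j$ maximal. The clean fix is to expand directly, using $n=r+c$ and $i=k-j$:
\[ d-D(j)=\tfrac{1}{2}(k-i)(2r-k-3i-1)=\tfrac{j}{2}\,(2r-4k+3j-1). \]
This vanishes at $j=0$. So the $j=0$ stratum, which is nonempty iff $k\le r/2$, has dimension exactly $d$ for every $c$, not only for $c=0$. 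For $j\ge 1$ the bracket increases with $j$, so it is nonnegative for all $j\ge1$ iff it is nonnegative at $j=1$. That condition is $2r-4k+2\ge 0$, i.e.\ $r\ge 2k-1$. Equivalently, $D(j+1)-D(j)=-(3j+r-2k+1)\le 0$, so your ``or monotone'' alternative is the version that works.

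The hypothesis is sharp. If $r=2k-1$, the $j=0$ stratum is empty and $j=1$ attains $d$. If $r=2k-2$, one gets $D(1)=d+1$. With this identity inserted, your argument is complete.
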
 
	\begin{proof}
		First, it is actually immediate from the definition that $\OGr(k,\cQ) \subset \Gr(k,\cE)$ is cut out by a section of the vector bundle $\cV$. To show that it is cut out by a regular section of $\cV$, it suffices to show that it has codimension $k\cdot \frac{k+1}{2}$ in $\Gr(k,\cE)$, as $\on{rank}\cV = k\cdot \frac{k+1}{2}$. \par 
		We first observe that if for every $s \in S$, the quadric $\cQ_s$ has corank $\leq n - 2k + 1$ in the $n$-dimensional vector space $\cE|_s$, then by Lemma 2.2 of \cite{coskunflagvar}, the variety of isotropic lines $\OGr(k,\cQ_s)$ has pure dimension $k\cdot\frac{2n-3k-1}{2}$, with either 1 irreducible component if the corank is $n - 2k + 1$ or $< n - 2k$, or two irreducible components if the corank is $n - 2k$. As a consequence we conclude that $\dim \OGr(2,\cQ) = k\cdot\frac{2n-3k-1}{2} + \dim S$, while $\dim \Gr(k,\cE) = k(n-k) + \dim S = k\cdot\frac{2n-3k-1}{2} + k\cdot \frac{k+1}{2} + \dim S $, so we may conclude that $\OGr(k,\cQ)$ is cut out by a regular section of $\cV$. In particular this implies that $\OGr(k,\cQ)$ is a local complete intersection variety, therefore CM. By miracle flatness, it follows that the morphism $\tau$ is flat. \par 
		The last claim of the lemma follows immediately from the fact that for each $s \in S$, $\OGr(k,\cQ_s)$ is codimension $k\cdot\frac{k+1}{2}$ inside of $\Gr(k,\cE|_s)$. 
	\end{proof}
	One useful consequence of the lemma above is that we can readily calculate the relative dualizing line bundle $\omega_{\OGr(k,\cQ)/S}$, since $\OGr(k,\cQ)$ is cut out by a regular section of a vector bundle on $\Gr(k,\cE)$. Using the isomorphism $\omega_{\OGr(k,\cQ)/S} \simeq \omega_{\Gr(k,\cE)/S} \otimes \det (\Sym^2 \cU_k^\vee \otimes \tau^*\cL^\vee)|_{\OGr(k,\cQ)}$, we find 
	\begin{equation} \label{dualizing}
		\omega_{\OGr(k,\cQ)/S} \simeq \cO(-n+k+1) \otimes (\tau^*\cL^\vee)^{\otimes k(k+1)/2}.
	\end{equation}
	Since $\cL$ is a line bundle pulled back from the base, on the fibers this reduces to simply $\cO(-n+k+1)$. 
	\section{Construction of the Fourier-Mukai kernels}\label{kernelsection}
	The construction of our kernels is analogous to the sheaves $\cE_{k,l}$ arising in Kuznetsov's proof of this claim in \cite{quadfibs}. \par 
	We first observe that there is a natural graded Clifford multiplication map of graded right $\Cl_\bullet(\cQ)$ modules 
	\[\cE \otimes \Cl_{\bullet}(\cQ) \to \Cl_{\bullet +1 }(\cQ);\]
	shifting degrees and applying the functor $q_*$ therefore yields for any $i \in \bbZ$ a \textit{Clifford multiplication} map 
	\begin{equation}
		\cE\otimes \cl_i \to \cl_{i+1}. 
	\end{equation}
	On the other hand, the orthogonal Grassmannian $\OGr(k,\cQ)$ supports a tautological isotropic subbundle $\cU_k \hookrightarrow \tau^*\cE$ pulled back from the Grassmann bundle $\Gr(k, \cE)$ over $S$, so composing with this inclusion gives for each $i \in \bbZ$ a natural multiplication map 
	\begin{equation}
		\cU_k \otimes \tau^*\cl_i \to \tau^*\cl_{i+1}. 
	\end{equation}
	As a consequence, it is possible to build a long exact sequence which we will use to construct our kernel. To do so, first observe that there is a natural inclusion $\Sym^p \cU_k \subset \Sym^{p-1} \cU_k \otimes \cU_k$ as a direct summand; then Clifford multiplication yields a map
	\[\Sym^p \cU_k \otimes \cl_i\to \Sym^{p-1}\cU_k \otimes \cU_k \otimes \cl_i \to \Sym^{p-1}\cU_k \otimes \cl_{i+1}.\]
	On the other hand, we can interpret $\cO_{\OGr(k,\cQ)}(-1) = \bigwedge^k \cU_k \subset \cU_k^{\otimes k}$, apply Clifford multiplication $k$ times, and get a map 
	\[\cO_{\OGr(k,\cQ)}(-1) \otimes \tau^* \cl_0 \to \tau^* \cl_k.\]
	Finally, we have natural maps 
	\[\Sym^p \cU_k^\vee \otimes \tau^*\cl_{i} \to \Sym^p \cU_k^\vee \otimes \cU_k^\vee \otimes \cU_k \otimes \tau^*\cl_{i} \to \Sym^{p+1} \cU_k^\vee \otimes \tau^*\cl_{i+1}\]
	given first by the monoidal coevaluation in the middle followed by multiplication onto each factor. 
	\begin{proposition}
		\label{LES}
		There is a long exact sequence of right $\tau^*\cl_0$-modules on $\OGr(k,\cQ)$ given by 
%		\begin{equation}
%			\begin{gathered}
			\begin{multline}
				\cdots \to \Sym^p \cU_k \otimes \tau^*\cl_{-p} \to \Sym^{p-1} \cU_{k} \otimes \tau^*\cl_{-p+1} \to \cdots \\
				\cdots \to \cU_k \otimes \tau^*\cl_{-1} \to \tau^*\cl_0 \to \cO_{\OGr(2,\cQ)}(1) \otimes \tau^*\cl_k \to \cU_k^\vee(1) \otimes \tau^*\cl_{k+1} \to \cdots \\
				\cdots \to \Sym^p\cU_k^\vee(1) \otimes \tau^*\cl_{p+k} \to \Sym^{p+1} \cU_k^\vee(1) \otimes \tau^*\cl_{p+k+1} \to  \cdots 
			\end{multline}
%			\end{gathered}
%		\end{equation}
	\end{proposition}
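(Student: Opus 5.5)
Since every term of the sequence is a locally free sheaf pulled back along $\tau$ and twisted by tautological bundles, exactness can be checked on fibers. Concretely, it suffices to check exactness after restricting to an arbitrary point $x \in \OGr(k,\cQ)$ lying over $s \in S$. At such a point all the relevant bundles have fibers that are plain vector spaces: $U = \cU_k|_x \subset E = \cE|_s$ is a $k$-dimensional isotropic subspace, $\cl_i|_s$ is the $i$-th graded piece of the Clifford algebra (twisted by $\cL$), and the differentials are left Clifford multiplication by elements of $U$. Exactness of a complex of vector bundles whose fibers are exact at each point gives exactness of the complex (and local splitness). I will therefore work entirely with a single quadratic form $q$ on $E$ and the isotropic subspace $U$. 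Before that, I would verify that the maps really compose to zero. This holds because $u\cdot u' + u'\cdot u = 2q(u,u') = 0$ in the Clifford algebra for $u,u' \in U$, so $U$ generates an exterior algebra $\bigwedge^\bullet U \subset \textcl$. Hence symmetrizing two successive multiplications kills them on the left half. The same computation with coevaluation handles the right half. The junction $\tau^*\cl_0 \to \cl_k(1) \to \cU_k^\vee\otimes\cl_{k+1}(1)$ is zero because $u\wedge(u_1\cdots u_k)=0$.

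The key structural step is to identify the complex with a Koszul-type complex for the exterior algebra $\Lambda = \bigwedge^\bullet U$ acting on $\textcl$. Choosing a complementary isotropic subspace is impossible in general, since $q$ may be degenerate. Instead, I would use a filtration argument. As a left $\Lambda$-module, the (ungraded, $\bbZ/2$-periodic) Clifford algebra $\textcl$ should be free. To see this, pick a basis $u_1,\dots,u_k$ of $U$ and extend it to a basis of $E$; the standard PBW-type basis of ordered monomials shows $\textcl \cong \Lambda \otimes \bigwedge^\bullet(E/U)$ as left $\Lambda$-modules, with no nondegeneracy needed. Since exactness is then reduced to the case $\textcl=\Lambda$ (tensoring with a vector space), the complex in the proposition becomes the complete resolution of $\Lambda$. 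Its left half $\cdots \to \Sym^p U\otimes\Lambda \to \cdots \to \Lambda$ is the Koszul resolution of the trivial module $\bigwedge^k U = \Lambda/\Lambda_+$, up to the socle identification; more precisely, its homology at $\Lambda$ is $\Lambda/ U\Lambda \cdot$. Its right half $\bigwedge^kU\otimes\Lambda \to U^\vee\otimes\bigwedge^kU\otimes\Lambda\to\cdots$ is the Koszul coresolution. Together they form the Tate resolution of the trivial module over the self-injective (Frobenius) algebra $\Lambda$. The middle map $\Lambda \to \bigwedge^k U\otimes \Lambda$, multiplication by the top form, is exactly the composite $\Lambda \twoheadrightarrow \bbC \hookrightarrow \Lambda$ through the socle. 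The exactness of the Koszul complexes for the Koszul dual pair $\Sym U^\vee$ and $\bigwedge U$ then gives exactness everywhere.

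Finally, one must keep track of the grading shifts $\cl_{-p}$ and $\cl_{p+k}$. These just record the degree of the Clifford multiplication and are compatible fiberwise with the $\bbZ/2$-graded decomposition above via $\cl_{i+2}\simeq \cl_i\otimes\cL^\vee$. One must also check that all maps are right $\tau^*\cl_0$-linear, which is clear because they are left multiplications. I expect the main obstacle to be the careful verification that $\textcl$ is free over $\Lambda$ as a left module in a way compatible with the graded pieces $\cl_i$, uniformly in degenerate fibers. The first thing I would try is the filtration by the number of factors from $U$ in ordered monomials, using that the associated graded is the exterior algebra $\bigwedge^\bullet E$, where freeness over $\bigwedge^\bullet U$ is evident.
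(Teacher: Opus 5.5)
Your proposal is correct and takes essentially the same route as the paper. The paper also reduces to fibers and passes to the full Clifford algebra. It then uses a PBW basis extending a basis of $U_k$ to write $\textcl\simeq\bigwedge^*U_k\otimes\bigwedge^*W$ as left $\bigwedge^*U_k$-modules, and identifies both halves as Koszul complexes. Concretely, the left half splits into rows $\Sym^N(U_k\xrightarrow{\mathrm{id}}U_k)\otimes\bigwedge^*W$, the right half uses the pairing $\bigwedge^pU_k\simeq\bigwedge^{k-p}U_k^\vee$, and the junction is checked by hand in the PBW basis; this is exactly your Tate-resolution and socle picture made explicit.

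The filtration argument you flag as the main obstacle is not needed: the PBW basis you already invoke gives freeness over $\bigwedge^*U_k$ directly, because $U_k$ is isotropic. Also, the twist in the right half should be $(\bigwedge^kU_k)^\vee$, though this makes no difference fiberwise.
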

	\begin{proof}
		Since all of the sheaves involved are vector bundles over $\OGr(k,\cQ)$, the exactness may be checked fiberwise on $\OGr(k,\cQ)$, in which case this reduces to checking exactness of a sequence of vector spaces. We fix a point $(U_k,s) \in \OGr(k,\cQ)$ consisting of a point $s \in S$ and a $k$-dimensional isotropic space $U_k$ for the quadratic form $Q := \cQ_s$ on $V := \cE|_s$. Restricted to $\kappa(s)$, the sheaves $\cl_i$ coincide with the even part of the Clifford algebra $\textcl_{\text{even}}$ of $Q$ when $i$ is even and the odd part of the Clifford algebra $\textcl_{\text{odd}}$ when $i$ is odd. As a consequence, the sequence becomes 
		\[\cdots \to \Sym^2 U_k \otimes \textcl_{\text{even}} \to U_k \otimes \textcl_{\text{odd}} \to \textcl_{\text{even}}\to \det(U_k)^\vee \otimes \textcl_{\parity(k)} \to U_k^\vee \otimes \det(U_k)^\vee \otimes \textcl_{\parity(k+1)}\to \cdots\]
		where $\textcl_{\text{even}}$ and $\textcl_{\text{odd}}$ denote respectively the even and odd parts of the $\bbZ/2$-graded Clifford algebra of $Q$. In fact, we will simplify our situation further by showing the exactness of the sequence 
		\[\cdots \to \Sym^2 U_k \otimes \textcl \to U_k \otimes \textcl \to \textcl \to \det(U_k)^\vee \otimes \textcl \to U_k^\vee \otimes \det(U_k)^\vee \otimes \textcl\to \cdots\]
		where we take the \textit{full} $\bbZ/2$-graded Clifford algebra. Observe that all the maps permute the $\bbZ/2$-grading of $\textcl$ in such a way that taking one of the graded pieces recovers our desired sequence. \par 
		We first fix a basis $e_1, e_2, \dots, e_k$ of $U_k$ and extend to a basis $e_1,\dots,e_n$ of $V$. Let $W= \mathrm{span}(e_{k+1},\dots,e_n)$. By the Poincar\'e-Birkhoff-Witt theorem for Clifford algebras (which is an easy consequence of the defining relations), we know that $\textcl$ has a basis given by terms of the form 
		\[ e_{i_1}e_{i_2}\cdots e_{i_\ell}\]
		for $1 \leq i_1 < i_2 < \cdots < i_\ell \leq n$. In particular, we can observe that as vector spaces we have an isomorphism 
		\begin{equation}
			\label{cliffwedgetrick}
			\begin{aligned}
				\textcl &\iso\bigwedge^*\left(U_k \oplus W \right)\simeq \bigwedge^* U_k \otimes \bigwedge^* W \\
				e_{i_1}e_{i_2} \cdots e_{i_\ell} &\mapsto e_{i_1} \wedge e_{i_2} \wedge \cdots e_{i_\ell},
			\end{aligned}
		\end{equation}
		though we stress that this isomorphism does not respect the algebra structure. However, since $U_k$ is isotropic, this map is easily checked to be a map of left $\bigwedge^* U_k \simeq \textcl(U_k)$-modules; equivalently, it respects left multiplication by elements of $U_k$. 
		\par 
		We then proceed by examining the cases.
		\begin{enumerate}[(a)]
			\item \underline{Exactness of }
			\[ \cdots \to \Sym^3 U_k \otimes \textcl \to \Sym^2 U_k \otimes \textcl \to U_k \otimes \textcl \to \textcl\]
			\underline{at all terms besides the rightmost:} \par 
			In fact, by exploiting the isomorphism of \eqref{cliffwedgetrick}, it is possible to break this chain complex into an infinite direct sum of transparently exact terms. Under the isomorphism of \eqref{cliffwedgetrick}, we see that the maps of our desired chain complex become 
			\begin{align*}
				\Sym^p U_k \otimes \bigwedge^* U_k \otimes \bigwedge^* W &\to \Sym^{p-1} U_k \otimes \bigwedge^* U_k \otimes \bigwedge^* W\\
				e_{i_1}\cdots e_{i_p} \otimes \alpha \otimes \beta &\mapsto \sum_{j=1}^p e_{i_1} \cdots \hat{e}_{i_j}\cdots e_{i_p} \otimes e_{i_j}\alpha \otimes \beta.
			\end{align*}
			As a consequence, we see that we can break the complex down as: 
			\begin{equation}
				\begin{tikzcd}
					& & & \Sym^0 U_k \otimes \bigwedge^0 U_k \otimes \bigwedge^* W \arrow[d, phantom, "\oplus"]\\
					& & \Sym^1 U_k \otimes \bigwedge^0 U_k \otimes \bigwedge^* W \arrow[r] \arrow[d, phantom, "\oplus"]& \Sym^0 U_k \otimes \bigwedge^1 U_k \otimes \bigwedge^* W \\
					& \Sym^2 U_k \otimes \bigwedge^0 U_k \otimes \bigwedge^* W \arrow[r] \arrow[d, phantom, "\oplus"]& \Sym^1 U_k \otimes \bigwedge^1 U_k \otimes \bigwedge^* W \arrow[r] & \Sym^0 U_k \otimes \bigwedge^2 U_k \otimes \bigwedge^* W \\
					\cdots \arrow[r] \arrow[d, phantom, "\oplus"]& \Sym^2 U_k \otimes \bigwedge^1 U_k \otimes \bigwedge^* W \arrow[r]& \Sym^1 U_k \otimes \bigwedge^2 U_k \otimes \bigwedge^* W  \arrow[r] & \Sym^0 U_k \otimes \bigwedge^3 U_k \otimes \bigwedge^* W \\
					\cdots \arrow[r] &\Sym^2 U_k \otimes \bigwedge^2 U_k \otimes \bigwedge^* W  \arrow[r] &\Sym^1 U_k \otimes \bigwedge^3 U_k \otimes \bigwedge^*W \arrow[r] & \Sym^0 U_k \otimes \bigwedge^4 U_k \otimes \bigwedge^* W \\
					\iddots & \vdots & \vdots & \vdots 
				\end{tikzcd}
			\end{equation}
			Then each row is, up to stupid truncation at the final term, the tensor by $\bigwedge^* W$ of the chain complex $\Sym^p(U_k \xrightarrow{\mathrm{id}} U_k)$, where we view $U_k \xrightarrow{\mathrm{id}} U_k$ as a complex in cohomological degrees $[0,1]$. But of course this two-term complex is exact, and the symmetric powers of an exact complex are exact. 
			\item \underline{Exactness of }
			\[U_k \otimes \textcl \to \textcl \to \det(U_k)^\vee \otimes \textcl \]
			\underline{at the middle term:} \par 
			This is easy to check explicitly. The image of the first map is given by terms of the form 
			\[e_1\xi_1 + e_2\xi_2 + \cdots + e_k\xi_k,\]
			but the second map, which is given by the 
			\[\xi \mapsto (e_1\wedge e_2 \wedge \cdots \wedge e_k \mapsto e_1e_2\cdots e_k\xi)\]
			for $\xi \in \textcl$, clearly vanishes if and only if when written in the Poincar\'e-Birkhoff-Witt basis, each term of $\xi$ contains $e_i$ for some $1 \leq i \leq k$. 
			\item \underline{Exactness of} \[\det(U_k) \otimes \textcl \to \textcl \to U_k^\vee \otimes \textcl\]
			\underline{at the middle term:} \par 
			This is essentially identical to the previous part: the image of the first map is given by $e_1e_2\cdots e_k\xi$ for $\xi \in \textcl$, while the second map is the map 
			\[\xi \mapsto (u \mapsto u\xi)\]
			for $u \in U_k$, and this homomorphism vanishes if and only if when written in the Poincar\'e-Birkhoff-Witt basis the terms of $\xi$ contain a basis of $U_k$. 
			\item \underline{Exactness of} 
			\[ \textcl \to U_k^\vee \otimes \textcl \to \Sym^2 U_k^\vee \otimes \textcl \to \Sym^3 U_k^\vee \otimes \textcl \to \cdots \]
			\underline{at all terms besides the leftmost:} \par 
			This is similar to case (a). Indeed, applying the isomorphism of \eqref{cliffwedgetrick} turns the terms of our complex into 
			\[\Sym^p U_k^\vee \otimes \bigwedge^* U_k \otimes \bigwedge^* W, \]
			and the same trick as before reduces this to showing the exactness of complexes of the form 
			\[\Sym^p U_k^\vee \otimes \bigwedge^0 U_k \to \Sym^{p+1} U_k^\vee \otimes \bigwedge^1 U_k \to \Sym^{p+2} U_k^\vee \otimes \bigwedge^2 U_k \to \cdots \to \Sym^{p+k} \cU_k^\vee \otimes \bigwedge^p U_k.\]
			On the other hand, the perfect pairing 
			\[\bigwedge^p U_k \otimes \bigwedge^{k-p} U_k \to \det U_k \simeq \bbC\]
			induces an identification of $\bigwedge^p U_2$ with $\bigwedge^{k-p} U_2^\vee$, and applying this isomorphism termwise produces an isomorphic chain complex of the form 
			\[\Sym^p U_k^\vee \otimes \bigwedge^k U_k^\vee \to \Sym^{p+1}U_k^\vee \otimes \bigwedge^{k-1} U_k^\vee \to \cdots \to \Sym^{p+k}U_2^\vee \otimes \bigwedge^0 U_k^\vee,\]
			given by the chain complex $\Sym^{p+k}(U_2^\vee \xrightarrow{\mathrm{id}} U_2^\vee)$ where $U_2^\vee \xrightarrow{\mathrm{id}} U_2^\vee$ is viewed as an exact complex supported in cohomological degrees $[-1,0]$. As before, this sequence is exact. 
		\end{enumerate}
	\end{proof}
	\begin{remark}
		One can make sense of all of the maps in the sequence on the ambient Grassmannian, rather than on the orthogonal Grassmannian. If one takes $k = 1$ and works on the ambient space $\Gr(1,\cE) \simeq \bbP(\cE)$, it is interesting to observe that the long exact sequence of Proposition \ref{LES} becomes a matrix factorization where two subsequent maps compose to multiplication by the defining section of the quadric, recovering the sequences (21) and (22) of \cite{quadfibs}. This matrix factorization also can be related to the matrix factorizations appearing in \cite{singularquadrics}. \par 
		One can also observe that in \cite{quadfibs}, the exact sequences which arise in the computation come from Koszul duality; it would be interesting to know whether the sequences arising here can be viewed through this lens. 
	\end{remark}
	Using the long exact sequence above, we define our Fourier-Mukai kernel as the right $\cl_0$-module given by 
	\begin{equation}
		\label{kerdef}
		\cF := \coker\left(\cU_k \otimes \tau^*\cl_{-1} \to \tau^*\cl_0\right) = \ker\left(\cO_{\OGr(k,\cQ)}(1) \otimes \tau^*\cl_k \to \cU_k^\vee(1) \otimes \tau^*\cl_{k+1}\right).
	\end{equation}
	It is then an immediate consequence of Proposition \ref{LES} above that there exist a left and right resolution for $\cF$ by right $\cl_0$-modules: 
	\begin{gather}
		\cdots \to \Sym^2 \cU_k \otimes \tau^*\cl_{-2}\to \cU_k \otimes \tau^*\cl_{-1}\to \tau^*\cl_0 \to \cF \to 0 \label{leftres} \\ 
		0 \to \cF \to \cO_{\OGr(k,\cQ)}(1) \otimes \tau^*\cl_k \to \cU_k^\vee(1) \otimes \tau^*\cl_{k+1} \to \Sym^2\cU_k^\vee(1) \otimes \tau^*\cl_{k+2} \to \cdots  \label{rightres}
	\end{gather}
	\begin{remark}
		Tensoring this kernel over $\cl_0$ with a particular left ideal sheaf of $\cl_0$ associated to a choice of isotropic subspace yields a reasonable notion of spinor sheaves on singular orthogonal Grassmannians; when $k = 1$, this recovers the construction of \cite{singularquadrics}. We discuss this connection further in the next section by analogy to the smooth case. \par 
		This kernel was also shown in \cite{2dimquadrics} to provide an embedding similar to Theorem \ref{embeddingtheorem} into $\OGr(2,\cQ)$ when the ambient vector space is of dimension $n = 4$. While our strategy is similar, the cohomological calculations differ greatly; in particular, Kuznetsov uses a resolution on the ambient Grassmannian itself which is special to his case. We expect a similar resolution exists in general, but avoid constructing one in our approach. 
	\end{remark}
	We now define our embedding functor as a functor from the (derived) category of left $\cl_0$-modules to the derived category of the total space of the fibration of orthogonal Grassmannians given by 
	\begin{equation} \label{functordefn}
		\begin{aligned}
			\Phi_\cF : \Db(S, \cl_0) &\to \on{D}^{-}(\OGr(k,\cQ)) \\ 
			\cG &\mapsto \cF \otimes^L_{\tau^*\cl_0} \tau^*\cG. 
		\end{aligned}
	\end{equation}
%	\textbf{\textcolor{red}{I need to show that $\cF$ is flat as a right module over $\tau^*\cl_0$ (or admits a finite length resolution by modules over $\tau^*\cl_0$), or else I don't think it's clear that this functor lands in $\Db(\OGr(k,\cQ))$!} Actually, I think it is MAYBE possible to use Kuznetsov's observation that my kernel is (almost) related to his by pushforward. $Rg_*f^*\cE_{0,0} \otimes_{\tau^*\cl_0} \tau^*\cG = Rg_*(f^*\cE_{0,0} \otimes_{g^*\tau^*\cl_0}g^*\tau^*\cG) = Rg_*(f^*(\cE_{0,0} \otimes_{p^*\cl_0} p^*\cG))$ maybe gives it to us? but it's not clear, since $f$ is not flat.} 
	This Fourier-Mukai kernel is particularly convenient to work with for our purposes for the following reason: 
	\begin{lemma}
		$\cF$ is a locally free $\cO_{\OGr(k,\cQ)}$-module. 
	\end{lemma}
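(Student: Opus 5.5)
We want to show that $\cF = \coker\left(\cU_k \otimes \tau^*\cl_{-1} \to \tau^*\cl_0\right)$ is locally free. The plan is to use the standard criterion: a coherent sheaf on a reduced (indeed Noetherian) scheme whose fiber dimension is locally constant is locally free. Alternatively, and more robustly since we do not want to worry about reducedness of $\OGr(k,\cQ)$, we use that a cokernel of a map of vector bundles $\varphi : \cA \to \cB$ is locally free exactly when the rank of $\varphi$ at every point is constant (equivalently, when $\im \varphi$ is a subbundle). The cleanest route is through the short exact sequence
\[0 \to \cF \to \cO(1) \otimes \tau^*\cl_k \xrightarrow{\psi} \cU_k^\vee(1)\otimes \tau^*\cl_{k+1},\]
combined with the cokernel description. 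Since $\cF$ is both a cokernel of $\varphi:\cU_k\otimes\tau^*\cl_{-1}\to\tau^*\cl_0$ and a kernel of $\psi$, we get a factorization of the composite map of vector bundles $\tau^*\cl_0 \to \cO(1)\otimes\tau^*\cl_k$ through $\cF$: it is surjective onto $\cF$ and $\cF$ injects into the target.

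The key step is a fiberwise computation. At a point $x=(U_k,s)$, the proof of Proposition \ref{LES} identifies the fiberwise complex, via the isomorphism \eqref{cliffwedgetrick}, with a direct sum of explicit pieces. In particular the image of $\varphi_x : U_k\otimes \textcl_{\parity}\to\textcl_{\parity}$ is the span of PBW monomials containing some $e_i$ with $i\le k$. Its complement therefore has a basis given by the monomials in $e_{k+1},\dots,e_n$ of the appropriate parity. The rank of $\varphi_x$ is then $\dim \textcl_{\even} - 2^{n-k-1} = 2^{n-1}-2^{n-k-1}$, which is independent of $x$, including over points of $S$ where the quadric is singular. Thus $\dim \cF\otimes\kappa(x) = \dim\coker\varphi_x = 2^{n-k-1}$ is constant. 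Here we use right exactness of $\otimes\kappa(x)$ applied to the cokernel presentation.

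To conclude, I would argue locally. At each $x$ choose a local splitting so that $\varphi$, in suitable local frames, has an invertible $r\times r$ minor with $r=\on{rank}\varphi_x$. On a neighbourhood this minor stays invertible, so we may row- and column-reduce to a block form $\begin{pmatrix}I_r&0\\0&\varphi'\end{pmatrix}$. Constancy of the fiberwise rank forces $\varphi'$ to vanish at every point, i.e.\ all its entries lie in every maximal ideal, hence in the nilradical. Here is the main obstacle: on a possibly nonreduced scheme this does not yet give $\varphi'=0$. To handle it I would invoke Lemma \ref{coranklemma}: $\OGr(k,\cQ)$ is a local complete intersection, flat over the smooth base $S$, and its fibers are cut out by regular sections and are generically reduced (they are generically smooth by the dimension count of \cite{coskunflagvar}). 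Being CM and generically reduced, the fibers are reduced, and hence so is the total space (flat over a smooth base with reduced fibers). On a reduced scheme $\varphi'=0$, so locally $\cF\simeq\cO^{2^{n-k-1}}$, which gives local freeness.

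If the reducedness argument turns out delicate, the fallback is to check that the fiber identification in Proposition \ref{LES} is compatible with a local frame. Concretely, choose locally a frame $e_1,\dots,e_n$ of $\tau^*\cE$ with $e_1,\dots,e_k$ spanning $\cU_k$. The PBW isomorphism \eqref{cliffwedgetrick} then holds over the local ring, because it only uses isotropy of $\cU_k$. This exhibits $\cF$ directly as the free module on monomials in $e_{k+1},\dots,e_n$ of fixed parity, and avoids reducedness altogether.
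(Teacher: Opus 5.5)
Your fallback argument is a complete proof. The main line of the proposal, however, rests on a false step, so you should make the fallback the proof and drop the reducedness detour. You claim the fibers $\OGr(k,\cQ_s)$ are generically smooth by the dimension count, but that count (as in Lemma \ref{coranklemma}) only gives the expected dimension. Reducedness genuinely fails under the hypotheses. If $\cQ_s$ has rank exactly $2k-1$, every isotropic $k$-plane $U$ meets the kernel of the form. For $0\neq l$ in that intersection, the differential of the section $U\mapsto Q|_U$ of $\Sym^2\cU_k^\vee$ sends every $\phi$ to a form whose value at $(l,l)$ is $2Q(\tilde\phi(l),l)=0$, since $l$ lies in the radical. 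So the differential is never surjective, and $\OGr(k,\cQ_s)$ is singular at every point while having the expected dimension, hence is nowhere reduced. For $k=2$, $n=4$ and $Q=x_1^2+x_2^2+x_3^2$ this is the familiar double conic of lines on a quadric cone, with completed local rings $\bbC[[b,d]]/(d^2)$. Since $S=\Spec\bbC$ (or a constant family of such quadrics) is allowed, the total space itself can be nonreduced. There, constancy of $\dim \cF\otimes\kappa(x)$ proves nothing, exactly as you feared with the nilpotent entries of $\varphi'$ (compare multiplication by $\epsilon$ on $\bbC[\epsilon]/(\epsilon^2)$).

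The fallback needs no reducedness. Over the local ring $R$ at a point, trivialize $\cL$ and extend a basis $e_1,\dots,e_k$ of the subbundle $\cU_k$ to a basis of $\tau^*\cE$. The point to stress is that the defining equations of $\OGr(k,\cQ)$ make the polar form vanish on $\cU_k\times\cU_k$ as elements of $R$, not merely at closed points. The Clifford algebra over $R$ is free on ordered monomials. Left multiplication by $e_i$ with $i\le k$ sends an ordered monomial to $\pm$ an ordered monomial or to $0$, because $e_i$ only has to pass letters $e_j$ with $j\le k$. Hence the image of $\cU_k\otimes\tau^*\cl_{-1}\to\tau^*\cl_0$ is the free direct summand spanned by the even monomials whose first index is $\le k$. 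So $\cF$ is free on the even monomials in $e_{k+1},\dots,e_n$, of rank $2^{n-k-1}$. The paper's proof is your primary route compressed to one line: fiberwise constant rank via the PBW basis. It tacitly relies on the constant-rank criterion, so your local-frame version is what actually covers the nonreduced orthogonal Grassmannians that the rank hypothesis permits.
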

	\begin{proof}
		Taking fibers reduces this to showing that $U_k \otimes \textcl_{\text{odd}} \to \textcl_{\text{even}}$ is constant rank, which follows from a straightforward calculation in terms of the Poincar\'e-Birkhoff-Witt basis. 
	\end{proof}
	In fact, it can be checked that $\cF$ is related to one of Kuznetsov's embedding functors in \cite{quadfibs} by $Rg_*f^*$ along the incidence correspondence: 
	\[\begin{tikzcd}
		&\bbP_{\OGr(k,\cQ)}(\cU_k) \arrow[ld, "f"'] \arrow[rd,"g"]& \\
		\cQ& & \OGr(k,\cQ)  
	\end{tikzcd}\]
	To be precise, if $p : \cQ\to S$ is the fibration, Kuznetsov constructs for each $a,b \in \bbZ$ a family of right $p^*\cl_0$-modules $\cE_{a,b}$ which coincide with twists of the sheaf $\cF$ above in the case $k = 1$. In particular, one can check that these $\cE_{a,b}$ admit the long right resolution 
	\[
			0 \to \cE_{a,b} \to \cO_{\cQ}(a+1) \otimes p^*\cl_{b+1} \to \cO_{\cQ}(a+2) \otimes p^*\cl_{b+2} \to \cdots
	\]
	When $a = -1, b = k - 1$, by using this right resolution it follows that we have an isomorphism of right $\tau^*\cl_0$-modules 
	\begin{equation}
		\cF \simeq (Rg_*f^*\cE_{-1,k-1})\otimes \cO_{\OGr(k,\cQ)}(1). 
	\end{equation}
	Using this isomorphism we check that the functor $\Phi_{\cF}$ lands in the bounded derived category $\Db(\OGr(k,\cQ))$. 
	\begin{proposition}
		For any complex $\cG \in \Db(S,\cl_0)$, the object $\cF \otimes^L_{\tau^*\cl_0} \tau^*\cG$ lives in bounded cohomological degrees. 
	\end{proposition}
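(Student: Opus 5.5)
The plan is to use the isomorphism $\cF \simeq (Rg_*f^*\cE_{-1,k-1})\otimes \cO_{\OGr(k,\cQ)}(1)$ just stated. This reduces boundedness of $\cF \otimes^L_{\tau^*\cl_0} \tau^*\cG$ to boundedness of the analogous tensor product for Kuznetsov's kernel on $\cQ$, pulled back along $f$ and pushed forward along $g$. From \cite{quadfibs} we know that Kuznetsov's functor $\cG \mapsto \cE_{-1,k-1}\otimes^L_{p^*\cl_0} p^*\cG$ sends $\Db(S,\cl_0)$ into $\Db(\cQ)$. We may assume this, since it is the content of the embedding there. Equivalently, $\cE_{a,b}$ has finite Tor-dimension as a right $p^*\cl_0$-module, locally over $S$.

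The key steps, in order, are as follows.
\begin{enumerate}[(1)]
\item By the projection formula for the finite-rank algebra $\tau^*\cl_0$ (with $g$ proper, and $g^*\tau^*\cl_0 = f^*p^*\cl_0$ by commutativity of the diagram over $S$), obtain
\[Rg_*f^*\cE_{-1,k-1}\otimes^L_{\tau^*\cl_0}\tau^*\cG \simeq Rg_*\bigl(f^*\cE_{-1,k-1}\otimes^L_{f^*p^*\cl_0} f^*p^*\cG\bigr).\]
Here $f^*$ should mean the underived pullback. Since $\cF$ is a sheaf, the identification $\cF(-1)\simeq Rg_*f^*\cE_{-1,k-1}$ already encodes that.
\item Compare $f^*\cE\otimes^L_{f^*p^*\cl_0} f^*p^*\cG$ with $Lf^*(\cE\otimes^L_{p^*\cl_0}p^*\cG)$. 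The latter is bounded because $\cQ$ is regular enough: it is a flat family over smooth $S$ with lci total space, and $f$ is the projection of a projective bundle $\bbP(\cU_k)$ over $\OGr(k,\cQ)$ mapping to $\cQ$. To avoid any derived pullback issues, I would argue with the long right resolution instead. Pulling back $0\to \cE\to \cO(a+1)\otimes p^*\cl_{b+1}\to\cdots$ along $f$ gives a resolution of $f^*\cE$ by the flat $f^*p^*\cl_0$-modules $\cO(j)\otimes f^*p^*\cl_j$, which are invertible bimodules twisted by line bundles. The resolution remains exact after pullback because $\cE$ and all terms are locally free, so exactness is fiberwise.
\item Finally, $Rg_*$ has bounded cohomological amplitude, since $g$ is a $\bbP^{k-1}$-bundle.
\end{enumerate}

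The main obstacle is step (2). A right resolution by flat modules does not by itself bound Tor-dimension. What is needed is that $f^*\cE$, or more directly $\cF$, has finite flat dimension over the algebra. I would first try a local argument. Over an open affine of $S$ where $\cE$ and $\cL$ are trivial, $\tau^*\cG$ is a pullback of a bounded complex of $\cl_0$-modules. Since $S$ is smooth and $\cl_0$ is locally free over $\cO_S$, $\cG$ admits a resolution whose length is controlled by $\dim S$ plus the Tor-dimension data from \cite{quadfibs}. More concretely, it suffices to show that $\mathrm{Tor}_i^{\tau^*\cl_0}(\cF,\tau^*M)=0$ for $i\gg0$ for a single $\cl_0$-module $M$. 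Using the periodic left resolution \eqref{leftres}, these Tor groups are the homology of $\Sym^p\cU_k\otimes\tau^*(\cl_{-p}\otimes_{\cl_0}M)$. I would show that this complex is exact in high degrees by repeating, with $M$ in place of $\cl_0$, the splitting of Proposition \ref{LES}(a) into symmetric powers of the acyclic complex $U_k\xrightarrow{\mathrm{id}}U_k$. This splitting relies only on the left $\bigwedge^*U_k$-module structure. The plan is therefore to verify that $\cl_{\bullet}\otimes_{\cl_0}M$ is free over $\bigwedge^*U_k$ fiberwise in high degrees, mimicking \eqref{cliffwedgetrick}. The transition from fiberwise to local statements then follows from flatness of $\tau$ (Lemma \ref{coranklemma}).
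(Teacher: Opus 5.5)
\textbf{There is a genuine gap, and it is at your step (2).} Steps (1) and (3), and the reduction via $\cF(-1)\simeq Rg_*f^*\cE_{-1,k-1}$, match what the paper does.

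Your first justification that $Lf^*(\cE_{-1,k-1}\otimes_{p^*\cl_0}p^*\cG)$ is bounded does not work. $\cQ$ is in general singular, and being lci gives no bound on the Tor-amplitude of pullbacks to it. Moreover, $f$ is not a projective bundle; that is $g$. The fiber of $f$ over $v\in\cQ_s$ is the variety of isotropic $k$-planes containing $v$, and its dimension jumps when $v$ lies in $\ker Q_s$, so $f$ is not flat.

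The missing idea is that $f$ nevertheless has finite Tor-dimension. The paper proves this by factoring $f$ through the Grassmannian bundle over $\cQ$ of $k$-planes containing the tautological line, which is smooth over $\cQ$. Inside it, $\bbP_{\OGr(k,\cQ)}(\cU_k)$ is the zero locus, of the expected codimension, of the section obtained by restricting the quadratic form to $\cU_k$. This section lies in (a twist of) the bundle of quadratic forms on $\cU_k$ vanishing on the tautological line, so the embedding is lci. Combine this with Kuznetsov's flatness of $\cE_{-1,k-1}$ over $\cl_0$, which makes the inner tensor product underived and bounded. Then $Lf^*$ followed by $Rg_*$ preserves $\Db$, and the proof is finished; the detour through the right resolution is not needed.

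Your fallback local argument cannot replace this. The claim that $\cl_\bullet\otimes_{\cl_0}M$ is fiberwise free over $\bigwedge^*U_k$ is false whenever some fiber is singular. The rank hypothesis still allows isotropic $U_k$ containing a nonzero $v\in\ker Q_s$: add $v$ to a $(k-1)$-dimensional isotropic subspace of a nondegenerate complement. Such a $v$ anticommutes with $V$ and squares to zero, so $v\,\textcl_{\mathrm{odd}}$ is a nilpotent two-sided ideal of $\textcl_{\even}$. Take $M$ a simple $\textcl_{\even}$-module supported at $s$. Then $v$ acts by zero on $\textcl\otimes_{\textcl_{\even}}M\neq 0$. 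Hence the pointwise complex $\Sym^\bullet U_k\otimes(\textcl\otimes_{\textcl_{\even}}M)$ has homology $\mathrm{Tor}_p^{\bigwedge^*U_k}(\bbC,\textcl\otimes_{\textcl_{\even}}M)\neq 0$ in every degree $p$. Restricting to ``high degrees'' changes nothing, since $\cl_\bullet\otimes_{\cl_0}M$ is $2$-periodic up to a twist by $\cL$.

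The high-degree vanishing of $\mathrm{Tor}^{\tau^*\cl_0}_i(\cF,\tau^*M)$ is true, but only at the level of sheaves, not pointwise. Compare $k=1$, where it comes from the matrix factorization and the equation of $\cQ$ being a nonzerodivisor.

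Two further claims are also wrong:
\begin{itemize}
\item For $M$ not locally free over $\cO_S$, the terms $\tau^*(\cl_{-p}\otimes_{\cl_0}M)$ are not vector bundles. Flatness of $\tau$ therefore does not reduce exactness to a pointwise check.
\item $\cG$ need not have a resolution whose length is bounded in terms of $\dim S$. In general $\cl_0$ can have infinite global dimension, already for $S$ a point and $Q$ singular.
\end{itemize}
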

	\begin{proof}
		It suffices to show that the twist $\cF(-1) \otimes^L_{\tau^*\cl_0} \tau^*\cG$ lives in bounded cohomological degrees. By the projection formula, we see that
		\begin{equation}
			\begin{aligned}
				\cF(-1) \otimes_{\tau^*\cl_0}^L \tau^*\cG &\simeq Rg_*f^*\cE_{-1,k-1} \otimes^L_{\tau^*\cl_0} \tau^*\cG \\
				&\simeq Rg_*(f^*\cE_{-1,k-1} \otimes^L_{g^*\tau^*\cl_0} g^*\tau^*\cG)\\
				&\simeq Rg_*(Lf^*(\cE_{-1,k-1} \otimes^L_{p^*\cl_0} p^*\cG)).
			\end{aligned}
		\end{equation}
		By \cite[Lemma 4.6]{quadfibs} the sheaf $\cE_{-1,k-1}$ is locally flat over $(S,\cl_0)$, and hence the tensor product over $p^*\cl_0$ is underived. \par 
		On the other hand, the morphism $f : \bbP_{\OGr(k,\cQ)}(\cU_k) \to \cQ$ is of finite Tor-dimension. We check this by factoring it as the composition $\bbP_{\OGr(k,\cQ)}(\cU_k) \hookrightarrow \Gr_{\cQ}(k-1,\cU_k/\cO_{\cQ}(-1)) \to \cQ$, where the first map is the closed immersion which sends the pair $(p \in \cQ, U_k \in \OGr(k,\cQ)) \in \bbP_{\OGr(k,\cQ)}(\cU_k)$ to the quotient $(U_k/\bbC v_p) \in \Gr_{\cQ}(k-1,\cU_k/\cO_\cQ(-1))$, where $\bbC v_p \subset V$ is the one-dimensional subspace corresponding to the point $p \in \cQ \subset \bbP(V)$. Clearly the second map is flat, so it suffices to observe that the closed immersion of finite Tor-dimension. Viewing $\Gr_{\cQ}(k-1,\cU_k/\cO_\cQ(-1))$ as the subscheme of the flag variety $\on{Fl}(1,k,V)$ where the 1-dimensional subspace is isotropic for $\cQ$, the variety $\bbP_{\OGr(k,\cQ)}(\cU_k)$ is cut out by the restriction of the quadratic form to the tautological $k$-dimensional subbundle: i.e., by the map of vector bundles
		\[\cL \to \Sym^2 \cE^\vee \to \Sym^2 \cU_k^\vee.\]
		However, this map of vector bundles vanishes when restricted to the one-dimensional subspace (or equivalently, when postcomposed with the map $\Sym^2 \cU_k^\vee \to \cO_{\cQ}(2)$), so it factors through a morphism
		\[\cL \to (\Sym^2 \cU_k^\vee)/\cO_{\cQ}(2).\]
		An easy rank calculation then shows that the codimension of $\bbP_{\OGr(k,\cQ)}(\cU_k)$ in $\Gr_{\cQ}(k-1,\cU_k/\cO_\cQ(-1))$ is precisely $\on{rk} \left(\cL^\vee \otimes (\Sym^2 \cU_k^\vee)/\cO_{\cQ}(2)\right)$, hence this closed immersion is the inclusion of a local complete intersection subvariety, hence it is of finite Tor-dimension. \par 
		Since derived pullbacks for morphisms of finite Tor-dimension and derived pushforwards for proper morphisms both preserve boundedness and coherence, the result follows. 
	\end{proof}
	Kuznetsov suggested to the author that, at least when $k = 2$, one might approach Theorem \ref{embeddingtheorem} by decomposing $f^*\cE_{-1,k-1}$ for some appropriate $i,j$ according to the semiorthogonal decomposition for the projective bundle $\bbP(\cU_2) \to \OGr(2,\cQ)$, and observing that these components are each up to a twist related to $\cF$. Regretfully, we were not able to see through this strategy. Instead we understand $\cF$ and its endomorphisms directly using the resolutions \eqref{leftres} and \eqref{rightres}. 
	\section{Schur functors and the Borel-Weil-Bott theorem} \label{bwbsection}
	When $Q$ is a singular quadric hypersurface, the orthogonal Grassmannian $\OGr(k,Q)$ will no longer be a homogeneous space for the Spin group. This adds additional subtleties in the cohomological calculations necessary for our proof. Nonetheless, by using Koszul resolutions, we will be able to calculate all of the requisite cohomology groups by using the Borel-Weil-Bott theorem on the ambient Grassmannians. There are many good sources for the representation theory we require, e.g. \cite{fultonharris, kuznetsovodd}, but we will briefly remind the reader of it. \par 
	For the remainder of the section, we fix the assumption that the quadrics arising in the quadric fibration all have rank $\geq 2k - 1$ so that Lemma \ref{coranklemma} applies. Recall also the running assumption $2 \leq k \leq n/2$. \par 
	Let $\mu = (\mu_1,\dots,\mu_k) \in \bbZ^k$ be a weakly decreasing sequence, i.e. $\mu_i \geq \mu_j$ for $i > j$. For any $k$-dimensional subspace $U$, we may interpret $\mu$ as a weight for the group $\GL(U)$ and write $\bbS^\lambda U$ for the irreducible representation of highest weight associated to $\mu$; this coincides with the Schur functor associated to $\mu$ when $\mu_i \geq 0$ for all $i$. Since $\bbS^{(\mu_1 + i, \mu_2 + i, \dots, \mu_k + i)} U = \bbS^\mu U \otimes (\det U)^{\otimes i}$, up to scaling by the determinant all irreducible representations of $\GL(U)$ are given by Schur functors. In some cases, these Schur functors have familiar names: 
	\begin{itemize}
		\item For $\mu = (p,0,\dots,0)$, $\bbS^{\mu} U \simeq \Sym^p U$, and 
		\item For $\mu = (\overbrace{1,\dots,1}^{p \text{ times}},0,\dots,0)$, $\bbS^\mu U \simeq \bigwedge^p U$. 
	\end{itemize}
	Highest weight representations and Schur functors also satisfy the following compatibility with dualization of vector bundles:
	\[(\bbS^\mu U)^\vee = \bbS^{(-\mu_k,-\mu_{k-1},\dots,-\mu_1)} U^\vee.\]
	\par 
	Perhaps the most important feature of these Schur functors is that their tensor products are controlled by the Littlewood-Richardson rule \cite{youngtablaeux}: 
	\begin{equation}
		\bbS^\mu U \otimes \bbS^\lambda U \simeq \bigoplus_\nu \bbS^\nu U^{\oplus c_{\mu,\lambda}^\nu}.
	\end{equation}
	We will also need to understand compositions of Schur functors $\bbS^\lambda \bbS^\mu U$. These also decompose as direct sums of Schur functors. While this is a complicated question in general, we will be able to extract the necessary information in our case in an ad hoc manner. \par 
	The first ingredient we need is a characterization of which irreducible subrepresentations occur in a specific representation. 
	\begin{lemma} \label{tensorprod}
		For any irreducible $\GL(U)$-subrepresentation 
		\[\bbS^\nu U \subset \bbS^{(a,b,0,\dots,0)} U \otimes \bigwedge^i \Sym^2 U,\]
		we have $\nu = (a + \alpha_1, b + \alpha_2, \alpha_3, \dots, \alpha_k)$ where $0 \leq \alpha_i \leq k + 1$, $\alpha_1 \leq i + 1$ and $\alpha_1 + \alpha_2 \leq i + 3$. 
	\end{lemma}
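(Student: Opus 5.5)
The plan is to first decompose $\bigwedge^i \Sym^2 U$ explicitly into irreducibles, and then control the tensor product with $\bbS^{(a,b,0,\dots,0)} U$ using the Littlewood--Richardson rule. For the first step I use the classical plethysm
\[
\bigwedge^i \Sym^2 U \simeq \bigoplus_{\lambda} \bbS^\lambda U,
\]
where $\lambda$ runs over the partitions of $2i$ with at most $k$ rows whose Frobenius form is $(\beta_1+1,\dots,\beta_r+1 \mid \beta_1,\dots,\beta_r)$ with $\beta_1>\dots>\beta_r\geq 0$. For such $\lambda$ we have $\sum_j \beta_j + r = i$. Since $\lambda$ has at most $k$ rows, each arm $\beta_j+1$ is at most $k$ (the leg $\beta_j\le k-1$ keeps it inside the rows), which gives the uniform bound $\lambda_j \leq k+1$. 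Reading off the first rows gives $\lambda_1 = \beta_1 + 2 \leq i+1$, because $\beta_1 \leq i-1$. For the second row, $\lambda_2 = \beta_2 + 3 \leq (i-2-\beta_1)+3$ when $r \geq 2$, and $\lambda_2 \leq 1$ when $r=1$. In either case $\lambda_1 + \lambda_2 \leq i+3$. These are precisely the numerical constraints appearing in the statement.

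Next, write $\mu = (a,b,0,\dots,0)$ and take $\bbS^\nu U$ occurring in $\bbS^\mu U \otimes \bbS^\lambda U$, so that $c^\nu_{\mu,\lambda} \neq 0$. Two standard facts about the Littlewood--Richardson rule then apply.
\begin{itemize}
\item First, $\mu \subset \nu$. This gives $\alpha_j := \nu_j - \mu_j \geq 0$, and hence the shape $\nu = (a+\alpha_1, b+\alpha_2, \alpha_3,\dots)$.
\item Second, $\nu$ is dominated by $\mu + \lambda$, that is, $\nu_1 + \dots + \nu_j \leq \mu_1 + \dots + \mu_j + \lambda_1 + \dots + \lambda_j$. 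Taking $j=1,2$ and using $\mu = (a,b,0,\dots)$ gives $\alpha_1 \leq \lambda_1 \leq i+1$ and $\alpha_1 + \alpha_2 \leq \lambda_1+\lambda_2 \leq i+3$.
\end{itemize}

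It remains to prove $\alpha_j \leq k+1$ for every $j$. I will use the LR tableau of skew shape $\nu/\mu$ and content $\lambda$, whose reverse reading word is a lattice word.
\begin{itemize}
\item For $j=1$, the lattice condition forces row $1$ of $\nu/\mu$ to contain only $1$'s. Hence $\alpha_1 \leq \lambda_1 \leq k+1$.
\item For $j \geq 3$, $\mu_j = 0$, so $\alpha_j = \nu_j$. I will bound this using the other LR description, where $\nu/\lambda$ is filled by $a$ ones and $b$ twos, each forming a horizontal strip. Adding two horizontal strips can raise row $j$ at most to $\lambda_{j-2}$. Hence $\nu_j \leq \lambda_{j-2} \leq k+1$.
\item For $j=2$, I will argue from the tableau of shape $\nu/\mu$. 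The $2$'s in row $2$ number at most $\lambda_2$. By column strictness, the $1$'s in row $2$ must sit below boxes of row $1$ lying in $\mu$, so they occupy columns $b+1,\dots,a$. By the lattice condition, they must also be compatible with the $1$'s already used in row $1$.
\end{itemize}

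I expect the row-$2$ bound to be the main obstacle. The naive estimate $\alpha_2 \leq \lambda_1 + \lambda_2$ is too weak, so a genuinely combinatorial argument is needed. The first thing I would try is to combine the fact that the $1$'s and $2$'s together fill a shape dominated by $(\lambda_1,\lambda_2)$ with the bound $\lambda_1 \leq k+1$ from the Frobenius description. If that does not close the gap, I would switch to the transpose symmetry $c^\nu_{\mu\lambda} = c^{\nu'}_{\mu'\lambda'}$. Under it, rows of $\nu/\mu$ become columns. Moreover, $\lambda'$ has at most $k+1$ rows, because $\lambda_1 \leq k+1$. Column strictness in the transposed LR tableau, whose entries lie in $\{1,\dots,\ell(\lambda')\}$, then bounds every column length of $\nu'/\mu'$, and hence every $\alpha_j$, by $k+1$. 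This transposed argument in fact handles all $j$ uniformly, so I would adopt it for the whole bound $\alpha_j \leq k+1$ if it goes through cleanly.
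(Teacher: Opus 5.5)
Your proof is correct. The transposed argument you hold in reserve does go through: column $j$ of $\nu'/\mu'$ has $\nu_j-\mu_j$ boxes, and these carry strictly increasing entries from $\{1,\dots,\ell(\lambda')\}=\{1,\dots,\lambda_1\}$. So you should commit to it and drop the tentative row-by-row analysis. (The direct row-$2$ attempt would also have closed. The lattice condition bounds the number of $2$'s in row $2$ by $\alpha_1$, not merely by $\lambda_2$, and the $1$'s there number at most $\lambda_1-\alpha_1$, so $\alpha_2\le\lambda_1$.)

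Your route differs from the paper's in two places.

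\emph{The bound $\alpha_j\le k+1$.} The paper never describes the constituents of $\bigwedge^i\Sym^2U$. It uses the perfect pairing $\bigwedge^i\Sym^2U\otimes\bigwedge^{k(k+1)/2-i}\Sym^2U\to\det\Sym^2U\simeq(\det U)^{\otimes(k+1)}$ to see that $\bbS^\mu U\otimes(\det U)^{\otimes(k+1)}$ occurs in $\bbS^\nu U\otimes\bigwedge^{k(k+1)/2-i}\Sym^2U$. Littlewood--Richardson containment then gives $\nu\subset\mu+((k+1)^k)$. You instead prove the sharper bound $\alpha_j\le\lambda_1$ by conjugation symmetry, and then use $\lambda_1\le k+1$.

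\emph{The inequalities $\alpha_1\le i+1$ and $\alpha_1+\alpha_2\le i+3$.} The paper writes $\nu$ as a weight of $\bbS^\mu U$ plus a weight of $\bigwedge^i\Sym^2U$. It bounds the latter by counting occurrences of $e_1,e_2$ in wedges of $i$ distinct monomials. Your dominance bound $\nu\preceq\mu+\lambda$ is the same idea applied to highest weights, with the constraints on $\lambda$ read off from the Frobenius-form plethysm.

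What each approach buys: the paper's argument is self-contained, whereas yours imports the plethysm but yields the stronger uniform statement $\alpha_j\le\lambda_1\le\min(i+1,k+1)$ for every $j$. The plethysm is in fact dispensable. Each constituent $\lambda$ is itself a weight of $\bigwedge^i\Sym^2U$, so the paper's monomial count already gives $\lambda_1\le\min(i+1,k+1)$ and $\lambda_1+\lambda_2\le i+3$, which is all your argument uses.
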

	\begin{proof}
		Let $\mu = (a,b,0,\dots,0)$. Evidently $\bbS^\nu U$ occurs as a subrepresentation of $\bbS^\mu U \otimes \bbS^\lambda U$ for some irreducible subrepresentation $\bbS^\lambda U \subset \bigwedge^i \Sym^2 U$. It follows immediately from the Littlewood-Richardson rule that we can write $\nu = (a + \alpha_1, b + \alpha_2, \alpha_3, \dots, \alpha_k)$ for $0 \leq \alpha_i$, since the coefficient $c_{\mu,\lambda}^\nu \neq 0$ only if the Young diagrams associated to $\mu$ and $\lambda$ lie in the Young diagram associated to $\nu$. \par 
		To find that $\alpha_i \leq k + 1$, we first recall that there is a perfect pairing 
		\[\bigwedge^i \Sym^2 U \otimes \bigwedge^{k(k+1)/2-i}\Sym^2 U \to \det \Sym^2 U,\]
		which implies that the map 
		\[\left(\bbS^\mu U \otimes \bigwedge^i \Sym^2 U\right) \otimes \bigwedge^{k(k+1)/2 - i}\Sym^2 U \to \bbS^\mu U \otimes \det \Sym^2 U\]
		is also a perfect pairing. In particular, the composition 
		\[\bbS^\nu U \otimes \bigwedge^{k(k+1)/2-i} \Sym^2 U\hookrightarrow \left(\bbS^\mu U \otimes \bigwedge^i \Sym^2 U\right) \otimes \bigwedge^{k(k+1)/2 - i}\Sym^2 U\to \bbS^\mu U \otimes \det \Sym^2 U \]
		cannot be zero. On the other hand, an easy calculation shows that $\det \Sym^2 U \simeq (\det U)^{\otimes(k+1)}$, so in particular
		\[\bbS^\mu U \otimes \det \Sym^2 U \simeq \bbS^{(a + k + 1, b + k + 1, k+1, \dots, k+1)} U.\]
		By Schur's lemma and the Littlewood-Richardson rule, it follows that there is some $\xi$ such that the coefficient $c_{\nu,\xi}^{(a+k+1,b + k + 1, k + 1, \dots, k + 1)}$ is nonzero, and hence the Young diagram for $\nu$ must be a subdiagram of the one for $(a + k + 1, b + k + 1, k + 1, \dots , k + 1)$. In particular, this implies that $\alpha_i \leq k + 1$ for all $i$. 
		\par 
		To check the remaining inequalities, note that $\nu$ must be a weight of $\bbS^{(a,b,0,\dots,0)} U\otimes \bigwedge^i \Sym^2 U$. However, every weight of this tensor product arise as a sum $\beta + \alpha$ where $\beta$ is a weight of $\bbS^{(a,b,0,\dots,0)}$ and $\alpha$ is a weight of $\bigwedge^i \Sym^2 U$. It then suffices to show that $\beta_1 \leq a$, $\beta_1 + \beta_2 \leq a + b$, $\alpha_1 \leq i + 1$ and $\alpha_1 + \alpha_2 \leq i + 3$ to prove the claim.\par 
		We first analyze the weights of $\bbS^{(a,b,0,\dots,0)} U$. Pick $e_1, \dots, e_k$ to be a basis of weight vectors for $U$, with $e_1$ in the weight space of $\epsilon_1 = (1,0,\dots,0)$, $e_2$ in the weight space of $\epsilon_2 = (0,1,0,\dots,0)$ and so on. Since with respect to the usual ordering on weights, $\epsilon_i > \epsilon_j$ for $i < j$, the highest weight of this representation will have maximal first entry; in particular, any weight $\beta$ of this representation must have $\beta_1 \leq a$. Similarly, the sum of the first two entries for the highest weight will be maximal, so any weight $\beta$ must have $\beta_1 + \beta_2 \leq a + b$. \par 
		Next, we analyze the weights of $\bigwedge^i \Sym^2 U$. The weight vectors of $\bigwedge^i \Sym^2 U$ are all of the form 
		\[e_{j_1}e_{j_2} \wedge e_{j_3}e_{j_4} \wedge \dots \wedge e_{j_{2i-1}}e_{j_{2i}}.\]
		The first entry of the weight for this weight vector will be as large as possible when the weight vector has as many copies of $e_1$ as possible, e.g. for the vector
		\[e_1^2 \wedge e_1e_2 \wedge e_1e_3 \wedge \cdots \wedge e_1e_i.\]
		Since $e_1$ occurs $i + 1$ times in this vector, it follows that any weight $\alpha$ of $\bigwedge^i \Sym^2 U$ must have $\alpha_1 \leq i + 1$. Similarly, the sum of the first two entries of a weight are largest when the weight vector has as many copies of $e_1$ or $e_2$ as possible, e.g. for the vector 
		\[e_1^2 \wedge e_2^2 \wedge e_1e_2 \wedge e_1e_3 \wedge \dots \wedge e_1e_{i-1}.\]
		In this weight vector, $e_1$ and $e_2$ occur a combined total of $6 + (i - 3) = i + 3$ times, so for any weight $\alpha$ of this representation we must have $\alpha_1 + \alpha_2 \leq i + 3$. 
	\end{proof}
	When we use this result to calculate cohomology, we will need to apply the Borel-Weil-Bott theorem. We recall this theorem here. 
	\begin{theorem}[Borel-Weil-Bott]
		Let $\mu = (\mu_1,\dots,\mu_k) \in \bbZ^k$ and $\nu = (\nu_1,\dots,\nu_{n-k}) \in \bbZ^{n-k}$, and let $(\mu,\nu) \in \bbZ^n$ denote their concatenation. Write $\rho = (n,n-1,\dots,1)$. Write $\cU$ for the tautological subbundle on $\Gr(k,n)$, and $\cQ$ for the tautological subbundle. If all of the entries of $(\mu,\nu) + \rho$ are distinct, then let $w \in S_n$ denote the unique permutation which makes it decreasing, and note that we have 
		\[R\Gamma(\bbS^\mu \cU^\vee \otimes \bbS^\nu \cQ^\vee) = \bbS^{w((\mu,\nu)+\rho) - \rho}V^\vee [-\ell(w)]\]
		where $\ell(w)$ is the length of the permutation. If not all entries of $(\mu,\nu) + \rho$ are distinct, then $\bbS^\mu \cU^\vee \otimes \bbS^\nu \cQ^\vee$ has no cohomology. 
 	\end{theorem}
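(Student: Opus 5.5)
The plan is to deduce the statement from the classical Borel--Weil--Bott theorem for line bundles on the full flag variety $G/B$, where $G=\GL(V)$. Set up the data as follows.
\begin{itemize}
\item Let $P\subset G$ be the parabolic subgroup stabilizing a fixed $k$-dimensional subspace, so that $\Gr(k,n)=G/P$.
\item Let $\pi\colon \on{Fl}(V)=G/B\to G/P$ be the projection from the full flag variety. Its fibre is $P/B$, which is the product of the flag varieties of the Levi factor $\GL_k\times \GL_{n-k}$.
\item Note that in the statement the sheaf called $\cQ$ is the tautological \emph{quotient} bundle.
\end{itemize}
The bundle $\bbS^\mu\cU^\vee\otimes\bbS^\nu\cQ^\vee$ is the homogeneous bundle $G\times^P W$ attached to the irreducible $P$-representation $W$ of highest weight $\lambda=(\mu,\nu)$. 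Here $P$ acts through its Levi quotient, and $\mu$, $\nu$ are dominant for $\GL_k$ and $\GL_{n-k}$ respectively. The duals appear because we use the convention that $\cU^\vee$ and $\cQ^\vee$ carry the positive weights, and this is also why the answer is expressed through $V^\vee$.

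\textbf{Step 1: reduce to $G/B$.} Write $L_\lambda$ for the line bundle on $G/B$ associated to the character $\lambda$. Apply the Borel--Weil theorem relatively on the fibres $P/B$, together with Kempf vanishing for Levi-dominant weights. Since $\lambda$ is Levi-dominant, this gives $\pi_*L_\lambda\simeq G\times^P W$ and $R^{>0}\pi_*L_\lambda=0$. Hence
\[R\Gamma(G/P,\bbS^\mu\cU^\vee\otimes\bbS^\nu\cQ^\vee)\simeq R\Gamma(G/B,L_\lambda),\]
and it suffices to prove Bott's theorem for line bundles on $G/B$.

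\textbf{Step 2: Demazure's reflection argument on $G/B$.} Fix a simple root $\alpha$ with minimal parabolic $P_\alpha$, and consider the $\bbP^1$-bundle $p_\alpha\colon G/B\to G/P_\alpha$. Put $m=\langle\lambda,\alpha^\vee\rangle$; the restriction of $L_\lambda$ to each fibre is $\cO_{\bbP^1}(m)$.
\begin{itemize}
\item If $\langle\lambda+\rho,\alpha^\vee\rangle=0$, that is $m=-1$, then $Rp_{\alpha*}L_\lambda=0$, so $L_\lambda$ has no cohomology.
\item If $m\ge0$, consider the dot action $s_\alpha\cdot\lambda=s_\alpha(\lambda+\rho)-\rho$. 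I will show $Rp_{\alpha*}L_{s_\alpha\cdot\lambda}\simeq (p_{\alpha*}L_\lambda)[-1]$ as $G$-equivariant bundles on $G/P_\alpha$. Both sides are induced from the irreducible $(m+1)$-dimensional representation of the Levi of $P_\alpha$ (essentially $\on{SL}_2$ times a torus). Matching the torus characters is a computation with fibrewise Serre duality on $\bbP^1$, using $\omega_{p_\alpha}=L_{-\alpha}$.
\item Consequently $H^i(L_\lambda)\simeq H^{i+1}(L_{s_\alpha\cdot\lambda})$.
\end{itemize}

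\textbf{Step 3: iterate to a dominant weight.} Iterate the reflection step along a reduced word for the permutation $w$ sorting $\lambda+\rho$. If two entries of $\lambda+\rho$ coincide, some reflection in this process hits a wall $\langle\lambda'+\rho,\alpha^\vee\rangle=0$, and everything vanishes. If the entries are distinct, we reach the dominant weight $w(\lambda+\rho)-\rho$ after exactly $\ell(w)$ steps, each of which shifts cohomological degree by one. Finally, apply Borel--Weil plus Kempf vanishing to that dominant weight: $H^0$ is the irreducible representation $\bbS^{w(\lambda+\rho)-\rho}V^\vee$ and the higher cohomology vanishes. This produces the stated formula.

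The main obstacle I expect is the $G$-equivariant identification in Step 2. One must match not only the ranks but the full $P_\alpha$-module structure of $p_{\alpha*}L_\lambda$ and $R^1p_{\alpha*}L_{s_\alpha\cdot\lambda}$, keeping careful track of the $\rho$-shift and of the sign conventions between $\cU$, $\cU^\vee$ and $V$, $V^\vee$. My first approach is to compute both sides as $P_\alpha$-representations via the $B$-weights in the fibre over the base point, and then invoke irreducibility.
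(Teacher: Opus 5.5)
The paper gives no proof of this statement. Borel--Weil--Bott is only recalled as a classical result, with references to Fulton--Harris and Kuznetsov, and is then used as a black box on the ambient Grassmannian bundles to compute the cohomology of the terms of Koszul resolutions.

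Your argument is the standard proof, and it is sound: reduce from $\Gr(k,n)=G/P$ to $G/B$, then apply Demazure's $\bbP^1$-bundle reflection.

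\begin{itemize}
\item Step 1 is legitimate because $(\mu,\nu)$ is dominant for the Levi $\GL_k\times\GL_{n-k}$. This is implicit in the statement, since $\bbS^\mu$ and $\bbS^\nu$ only make sense for weakly decreasing sequences.
\item In Step 2, relative Serre duality gives $R^1p_{\alpha*}L_{s_\alpha\cdot\lambda}\simeq (p_{\alpha*}L_{-\lambda+m\alpha})^\vee$. Its torus weights coincide with those of $p_{\alpha*}L_\lambda$, namely the $\alpha$-string between $\lambda$ and $\lambda-m\alpha$. Both fibres are irreducible modules over the Levi of $P_\alpha$, on which the unipotent radical acts trivially. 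So this does give the $G$-equivariant isomorphism you want.
\end{itemize}

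Two bookkeeping remarks.
\begin{itemize}
\item In Step 3 you are really using Step 2 in reverse. If $\langle\lambda+\rho,\alpha^\vee\rangle<0$, apply Step 2 to $\lambda'=s_\alpha\cdot\lambda$, which satisfies $\langle\lambda',\alpha^\vee\rangle\geq 0$ and $s_\alpha\cdot\lambda'=\lambda$. The cohomological degree then drops by one at each step toward the dominant chamber.
\item In the singular case, note that swapping only strictly increasing adjacent entries terminates at a weakly decreasing sequence. In such a sequence any repeated entries are adjacent, so they produce a wall $\langle\lambda'+\rho,\alpha^\vee\rangle=0$.
\end{itemize}

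You were also right that $\cQ$ in the statement must be the tautological quotient bundle. A check such as $H^0(\Gr(1,V),\cU^\vee)=V^\vee=\bbS^{(1,0,\dots,0)}V^\vee$ confirms that your conventions match the paper's.

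The trade-off is simply that the paper buys brevity by citation. Your route derives the Grassmannian statement from Borel--Weil and Kempf vanishing on flag varieties.
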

 	With these two results in hand, we are in a good place to prove the necessary cohomological vanishings on the relative orthogonal Grassmannian, which we will log for future use. The proofs of the following lemmas will follow from an explicit (but fairly technical) analysis of the Schur functors arising in Koszul resolutions for tautological vector bundles, paired with the Borel-Weil-Bott theorem above. 
 	\begin{lemma}\label{vanishinglemma1}
 		On $\OGr(k,\cQ)$, for any $1 \leq t < n - 2(k-1)$, we have 
 		\begin{equation}
 			R^i\tau_*(\Sym^p \cU_k \otimes \Sym^q \cU_k(-t)) = 0
 		\end{equation}
 		for all $i > p + q + 2t - 1$. 
 	\end{lemma}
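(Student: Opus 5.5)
Since $\OGr(k,\cQ)\subset \Gr(k,\cE)$ is the zero locus of a regular section of $\cV = \Sym^2\cU_k^\vee\otimes\tau^*\cL^\vee$ (Lemma \ref{coranklemma}), I would compute $R\tau_*$ through the Koszul resolution
\[0\to \bigwedge^{N}\cV^\vee \to \cdots \to \bigwedge^j \cV^\vee \to \cdots \to \cO_{\Gr(k,\cE)} \to \cO_{\OGr(k,\cQ)}\to 0,\qquad N = k(k+1)/2,\]
with $\bigwedge^j\cV^\vee = \bigwedge^j\Sym^2\cU_k\otimes \cL^{j}$. Tensoring with $\Sym^p\cU_k\otimes\Sym^q\cU_k\otimes(\det\cU_k)^{t}$ and pushing forward to $S$ gives a spectral sequence whose $E_1$ terms are $R^{i'}\pi_*\big(\Sym^p\cU_k\otimes\Sym^q\cU_k\otimes\bigwedge^j\Sym^2\cU_k\otimes(\det\cU_k)^t\big)\otimes\cL^j$, contributing to $R^{i'-j}\tau_*$. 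The projection $\pi:\Gr(k,\cE)\to S$ is a Grassmann bundle, and $\cL$ is pulled back from $S$, so it suffices to show that $R^{i'}\pi_*$ of each such term vanishes whenever $i' - j > p+q+2t-1$. This check can be done fibrewise on $\Gr(k,n)$ using Borel--Weil--Bott.

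First I decompose the bundle into irreducibles. By Pieri's rule, $\Sym^p U\otimes\Sym^q U=\bigoplus \bbS^{(a,b,0,\dots,0)}U$ with $a+b=p+q$ and $b\le \min(p,q)$. Lemma \ref{tensorprod} then shows that every summand of $\bbS^{(a,b,0,\dots)}U\otimes\bigwedge^j\Sym^2U$ has the form $\bbS^\nu U$ with $\nu=(a+\alpha_1,b+\alpha_2,\alpha_3,\dots,\alpha_k)$, where $0\le\alpha_i\le k+1$, $\alpha_1\le j+1$, $\alpha_1+\alpha_2\le j+3$, and $\sum\alpha_i = 2j$. Twisting by $(\det U)^t$ adds $t$ to every entry. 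Writing $\bbS^\nu U\otimes(\det U)^t=\bbS^{\mu}\cU^\vee$ with $\mu=(-t-\alpha_k,\dots,-t-\alpha_3,-t-b-\alpha_2,-t-a-\alpha_1)$, Borel--Weil--Bott applies with $\nu=0$ on the quotient bundle.

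Next I apply Borel--Weil--Bott. The weight $(\mu,0)+\rho$ has its first $k$ entries equal to $n-i+1+\mu_i$, which are decreasing. Its last $n-k$ entries are $n-k,\dots,1$. The entries of $\mu$ are all negative, since $t\ge1$. So the length $\ell(w)$ counts, for each of the first $k$ entries, how many of $\{1,\dots,n-k\}$ exceed it. Concretely, an entry $n-i+1+\mu_i$ that lies in $[1,n-k]$ contributes $n-k-(n-i+1+\mu_i)=-\mu_i-k+i-1$, and an entry $\le 0$ contributes $n-k$. Cohomology vanishes if some entry lands among $\{1,\dots,n-k\}$ or two entries coincide. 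The hypothesis $t<n-2(k-1)$ is what keeps the entries $-t-\alpha_\bullet$ from falling below the range where $n-k$ would be contributed.

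The remaining task is to bound $\ell(w)-j$ by $p+q+2t-1$. The two rows carrying $a$ and $b$ contribute at most $(t+a+\alpha_1-k)+(t+b+\alpha_2-k+1)$. The other rows contribute $\sum_{i\ge3}(t+\alpha_i-(i-1))$ at most, but only when $t+\alpha_i\ge i$; otherwise they give $0$ or force a coincidence, hence vanishing. I expect the main obstacle here: combining these with $\sum\alpha_i=2j$, $\alpha_1\le j+1$ and $\alpha_1+\alpha_2\le j+3$ so that the $2j$ in the numerator is absorbed by the $-j$ shift and the rows $i\ge3$ do not each add a full $t$. I would first try to show that for nonvanishing cohomology the rows $i\ge 3$ must have entries lying strictly above $n-k$ (contributing nothing), using the distinctness requirement and $t<n-2k+2$. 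That would reduce the bound to $2t+a+b+\alpha_1+\alpha_2-2k+1-j\le p+q+2t-1$, which follows from $\alpha_1+\alpha_2\le j+3$ and $k\ge2$. If that reduction fails, I would fall back on a case analysis on how many rows fall into $[1,n-k]$, using $\alpha_i\le k+1$.
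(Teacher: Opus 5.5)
Your architecture is the same as the paper's. You use the Koszul resolution of $\OGr(k,\cQ)\subset\Gr(k,\cE)$, Pieri's rule to reduce to $\bbS^{(a,b,0,\dots,0)}\cU_k$, Lemma \ref{tensorprod} for the summands, and Borel--Weil--Bott with target $\ell(w)\le a+b+j+2t-1$. The reduction you hedge on is also correct, and it is what the paper does. The entries of $(\mu,0)+\rho$ in the $k-2$ positions carrying $\alpha_3,\dots,\alpha_k$ are all at least $n-k+3-\alpha_3-t\ge n-2k+2-t\ge 1$. The last $n-k$ entries are exactly $n-k,\dots,1$. So each of those positive entries either exceeds $n-k$ and contributes nothing to $\ell(w)$, or it repeats a value and kills the cohomology.

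The gap is in the final count.
\begin{itemize}
\item Your rule assigning an entry in $[1,n-k]$ the contribution $-\mu_i-k+i-1$ is vacuous, since such entries give zero cohomology.
\item Your bound $(t+a+\alpha_1-k)+(t+b+\alpha_2-k+1)$ for the two remaining rows is mis-indexed. In $\mu$ these rows sit in positions $k$ and $k-1$, with entries $n-k+1-a-\alpha_1-t$ and $n-k+2-b-\alpha_2-t$. For example, take $k=2$, $t=1$, $p=n-2$, $q=0$, $j=0$. Then $\mu+\rho=(n-1,0,n-2,\dots,1)$ and $\ell(w)=n-2$, whereas your expression gives $n-3$.
\item Even taken at face value, your concluding step needs $\alpha_1+\alpha_2\le j+2k-2$. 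This fails for $k=2$, $j=3$, $\alpha=(3,3)$, which is the summand $\det\Sym^2U=(\det U)^3$. So "follows from $\alpha_1+\alpha_2\le j+3$ and $k\ge 2$" is false.
\end{itemize}

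The correct bookkeeping is as follows. Each of the two remaining entries contributes either $0$ (if it is $>n-k$) or exactly $n-k$ (if it is $\le 0$).
\begin{itemize}
\item If only the last entry is $\le 0$, then $n-k\le a+\alpha_1+t-1\le a+j+t\le a+b+j+2t-1$. This uses $\alpha_1\le j+1$, $b\ge 0$ and $t\ge 1$.
\item If both entries are $\le 0$, strict decrease forces the last one to be $\le -1$. Hence $n-k\le a+\alpha_1+t-2$ and $n-k\le b+\alpha_2+t-2$. Adding gives $2(n-k)\le a+b+\alpha_1+\alpha_2+2t-4\le a+b+j+2t-1$ by $\alpha_1+\alpha_2\le j+3$.
\end{itemize}
The extra $-1$ from strict decrease is exactly what your version is missing in the extremal case $k=2$, $j=3$. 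With this correction the argument closes. The relation $\sum\alpha_i=2j$ is not needed.
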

 	\begin{proof}
 		For the sake of clarity in this proof, we write $\cU_k$ for the tautological subbundle on $\Gr(k,\cE)$ or its fibers, and $\cU_k|_{\OGr}$ for its restriction to the orthogonal Grassmannian or its fibers. 
 		\par
 		By Grothendieck's theorem on cohomology and base change, it suffices to show the vanishing of these cohomologies on all fibers, so we may fix a point $s \in S$ and consider the quadric $Q := \cQ_s$ on $V := \cE|_s$ and instead show the vanishing of 
 		\[H^i(\Sym^p \cU_k \otimes \Sym^q \cU_k|_{\OGr}(-t)) = 0\]
 		for $i > p + q + 2t- 1$. By the Pieri rule, there is a decomposition 
 		\[\Sym^p \cU_k \otimes \Sym^q \cU_k \simeq \bigoplus_{\max(p,q) \leq a \leq p + q}\bbS^{(a,p+q-a,0,\dots,0)}\cU_k,\]
 		so it is enough to instead show the vanishing 
 		\[H^i(\bbS^{(a,b,0,\dots,0)}\cU_k|_{\OGr}(-t)) = 0\]
 		for $i > a + b + 2t-1$, where $a, b \geq 0$. \par 
 		Since the fiber $\OGr(k,Q)$ is the zero locus of a regular section of the bundle $\Sym^2 \cU_k^\vee$, we may resolve $\bbS^{(a,b,0,\dots,0)}\cU_k|_{\OGr}(-t)$ on $\Gr(k,V)$ by tensoring the bundle $\bbS^{(a,b,0,\dots,0)}\cU_k$ with the Koszul resolution 
 		\[0 \to \det \Sym^2 \cU_k \to \cdots \to \bigwedge^2 \Sym^2 \cU_k \to \Sym^2 \cU_k \to \cO_{\Gr(k,V)} \to \cO_{\OGr(k,Q)} \to 0.\]
 		This resolves $\bbS^{(a,b,0,\dots,0)}\cU_k|_{\OGr}(-t)$ by 
 		\begin{multline*}
 			0 \to \bbS^{(a,b,0,\dots,0)}\cU_k \otimes \det \Sym^2 \cU_k(-t) \to \bbS^{(a,b,0,\dots,0)}\cU_k \otimes \bigwedge^{k(k+1)/2 - 1} \Sym^2 \cU_k(-t) \to \dots \\
 			\dots \bbS^{(a,b,0,\dots,0)}\cU_k \otimes \bigwedge^2 \Sym^2 \cU_k(-t) \to \bbS^{(a,b,0,\dots,0)}\cU_k \otimes \Sym^2 \cU_k(-t) \to  \bbS^{(a,b,0,\dots,0)}\cU_k(-t). 
 		\end{multline*}
 		By using the hypercohomology spectral sequence, if we can show that 
 		\[H^i(\bbS^{(a,b,0,\dots,0)}\cU_k \otimes \bigwedge^j \Sym^2\cU_k(-t)) = 0\]
 		for all $i > a + b + 2t + j - 1$, then the lemma will be proved. Using Lemma \ref{tensorprod}, this reduces to showing that 
 		\[H^i(\bbS^\nu \cU_k(-t)) = 0\]
 		when $\nu = (a + \alpha_1, b + \alpha_2, \alpha_3, \dots, \alpha_k)$ where $0 \leq \alpha_1,\alpha_2,\cdots,\alpha_k \leq k + 1$, $\alpha_1 \leq j + 1$ and $\alpha_1 + \alpha_2 \leq j + 3$. Since $\nu$ induces a highest weight representation, the sequence of integers defining $\nu$ must be decreasing. Using the compatibility of highest weight representations with dualization of vector bundles and tensors with the determinant, there is an isomorphism 
 		\[\bbS^\nu \cU_k(-t) = \bbS^{(-\alpha_k - t, \dots, -\alpha_3 - t, -b - \alpha_2 - t, -a - \alpha_1 - t)} \cU_k.\]
 		By the Borel-Weil-Bott theorem, we consider the sequence of numbers given by adding $\rho = (n, n-1, \dots, 1)$ to $(-\alpha_k - t, \dots , -b - \alpha_2 - t, - a - \alpha_1 - t, 0, \dots , 0)$: 
 		\[(n - \alpha_k - t,\dots, n - k + 3 - \alpha_3 - t,n - k + 2 - b - \alpha_2 - t,n - k + 1 - a - \alpha_1 - t, n-k, \dots, 1).\]
 		Evidently, the first $k$ terms are strictly decreasing and the last $n - k$ terms are strictly decreasing. Moreover, since $\alpha_3 \leq k + 1$, it follows that $n - k + 3 - \alpha_3 - t \geq n - k + 3 - k - 1 - t \geq n - 2(k-1) - t > 0$, so the first $k - 2$ terms are always positive. Using Borel-Weil-Bott, it follows that the bundle $\bbS^\nu \cU_k(-t)$ on $\Gr(k,V)$ can only have higher cohomology in degrees $n - k$ or $2(n-k)$. In particular, we see that the bundle $\bbS^\nu \cU_k(-t)$ can have cohomology in degree $n - k$ only if 
 		\[n - k + 2 - b - \alpha_2 - t > n - k > 1 > n - k + 1 - a - \alpha_1 - t\]
 		in which case $n - k < a + \alpha_1 + t \leq a + j + 1 + t$; but since all the values are integral, this implies that the cohomology occurs in degree $n - k \leq a + j + t \leq a + b + j + 2t - 1$. Similarly, there can be cohomology in degree $2(n-k)$ only if 
 		\[1 > n - k + 2 - b - \alpha_2 - 1 > n - k + 1 - a - \alpha_1 - 1.\]
 		Using again that everything is integer-valued, this implies that 
 		\[n - k \leq b + \alpha_2 + t  - 2\]
 		and 
 		\[n - k \leq a + \alpha_1 + t - 2,\]
 		hence the cohomology occurs in degree $2(n-k) \leq a + b + \alpha_1 + \alpha_2 + 2t - 4\leq a + b + j + 2t - 1$. This proves the lemma.
 	\end{proof}
 	We will specialize to the case where $t = 1$ in order to prove fully faithfulness of the main embedding functor, and allow for higher values of $t$ in the proofs for semiorthogonality. \par 
 	In the dual situation, we have the following. 
 	\begin{lemma}\label{vanishinglemma2}
 		On $\OGr(k,\cQ)$, for any (possibly negative) $t < n -  2k$, we have 
 		\begin{equation} 
 			R^i\tau_*\left(\Sym^p \cU_k^\vee \otimes \Sym^q \cU_k^\vee(-t)\right)  = 0
 		\end{equation}
 		for all $i > 0$. If in addition $\min(p,q)  < t$, then vanishing holds also for $i = 0$. \par 
 		Moreover, we have isomorphisms 
 		\begin{align}
 			\tau_*(\cO_{\OGr(k,\cQ)}(1)) &\simeq \bigwedge^k \cE^\vee, \label{O1calc}\\
 			\tau_*(\cU^\vee(1)) &\simeq \bbS^{(2,\overbrace{\scriptstyle{1, \dots, 1}}^{k - 1 \text{ times}})} \cE^\vee / \im\left(\bigwedge^{k-1}\cE^\vee \otimes \tau^*\cL \to \bigwedge^{k-1} \cE^\vee \otimes \Sym^2 \cE^\vee \to \bbS^{(2,1,\dots,1)} \cE^\vee\right). \label{U1calc}
 		\end{align}
 	\end{lemma}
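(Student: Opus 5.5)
We would follow the template of the proof of Lemma \ref{vanishinglemma1}. By cohomology and base change (using flatness of $\tau$ from Lemma \ref{coranklemma}), it suffices to work on a single fiber. We fix $s \in S$, $Q = \cQ_s$ on $V = \cE|_s$, and prove $H^i(\Sym^p\cU_k^\vee\otimes\Sym^q\cU_k^\vee(-t)|_{\OGr}) = 0$ for $i>0$, and also for $i=0$ when $\min(p,q)<t$. Once the fiberwise higher cohomology vanishes, base change also identifies $\tau_*$ with the family of fiberwise $H^0$'s, which is what the two isomorphisms require.

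By the Pieri rule we decompose
\[\Sym^p\cU_k^\vee\otimes\Sym^q\cU_k^\vee \simeq \bigoplus_a \bbS^{(a,p+q-a,0,\dots,0)}\cU_k^\vee,\]
with $\max(p,q)\le a\le p+q$, so that $b = p+q-a \le \min(p,q)$. Twisting by $\cO(-t)=(\det\cU_k^\vee)^{-t}$ gives weights $(a-t,b-t,-t,\dots,-t)$. We then resolve by the Koszul complex, whose $j$-th term is $\bbS^{(a,b,0,\dots)}\cU_k^\vee\otimes\bigwedge^j\Sym^2\cU_k\,(-t)$. By Lemma \ref{tensorprod} applied with $U=\cU_k^\vee$, dualizing $\bigwedge^j\Sym^2\cU_k$, the irreducible summands are $\bbS^{\lambda}\cU_k^\vee$ with
\[\lambda = (a-t-\alpha'_1,\,b-t-\alpha'_2,\,-t-\alpha'_3,\dots),\]
where the $\alpha'_i$ are nonnegative and bounded by $k+1$, and the total degree of $\alpha'$ is $2j$. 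To make the final-coordinate bounds clean it is better to use the dual form of Lemma \ref{tensorprod}: $\bigwedge^j\Sym^2\cU_k \simeq \bigwedge^{N-j}\Sym^2\cU_k^\vee\otimes(\det\cU_k)^{k+1}$, where $N = k(k+1)/2$. This expresses the summands as $\bbS^\nu\cU_k^\vee\otimes\cO(-t-k-1)$ with $\nu=(a+\beta_1,b+\beta_2,\beta_3,\dots,\beta_k)$, $0\le\beta_i\le k+1$, $\beta_1\le N-j+1$.

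The hypercohomology spectral sequence reduces the claim to showing that the $j$-th term has $H^{i+j}=0$ for $i>0$ (respectively $i\ge0$). We then apply Borel--Weil--Bott to $(\nu_1-t-k-1,\dots,\nu_k-t-k-1,0,\dots,0)+\rho$. The first $k$ entries are $n-i+1+\nu_i-t-k-1$. Since $t<n-2k$ and $\nu_k\ge 0$, the smallest of these is
\[n-k+\nu_k-t-k-1+1 \;\ge\; n-2k-t \;>\; 0,\]
so the entries can collide with the tail $(n-k,\dots,1)$ only if they are repetitions. When they are all distinct, they must all exceed $n-k$ (giving $H^0$ in total degree $j$, hence $H^{-j}$ of the complex, which contributes nothing in positive degree except at $j=0$), or the sequence is degenerate. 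More precisely, an entry $x$ with $1\le x\le n-k$ coincides with some tail entry and kills cohomology, so any nonvanishing cohomology needs every head entry $>n-k$, i.e.\ no permutation, i.e.\ only $H^0$. This term sits in complex degree $-j$ and so contributes to $H^{-j}$ of $\OGr$. Since $\OGr$ cohomology is concentrated in nonnegative degrees, only $j=0$ matters, which gives higher vanishing.

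For the $H^0$ vanishing when $b<t$, we look at the $j=0$ term $\bbS^{(a-t,b-t,-t,\dots)}\cU_k^\vee$. Its BWB sequence has $k$-th entry $n-k+1-t\le n-k$, which lies in $[1,n-k]$ because $t<n-2k$ and $t\ge1$ (forced by $b\ge0$). So this entry repeats and there is no $H^0$. The same repetition argument kills all $j$, giving total vanishing.

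For the isomorphisms we take $t=-1$. For $\cO(1)$ the only surviving Koszul term is $j=0$, where $H^0(\Gr,\det\cU_k^\vee)=\bigwedge^k V^\vee$; we must also check that the $j=1$ term $\Sym^2\cU_k\otimes\cO(1)$ has no cohomology, which is a direct BWB check. For $\cU^\vee(1)$, the $j=0$ term gives $\bbS^{(2,1,\dots,1)}V^\vee$. The $j=1$ term $\Sym^2\cU_k\otimes\cU_k^\vee(1)$ contains the summand $\bigwedge^{k-1}\cU_k^\vee\otimes\cL$, whose $H^0$ is $\bigwedge^{k-1}V^\vee\otimes\cL$, while the other summands are acyclic. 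The differential is induced by the quadratic form, which yields the stated cokernel; we would have to verify that this map is injective and that all $j\ge2$ terms are acyclic.

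The main obstacle is the careful bookkeeping in the BWB analysis. The earlier argument that head entries must avoid $[1,n-k]$ needs to be rechecked for $j\ge1$, since there $\nu$ has a negative twist and entries can genuinely fall below $1$. In that case nonzero higher cohomology $H^{\ell(w)}$ with $\ell(w)\ge(n-k)$ arises, and we must show $\ell(w)\le j$ fails, i.e.\ $\ell(w)-j\le 0$, using $\beta_1\le N-j+1$ much as in Lemma \ref{vanishinglemma1}. This inequality chase, together with pinning down the $j=1$ map in \eqref{U1calc}, is where we expect the real work to lie.
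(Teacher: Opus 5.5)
Your proposal is correct and is essentially the paper's proof. You reduce to a fiber and decompose by Pieri. You rewrite every Koszul term as $\bbS^\nu\cU_k^\vee(-t-k-1)$ with $\nu$ nonnegative via $\bigwedge^j\Sym^2\cU_k\simeq\bigwedge^{k(k+1)/2-j}\Sym^2\cU_k^\vee\otimes\cO(-k-1)$. You note that the last head entry is $\ge n-2k-t>0$, so each term has at most $H^0$. You kill $H^0$ by a collision when $b<t$. Finally you read off the two pushforwards from the $j=0,1$ Koszul terms.

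Two small corrections. First, the worry in your final paragraph is unfounded: the twist $-t-k-1$ is the same for every $j$, so your positivity bound already covers all $j$. This also makes injectivity of the $j=1$ map and the $j\ge 2$ terms irrelevant, since $H^0$ is then just the cokernel of $H^0(K^{-1})\to H^0(K^0)$. Second, for $k=2$ the colliding entry is $n-1+b+\beta_2-t-3\le n-1+b-t\le n-2$, not $n-k+1-t$, and this is exactly where $b<t$ is used; the paper uses this second entry plus pigeonhole for every $k$.
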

 	\begin{proof}
 		For the first part of the lemma, the strategy is roughly the same as the previous Lemma \ref{vanishinglemma1}: we pass to a fiber, taking $V = \cE|_s$ and $Q = \cQ_s$, and check vanishing by examining the Koszul resolution of $\Sym^p\cU_k^\vee \otimes \Sym^q \cU_k^\vee(-t)|_{\OGr}$. In fact, we will show all terms in the Koszul sequence have no higher cohomology. Decomposing the tensor product, we again reduce to showing $\bbS^{(a,b,0,\dots,0)} \cU_k^\vee(-t) \otimes \bigwedge^i \Sym^2\cU_k$ has no higher cohomology, where $a + b = p + q$ and $0 \leq b \leq \min(p,q)$. This bundle can be rewritten as 
 		\begin{align*}
 			\bbS^{(a,b,0,\dots,0)} \cU_k^\vee(-t) \otimes \bigwedge^i \Sym^2\cU_k &\simeq \bbS^{(a,b,0,\dots,0)} \cU_k^\vee(-t) \otimes \bigwedge^{k(k+1)/2-i} \Sym^2 \cU_k^\vee \otimes \det \Sym^2 \cU_k \\
 			&\simeq \bbS^{(a,b,0,\dots,0)} \cU_k^\vee \otimes \bigwedge^{k(k+1)/2-i} \Sym^2 \cU_k^\vee (-t-k-1).
 		\end{align*}
 		Using again Lemma \ref{tensorprod}, the sheaves on $\Gr(k,V)$ arising in the spectral sequence whose cohomology we need to compute are of the form $\bbS^\nu \cU_k^\vee$ where $\nu = (a + \alpha_1 - t - k - 1, b + \alpha_2 - t - k - 1 , \alpha_3 - t - k - 1, \dots, \alpha_k - t - k - 1)$. By the Borel-Weil-Bott theorem, to calculate the cohomology of this equivariant vector bundle on $\Gr(k,V)$, we consider the sequence of numbers 
 		\[(n + a + \alpha_1 - t-k-1, n - 1 + b + \alpha_2 - t - k - 1, \dots, n-k+1+\alpha_k - t- k - 1, n - k, \dots, 1).\] The bundles all have no higher cohomology as long as $n - k + 1 + \alpha_k - t - k - 1 = n - 2k - t + \alpha_k > 0$. But this follows from our assumption that $t < n - 2k$. \par
 		If in addition $b \leq \min(p,q) < t$, we have the inequality 
 		\[n - 1 + b + \alpha_2 - t - k - 1 \leq n - 1 + b - t < n - 1,\]
 		where we recall that $\alpha_2 \leq k + 1$ by Lemma \ref{tensorprod}. Hence the $n - 1$ numbers 
 		\[n - 1 + b + \alpha_2 - t - k - 1, \dots, n-k+1+\alpha_k-t-k-1,n-k,\dots,1\]
 		all lie in the interval $[0,n-2]$, and by the pigeonhole principle cannot all be distinct, so the Borel-Weil-Bott theorem allows us to conclude that the associated vector bundle has no cohomology. 
 		\par  
 		For the calculation of $\tau_*(\cO_{\OGr(k,\cQ)}(1))$, observe first that the pushforward of $\cO_{\Gr(k,\cE)}(1)$ to the base is canonically isomorphic to $\bigwedge^k \cE^\vee$, so that restriction gives a canonical map 
 		\[\bigwedge^k \cE^\vee \to \tau_*(\cO_{\OGr(k,\cQ)}(1)).\]
 		Since all terms in the Koszul resolution have no higher cohomology, to check that this map is an isomorphism we can simply pass to a fiber $s \in S$ and check that 
 		\[H^0(\Sym^2 \cU_k(1)) = 0,\]
 		because the sequence 
 		\[H^0(\Sym^2 \cU_k(1)) \to H^0(\cO_{\Gr(k,V)}(1)) \to H^0(\cO_{\OGr(k,\cQ)}(1)) \to 0\]
 		is exact. But one can see by writing 
 		\[\Sym^2 \cU_k(1) \simeq \bbS^{(1,\dots,1,-1)}\cU_k^\vee\]
 		that the Borel-Weil-Bott theorem immediately implies that this vector bundle has no global sections. \par 
 		For $\tau_*(\cU^\vee(1))$, the argument is similar. We can calculate that the pushforward of $\cU_k^\vee(1) = \bbS^{(2,1,\dots,1)}\cU_k^\vee$ on the ambient Grassmannian to the base is isomorphic to $\bbS^{(2,1,\dots,1)}\cE^\vee$, and therefore restriction gives a map
 		\[\bbS^{(2,1,\dots,1)}\cE^\vee \to \tau_*(\cU_k^\vee(1)).\]
 		If we consider the Koszul resolution for the total space of the orthogonal Grassmannian fibration $\OGr(k,\cQ)$ inside of the Grassmannian bundle $\Gr(k,\cE)$, then our previous arguments imply that the map above is surjective, and the kernel is computed by the image of the pushforward of the map 
 		\[\cU_k^\vee(1) \otimes \tau^*\cL \otimes \Sym^2 \cU_k \to \cU_k^\vee(1), \]
 		which is just multiplication with the defining section of the orthogonal Grassmannian. This map factors as 
 		\[\cU_k^\vee(1) \otimes \tau^*\cL \otimes \Sym^2 \cU_k \to \cU_k^\vee(1) \otimes \Sym^2 \cE^\vee \otimes \Sym^2 \cU_k \to \cU_k^\vee(1) \otimes \Sym^2 \cU_k^\vee \otimes \Sym^2 \cU_k\to \cU_k^\vee(1),\]
 		where the first map comes from the quadratic form $\tau^*\cL\to \Sym^2 \cE^\vee$, the second comes from restriction from $\cE$ to $\cU_k$, and the third comes from the evaluation map $\Sym^2 \cU_k^\vee \otimes \Sym^2 \cU_k \to \cO_{\Gr(k,\cE)}$. We can then check that there is an identification \[\cU_k^\vee(1) \otimes \Sym^2 \cU_k \simeq \bbS^{(2,1,\dots,1,-1)}\cU_k^\vee \oplus \bbS^{(1,\dots,1,0)}\cU_k^\vee,\]
 		and the Borel-Weil-Bott theorem applied on the Grassmannian fibers implies that the pushforward of the first summand to $S$ is trivial, while the pushforward of the second is precisely $\bigwedge^{k-1} \cE^\vee$. Pushing forward this sequence of maps, we get an identification of $\tau_*(\cU_k^\vee(1))$ with the image of 
 		\[\bigwedge^{k-1} \cE^\vee \otimes \tau^*\cL \to \bigwedge^{k-1} \cE^\vee \otimes \Sym^2 \cE^\vee \to \bbS^{(2,1,\dots,1)} \cE^\vee,\]
 		proving the claim. 
 	\end{proof}
 	To prove fully faithfulness for the main embedding functor we will specialize to $t = -1$, while the cases $t > -1$ will be useful in proving semiorthogonality. \par 
 	When $k = 2$, tensor products and plethysms of Schur functors $\bbS^{\lambda} \cU_2$ are much simpler, and as a consequence, we are able to prove stronger vanishing results. We do so, case by case, in the following series of lemmas. 
 	\begin{lemma}\label{vanishinglemma3}
 		On $\OGr(2,\cQ)$, for any $0 \leq t \leq n - 4$, we have 
 		\begin{equation}
 			R^i\tau_*(\Sym^p \cU_2 \otimes \Sym^q \cU_2^\vee(-t)) = 0
 		\end{equation}
 		for all $i > p + t$. If in addition $p < n - 4 - t$, then $R^i\tau_*(\Sym^p \cU_2 \otimes \Sym^q \cU_2^\vee(-t)) = 0$ for all $i > 0$.
 	\end{lemma}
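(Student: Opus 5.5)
The plan is to run the same argument as in Lemmas \ref{vanishinglemma1} and \ref{vanishinglemma2}, using the simplifications available when $k=2$.

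\textbf{Reduction to the fibers.} By cohomology and base change (and the fact that $\tau$ is flat), it suffices to work on a fiber. So fix $s \in S$, put $V=\cE|_s$, and prove $H^i(\Sym^p\cU_2\otimes\Sym^q\cU_2^\vee(-t)|_{\OGr})=0$ for $i>p+t$. I would resolve this sheaf on $\Gr(2,V)$ by tensoring with the Koszul complex of the regular section of $\Sym^2\cU_2^\vee$ (Lemma \ref{coranklemma}). The $j$-th term is then
\[\Sym^p\cU_2\otimes\Sym^q\cU_2^\vee(-t)\otimes\bigwedge^j\Sym^2\cU_2, \qquad 0\le j\le 3.\]
By the hypercohomology spectral sequence, it is enough to show that the $j$-th term has no cohomology in degrees $>p+t+j$. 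For the second claim, it is enough to show that every term has cohomology only in degree $0$.

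\textbf{Decomposing the terms.} For $k=2$ everything is explicit, and I would write all bundles as Schur functors of $\cU_2^\vee$.
\begin{itemize}
\item Since $\cU_2\simeq\cU_2^\vee(-1)$, we have $\Sym^p\cU_2\simeq\Sym^p\cU_2^\vee(-p)$.
\item The Pieri rule then gives
\[\Sym^p\cU_2\otimes\Sym^q\cU_2^\vee(-t)\simeq\bigoplus\bbS^{(a,b)}\cU_2^\vee(-p-t),\]
where $a+b=p+q$ and $a\ge\max(p,q)$.
\item The exterior powers of $\Sym^2\cU_2$ are $\cO$, $\bbS^{(0,-2)}\cU_2^\vee$, $\bbS^{(-1,-3)}\cU_2^\vee$ and $\bbS^{(-3,-3)}\cU_2^\vee$.
\end{itemize}
Hence every irreducible summand of the $j$-th term is $\bbS^{(\lambda_1,\lambda_2)}\cU_2^\vee$ with the following properties, which come from Pieri again or from the weight argument of Lemma \ref{tensorprod}:
\begin{itemize}
\item $\lambda_1\ge a-p-t-3\ge -t-3$;
\item $\lambda_2\ge b-p-t-3\ge -p-t-3$, and more precisely $\lambda_2\ge -p-t-j-1$ for $j\ge1$;
\item $\lambda_1+\lambda_2=q-p-2t-2j$.
\end{itemize}

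\textbf{Borel--Weil--Bott.} I would apply the theorem to the sequence $(\lambda_1+n,\ \lambda_2+n-1,\ n-2,\dots,1)$.
\begin{itemize}
\item The tail $n-2,\dots,1$ consists of consecutive integers. So a nonzero cohomology group can occur only in degree $0$, in degree $n-2$ (when $\lambda_2+n-1\le 0<\lambda_1+n$), or in degree $2(n-2)$ (when both entries are $\le 0$).
\item Degree $2(n-2)$ requires $\lambda_1\le -n$. Combined with $\lambda_1\ge -t-3$, this gives $n\le t+3$, which contradicts $t\le n-4$. So this case never occurs.
\item Degree $n-2$ requires $\lambda_2\le 1-n$. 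Combined with $\lambda_2\ge -p-t-j-1$ (and directly for $j=0$), this gives $n-2\le p+t+j$, which is exactly the range we are allowed. This proves the first claim.
\item For the second claim, the cruder bound $\lambda_2\ge -p-t-3$ turns the requirement $\lambda_2\le 1-n$ into $p\ge n-4-t$. This is excluded by the hypothesis $p<n-4-t$, so all Koszul terms have only $H^0$, and the spectral sequence gives vanishing for all $i>0$.
\end{itemize}

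\textbf{Main obstacle.} The delicate step is the bookkeeping of the lower bounds on $\lambda_1$ and $\lambda_2$. These must be sharp enough in $j$ for the degree-$(n-2)$ estimate to come out as exactly $p+t+j$, with no off-by-one loss. I would settle this by checking the explicit Pieri decompositions for $j=0,1,2,3$ separately, rather than relying on the cruder general bounds of Lemma \ref{tensorprod}.
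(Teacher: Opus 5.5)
Your proposal is correct and follows the paper's proof essentially step for step. It uses the same reduction to a fiber, the same rewriting $\Sym^p\cU_2\simeq\Sym^p\cU_2^\vee(-p)$ followed by the Pieri decomposition, and the same tensoring with the Koszul complex. The Borel--Weil--Bott case analysis also matches: degree $2(n-2)$ is ruled out via $\lambda_1\ge -t-3$, degree $n-2$ is bounded via $\lambda_2\ge -p-t-j-1$ (the paper's $\lambda_2\ge q-a-t-j-1$), and the second claim comes from $\lambda_2\ge -p-t-3$; your version of that last bound also fixes the paper's sign slip, which writes $p-t+n-4$ where $n-4-p-t$ is meant.
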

 	\begin{proof}
 		The proof is similar to Lemma \ref{vanishinglemma1}, so we will immediately reduce to calculating the cohomology of this bundle over the fiber of some $s \in S$. Since $\cU_2$ is rank 2, there is a natural isomorphism \[\cU_2 \simeq \cU_2^\vee(-1),\]
 		so in particular we may rewrite $\Sym^p \cU_2 \simeq \Sym^p \cU_2^\vee (-p)$. The first part of the lemma then reduces to showing that 
 		\[H^i(\Sym^p \cU_2^\vee \otimes \Sym^q \cU_2^\vee(-t-p)|_{\OGr}) = 0.\]
 		for all $i > p + t$. Using the Pieri rule, we have a decomposition 
 		\[\Sym^p \cU_2^\vee \otimes \Sym^q \cU_2^\vee(-t-p)\simeq \bigoplus_{\max(p,q) \leq a \leq p + q}\bbS^{(a-t-p,q-a-t)}\cU_2^\vee,\]
 		so we may check the vanishing of cohomology for each summand. We then resolve on the Grassmannian by tensoring with the Koszul resolution
 		\[0 \to \det \Sym^2 \cU_2 \simeq \cO(-3)\to \bigwedge^2 \Sym^2 \cU_2 \simeq \Sym^2 \cU_2(-1) \to \Sym^2 \cU_2 \to \cO_{\Gr(k,V)}\to \cO_{\OGr(k,Q)} \to 0.\]
 		By applying the Pieri rule after tensoring the Koszul resolution with $\bbS^{(a-t-p,q-a-t)}\cU_2^\vee$, we see that the bundle $\bbS^{(a-t-p,q-a-t)}\cU_2^\vee|_{\OGr}$ is resolved by 
 		\[0 \to \bbS^{(a-t-p-3,q-a-t-3)}\cU_2^\vee \to \substack{\bbS^{(a-t-p-3,q-a-t-1)}\cU_2^\vee \\ \oplus \\ \bbS^{(a-t-p-2,q-a-t-2)}\cU_2^\vee \\ \oplus \\ \bbS^{(a-t-p-1,q-a-t-3)}\cU_2^\vee} \to \substack{\bbS^{(a-t-p-2,q-a-t)}\cU_2^\vee \\ \oplus \\ \bbS^{(a-t-p-1,q-a-t-1)}\cU_2^\vee \\ \oplus \\ \bbS^{(a-t-p,q-a-t-2)}\cU_2^\vee} \to \bbS^{(a-t-p,q-a-t)}\cU_2^\vee,\]
 		where for the moment we let $\bbS^{(\lambda_1,\lambda_2)} \cU_2^\vee = 0$ whenever $\lambda_1 < \lambda_2$. \par 
 		By the hypercohomology spectral sequence, the claim will follow if we show that 
 		\[H^i(\bbS^{(a-t-p,q-a-t)}\cU_2^\vee \otimes \bigwedge^j \Sym^2 \cU_2) = 0\]
 		whenever $i > p + t + j$, so we analyze the summands of the terms of the Koszul resolution using the Borel-Weil-Bott theorem once more. To understand the cohomology for any equivariant vector bundle of the form $\bbS^{(\lambda_1,\lambda_2)}\cU_2^\vee$ on the Grassmannian, we need only examine the sequence of integers 
 		\[\lambda_1 + n, \lambda_2 + n - 1, n - 2, \dots, 1.\]
 		By Borel-Weil-Bott, such an equivariant vector bundle can have higher cohomology only in degree $n - 2$, if $ \lambda_1 + n > n - 2 > 1 > \lambda_2 + n - 1$, or in degree $2n - 4$, if $1 > \lambda_1 + n$. In fact the second case cannot happen, since for all Schur functors arising in the Koszul resolution we have 
 		\[\lambda_1 + n \geq a - t - p - 3 + n \geq a - p + 1 \geq 1,\]
 		since $t \leq n - 4$ and $a - p \geq 0$. \par 
 		Suppose on the other hand that a Schur functor arising in the degree $j$ term of the Koszul resolution has cohomology in degree $n - 2$. By inspection of the sequence above, we see that $\lambda_2 \geq q - a - t - j - 1$, so in particular we have the inequality 
 		\[0 \geq \lambda_2 + n - 1 \geq q - a - t - j - 1 + n - 1,\]
 		or equivalently 
 		\[n - 2 \leq a - q + t + j \leq p + t + j,\]
 		as $a \leq p + q$. It follows that the cohomology of this Schur functor arises only in degrees $\leq p + t + j$, proving the first part of the lemma. \par 
 		If we also assume that $p < n - 4 - t$, then 
 		\[\lambda_2 + n - 1 \geq q - a - t - 3 + n - 1 \geq p - t + n - 4 > 0,\]
 		so each term of the Koszul resolution has no higher cohomology, finishing the proof of the lemma. 
 	\end{proof}
 	Under similar but slightly different hypotheses, we can show that the pushforwards by $\tau$ completely vanish. The following proposition should be thought of as essentially an analogue of \cite[Lemma 3.11]{kuznetsoveven} in the relative, potentially singular setting.
 	\begin{lemma}\label{vanishinglemma4}
 		Suppose that $n > 4$. On $\OGr(2,\cQ)$, for any $0 \leq t \leq n - 4$ and any $0 \leq p,q \leq n/2 - 2$, we have 
 		\begin{equation}
 			R\tau_*(\Sym^p \cU_2 \otimes \Sym^q \cU_2^\vee(-t)) = 0,
 		\end{equation}
 		unless $t = 0$ and $p \leq q$ or $p = q = n/2 - 2$ and $t = n/2 - 2$ or $n/2 - 1$. 
 	\end{lemma}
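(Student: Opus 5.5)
The plan is to follow the proofs of Lemmas \ref{vanishinglemma1} and \ref{vanishinglemma3}, but to prove that \emph{every} term of the Koszul resolution is acyclic, not just that it has a bound on the degrees of its cohomology.

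\textbf{Reduction to a fiber.} By cohomology and base change, together with the flatness of $\tau$ from Lemma \ref{coranklemma}, it suffices to show that the restriction of $\Sym^p \cU_2 \otimes \Sym^q \cU_2^\vee(-t)$ to $\OGr(2,Q) \subset \Gr(2,V)$ has no cohomology at all, for every fiber $Q = \cQ_s$. As in Lemma \ref{vanishinglemma3}, I use the isomorphism $\cU_2 \simeq \cU_2^\vee(-1)$ and the Pieri rule. Together they decompose the bundle into summands
\[\bbS^{(a-t-p,\;q-a-t)}\cU_2^\vee, \qquad \max(p,q) \leq a \leq p+q.\]
I resolve each summand by the explicit three-step Koszul complex written in that proof. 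Its degree $j$ term is a sum of bundles $\bbS^{(\lambda_1,\lambda_2)}\cU_2^\vee$ with
\[\lambda_1 = a-t-p-\varepsilon_1, \qquad \lambda_2 = q-a-t-\varepsilon_2, \qquad \varepsilon_1,\varepsilon_2 \in \{0,1,2,3\},\]
where the pair $(\varepsilon_1,\varepsilon_2)$ is one of the listed pairs with $\varepsilon_1+\varepsilon_2 = 2j$. If every such term is acyclic on $\Gr(2,V)$, the hypercohomology spectral sequence gives the vanishing.

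\textbf{The Borel--Weil--Bott analysis.} By Borel--Weil--Bott, $\bbS^{(\lambda_1,\lambda_2)}\cU_2^\vee$ is acyclic exactly when the sequence
\[(\lambda_1+n,\ \lambda_2+n-1,\ n-2,\dots,1)\]
has a repeated entry. This happens if $\lambda_2+n-1 \in [1,n-2]$, or if $\lambda_1+n \in [1,n-2]$, or if $\lambda_1+n = \lambda_2+n-1$. So the work is a case analysis on the integers
\[x := \lambda_1+n = n-t-\varepsilon_1+(a-p), \qquad y := \lambda_2+n-1 = n-1-t-\varepsilon_2-(a-q),\]
using the constraints $0 \leq a-p \leq q$, $0 \leq a-q \leq p$, $0 \le t \le n-4$ and $p,q \leq n/2-2$.

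The term is non-acyclic only in one of two situations.

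\emph{Case $y \geq n-1$.} This forces $t = 0$, $\varepsilon_2 = 0$ and $a = q$. Then $a = \max(p,q)$ forces $p \leq q$, which is the first excluded case. (One also needs $x > y$ there, which holds automatically.)

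\emph{Case $y \leq 0$.} Acyclicity then fails only if $x \geq n-1$, or if $x \le 0$ with $x \ne y$. Since $x \geq n-t-3 \geq 1$, only the first option matters. It requires
\[a-p \geq t+\varepsilon_1-1 \quad\text{and}\quad a-q \geq n-1-t-\varepsilon_2.\]
Adding these and using $a \leq p+q$, together with $p,q \leq n/2-2$ and the admissible pairs $(\varepsilon_1,\varepsilon_2)$, should force $p = q = n/2-2$ and $a = p+q$. It should also force $t \in \{n/2-2,\, n/2-1\}$, which is the second excluded case. Note that $\varepsilon_1 + \varepsilon_2 = 2j$, and the possible values are tied to $j$ by the explicit resolution.

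\textbf{Expected obstacle.} The main obstacle is exactly this last inequality bookkeeping. One has to use precisely which pairs $(\varepsilon_1,\varepsilon_2)$ occur, for instance $(0,0)$, $(2,0)$, $(1,1)$, $(0,2)$, $(3,1)$, and so on. One also has to handle the convention that $\bbS^{(\lambda_1,\lambda_2)} = 0$ when $\lambda_1 < \lambda_2$, since this removes some of the potentially bad terms. I would organize the check by the Koszul degree $j \in \{0,1,2,3\}$, bounding $x+y$ in each degree.

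If some stray non-acyclic term survives outside the excluded cases, I would fall back to arguing that two adjacent Koszul terms contribute the same $\bbS^{w(\lambda+\rho)-\rho}V^\vee$ in consecutive degrees, and cancel via the spectral sequence differential. I expect, however, that termwise acyclicity suffices. The hypothesis $n>4$ is used so that $n/2-2 \geq 0$ makes sense and the interval $[1,n-2]$ is nonempty.
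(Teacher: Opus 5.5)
Your proposal is correct, and it differs from the paper's proof in one structural respect.

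\textbf{The paper's route.} The paper first applies Serre duality on the fiber, using $\omega \simeq \cO(-n+3)$. This replaces $(p,q,t)$ by $(q,p,n-3-t)$, so one may assume $t<n/2-1$. It then checks termwise acyclicity of the Koszul complex in three separate cases: $1\le t<n/2-2$; $t=0$ with $p>q$; and $n/2-2\le t<n/2-1$ with $\min(p,q)<n/2-2$.

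\textbf{Your route.} You check termwise acyclicity directly over the whole range $0\le t\le n-4$, with no duality, and this does go through. Large $t$ is handled by your bound $x\ge n-t-3\ge 1$, which is exactly where the hypothesis $t\le n-4$ enters.

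\textbf{Closing the last step.} The bookkeeping you flag as the main obstacle is short. A non-acyclic term with $y\le 0$ needs $x\ge n-1$, hence $x-y=1+(\varepsilon_2-\varepsilon_1)+(2a-p-q)\ge n-1$. On the other hand, $2a-p-q\le p+q\le n-4$. Over all eight admissible pairs one has $\varepsilon_2-\varepsilon_1\le 2$, with equality only for $(0,2)$ in Koszul degree $1$ and $(1,3)$ in degree $2$. So equality must hold throughout. This forces $p=q=n/2-2$, $a=p+q$, and then $t=n/2-1-\varepsilon_1$, i.e.\ $t=n/2-1$ or $t=n/2-2$. (In particular, for odd $n$ nothing survives.)

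Consequently you need neither the convention $\bbS^{(\lambda_1,\lambda_2)}=0$ for $\lambda_1<\lambda_2$ nor any cancellation in the spectral sequence. Also, the quantity to control is $x-y$, not $x+y$.

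\textbf{A harmless slip.} Your parenthetical that $x>y$ holds automatically when $y=n-1$ is false: take $(\varepsilon_1,\varepsilon_2)=(2,0)$ and $q=p+1$, which gives $x=y=n-1$. This does not matter, since that situation lies in the excluded case.

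\textbf{Comparison.} Your argument is uniform in $t$ and avoids invoking duality on possibly singular fibers. The paper's duality step, at the cost of a case split, explains why the two exceptional values $t=n/2-2$ and $t=n/2-1$ come as a pair: they are exchanged by $t\mapsto n-3-t$ when $p=q$.
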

 	\begin{proof}
 		As before, we may pass to a fiber and instead calculate 
 		\[R\Gamma(\Sym^p \cU_2 \otimes \Sym^q \cU_2^\vee(-t)|_{\OGr}) = R\Hom(\Sym^p \cU_2^\vee(t)|_{\OGr}, \Sym^q \cU_2^\vee|_{\OGr})\]
 		on a single orthogonal Grassmannian $\OGr(k,Q)$. By the calculation of the dualizing line bundle \eqref{dualizing}, there is an isomorphism 
 		\[R\Hom(\Sym^p \cU_2^\vee(t)|_{\OGr}, \Sym^q \cU_2^\vee|_{\OGr}) \simeq R\Hom(\Sym^q \cU_2^\vee(n-3-t)|_{\OGr}, \Sym^p \cU_2^\vee|_{\OGr})^\vee,\]
 		so to check the vanishing we may assume that $0 \leq t < n/2 - 1$. \par 
 		By resolving with the Koszul resolution and using the hypercohomology spectral sequence, we again reduce to understanding the cohomology of the bundles in the sequence 
 		\[0 \to \bbS^{(a-t-p-3,q-a-t-3)}\cU_2^\vee \to \substack{\bbS^{(a-t-p-3,q-a-t-1)}\cU_2^\vee \\ \oplus \\ \bbS^{(a-t-p-2,q-a-t-2)}\cU_2^\vee \\ \oplus \\ \bbS^{(a-t-p-1,q-a-t-3)}\cU_2^\vee} \to \substack{\bbS^{(a-t-p-2,q-a-t)}\cU_2^\vee \\ \oplus \\ \bbS^{(a-t-p-1,q-a-t-1)}\cU_2^\vee \\ \oplus \\ \bbS^{(a-t-p,q-a-t-2)}\cU_2^\vee} \to \bbS^{(a-t-p,q-a-t)}\cU_2^\vee,\]
 		where $\max(p,q) \leq a \leq p + q$. In fact, we will show that every such bundle has no cohomology under the conditions above. \par 
 		Suppose first that $1 \leq t < n/2 - 2$. Taking $\bbS^{(\lambda_1,\lambda_2)} \cU_2^\vee$ above where $q - a - t \geq \lambda_2 \geq q - a - t - 3$, 
 		we observe that 
 		\[n - 1 + \lambda_2 \geq n - 1 + q - a - t - 3 \geq n - p - t - 4 > n - n/2+2 - n/2+2 - 4 = 0,\]
 		as well as 
 		\[n - 1 + \lambda_2 \leq n - 1 + q - a - t \leq n - 1  - t \leq n - 2,\]
 		so the sequence of integers 
 		\[n + \lambda_1, n - 1 + \lambda_2, n - 2, \dots, 1\]
 		are not all distinct, and $\bbS^{(\lambda_1,\lambda_2)}\cU_2^\vee$ has no cohomology by Borel-Weil-Bott. \par 
 		Now suppose that $t = 0$ and $p > q$. In the Koszul resolution above, with $\max(p,q) \leq a \leq p + q$, it follows that the inequality $q - a < 0$ must be strict, so for any $\bbS^{(\lambda_1,\lambda_2)}\cU_2^\vee$ as above where $q - a \geq \lambda_2 \geq q - a - 3$, we have 
 		\[n - 1 + \lambda_2 \geq n - 1 + q - a - 3\geq n - p - 4 \geq n - n/2 + 2 - 4 > 0\]
 		as well as 
 		\[n - 1 + \lambda_2 \leq n - 1 + q - a < n - 1,\]
 		or equivalently $n - 1 + \lambda_2 \leq n - 2$, so Borel-Weil-Bott theorem again guarantees any such vector bundle is acyclic in the Koszul resolution. \par 
 		In the final case for the computation of acyclicity of these vector bundles, we may assume that the inequality $n/2 - 2 \leq t < n/2 - 1$ holds and one of $p, q$ is strictly less than $n/ 2 - 2$; hence $a \leq p + q < n - 4$. Again taking $\bbS^{(\lambda_1,\lambda_2)}\cU_2^\vee$ where $q - a - t \geq \lambda_2 \geq q - a - t - 3$, observe first that 
 		\[n - 1 + \lambda_2 \leq n - 1 + q - a - t \leq n - 1 - t \leq n - 2,\]
 		but we only have 
 		\[n - 1 + \lambda_2 \geq n - 1 + q - a - t - 3 \geq n - p - t - 4 > n - n/2+2-n/2+1 - 4 = -1,\]
 		so this vector bundle may have cohomology if and only if $n - 1 + \lambda_2 = 0$. By the chain of inequalities above, this forces $\lambda_2 = q - a - t - 3$ as well as $p + t = n - 4$. By inspection of the terms in the Koszul resolution, if $\lambda_2 = q - a - t - 3$, then $a - t - p - 3\leq \lambda_1 \leq a - t - p - 1$, so we may observe
 		\[n + \lambda_1 \leq n + a - t - p - 1 \leq a + 3 < n - 1\]
 		or equivalently $n + \lambda_1 \leq n - 2$, as well as 
 		\[n + \lambda_1 \geq n + a - t - p - 3 \geq n - t - 3 > n - n/2 + 1 - 3 = n/2-2 > 0,\]
 		proving by Borel-Weil-Bott the acyclicity of any vector bundle arising in the Koszul resolution under the conditions of the lemma.
 		
 	\end{proof}
 	The previous vanishing lemmas will reduce all of the necessary calculations for Theorem \ref{sodtheorem} to a finite collection of explicit calculations, which will need as input the following isomorphisms. 
 	\begin{lemma}\label{vanishinglemma5}
 		On $\OGr(2,\cQ)$, we have the following isomorphismsm, where $0 \leq t \leq q$ and $t < n - 4$: 
 		\begin{align}
 			\tau_*(\Sym^t \cU_2^\vee \otimes \Sym^q \cU_2^\vee(-t)) &\simeq \tau_*(\Sym^{q-t} \cU_2^\vee) \label{vl5e1}\\
 			&\simeq \Sym^{q-t} \cE^\vee / \im(\Sym^{q-t-2}\cE^\vee \otimes \tau^*\cL \to \Sym^{q-t} \cE^\vee)\nonumber \\
 			\tau_*(\Sym^{t+1} \cU_2^\vee \otimes \Sym^q \cU_2^\vee(-t)) &\simeq \tau_*(\cU_2^\vee \otimes \Sym^{q-t}\cU_2^\vee ) \label{vl5e2}\\
 			&\simeq \cE^\vee \otimes \Sym^{q-t} \cE^\vee / \im(\left(\substack{\cE^\vee \otimes\Sym^{q-t-2} \cE^\vee \\ \oplus \\ \Sym^{q-t-1}\cE^\vee}\right) \otimes \tau^*\cL \to \cE^\vee \otimes \Sym^{q-t}\cE^\vee) \nonumber 
		\end{align}
 	\end{lemma}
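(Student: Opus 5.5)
The plan has two stages: first identify the left-hand pushforwards with simpler ones using the Pieri rule and the vanishing results of Lemma \ref{vanishinglemma3}/\ref{vanishinglemma2}, then compute those simpler pushforwards via the Koszul resolution exactly as in the proof of \eqref{O1calc}.

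\textbf{Step 1: reduce to the middle summand.} Since $\cU_2^\vee$ has rank $2$, we have $\cU_2^\vee(-1)\simeq \cU_2$, so $\Sym^t\cU_2^\vee(-t)\simeq \Sym^t\cU_2$. The left-hand side of \eqref{vl5e1} is therefore $\tau_*(\Sym^t\cU_2\otimes\Sym^q\cU_2^\vee)$. For rank two there is the standard Clebsch–Gordan decomposition
\[\Sym^t\cU_2\otimes \Sym^q\cU_2^\vee\simeq \bigoplus_{j=0}^{t}\Sym^{q-t+2j}\cU_2^\vee(-j),\]
valid for $t\le q$, which follows from Pieri after rewriting $\Sym^t\cU_2=\Sym^t\cU_2^\vee(-t)$. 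The $j=0$ summand is $\Sym^{q-t}\cU_2^\vee$. For $j\geq 1$, the summand has the shape $\Sym^0\cU_2^\vee\otimes\Sym^{m}\cU_2^\vee(-j)$ with $\min(0,m)=0<j$, and $j\le t<n-4$. Lemma \ref{vanishinglemma2} (with $k=2$) then kills all of its pushforwards, including $\tau_*$. This gives the first isomorphism of \eqref{vl5e1}.

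For \eqref{vl5e2}, I would use the same decomposition, replacing $t$ by $t+1$ in the first factor and writing $\Sym^{t+1}\cU_2^\vee(-t)\simeq \Sym^{t+1}\cU_2(1)$. The summands become $\Sym^{q-t-1+2j}\cU_2^\vee(1-j)$. The $j=0$ summand contributes $\Sym^{q-t-1}\cU_2^\vee(1)$, and the $j=1$ summand contributes $\Sym^{q-t+1}\cU_2^\vee$; the summands with $j\ge2$ vanish by Lemma \ref{vanishinglemma2}. Comparing with $\cU_2^\vee\otimes\Sym^{q-t}\cU_2^\vee\simeq \Sym^{q-t+1}\cU_2^\vee\oplus\Sym^{q-t-1}\cU_2^\vee(1)$ gives the first isomorphism of \eqref{vl5e2}. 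If $q=t$, the $(1)$-term is absent on both sides.

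\textbf{Step 2: compute via the Koszul resolution.} On $\Gr(2,\cE)$, $\pi_*\Sym^m\cU_2^\vee=\Sym^m\cE^\vee$ and there is no higher cohomology; the same holds for $\cU_2^\vee\otimes\Sym^m\cU_2^\vee$, whose pushforward is $\cE^\vee\otimes\Sym^m\cE^\vee$. Tensor the Koszul complex $\tau^*\cL^{\otimes\bullet}\otimes\bigwedge^\bullet\Sym^2\cU_2\to\cO$ with the bundle in question. Using the Borel–Weil–Bott bookkeeping of Lemma \ref{vanishinglemma2} (with $t=0$ there), every term has no higher cohomology. Hence $\tau_*$ is the cokernel of the pushforward of the first Koszul map $\Sym^m\cU_2^\vee\otimes\Sym^2\cU_2\otimes\tau^*\cL\to\Sym^m\cU_2^\vee$, provided the higher Koszul terms contribute nothing in degree $0$; by BWB their $H^0$ vanishes since they have negative weights in the appropriate places. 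Pieri gives
\[\Sym^m\cU_2^\vee\otimes\Sym^2\cU_2=\Sym^m\cU_2^\vee\otimes \Sym^2\cU_2^\vee(-2)\simeq \Sym^{m+2}\cU_2^\vee(-2)\oplus\Sym^m\cU_2^\vee(-1)\oplus\Sym^{m-2}\cU_2^\vee.\]
The first two summands are acyclic by BWB (a repeated entry in $\lambda+\rho$), and the third pushes forward to $\Sym^{m-2}\cE^\vee$. As in the proof of \eqref{U1calc}, the map factors through the quadratic form $\cL\to\Sym^2\cE^\vee$ followed by multiplication, which yields the claimed quotient $\Sym^{m}\cE^\vee/\im(\Sym^{m-2}\cE^\vee\otimes\cL)$. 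The second formula is obtained the same way: $\cU_2^\vee\otimes\Sym^m\cU_2^\vee\otimes\Sym^2\cU_2$ has surviving summands pushing forward to $\cE^\vee\otimes\Sym^{m-2}\cE^\vee$ and $\Sym^{m-1}\cE^\vee$. The latter comes from $\Sym^{m-1}\cU_2^\vee\otimes\det$-type pieces, i.e.\ $\Sym^{m+1}\cU_2^\vee(-1)$-like summands reorganized, which after BWB contribute $\Sym^{m-1}\cE^\vee$.

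\textbf{Main obstacle.} The hard part is the second formula. There I must correctly identify which Pieri summands of $\cU_2^\vee\otimes\Sym^m\cU_2^\vee\otimes\Sym^2\cU_2$ have nonzero $H^0$, and verify that the induced map is the natural one, i.e.\ multiplication by the quadratic form into either factor. I would handle this by writing the bundle as $(\Sym^{m+1}\cU_2^\vee\oplus\Sym^{m-1}\cU_2^\vee(1))\otimes\Sym^2\cU_2$ and treating each piece separately. The $\Sym^{m+1}$ piece gives $\Sym^{m-1}\cE^\vee\otimes\cL$. The $(1)$ piece gives, by the same computation that produced \eqref{U1calc}, an $\cE^\vee\otimes\Sym^{m-2}\cE^\vee$-type image, whose sum with the first recovers the stated image.
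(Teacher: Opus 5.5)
Your proposal is correct and follows essentially the paper's proof: a Pieri (Clebsch--Gordan) decomposition isolates the summands $\Sym^{q-t}\cU_2^\vee$, resp.\ $\Sym^{q-t+1}\cU_2^\vee\oplus\Sym^{q-t-1}\cU_2^\vee(1)\simeq\cU_2^\vee\otimes\Sym^{q-t}\cU_2^\vee$, that Lemma \ref{vanishinglemma2} does not kill, and the Koszul resolution with Borel--Weil--Bott on $\Gr(2,\cE)$ then identifies their pushforwards as cokernels of the pushed-forward quadric multiplication maps. One small slip in Step 2: the summands $\Sym^{m+1}\cU_2^\vee(-1)$ are acyclic and contribute nothing; the three contributing pieces are the two untwisted copies of $\Sym^{m-1}\cU_2^\vee$ and the summand $\Sym^{m-3}\cU_2^\vee(1)$, which is exactly the accounting your final paragraph gives.
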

 	\begin{proof}
 		In the first case, we first observe by the Pieri rule that 
 		\[\Sym^t \cU_2^\vee \otimes \Sym^q \cU_2^\vee(-t) \simeq \bigoplus_{j = 0}^{t} \Sym^{q+t-2j} \cU_2^\vee(j-t).\]
 		By Lemma \ref{vanishinglemma2}, all of the summands have vanishing $R\tau_*$ except for $\Sym^{q-t} \cU_2^\vee$. Moreover, the proof of Lemma \ref{vanishinglemma2} shows that every term in the Koszul resolution for $\Sym^{q-t} \cU_2^\vee$ has vanishing higher direct images. Using the Borel-Weil-Bott theorem on the ambient Grassmannian bundle to calculate the pushforward of the bundle $\Sym^{q-t} \cU_2^\vee$ gives a canonical surjective map 
 		\[\Sym^{q-t} \cE^\vee \to \tau_*(\Sym^{q-t}\cU_2^\vee),\]
 		whose kernel is computed by the image of the pushforward of the map \[\Sym^{q-t} \cU_2^\vee \otimes \tau^*\cL\otimes \Sym^2 \cU_2\to \Sym^{q-t}\cU_2^\vee\] given by multiplication by the equation for the quadric. It is then straightforward to check that this map pushes forward to the map $\Sym^{q-t-2} \cE^\vee \otimes \tau^*\cL\to \Sym^{q-t} \cE^\vee$. \par 
 		The case of the pushforward $\tau_*(\Sym^{t+1} \cU_2^\vee \otimes \Sym^q \cU_2^\vee(-t))$ is similar, with two additional subtleties. First, there are two Schur functors which contribute to the pushforward, which are precisely the summands coming from the subbundle \[\cU_2^\vee \otimes \Sym^{q-t} \cU_2^\vee \subset \Sym^{t+1}\cU_2^\vee \otimes \Sym^q \cU_2^\vee(-t).\] Second, there are three Schur functors which contribute to the pushforward of the bundle $\cU_2^\vee \otimes \Sym^{q-t} \cU_2^\vee \otimes \tau^*\cL \otimes \Sym^2 \cU_2$ determining the relations: two of these combine to push forward to the map \[\cE^\vee \otimes \Sym^{q-t-2} \cE^\vee \otimes \tau^*\cL\to \cE^\vee \otimes \Sym^{q-t} \cE^\vee\] which is multiplication by the quadric relation in the right entry, and the last pushes forward to the map \[\Sym^{q-t-1} \cE^ \vee \otimes \tau^*\cL\to \cE^\vee \otimes \Sym^{q-t} \cE^\vee\] which splits the multiplication by the quadric relation between the left and right entry. Otherwise, the proof goes through essentially without change. 
 	\end{proof}
 	To conclude this section, we explain on a fiber over $s \in S$ how to interpret the isomorphism \eqref{U1calc} of Lemma \ref{vanishinglemma2}. If we define $Q \coloneq \cQ_s$ as the quadric on $V \coloneq \cE|_s$, by diagonalization of quadratic forms we may assume that we have a orthogonal basis $e_i$ of $V$ such that $Q(e_i,e_i) = \lambda_i$. As a consequence, we can write the defining equation of $Q$ as $\lambda_1x_1^2 + \cdots + \lambda_n x_n^2$. As a first step, it is most natural to view $\bbS^{(2,1\dots,1)} V^\vee$ as a quotient of $\bigwedge^k V^\vee \otimes V^\vee$: in fact, by the Pieri rule, we have an isomorphism 
 	\[\bbS^{(2,1,\dots,1)} V^\vee \simeq \left(\bigwedge^k V^\vee \otimes V^\vee \right)/ \bigwedge^{k+1} V^\vee,\]
 	where $\bigwedge^{k+1} V^\vee \to \bigwedge^k V^\vee \otimes V^\vee$ embeds via the comultiplication map. \par 
 	Explicitly, this can be viewed as being spanned by elements of the form $x_{i_1} \wedge x_{i_2} \wedge \cdots \wedge x_{i_k} \otimes x_{i_{k+1}}$ where we additionally impose the \textit{cyclic rotation relations} 
 	\begin{multline} \label{cyclicrotations}
 		x_{i_1} \wedge \cdots \wedge x_{i_k} \otimes x_{i_{k+1}} + (-1)^{k} x_{i_2} \wedge \cdots \wedge x_{i_{k+1}} \otimes x_{i_1} + \cdots + (-1)^{k^2}x_{i_{k+1}} \wedge \cdots \wedge x_{i_{k-1}}\otimes x_{i_k} \\ 
 		= \sum_{j=1}^{k+1} \sgn(\tau^j)x_{i_{\tau^j(1)}} \wedge \cdots \wedge x_{i_{\tau^j(k)}} \otimes x_{i_{\tau^j(k+1)}},
 	\end{multline}
 	where we let $\tau$ denote the permutation $(1 \;\; 2 \;\; \cdots \;\; k+1)$; these relations form a basis for the image of $\bigwedge^{k+1}V^\vee \to \bigwedge^k V^\vee \otimes V^\vee$. However, we also take a further quotient by the image of
 	\[\bigwedge^{k-1} V^\vee \to \bigwedge^{k-1} V^\vee \otimes \Sym^2 V^\vee \to \bbS^{(2,1,\dots,1)} V^\vee.\]
 	Having fixed our choice of basis diagonalizing the quadratic form, this can be given as the map
 	\[ \alpha \mapsto \sum_{i=1}^n \lambda_i \alpha \wedge x_i \otimes x_i,\]
 	giving the \textit{quadric relations}
 	\begin{equation} \label{quadricrelations}
 		\sum_{i=1}^n \lambda_i x_{i_1} \wedge x_{i_2} \wedge \cdots \wedge x_{i_{k-1}} \wedge x_i \otimes x_i
 	\end{equation}
 	for any $i_1 < i_2 < \cdots < i_{k-1}$. \par 
 	Finally, note that on smooth fibers, this isomorphism is precisely an identification of $H^0(\cU^\vee(1))$ with the \textit{orthogonal} Schur functor associated to weight $(2,1,\dots, 1)$. This can be checked by appealing to the Borel-Weil-Bott theorem on the orthogonal Grassmannian itself which is homogeneous for the action of $\Spin(V)$.
	\section{The FM kernel over smooth fibers, and a digression on spinors} \label{smoothsection}
	To make the construction discussed above more explicit, we discuss the more familiar situation where $S = \Spec \bbC$ is a single point and the quadric fibration is a single smooth quadric $Q \subset\bbP(V)$ where $\dim V = n$. Note that since the quadrics, orthogonal Grassmannians, and sheaves $\cF$ are all flat over the base $S$, restricting to a smooth quadric fiber reduces to this case. \par 
	When $k < n/2$, both the quadric $Q$ itself and the orthogonal Grassmannian $\OGr(k,Q)$ are homogeneous spaces under the action of $\SO(n)$, and therefore also under the action of its simply connected double cover $\Spin(n)$. When $n$ is even and $2k = n$, it is well-known that $\OGr(k,Q)$ has two connected components, both of which are homogeneous spaces under the action of $\SO(n)$ (and hence $\Spin(n)$). In either case, the group $\Spin(n)$ embeds as a subgroup of $\textcl_{\text{even}}^\times$. \par 
	Since the base is a single point, the FM kernel $\cF$ is defined by the right exact sequence of right $\textcl_{\text{even}}$-modules on $\OGr(k,Q)$ given by 
	\begin{equation}
		\cU_k \otimes \textcl_{\text{odd}} \to \textcl_{\text{even}} \to \cF \to 0. \label{rightexactF}
	\end{equation}
	Let us fix an maximal isotropic subspace $W \subset V$, in which case $\dim W = \lfloor n/2 \rfloor$. We briefly recall that one construction of the \textit{half-spinor (resp. spinor) representation} when $n$ is even (resp. odd) is given by taking the left ideal 
	\[I_W := \textcl \cdot \bigwedge^n W\]
	and then taking the even piece $I_W^\text{even} \subset \textcl_{\text{even}}$ or the odd pieces $I_W^\text{odd} \simeq \textcl_{\text{odd}} \otimes_{\textcl_{\text{even}}}I_W^\text{even}$. When $n$ is even, $I_W^\text{odd}$ is another half-spinor representation distinct from $I_W^\text{even}$, while when $n$ is odd, they are isomorphic $\Spin(n)$-representations. \par 
	One way to check that these are indeed the half-spinor (resp. spinor) representations is by observing that $I_W^\text{even}$ and $I_W^\text{odd}$ are clearly invariant under the action of $\textcl_{\text{even}}^\times$, and so define a $\textcl_{\text{even}}^\times$-representation. We can therefore restrict them to $\Spin(n)$-representations; then to check that they define the usual half-spinor (resp. spinor) representations, pick a convenient maximal torus in $\Spin(n)$ and check that they have the correct highest weight and dimension.  \par 
	When $n$ is odd, the action of $\textcl_{\text{even}}$ on the module $I_W^{\text{even}}$ realizes an isomorphism 
	\begin{equation} 
		\textcl_{\text{even}} \simeq \End_\bbC(I_W^\text{even}), \label{ClEnd1}
	\end{equation}
	so in particular $\textcl_{\text{even}}$ is a central simple algebra. Then we have the following well-known theorem of Morita equivalence: 
	\begin{proposition}\label{morita}
		Suppose that $M$ is a free $R$-module of finite rank. Then the functors between the categories of right modules
		\begin{align*}
			\End_R(M)\text{-}\mathrm{mod} &\to R\text{-}\mathrm{mod} \\
			N &\mapsto N \otimes_{\End_R(M)} M 
		\end{align*}
		and
		\begin{align*}
			R\text{-}\mathrm{mod} &\to \End_R(M)\text{-}\mathrm{mod} \\
			N &\mapsto N \otimes_R \Hom_R(M,R)
		\end{align*}
		are mutually inverse exact equivalences. 
	\end{proposition}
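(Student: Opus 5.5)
This is classical Morita theory. Write $E := \End_R(M)$ and $M^* := \Hom_R(M,R)$. Module conventions have to be fixed first so that the tensor products typecheck. Since $E$ acts on $M$ on the left, $M$ is an $(E,R)$-bimodule and $M^*$ is an $(R,E)$-bimodule. The functors are therefore $N \mapsto N \otimes_E M$, from right $E$-modules to right $R$-modules, and $N \mapsto N \otimes_R M^*$, in the reverse direction. The plan has four steps: identify the two bimodule compositions $M^* \otimes_E M$ and $M \otimes_R M^*$, deduce that the two functors are mutually inverse up to natural isomorphism, get exactness for free, and note the noncommutative caveat.

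First, we fix a basis to reduce to matrices. Choose an isomorphism $M \simeq R^{r}$ of right $R$-modules. Then $E \simeq M_r(R)$ acts on column vectors $M = R^{r}$ on the left. The dual $M^* \simeq R^{r}$ is the module of row vectors, which is a left $R$-module and a right $M_r(R)$-module.

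Second, we compute the two compositions.
\begin{itemize}
\item The evaluation pairing $M^* \otimes_E M \to R$, $\varphi \otimes m \mapsto \varphi(m)$, is row times column. It is an $(R,R)$-bimodule map.
\item The map $M \otimes_R M^* \to E$, $m \otimes \varphi \mapsto (x \mapsto m\varphi(x))$, is column times row. It is an $(E,E)$-bimodule map.
\end{itemize}
Both maps are isomorphisms; this is the standard verification in the basis $e_i$. For the second map, the elementary matrices $E_{ij} = e_i \otimes e_j^*$ give surjectivity, and freeness of $M$ over $R$ gives injectivity. For the first map, $e_j^* \otimes e_i = e_j^*\otimes E_{i1}e_1 = e_j^* E_{i1} \otimes e_1 = \delta_{ij}\, e_1^* \otimes e_1$. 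Hence $M^* \otimes_E M$ is generated by $e_1^* \otimes e_1$ as an $R$-module, and that generator maps to $1$. Injectivity then follows because $R \to M^*\otimes_E M$, $r \mapsto r\,e_1^*\otimes e_1$, is a right inverse of the pairing.

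Third, associativity of tensor products gives natural isomorphisms
\[
(N \otimes_E M)\otimes_R M^* \simeq N \otimes_E E \simeq N
\quad\text{and}\quad
(N \otimes_R M^*)\otimes_E M \simeq N \otimes_R R \simeq N,
\]
so the two functors are mutually inverse equivalences. Exactness follows without further work. $M$ is projective over $E$, being a direct summand $E e_{11}$ of $E$, and $M^*$ is free over $R$; so both functors are exact. Alternatively, any equivalence of abelian categories is exact.

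The only genuine subtlety is checking $M^* \otimes_E M \simeq R$ when $R$ is noncommutative, since one must track left versus right actions; the elementary-matrix computation in the second step handles it. In the application the paper has in mind, $R = \bbC$ and $M = I_W^{\mathrm{even}}$, so everything is finite-dimensional linear algebra.
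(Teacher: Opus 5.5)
Your proof is correct. The paper gives no argument for this proposition: it quotes it as a well-known Morita equivalence. It then applies it to sheaves of modules on $\OGr(k,Q)$ in two cases:
\begin{itemize}
\item the odd case, with $R=\bbC$ and $M=I_W^{\text{even}}$;
\item the even case, with $R=Z\simeq\bbC\times\bbC$ and $M=I_W$. Here $M$ is free over $Z$ only because the two half-spin representations have equal dimension, so your closing remark covers only the odd case.
\end{itemize}
Your route is the standard proof that citation relies on: the bimodule isomorphisms $M\otimes_R M^*\simeq\End_R(M)$ and $M^*\otimes_{\End_R(M)}M\simeq R$, followed by associativity of $\otimes$. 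It carries over to sheaves unchanged by tensoring everything with $\cO$.

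Two small points to tighten.

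First, the computation $e_j^*\otimes e_i=\delta_{ij}\,e_1^*\otimes e_1$ only shows that $e_1^*\otimes e_1$ generates $M^*\otimes_E M$ as an $(R,R)$-bimodule. Your injectivity argument needs $r\mapsto r\,e_1^*\otimes e_1$ to be surjective, which requires generation as a left $R$-module. When $R$ is noncommutative you must also move right scalars across. For example, use $e_i b=(bE_{i1})e_1$ and $e_j^*(bE_{i1})=\delta_{ij}\,b\,e_1^*$, which give $e_j^*\otimes e_i b=\delta_{ij}\,b\,e_1^*\otimes e_1$.

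Second, your use of $e_1$ tacitly assumes the rank is positive. The statement needs this anyway: for $M=0$ one has $\End_R(M)=0$, and $N\mapsto N\otimes_R M^*$ is not an equivalence unless $R=0$.
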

	In particular, if we apply this to the sequence \eqref{rightexactF} of right $\textcl_{\text{even}}$-modules on $\OGr(k,Q)$, we see that it is carried to the sequence of sheaves on $\OGr(k,Q)$ given by
	\[\cU_k \otimes I_W^\text{odd} \to I_W^\text{even} \to \cF \otimes_{\textcl_{\text{even}}} I_W^\text{even}\to 0\] 
	where one can check that this cokernel is a $\Spin(n)$-equivariant vector bundle with highest weight $(1/2,\cdots,1/2)$ and of the correct rank. In particular, we see that we have an isomorphism \begin{equation} \label{oddspinor}
		\cF \otimes_{\textcl_{\text{even}}} I_W^\text{even} \simeq \cS
	\end{equation} relating $\cF$ to the usual spinor bundle on $\OGr(k,V)$ when $\dim V$ is odd, which is exceptional by \cite[Proposition 6.8]{kuznetsovodd}. In particular, under the equivalence \[\Db(\OGr(k,Q),\textcl_{\text{even}}) \simeq \Db(\OGr(k,Q))\] coming from Proposition \ref{morita}, the exceptionality of $\cS$ implies the exceptionality of $\cF$ as a module over $\textcl_{\even}$. Moreover, the inverse equivalence realizes an isomorphism 
	\[
		\cF \simeq \cS \otimes (I_W^\text{even})^\vee, \]
	so if we forget the $\textcl_{\text{even}}$-module structure then purely as sheaves on the orthogonal Grassmannian we have the equivalence 
	\[R\Hom(\cF,\cF)\simeq R\Hom(\cS, \cS) \otimes \End(I_W^\text{even},I_W^\text{even}) \simeq \textcl_{\text{even}}.\]
	\par 
	When $n$ is even, the action of the full Clifford algebra $\textcl$ on $I_W$ realizes an isomorphism \cite[II.2.1]{chevalleyclifford}
	\[\textcl \simeq \End_\bbC(I_W),\]
	which induces by restriction to $\textcl_{\text{even}}$ an isomorphism 
	\begin{equation}
		\textcl_{\text{even}} \simeq \End_\bbC(I_W^\text{even}) \times \End_\bbC(I_W^\text{odd}). \label{ClEnd2}
	\end{equation}
	If we let $Z := Z(\textcl_{\text{even}})$ denote the center of the even Clifford algebra, then $Z$ has a natural action on $I_W$ which preserves the even and odd part as well. More precisely: if we pick a basis $f_1,\dots, f_n$ for $W$ and choose vectors $e_1, \dots, e_n$ such that $Q(e_i,f_j) = \delta_{ij}$, then the center $Z$ is two-dimensional over $\bbC$ and spanned by $1$ and the element 
	\[d := \frac{1}{2^n}(e_1+f_1)(e_1-f_1)\cdots(e_n+f_n)(e_n-f_n),\]
	and it is an easy explicit calculation in the Clifford algebra that $d$ acts by $1 \cdot$ on  $I_W^\text{even}$ and by $-1\cdot$ on $I_2^\text{odd}$. In particular, we can reinterpret the question of preserving even or odd pieces as being $Z$-linear, and hence reinterpret the isomorphism of \eqref{ClEnd2} as a $Z$-linear isomorphism 
	\begin{equation}
		\textcl_{\text{even}} \simeq \End_Z(I_W);
	\end{equation}
	As a consequence, we may view $\textcl_{\text{even}}$ as being a trivial Azumaya algebra over $Z$. On the other hand, $d^2 = 1$, so $\Spec Z = \Spec \bbC[d]/(d^2-1) \simeq \bbC \sqcup \bbC$. Taking the fibers of $I_W$ over the two points recovers $I_W^\text{even}$ and $I_W^\text{odd}$, and the fibers of $\textcl_\text{even} \simeq \End_Z(I_W)$ are $\End_\bbC(I_W^\text{even})$ and $\End_\bbC(I_W^\text{odd})$. By the same considerations as in the odd case, we see that Proposition \ref{morita} carries the sequence of right $\textcl_{\text{even}}$-modules on $\OGr(k,Q)$ to a sequence of right $Z$-modules on $\OGr(k,Q)$ given by 
	\[\cU_k \otimes I_W \to I_W \to \cF\otimes_{\textcl_{\text{even}}} I_W \to 0,\]
	which under the equivalence of $\Spec_{\OGr(k,Q)} Z \simeq \OGr(k,Q) \sqcup \OGr(k,Q)$ gives isomorphisms 
	\begin{equation}
		\label{evenspinor} \cF \otimes_{\textcl_{\text{even}}} I_W^{\text{even}} \simeq \cS_+ ,\qquad \qquad \cF \otimes_{\textcl_{\text{even}}} I_W^\text{odd} \simeq \cS_-
	\end{equation} with the spinor bundles, whose exceptionality is shown for example in \cite[Proposition 6.8]{kuznetsovodd}. When $k < n/2$, these are both vector bundles of the same rank on the connected variety $\OGr(k,Q)$, while when $k = n/2$ both $\cS_+$ and $\cS_-$ are line bundles supported on different connected components of $\OGr(k,Q)$. In particular, in the category $\Db(\OGr(k,Q), \textcl_{\text{even}})$, we can deduce
	\[R\End_{\textcl_{\text{even}}}(\cF,\cF) = \bbC \times \bbC\]
	and that the even and odd pieces of $\cF$ are exceptional. \par 
	On the other hand, applying the inverse equivalence gives an isomorphism
	\[\cF \simeq (\cS_+ \oplus \cS_-) \otimes_\cZ I_W^\vee \simeq \cS_+ \otimes (I_W^\text{even})^\vee \oplus \cS_- \otimes (I_W^\text{odd})^\vee,\]
	and forgetting the $\textcl_{\text{even}}$-module structure we see that as sheaves of $\cO$-modules on the orthogonal Grassmannian we have 
	\begin{equation}
		\begin{aligned}
			R\Hom(\cF,\cF) &\simeq R\Hom(\cS_+,\cS_+) \otimes \Hom(I_W^\text{even}, I_W^\text{even}) \oplus R\Hom(\cS_-,\cS_-) \otimes \Hom(I_W^\text{odd},I_W^\text{odd}) \\
			&\simeq \End(I_W^\text{even}) \oplus \End(I_W^\text{odd}) \\
			&\simeq \End_Z(I_W) \\
			&\simeq \textcl_{\text{even}}.
		\end{aligned}
	\end{equation}
	In particular, we may observe that we have proved the following proposition. 
	\begin{proposition}
		Suppose that $Q \subset \bbP(V)$ is a smooth quadric hypersurface, and take the sheaf of right $\textcl_{\text{even}}$-modules $\cF$ as defined above on $\OGr(k,Q)$, i.e. 
		\[\cF := \coker(\cU \otimes \textcl_{\text{odd}} \to \textcl_{\text{even}}).\]
		Then as sheaves of $\cO$-modules on $\OGr(2,Q)$, we have 
		\[R\Hom(\cF,\cF) \simeq \textcl_{\text{even}}. \]
	\end{proposition}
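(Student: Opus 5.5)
The statement is essentially a summary of the discussion preceding it, so my proof would organize that discussion into three steps: a Morita reduction, an identification with spinor bundles, and an assembly of endomorphism algebras. Throughout, $Q$ is smooth and $S$ is a point, so $\cF$ is the cokernel in \eqref{rightexactF}, a locally free sheaf of right $\textcl_{\text{even}}$-modules on $\OGr(k,Q)$.

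\emph{Morita step.} I first describe $\textcl_{\text{even}}$ as an endomorphism algebra.
\begin{itemize}
\item If $n$ is odd, \eqref{ClEnd1} identifies $\textcl_{\text{even}}$ with $\End_\bbC(I_W^{\text{even}})$.
\item If $n$ is even, I use the central element $d$, which squares to $1$ and acts by $\pm 1$ on $I_W^{\text{even}}$ and $I_W^{\text{odd}}$. This gives $\textcl_{\text{even}} \simeq \End_Z(I_W)$ with $\Spec Z$ two points, i.e.\ a product of two matrix algebras as in \eqref{ClEnd2}.
\end{itemize}
In either case Proposition \ref{morita} applies, after sheafifying over $\OGr(k,Q)$ with the trivial algebra bundle. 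It turns the right exact sequence \eqref{rightexactF} into $\cU_k \otimes I_W^{\text{odd}} \to I_W^{\text{even}} \to \cF \otimes_{\textcl_{\text{even}}} I_W^{\text{even}} \to 0$, together with its analogue over the other point of $\Spec Z$ when $n$ is even. The inverse equivalence then gives
\[
\cF \simeq \cS \otimes (I_W^{\text{even}})^\vee \quad (n \text{ odd}), \qquad \cF \simeq \cS_+ \otimes (I_W^{\text{even}})^\vee \oplus \cS_- \otimes (I_W^{\text{odd}})^\vee \quad (n \text{ even}),
\]
provided the cokernels above are the spinor bundles.

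\emph{Identifying the cokernels.} This is the main obstacle. I would proceed in two stages.
\begin{enumerate}
\item The cokernel is $\Spin(n)$-equivariant, because $\cU_k$ is $\Spin(n)$-equivariant and the map is Clifford multiplication. Since $\Spin(n)$ acts transitively on $\OGr(k,Q)$, it suffices to identify the fiber at one isotropic $U_k$ as a representation of the parabolic subgroup $P$ stabilizing $U_k$.
\item Choose $U_k \subset W$ and use the PBW-type decomposition \eqref{cliffwedgetrick}. The fiber is $I_W^{\text{even}}/U_k I_W^{\text{odd}}$, a quotient of the spinor module by the span of $U_k$-multiples. It has dimension $2^{\lfloor n/2\rfloor - k}$, and as a representation of the Levi factor of $P$ it is the spin representation of $\Spin(U_k^\perp/U_k)$ twisted by the character $\det(U_k)^{-1/2}$.
\end{enumerate}
This is exactly the fiber of Kuznetsov's spinor bundle, whose highest weight is $(1/2,\dots,1/2)$. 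For $n$ even the two points of $\Spec Z$ give $\cS_+$ and $\cS_-$, which are the two half-spinor twists; when $k = n/2$ these are line bundles on the two connected components. The hard part is computing the weights precisely, since the convention for which half-spinor appears must match \eqref{evenspinor}. I would check this on the maximal torus adapted to the basis $e_i, f_i$.

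\emph{Conclusion.} Exceptionality of $\cS$, $\cS_\pm$ is \cite[Proposition 6.8]{kuznetsovodd}. For $n$ even I also need $R\Hom(\cS_+, \cS_-) = 0$. For $k < n/2$ I would take this from the same reference's orthogonality statements. For $k = n/2$ it holds because $\cS_\pm$ are supported on disjoint components; there, $R\Hom(\cS_\pm,\cS_\pm) = \bbC$ because each component is a connected homogeneous projective variety. Combining these facts,
\[
R\Hom(\cF,\cF) \simeq \End(I_W^{\text{even}}) \oplus \End(I_W^{\text{odd}}) \simeq \textcl_{\text{even}}
\]
by \eqref{ClEnd2}, and in the odd case $R\Hom(\cF,\cF) \simeq \End(I_W^{\text{even}}) \simeq \textcl_{\text{even}}$ by \eqref{ClEnd1}.
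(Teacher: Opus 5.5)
Your proposal follows essentially the same route as the paper. You Morita-reduce via $I_W$ (over $\bbC$ for $n$ odd, over $Z$ for $n$ even), identify the resulting cokernels with the spinor bundles by $\Spin(n)$-equivariance, highest weight and rank, and conclude by exceptionality from \cite[Proposition 6.8]{kuznetsovodd}. You are in fact slightly more careful than the paper: since $R\Hom(\cF,\cF)$ is taken as $\cO$-modules, the cross terms $R\Hom(\cS_\pm,\cS_\mp)$ must vanish in both directions, and the paper's displayed computation uses this silently.

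There is one cosmetic slip. For $n$ even and $k<n/2$, each of $I_W^{\text{even}}/U_k I_W^{\text{odd}}$ and $I_W^{\text{odd}}/U_k I_W^{\text{even}}$ has dimension $2^{n/2-k-1}$, not $2^{\lfloor n/2\rfloor-k}$. Also, which half-spinor is called $\cS_+$ plays no role in computing $R\Hom(\cF,\cF)$, so the weight-convention part of your ``main obstacle'' can be skipped.
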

	But the flatness of $\cF$ allows us to propagate this property to any fiber in our fibration regardless of singularities, at least on the level of Euler characteristics. 
	\begin{corollary}
		\label{constancy}
		Suppose that $p : \cQ \to S$ is a flat quadric fibration such that every fiber is of corank $\leq n - 2k + 1$, and consider the fibration $\tau: \OGr(k,\cQ) \to S$. Then for all $s \in S$, 
		\begin{equation}
			\chi(\cF|_s,\cF|_s) := \chi\left(R\Hom(\cF|_s,\cF|_s)\right) = \on{rk}\cl_0,
		\end{equation}
		and $\dim \Hom(\cF|_s,\cF|_s) \geq \on{rk}\cl_0$. 
	\end{corollary}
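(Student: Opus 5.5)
The plan is to deform to a smooth fiber: the Euler characteristic is locally constant in flat families, so it suffices to compute it at one point. To make this precise, consider the relative object $R\homsheaf(\cF,\cF)$ on $\OGr(k,\cQ)$ and its pushforward $R\tau_*R\homsheaf(\cF,\cF)$ to $S$.

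First, Lemma on local freeness shows $\cF$ is a vector bundle on $\OGr(k,\cQ)$. Hence $\homsheaf(\cF,\cF)=\cF^\vee\otimes\cF$ is locally free, and it is flat over $S$ because $\tau$ is flat by Lemma \ref{coranklemma}. Moreover, its formation commutes with restriction to fibers: $(\cF^\vee\otimes\cF)|_{\OGr(k,\cQ_s)}=\cF|_s^\vee\otimes\cF|_s$. This also uses that $\cF|_s$ is the cokernel of the fiberwise sequence, by right exactness of restriction applied to \eqref{kerdef}. Since $\tau$ is proper and flat, $R\tau_*(\cF^\vee\otimes\cF)$ is a perfect complex on $S$, and base change gives $Li_s^*R\tau_*(\cF^\vee\otimes\cF)\simeq R\Gamma(\cF|_s^\vee\otimes\cF|_s)=R\Hom(\cF|_s,\cF|_s)$. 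The Euler characteristic of the fiber of a perfect complex is the rank of that complex, which is locally constant on $S$. Here I take $S$ to be connected; otherwise one argues componentwise.

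Second, I need one point where the rank is known. At a point $s$ with smooth fiber, the preceding Proposition gives $R\Hom(\cF|_s,\cF|_s)\simeq\textcl_{\even}$, concentrated in degree $0$, so $\chi=\dim\textcl_{\even}=2^{n-1}=\on{rk}\cl_0$. The main obstacle is that $S$ might contain no point with a smooth fiber. In that case I would enlarge the family: pull everything back to the total space of the vector bundle $\cL^\vee\otimes\Sym^2\cE^\vee$, or to a family over $S\times\mathbb{A}^1$ interpolating between $q_s$ and a nondegenerate form. This restricted to an open subset where ranks stay $\geq 2k-1$, which still contains every original point and a generic smooth form. The new base is smooth, the constructions of $\cl_0$ and $\cF$ are compatible with base change, and the new base is connected to smooth fibers. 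Alternatively, since $\chi(\cF|_s,\cF|_s)$ depends only on the fiber data $(V,Q)$, one can use the universal family over the open subset of the space of quadratic forms on $V$ of rank $\geq 2k-1$. That space is irreducible and contains the smooth forms, and the argument above applies there.

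Finally, for the inequality on $\Hom$, use upper semicontinuity of $\dim H^0$ in the flat proper family. Along a curve $T$ through $s$ whose generic point is smooth, the function $\dim\Hom(\cF|_t,\cF|_t)=\dim H^0(\cF|_t^\vee\otimes\cF|_t)$ can only jump up at special points. At the generic point it equals $\dim\textcl_{\even}=\on{rk}\cl_0$, so $\dim\Hom(\cF|_s,\cF|_s)\geq\on{rk}\cl_0$.
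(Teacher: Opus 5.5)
Your proposal is correct and is essentially the paper's argument: constancy of $\chi$ and upper semicontinuity of $\dim\Hom$ for the flat bundle $\cF^\vee\otimes\cF$ over a flat projective family, with the possible absence of smooth fibers handled by passing to the universal family over an open subset of the space of quadratic forms on $V$. Your choice of that open subset (forms of rank $\geq 2k-1$, where Lemma \ref{coranklemma} still applies) is the right one. The paper's proof instead writes corank $\leq n-2k$, an open set that omits the fibers of corank exactly $n-2k+1$ allowed by the hypothesis.
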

	\begin{proof}
		If we knew that the quadric fibration contained a point $s \in S$ where the fiber was smooth, we would be done by the semicontinuity theorem and the constancy of Euler characteristic for the flat sheaf $\cF^\vee \otimes \cF$ over the flat projective family $p : \OGr(k,\cQ) \to S$. The only nontrivial content to the corollary is the fact that the quadric fibration need not have a point $s \in S$ such that $\cQ_s$ is smooth! \par 
		However, shrinking $S$ if necessary, there is a natural map $S \to \abs{\cO_{\bbP(V)}(2)}$ to the linear system of all quadrics; moreover, $S$ lands in the open subset of the linear system where each fiber is of corank $\leq n - 2k$, which in particular contains all smooth quadrics. Working instead on this larger family, we deduce the claim. 
	\end{proof}
	We will also find it useful to observe that similar results hold for Euler pairings between twists of $\cF$ and certain tautological vector bundles on relative orthogonal Grassmannians, at least when $k = 2$. 
	\begin{proposition}\label{constancy2}
		Suppose that $p : \cQ\to S$ is a flat quadric fibration such that every fiber is of corank $\leq n - 3$, and consider the fibration $\OGr(2,\cQ) \to S$. For any $s \in S$, whenever $0 \leq t \leq n - 4$ and $0 \leq q \leq n/2 - 2$, 
		we have equalities 
		\begin{equation}
				\chi(\cF|_s\otimes \cO(-t),\cF|_s) = R\Hom(\cF|_s \otimes \cO(-t),\cF|_s) = 0
		\end{equation}
		and 
		\begin{equation}
			\chi(\cF|_s \otimes \cO(-t),\Sym^2 \cU_2^\vee) = R\Hom(\cF|_s \otimes \cO(-t),\Sym^2\cU_2^\vee) = 0.
		\end{equation}
	\end{proposition}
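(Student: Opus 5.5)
The plan is to argue exactly as in Corollary \ref{constancy}: compute everything on a smooth quadric, where representation theory is available, and then transport the answer to an arbitrary fiber using constancy of Euler characteristics in flat families. One caveat first. For $t = 0$ the first pairing is $\chi(\cF|_s,\cF|_s) = \on{rk}\cl_0 \neq 0$ by Corollary \ref{constancy}, so the first equality can only hold for $t \geq 1$. Also, the displayed $\Sym^2\cU_2^\vee$ should presumably read $\Sym^q\cU_2^\vee$, since $q$ appears in the hypotheses. I will prove the statement with these two adjustments.

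\textbf{Step 1 (reduction to a smooth fiber).} The sheaves $\cF^\vee\otimes\cF(t)$ and $\cF^\vee\otimes\Sym^q\cU_2^\vee(t)$ are locally free on $\OGr(2,\cQ)$, and $\tau$ is flat and projective by Lemma \ref{coranklemma}. Hence both Euler characteristics are locally constant in $s$. As in the proof of Corollary \ref{constancy}, after shrinking $S$ I replace the family by the universal family over the open subset of $\abs{\cO_{\bbP(V)}(2)}$ consisting of quadrics of corank $\leq n-3$. This subset is irreducible, contains the smooth quadrics, and over it $\OGr(2,-)$ is still flat by Lemma \ref{coranklemma}. It therefore suffices to show that both $R\Hom$ complexes vanish when $Q$ is smooth.

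\textbf{Step 2 (smooth case via spinor bundles).} On a smooth $Q$ I use the isomorphisms of Section \ref{smoothsection}. For $n$ odd, $\cF \simeq \cS\otimes (I_W^{\even})^\vee$; for $n$ even, $\cF \simeq \cS_+\otimes(I_W^{\even})^\vee \oplus \cS_-\otimes(I_W^{\text{odd}})^\vee$. These give
\begin{align*}
R\Hom(\cF(-t),\cF) &\simeq \bigoplus R\Hom(\cS_\pm(-t),\cS_\pm)\otimes\Hom(I^\pm_W,I^\pm_W)\ \oplus\ (\text{cross terms } R\Hom(\cS_\pm(-t),\cS_\mp)),\\
R\Hom(\cF(-t),\Sym^q\cU_2^\vee) &\simeq \bigoplus R\Hom(\cS_\pm(-t),\Sym^q\cU_2^\vee)\otimes I_W^\pm .
\end{align*}
So the problem reduces to the vanishing of $H^\bullet(\OGr(2,Q),\cS_\pm^\vee\otimes\cS_{\pm}(t))$, of the cross terms, and of $H^\bullet(\cS_\pm^\vee\otimes\Sym^q\cU_2^\vee(t))$, in the stated ranges of $t$ and $q$.

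I would obtain these vanishings from the semiorthogonality of the exceptional collections of \cite{kuznetsoveven, kuznetsovodd}. Their blocks are $\langle\cO,\cU_2^\vee,\dots,\Sym^{q}\cU_2^\vee,\cS_{(\pm)}\rangle$ twisted by $\cO(j)$. Pairing the spinor in block $j$ against the spinor or the $\Sym^q\cU_2^\vee$ in block $j+t$, for $1\le t\le n-4$ and $q\le n/2-2$, gives exactly the required vanishings. If some value of $t$ falls outside what the collection covers directly (for instance when the even and odd block lengths differ), I would compute directly instead. On the homogeneous space $\OGr(2,Q)=\Spin(n)/P$, decompose $\cS^\vee\otimes\cS\simeq\bigoplus_i \bigwedge^i(\cU_2^\perp/\cU_2)$-type summands and $\cS^\vee\otimes\Sym^q\cU_2^\vee$ into irreducible $P$-equivariant bundles, and apply Borel--Weil--Bott for $\Spin(n)$. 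In each summand the twist $\cO(t)$ with $1\le t\le n-4$ should force a repeated entry in $\lambda+\rho$, i.e. a singular weight, hence no cohomology.

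\textbf{Main obstacle.} I expect Step 2 to be the hardest part: checking that the full range $0<t\le n-4$ (rather than only the ranges matching the block lengths) yields singular weights for every irreducible summand. This requires care with the half-spinor weights $(1/2,\dots,\pm1/2)$ and the parity of $n$. Note also that for singular fibers the argument only gives vanishing of the Euler characteristic, not of the $R\Hom$ complex itself. Upgrading to genuine vanishing there would require combining it with Lemmas \ref{vanishinglemma3} and \ref{vanishinglemma4} through the resolutions \eqref{leftres}, \eqref{rightres}, which is presumably carried out later in the paper.
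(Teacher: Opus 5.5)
Your overall route is the paper's: degenerate to a smooth quadric as in Corollary \ref{constancy}, write $\cF$ as a sum of spinor bundles tensored with multiplicity spaces, and invoke Kuznetsov's semiorthogonality. Your two corrections are right ($t\geq 1$ in the first equality, and $\Sym^q$ in place of $\Sym^2$). So is your remark that on singular fibers the degeneration only gives $\chi=0$, which is all that is used later.

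However, Step 2 fails as written, because the statement has a third typo that you reproduced: the sign of the twist. With $\cF|_s\otimes\cO(-t)$ in the first slot, your reduction asks for $H^\bullet(\cS_\pm^\vee\otimes\cS_\pm(t))=0$ and $H^\bullet(\cS_\pm^\vee\otimes\Sym^q\cU_2^\vee(t))=0$ for $t\geq 1$. Both are false. First, $\cO$ is a direct summand of $\cS^\vee\otimes\cS$ via the trace, so $H^0(\cS^\vee\otimes\cS(t))\supseteq H^0(\cO(t))\neq 0$. Second, for $n=5$ one has $\OGr(2,Q)\cong\bbP^3$ with $\cO(1)\simeq\cO_{\bbP^3}(2)$ and $\cS\simeq\cO_{\bbP^3}(1)$, so $H^\bullet(\cS^\vee(1))=H^\bullet(\cO_{\bbP^3}(1))\neq 0$.

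The mistake is the direction of semiorthogonality. In Kuznetsov's collections it kills $R\Hom$ from a later block to an earlier one, i.e. $R\Hom(\cS(t),\cS)$ and $R\Hom(\cS(t),\Sym^q\cU_2^\vee)$. It says nothing about maps from block $j$ into block $j+t$, which is what your reduction needs. For the same reason your Borel--Weil--Bott fallback cannot produce singular weights: the summand $\cO(t)$ has dominant weight and nonzero $H^0$.

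The fix is to prove the statement with $\cF|_s\otimes\cO(t)$, which is what is actually used downstream. Lemmas \ref{cliffnonclifflemma} and \ref{cliffclifflemma} concern $R\tau_*(\cF^\vee(-t)\otimes\Sym^q\cU_2^\vee)$ and $R\tau_*(\cF^\vee\otimes\cF(-t))$, whose fibers are $R\Hom(\cF|_s(t),\Sym^q\cU_2^\vee)$ and $R\Hom(\cF|_s(t),\cF|_s)$. With this sign your Step 2 reduces to three groups:
\[
H^\bullet(\cS_\pm^\vee\otimes\cS_\pm(-t)),\qquad H^\bullet(\cS_\pm^\vee\otimes\cS_\mp(-t)),\qquad H^\bullet(\cS_\pm^\vee\otimes\Sym^q\cU_2^\vee(-t)).
\]
These do vanish by Kuznetsov's semiorthogonality for $1\leq t\leq n-4$. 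The last also vanishes at $t=0$, since $\Sym^q\cU_2^\vee$ precedes the spinors inside a block. The only case not read off directly is $n=2m$, $q=m-2$, $t=2m-4$. There Serre duality with $\omega\simeq\cO(-(n-3))$ turns it into $R\Hom(\Sym^{m-2}\cU_2^\vee(1),\cS_\pm)$, a map from block $1$ to block $0$, which vanishes. Your Step 1 then transports $\chi=0$ to every fiber, exactly as in the paper.
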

	\begin{proof}
		Over smooth fibers, $\cF$ is a direct sum of copies of spinor bundles, and in this case the vanishing follows from the semiorthogonality results in \cite{kuznetsovodd} and \cite{kuznetsoveven}. By a similar degeneration argument to Corollary \ref{constancy}, the same holds for any possibly singular fiber. 
	\end{proof}
	\begin{remark} \label{spinorsheafremark}
		If we allow the quadric to be singular, then one can take $\cF \otimes_{\textcl_{\even}} I_W^\even$ and $\cF \otimes_{\textcl_\even}{I_W^{\mathrm{odd}}}$ to be a \textit{definition} for spinor sheaves, analogous to the one presented in Addington's thesis  \cite{singularquadrics}. \par 
		Moreover, when we choose our family of quadrics to be a pencil of quadrics with smooth base locus in an odd dimensional projective space, the central reduction of \cite{quadfibs} takes the Fourier-Mukai functor \[\Phi_{\cF}: \Db(S,\cl_0) \to \Db(\OGr(k,\cQ))\] to the functor $\Db(C) \to \Db(\OGr(k,\cQ))$ with Fourier-Mukai kernel given by varying the spinor sheaf with a maximal isotropic subspace $W$, as is done in \cite{singularquadrics} for the case of quadrics.
	\end{remark}
	\section{Proof of Theorem \ref{embeddingtheorem}} \label{proofsection}
	In order to actually show that the functor $\Phi_\cF$ defines a fully faithful functor, we first observe that the right adjoint $\Psi_\cF$ of $\Phi_\cF$ is given by 
	\begin{equation}
		\begin{aligned}
			\Psi_\cF : \Db(\OGr(k,\cQ)) &\to \Db(S, \cl_0) \\ 
			\cG &\mapsto R\tau_*(\cF^\vee \otimes \cG). 
		\end{aligned}
	\end{equation}
	where $\cF^\vee$ denotes the $\cO$-linear dual; this follows from an application of derived tensor-hom adjunction, using that $\cF$ is locally free as a $\cO$-module, followed by the derived pullback-pushforward adjunction. \par 
	Using the lemmas proved in the sections above, we can deduce the main ingredient in the theorem. 
	\begin{lemma}\label{keylemma}
		We have an isomorphism of $\cl_0$-bimodules
		\begin{equation}
			R\tau_*(\cF^\vee \otimes \cF) \simeq \cl_0.
		\end{equation}
	\end{lemma}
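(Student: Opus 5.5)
We aim to compute $R\tau_*(\cF^\vee\otimes\cF)$ by combining the left resolution \eqref{leftres} of $\cF$ with the right resolution \eqref{rightres}. The plan has three steps: show the higher direct images vanish, show the remaining $\tau_*$ has the rank of $\cl_0$, and then show the natural map from $\cl_0$ is an isomorphism by reducing to linear algebra on fibers.

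\textbf{Step 1: vanishing of higher direct images.} Dualizing \eqref{leftres} gives a right resolution
\[0\to \cF^\vee\to \tau^*\cl_0^\vee\to \cU_k^\vee\otimes\tau^*\cl_{-1}^\vee\to \Sym^2\cU_k^\vee\otimes\tau^*\cl_{-2}^\vee\to\cdots.\]
Tensoring it with \eqref{rightres} gives a double complex resolving $\cF^\vee\otimes\cF$. Its terms are
\[\Sym^p\cU_k^\vee\otimes\Sym^q\cU_k^\vee(1)\otimes\tau^*(\cl_{-p}^\vee\otimes\cl_{k+q}),\]
placed in degree $p+q$. Lemma \ref{vanishinglemma2} with $t=-1$ applies since $-1<n-2k$. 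It shows that every term has vanishing $R^{>0}\tau_*$. Hence $R\tau_*(\cF^\vee\otimes\cF)$ is computed by the complex of $\tau_*$'s of this double complex, which lives in degrees $\geq 0$.

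Alternatively, to control the top degrees, one can resolve $\cF^\vee\otimes\cF$ on the other side, using \eqref{leftres} for $\cF$ and dualizing \eqref{rightres}. This produces terms $\Sym^p\cU_k\otimes\Sym^q\cU_k(-1)$ in degree $-(p+q)$. Lemma \ref{vanishinglemma1} with $t=1$ (allowed since $1<n-2(k-1)$) shows that such a term contributes to $R\tau_*$ only in degrees $\leq -(p+q)+p+q+1=1$. Comparing the two descriptions shows that $R\tau_*(\cF^\vee\otimes\cF)$ is concentrated in degrees $[0,1]$. Since all sheaves involved are flat over $S$, the same holds fiberwise, by cohomology and base change.

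\textbf{Step 2: rank count.} On each fiber, Corollary \ref{constancy} gives
\[\chi(\cF|_s,\cF|_s)=\on{rk}\cl_0 \quad\text{and}\quad \dim\Hom(\cF|_s,\cF|_s)\ge \on{rk}\cl_0.\]
We then need $\dim\Hom(\cF|_s,\cF|_s)\le\on{rk}\cl_0$. Given this, $\Ext^1$ vanishes by the Euler characteristic, so $\tau_*(\cF^\vee\otimes\cF)$ is locally free of rank $\on{rk}\cl_0$, with no higher direct images.

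For the upper bound we use the presentation $\cU_k\otimes\textcl_{\text{odd}}\to\textcl_{\text{even}}\to\cF|_s\to0$. It gives an injection
\[\Hom(\cF|_s,\cF|_s)\hookrightarrow H^0(\cF|_s)\otimes\textcl_{\text{even}}^\vee,\]
whose image consists of homomorphisms killing the image of $\cU_k\otimes\textcl_{\text{odd}}$. We compute $H^0(\cF|_s)$ from \eqref{rightres} together with the isomorphisms \eqref{O1calc} and \eqref{U1calc} of Lemma \ref{vanishinglemma2}:
\[H^0(\cF|_s)=\ker\Big(\textstyle\bigwedge^k V^\vee\otimes\textcl_{\parity(k)}\to \big(\bbS^{(2,1,\dots,1)}V^\vee/\text{quadric relations}\big)\otimes \textcl_{\parity(k+1)}\Big).\]

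\textbf{Step 3: identifying the map.} The map $\cl_0\to\tau_*(\cF^\vee\otimes\cF)$ comes from the right $\tau^*\cl_0$-action on $\cF$. It is a map of bimodules, since it is given by right multiplication, and by Proposition \ref{endomorphismprop} it is injective on fibers. Once ranks agree, it is therefore an isomorphism of locally free sheaves.

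\textbf{Main obstacle.} The hard part is the linear-algebra bound in Step 2, namely showing the kernel above has dimension at most $\on{rk}\textcl_{\text{even}}$ for singular $Q$. The approach we would try first is to diagonalize $Q$ in an orthogonal basis and work with the PBW basis of $\textcl$. We would then use the cyclic rotation relations \eqref{cyclicrotations} and the quadric relations \eqref{quadricrelations} to show that a compatible section is determined by its image in a single summand. In other words, $\Hom(\cF|_s,\cF|_s)$ should be identified with the right-multiplication endomorphisms, in the same way that Proposition \ref{endomorphismprop} identifies $\End$ of $\cl_0$.
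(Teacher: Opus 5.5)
Your framework matches the paper's proof. The left resolution together with Lemma \ref{vanishinglemma1} at $t=1$ kills $R^{\geq 2}\tau_*$. Your description of $\End(\cF|_s)$, via the presentation $\cU_k\otimes\textcl_{\text{odd}}\to\textcl_{\even}\to\cF|_s\to 0$ and $H^0(\cF|_s)$ computed from \eqref{O1calc} and \eqref{U1calc}, is exactly the kernel of \eqref{mapkernelfiber}: the $H^0(\cF|_s)$ condition is its second component, and the condition of killing $\cU_k\otimes\textcl_{\text{odd}}$ is its first. Corollary \ref{constancy} then reduces everything to a fiberwise dimension statement. Ending with injectivity plus an upper bound (your route) or with surjectivity plus the lower bound (the paper's route) makes no difference. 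The problem is that this fiberwise computation is the whole content of the lemma, you have not carried it out, and the mechanism you sketch for it is not enough.

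Write a kernel element as $\sum_I x_{i_1}\wedge\cdots\wedge x_{i_k}\otimes\phi_{i_1\cdots i_k}$. The outer rotation relations \eqref{outerrotation}, which come from $H^0(\cF|_s)$, do show that $\phi_{i_1\cdots i_k}=k!\,e_{i_1}\cdots e_{i_k}\phi$ with $\phi:=\frac{1}{k!}e_k\cdots e_1\phi_{1\cdots k}$. The paper proves this by an induction that swaps indices into $\{1,\dots,k\}$ using $e_j^2=1$; it is your ``determined by its image in a single summand.'' But it only bounds $\dim\End(\cF|_s)$ by $\dim\Hom(\textcl_{\even},\textcl_{\parity(k)})=(\on{rk}\cl_0)^2$, not by $\on{rk}\cl_0$.

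The missing idea is to use the other family of conditions, the inner rotation relations \eqref{innerrotation}, which say the homomorphism kills the image of $\cU_k\otimes\textcl_{\text{odd}}$. Choose $k-1$ unit basis vectors distinct from $e_i,e_j$; this is where the rank hypothesis enters. The relations then give $e_i\phi(e_j\,\cdot\,)=-e_j\phi(e_i\,\cdot\,)$ for $i\neq j$, and hence $e_ie_j\phi=\phi(e_ie_j\,\cdot\,)$. So $\phi$ is left $\textcl_{\even}$-linear. Only now does Proposition \ref{endomorphismprop} apply, showing $\phi$ is right multiplication by some $\xi\in\textcl_{\even}$. This proves $\textcl_{\even}\to\End(\cF|_s)$ is surjective, and the lower bound of Corollary \ref{constancy} finishes the argument.

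Separately, your Step 3 attributes fiberwise injectivity of $\textcl_{\even}\to\End(\cF|_s)$ to Proposition \ref{endomorphismprop}. That proposition concerns $\cl_0$ acting on itself and holds for every quadratic form, whereas faithfulness of the right action on $\cF|_s$ does not. For example, take the rank-one form $x_1^2$ with $n$ odd and $\xi=e_2\cdots e_n$. Then $\textcl_{\even}\,\xi=\bbC\,\xi$, and this lies in $U\cdot\textcl_{\text{odd}}$ for every isotropic $U$, so $\xi$ acts by zero on $\cF|_s$.

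Under the paper's hypotheses injectivity does hold. The $x_1\wedge\cdots\wedge x_k$-component of the image of $\xi$ is $\alpha\mapsto k!\,e_1\cdots e_k\alpha\xi$, and $e_1\cdots e_k$ is invertible. Once surjectivity is proved, though, injectivity follows for free, as in the paper.
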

	\begin{proof}
		Observe that we have a left and a right resolution for $\cF^\vee \otimes \cF$ coming from the left and right resolution \eqref{leftres} and \eqref{rightres} of $\cF$: 
		\begin{gather}
			\cdots \to \begin{matrix}
				\tau^*\cl_{k+1}^\vee \otimes \cU_k(-1) \otimes \tau^*\cl_0 \\
				\oplus \\
				\tau^*\cl_k^\vee \otimes \cU_k(-1) \otimes \tau^*\cl_{-1}
			\end{matrix}\to \tau^*\cl_k^\vee \otimes \cO_{\OGr(k,\cQ)}(-1) \otimes \tau^* \cl_0 \to \cF^\vee \otimes \cF \to 0  \label{lefttensorres}\\
			0 \to \cF^\vee \otimes \cF \to \tau^*\cl_0^\vee \otimes \cO_{\OGr(k,\cQ)}(1) \otimes \tau^*\cl_k \to \begin{matrix}
				\tau^*\cl_{-1}^\vee \otimes \cU_k^\vee(1) \otimes \tau^*\cl_k \\
				\oplus \\
				\tau^*\cl_0^\vee \otimes \cU_k^\vee(1) \otimes \tau^*\cl_{k+1}
			\end{matrix} \to \cdots \label{righttensorres}
		\end{gather}
		We first use the resolution \eqref{lefttensorres}, which we denote as $A^\bullet_{\text{left}}$. Observe that the $\ell$th term $A^{-\ell}_{\text{left}}$ is always of the form \[\bigoplus_{\substack{\text{finitely}\\ \text{many }i,j}}\tau^*\cl_i^\vee \otimes \Sym^p \cU_k \otimes \Sym^q \cU_k(-1) \otimes \tau^*\cl_j,\]
		where $p + q = \ell$, but by Lemma \ref{vanishinglemma1}, we know that $R^i\tau_*(\Sym^p \cU_k \otimes \Sym^q \cU_k(-1)) = 0$ whenever $i > \ell + 1$. If we allow $\sigma^{\geq -n} A^\bullet_{\text{left}}$ to denote the stupid truncations of $A^\bullet_{\text{left}}$, then it follows from the hypercohomology spectral sequence that 
		\[R^i\tau_*(\sigma^{\geq -n} A^\bullet_{\text{left}}) = 0\]
		for $i \geq 2$. But if we then observe that $A^\bullet_{\text{left}} = \colim_{n\to\infty} \sigma^{\geq -n}A^\bullet_{\text{left}}$ is a directed colimit of complexes, using that $R\tau_*$ is a left adjoint to the Grothendieck duality functor we see that \[R^i\tau_*(A^\bullet_{\text{left}}) = R^i\tau_*(\cF^\vee \otimes \cF) = 0 \]
		whenever $i \geq 2$. But since $\cF^\vee \otimes \cF$ is just a sheaf, certainly $R\tau_*(\cF^\vee \otimes \cF)$ has cohomology only in nonnegative degrees, so the only nonzero terms are 
		\[\tau_*(\cF^\vee \otimes \cF)\quad \text{and}\quad R^1\tau_*(\cF^\vee \otimes \cF).\]
		\par 
		In order to conclude our proof, we will make use of the Lemma \ref{constancy} on the constancy of the Euler characteristic and the theorems of cohomology and base change in order to reduce this question to yet another fiberwise computation. To be more precise: we will construct a map 
		\begin{equation} 
			\label{cl0isombasic}
			\cl_0 \to \tau_*(\cF^\vee \otimes \cF) 
		\end{equation}
		which, on the fibers of the map gives an isomorphism 
		\[\textcl_{even} \to \tau_*(\cF^\vee \otimes \cF) \otimes \kappa(s) \to  H^0(\cF|_s^\vee \otimes \cF|_s) = \End(\cF|_s),\]
		where $\cF|_s \coloneq \cF \otimes \kappa(s)$ is the fiber of $\cF$ at $s \in S$. But by the constancy of Euler characteristic, this implies that $H^1(\cF|_s^\vee \otimes \cF|_s) = 0$ for all $s \in S$, and the usual theorems on cohomology and base change allow us to conclude that we have an isomorphism
		\[\cl_0 \iso \tau_*(\cF^\vee \otimes \cF) \simeq R\tau_*(\cF^\vee \otimes \cF).\]
		To begin, observe that by the exact sequence \eqref{righttensorres} and the projection formula, we have an identification 
		\begin{equation}
			\tau_*(\cF^\vee \otimes \cF) \simeq \ker\left(\cl_0^\vee \otimes \tau_*(\cO_{\OGr(k,\cQ)}(1)) \otimes \cl_k \to \begin{matrix}
				\cl_{-1}^\vee \otimes \tau_*(\cU_k^\vee(1)) \otimes \cl_k \\
				\oplus \\
				\cl_0^\vee \otimes \tau_*(\cU_k^\vee(1)) \otimes \cl_{k+1}
			\end{matrix}\right).
		\end{equation}
		Let us define by analogy to the smooth fibers (see the remark following Lemma \ref{vanishinglemma2}) the notation
		\[\bbS^{(2,1,\dots,1)}_{\SO} \cE^\vee := \tau_*(\cU^\vee(1))\simeq \bbS^{(2,1,\dots,1)} \cE^\vee / \im\left(\bigwedge^{k-1}\cE^\vee \otimes \tau^*\cL \to \bigwedge^{k-1}\cE^\vee \otimes \Sym^2 \cE^\vee \to \bbS^{(2,1,\dots,1)} \cE^\vee\right)\]
		which is the isomorphism \eqref{U1calc} of Lemma \ref{vanishinglemma2}. Then applying this along with the other isomorphism \eqref{O1calc} of Lemma \ref{vanishinglemma2} gives an isomorphism 
		\begin{equation}
			\label{mapkernel}
			\tau_*(\cF^\vee \otimes \cF) \simeq \ker\left(\cl_0^\vee \otimes \bigwedge^k\cE^\vee \otimes \cl_k \to \begin{matrix}
				\cl_{-1}^\vee \otimes \bbS^{(2,1,\dots,1)}_{\SO}\cE^\vee \otimes \cl_k \\
				\oplus \\
				\cl_0^\vee \otimes \bbS^{(2,1,\dots,1)}_{\SO}\cE^\vee \otimes \cl_{k+1}
			\end{matrix}\right),
		\end{equation}
		where the first map is given by 
		\[
		\cl_0^\vee \otimes \bigwedge^k\cE^\vee \otimes \cl_k \to \cl_0^\vee \otimes \cE \otimes \cE^\vee \otimes \bigwedge^k \cE^\vee \otimes \cl_k\to\cl_{-1}^\vee \otimes \bbS^{(2,1,\dots,1)} \cE^\vee \otimes \cl_k \to \cl_{-1}^\vee \otimes \bbS^{(2,1,\dots,1)}_{\SO}\cE^\vee \otimes \cl_k
		\]
		consisting of the monoidal coevaluation $\cO \to \cE \otimes \cE^\vee$ followed by the Clifford multiplication $ \cl_0^\vee \otimes \cE \to \cl_{-1}^\vee $
		and the canonical surjection $\cE^\vee \otimes \bigwedge^k \cE^\vee \to \bbS^{(2,1,\dots,1)}_{\SO} \cE^\vee \to \bbS^{(2,1,\dots,1)}_{\SO} \cE^\vee,$ and the second map is given by 
		\[
		\cl_0^\vee \otimes \bigwedge^k\cE^\vee \otimes \cl_k \to \cl_0^\vee \otimes \bigwedge^k \cE^\vee \otimes \cE^\vee \otimes \cE \otimes \cl_k \to\cl_0^\vee \otimes \bbS^{(2,1,\dots,1)} \cE^\vee \otimes \cl_{k+1} \to \cl_0^\vee \otimes \bbS^{(2,1,\dots,1)}_{\SO}\cE^\vee \otimes \cl_{k+1}
		\]
		consisting again of the monoidal coevaluation $\cO\to \cE^\vee \otimes \cE$ followed by the canonical surjection \[\bigwedge^k \cE^\vee \otimes \cE^\vee \to \bbS^{(2,1,\dots,1)} \cE^\vee \to \bbS^{(2,1,\dots,1)}_{\SO} \cE^\vee\] along with the Clifford multiplication $\cE \otimes \cl_k \to \cl_{k+1}$. \par
		On the other hand, there is a natural map of sheaves 
		\begin{equation}
			\label{kerguess}
			\cl_0 \to \cl_0^\vee \otimes \cl_0 \to \cl_0^\vee \otimes \bigwedge^k \cE^\vee \otimes \bigwedge^k \cE \otimes \cl_0 \to \cl_0^\vee \otimes \bigwedge^k \cE^\vee \otimes \cl_k
		\end{equation}
		where the first map is the natural map $\cl_0 \to \Endsheaf_\cO(\cl_0)$ given by right multiplication, the second is again monoidal coevaluation $\cO \to \bigwedge^k \cE^\vee \otimes \bigwedge^k \cE$, and the last map is the composition $\bigwedge^k \cE \otimes \cl_0 \to \cE \otimes \cE \otimes \cl_0 \to \cl_k$. Recall that by Proposition \ref{endomorphismprop} we have an identification of the image of the map $\cl_0 \to \Endsheaf_\cO(\cl_0)$ with the left $\cl_0$-linear endomorphisms of $\cl_0$. 
		\par
		Now, we check on fibers that the composition is an isomorphism onto the kernel of the homomorphism \eqref{mapkernel}. As before, we fix a point $s \in S$ and consider the quadric $Q = \cQ_s$ on $V := \cE|_s$; the fibers of $\cl_i$ restrict to $\textcl_{\text{even}}$ when $i$ is even and $\textcl_{\text{odd}}$ when $i$ is odd. We write $\parity(k)$ to denote whether $k$ is even or odd. Let us fix an orthogonal basis $e_1, \dots, e_n$ for $V$ such that $Q(e_i,e_i) = \lambda_i$; this in particular implies that $e_ie_j = -e_je_i$ whenever $i \neq j$. By the assumption that the corank is $\leq n - 2k + 1$ and therefore the rank is at least $2k  - 1 \geq  k$, we may assume that $\lambda_1 = \lambda_2 = \cdots = \lambda_{k} = 1$. Examining \eqref{mapkernel}, we have an identification of $\End(\cF(s))$ with the kernel of the homomorphism  
		\begin{equation}
			\label{mapkernelfiber}
			\bigwedge^k V^\vee  \otimes \Hom(\textcl_{\text{even}},\textcl_{\parity(k)}) \to \begin{matrix}
				\bbS^{(2,1,\dots,1)}_{\SO}V^\vee \otimes \Hom(\textcl_{\text{odd}}, \textcl_{\parity(k)}) \\
				\oplus \\
				\bbS^{(2,1,\dots,1)}_{\SO}V^\vee  \otimes \Hom(\textcl_{\text{even}}, \textcl_{\parity(k+1)})
			\end{matrix} 
		\end{equation}
		which takes 
		\[\omega \otimes \phi \mapsto \left(\sum_{j=1}^n \omega \otimes x_j \otimes \phi\left(e_j \emptyinput\right), \sum_{j=1}^n \omega \otimes x_j \otimes e_j\phi\right).\]
		On the other hand, the restriction of homomorphism \eqref{kerguess} to the fiber can be viewed as a map 
		\begin{equation}\label{kerguessfiber}
			\textcl_{\text{even}} \to \bigwedge^k V^\vee \otimes \Hom(\textcl_{\text{even}},\textcl_{\parity(k)}), 
		\end{equation}
		and should be interpreted as the composition of the map
		\[\xi \mapsto (\alpha \mapsto \alpha\xi)\]
		followed by the map
		\[\phi \mapsto \sum_{1 \leq i_1 < \cdots < i_k \leq n } x_{i_1} \wedge \cdots \wedge x_{i_k}\otimes \left(\sum_{\sigma \in S_k} \sgn(\sigma)e_{i_{\sigma(1)}}\cdots e_{i_\sigma(k)}\right)\phi.\]
		However, since we are assuming that the basis is orthogonal so that $e_ie_j = -e_je_i$ for $i \neq j$, we may rewrite $\sum_{\sigma \in S_k} \sgn(\sigma)e_{i_{\sigma(1)}}\cdots e_{i_{\sigma_k}} = k! \cdot e_{i_1} \cdots e_{i_k}$, and therefore the composition is nothing but the map 
		\[\xi \mapsto \sum_{1 \leq i_1 < \cdots < i_k \leq n} x_{i_1} \wedge \cdots \wedge x_{i_k} \otimes (\alpha \mapsto k! \cdot e_{i_1}\cdots e_{i_k}\alpha\xi).\]
		To characterize the kernel of \eqref{mapkernelfiber}, we therefore take an arbitrary element 
		\[\sum_{1 \leq i_1 < \cdots < i_k \leq n} x_{i_1} \wedge \cdots \wedge x_{i_k} \otimes \phi_{i_1\cdots i_k} \in \bigwedge^k V^\vee \otimes \Hom(\textcl_{\text{even}}, \textcl_{\parity(k)})\]
		and check what conditions lying in the kernel impose on this element. For convenience's sake, we define as notation $\phi_{i_{\sigma(1)}i_{\sigma(2)}\cdots i_{\sigma(k)}} = \sgn(\sigma)\phi_{i_1i_2\cdots i_k}$ for any permutation $\sigma \in S_k$. Applying the map $\bigwedge^k V^\vee \otimes \Hom(\textcl_{\even},\textcl_{\parity(k)}) \to \bigwedge^kV^\vee \otimes V^\vee \otimes \Hom(\textcl_{\text{odd}},\textcl_{\parity(k)})$ gives
		\begin{equation}
			\sum_{j=1}^n \sum_{1 \leq i_1 < \cdots < i_k \leq n} x_{i_1} \wedge \cdots \wedge x_{i_k} \otimes x_j \otimes \phi_{i_1\cdots i_k}(e_j\emptyinput).
		\end{equation}
		By inspection, one can break this sum up and reindex to produce the summation
		\begin{multline}
			\sum_{1 \leq i_1 < \cdots < i_{k-1} \leq n}^n \sum_{j \neq i_\ell} x_{i_1} \wedge \cdots \wedge x_{i_{k-1}} \wedge x_j \otimes x_j \otimes \phi_{i_1\cdots i_{k-1}i_j}(e_j\emptyinput) \\
			+ \sum_{1 \leq i_1 < \cdots < i_k < i_{k+1} \leq n} \sum_{j=1}^{k+1}\left(x_{i_{\tau^j(1)}} \wedge \cdots \wedge x_{i_{\tau^j(k)}} \otimes x_{i_{\tau(k+1)}} \otimes \phi_{i_{\tau^j(1)}\cdots i_{\tau^j(k)}}(e_{i_{\tau^j(k+1)}}\emptyinput)\right),
		\end{multline}
		where we again let $\tau$ denote the cyclic permutation $(1 \;\; 2 \;\; \cdots \;\; k + 1)$. Composing with the surjection 
		\[\bigwedge^k V^\vee \otimes V^\vee \otimes \Hom(\textcl_{\even},\textcl_{\parity(k)}) \to \bbS^{(2,1,\dots,1)}_{\SO}V^\vee \otimes \Hom(\textcl_{\text{odd},\textcl_{\parity(k)}}\]
		gives the image of our element under the top map of \eqref{mapkernelfiber}. But by assumption, if our element lies in the kernel of \eqref{mapkernelfiber}, then this must vanish when viewed as an element of $\bbS^{(2,1,\dots,1)}_{\SO} V^\vee \otimes \Hom(\textcl_{\text{odd}},\textcl_{\parity(k)})$, so as an element of $\bigwedge^k V^\vee \otimes V^\vee \otimes \Hom(\textcl_{\text{odd}},\textcl_{\parity(k)})$, it lies in the span of the cyclic rotation relations \eqref{cyclicrotations} and quadric relations \eqref{quadricrelations} tensored with $\Hom(\textcl_{\text{odd}},\textcl_{\parity(k)})$. In particular, this implies that there exists for each tuple $1 \leq i_1 < \cdots < i_{k-1} \leq n$ some $\varphi_{i_1i_2\cdots i_{k-1}} \in \Hom(\textcl_{\text{odd}},\textcl_{\parity(k)})$ such that 
		\[ \sum_{\substack{j=1 \\ j\neq  i_\ell}}^n x_{i_1} \wedge \cdots \wedge x_{i_{k-1}} \otimes x_j \otimes \phi_{i_1\cdots i_{k-1}j}(e_j\emptyinput) = \sum_{\substack{j = 1 \\ j \neq  i_\ell}}^n \lambda_j x_{i_1} \wedge \cdots \wedge x_{i_{k-1}} \otimes x_j \otimes \varphi_{i_1\cdots i_{k-1}}, \]
		which forces us to conclude that 
		\begin{equation} \label{innerquadric}
			\phi_{i_1\cdots i_{k-1}j}(e_j\emptyinput) = \lambda_j \varphi_{i_1\cdots i_{k-1}}
		\end{equation}
		for all $1 \leq i_1 < \cdots < i_{k-1} \leq n$ and $j \neq i_\ell$. If we likewise define, $\varphi_{i_{\sigma(1)}} \cdots i_{\sigma(k-1)} = \sgn(\sigma)\varphi_{i_1\cdots i_{k-1}}$, then it is equivalent for this identity to hold for any permutation of the $i_1, \dots, i_{k-1}$. Similarly, we can conclude that there exist elements $\psi_{i_1i_2\cdots i_{k+1}} \in \Hom(\textcl_{\text{odd}},\textcl_{\parity(k)})$ where the $i_\ell$ are all distinct such that 
		\begin{equation}\label{innerrotation}
			\psi_{i_1i_2\cdots i_{k+1}} = \phi_{i_1i_2\cdots i_k}(e_{i_{k+1}}\emptyinput) = \sgn(\tau^j)\phi_{i_{\tau^j(1)}i_{\tau^j(2)}\cdots i_{\tau^j(k)}}(e_{\tau^j(k+1)}\emptyinput)
		\end{equation} 
		for $\tau = (1 \;\; 2 \;\; \cdots \;\; k+1)$. This can be more explicitly written as equalities
		\[\phi_{i_1i_2\cdots i_k}(e_{k+1}\emptyinput) = (-1)^{k}\phi_{i_2i_3\cdots i_{k+1}}(e_{i_1}\emptyinput) = (-1)^{2k}\phi_{i_3i_4\cdots i_{1}}(e_{i_2}) =\cdots = (-1)^{k^2} \phi_{i_{k+1}i_1\cdots i_{k-1}}(e_{i_k}\emptyinput).\]
		\par 
		On the other hand, applying the second map of \eqref{mapkernelfiber} and observing the relations lying in the kernel impose, we find elements $\varphi_{i_1\cdots i_{k-1}}',\psi_{i_1\cdots i_{k+1}}'\in \Hom(\textcl_{\text{even}}, \textcl_{\parity(k+1)})$ such that 
		\begin{equation} \label{outerquadric}
			e_j\phi_{ji_1\cdots i_{k-1}} = \lambda_j \varphi_{i_1\cdots i_{k-1}}'
		\end{equation}
		for $j, i_1, \cdots, i_{k-1}$ all distinct, and 
		\begin{equation}\label{outerrotation}
			\psi_{i_1\cdots i_{k+1}}' = e_{i_1}\phi_{i_2i_3\cdots i_{k+1}} = \sgn(\tau^j)e_{i_{\tau^j(1)}}\phi_{i_{\tau^j(2)}i_{\tau^j(3)}\cdots i_{\tau^j(k+1)}}
		\end{equation}
		whenever $i_1, \dots, i_{k+1}$ are all distinct. \par 
		If we take an element in the image of the map \eqref{kerguessfiber}, this amounts to setting
		\[\phi_{i_1i_2\cdots i_k}(\alpha) = k! \cdot e_{i_1}e_{i_2}\cdots e_{i_k}\alpha\xi,\]
		for some element $\xi \in \textcl_{\text{even}}$, and it is immediate to see that we can take 		
		\[\varphi_{i_1\cdots i_{k-1}}(\alpha) = \varphi_{i_1\cdots i_{k-1}}'(\alpha) = k!e_{i_1}\cdots e_{i_{k-1}}\alpha\xi \qquad \qquad \psi_{i_1\cdots i_{k+1}}(\alpha) = \psi_{i_1\cdots i_{k+1}}'(\alpha) = k! e_{i_1}\cdots e_{i_{k+1}}\alpha\xi,
		\]
		and therefore conclude that any element in the image of \eqref{kerguessfiber} must lie in the kernel of \eqref{mapkernelfiber}. \par 
		Conversely, suppose that we have an arbitrary element $\sum_{1 \leq i_1 < \cdots < i_k \leq n} x_{i_1}\wedge \cdots \wedge x_{i_k} \otimes \phi_{i_1\cdots i_k}$ in the kernel, and recall that the $\phi_{i_1\cdots i_k}$ satisfy the identities involving the $\varphi_{i_1\cdots i_{k-1}}, \psi_{i_1\cdots i_{k+1}}, \varphi_{i_1\cdots i_{k-1}}', \psi_{i_1\cdots i_{k+1}}'$ as above. It then suffices by Proposition \ref{endomorphismprop} for us to find a left $\textcl_{\text{even}}$-linear endomorphism $\phi$ of $\textcl_{\text{even}}$ such that $\phi_{i_1\cdots i_k} = k!\cdot e_{i_1}\cdots e_{i_k}\phi$. Keeping in mind our assumption that $\lambda_1 = \lambda_2 = \cdots = \lambda_{2k} = 1$, we define 
		\[\phi := \frac{e_ke_{k-1}\cdots e_1}{k!} \cdot \phi_{123\cdots k}.\]
		We then check that $k!\cdot e_{i_1}e_{i_2}\cdots e_{i_k}\phi = \phi_{i_1\cdots i_k}$ for $i_1, \cdots, i_k$ all distinct. In particular, this amounts to checking that 
		\[\phi_{i_1i_2\cdots i_k} = e_{i_1}e_{i_2}\cdots e_{i_k}e_{k}e_{k-1}\cdots e_1 \phi_{123\cdots k}.\]
		We will show this inductively. It suffices to show that given any length $\ell$ subsequence of distinct integers $j_1,j_2,\dots, j_\ell$ of $1,2, \dots, k$ which is pairwise distinct from $i_{\ell+1}, \cdots, i_{k}$, we can find a subsequence $m_1, m_2, \dots, m_{\ell - 1} \leq n$ of $j_1, \dots, j_\ell$ pairwise distinct from $i_{\ell}, i_{\ell+1}, \cdots, i_k$ such that 
		\[e_{i_1}e_{i_2}\cdots e_{i_\ell}e_{j_\ell}\cdots e_{j_2}e_{j_1}\phi_{j_1\cdots j_\ell i_{\ell+1}\cdots i_k} = e_{i_1}e_{i_2}\cdots e_{i_{\ell-1}}e_{m_{\ell-1}}\cdots e_{m_2}e_{m_1}\phi_{k_1\cdots k_{\ell-1}i_{\ell+1}\cdots i_k}.\]
		If we know that $i_\ell \neq j_i$ for any $1 \leq i \leq \ell$, then this is straightforward: using the identity \eqref{outerrotation} once and the fact that $e_{j_1}e_{j_1} = \lambda_{j_1} = 1$ since $j_1 \in \{1,2,\dots, 2k\}$, we can check 
		\begin{equation}
			\begin{aligned}
				e_{i_1}e_{i_2}\cdots e_{i_\ell}e_{j_\ell}\cdots e_{j_2}e_{j_1}\phi_{j_1\cdots j_\ell i_{\ell+1}\cdots i_k} &= (-1)^\ell e_{i_1} \cdots e_{i_{\ell-1}} e_{j_\ell} \cdots e_{j_2}e_{j_1}e_{i_\ell}\phi_{j_1\cdots j_\ell i_{\ell+1}\cdots i_k} \\
				&= (-1)^{\ell + k}e_{i_1}\cdots e_{i_{\ell-1}}e_{j_\ell} \cdots e_{j_{2}}e_{j_1}e_{j_1}\phi_{j_2 \cdots j_{\ell}i_{\ell+1}\cdots i_k i_\ell} \\
				&= e_{i_1}\cdots e_{i_{\ell-1}}e_{j_\ell}\cdots e_{j_2}\phi_{j_2\cdots j_{\ell}i_\ell i_{\ell+1}\cdots i_k}
			\end{aligned}
		\end{equation}
		and we can take $m_1 = j_2, m_2 = j_3, \cdots, m_{\ell - 1} = j_\ell$. Otherwise, there exists some $j_i$ such that $i_\ell = j_i$, and there is an equality 
		\begin{equation}
			\begin{aligned}
				e_{i_1}e_{i_2}\cdots e_{i_\ell}e_{j_\ell}\cdots e_{j_i}\cdots e_{j_2}e_{j_1}\phi_{j_1\cdots j_i \cdots j_\ell i_{\ell+1}\cdots i_k} &= e_{i_1} \cdots e_{i_{\ell-1}} e_{i_\ell} e_{j_i}e_{j_\ell} \cdots e_{j_{i+1}}e_{j_{i-1}}\cdots e_{j_1}\phi_{j_1\cdots j_{i-1}j_{i+1}\cdots j_\ell j_ii_{\ell+1}\cdots i_k} \\
				&= e_{i_1}\cdots e_{i_{\ell-1}}e_{j_\ell}\cdots e_{j_{i+1}}e_{j_{i-1}}\cdots e_{j_1} \phi_{j_1\cdots j_{i-1}j_{i+1}\cdots j_\ell i_\ell i_{\ell+1}\cdots i_k},
			\end{aligned}
		\end{equation}
		where $e_{i_\ell}e_{j_i} = e_{j_i}e_{j_i} = \lambda_{j_i} = 1$ since $j_i \in \{1,\dots, 2k\}$. Hence in this case we can take $m_1 = j_1, m_2 = j_2, \dots, m_{i-1} = j_{i-1}, m_i = j_{i+1}, \dots, m_{\ell-1} = j_{\ell}$. This proves that $\phi \in \Hom(\textcl_{\even}, \textcl_{\parity(k)})$ as defined above satisfies the equalities $k!\cdot e_{i_1}\cdots e_{i_k}\phi = \phi_{i_1\dots i_k}$, and so under the natural composition \[\End(\textcl_{\even}) \to \bigwedge^k V^\vee \otimes \Hom(\textcl_{\even},\textcl_{\parity(k)}),\] the element $\phi$ maps to $\sum_{1 \leq i_1 < \cdots < i_k \leq n}\phi_{i_1\cdots i_k}$. \par 
		To show that our element $\sum_{1 \leq i_1 < \cdots < i_k \leq n}\phi_{i_1\cdots i_k}$ actually is in the image of \eqref{kerguessfiber}, Proposition \ref{endomorphismprop} shows that it suffices to show that $\phi$ is a left $\textcl_{\even}$-linear endomorphism. To check this left $\textcl_{\even}$-linearity, it suffices to check that 
		\[e_ie_j\phi = \phi(e_ie_j\emptyinput)\]
		for any $i \neq j$, as these terms generate the algebra $\textcl_{\text{even}}$. If we fix some $i \neq j$, then since the rank of the quadratic form is at least $2k$, it follows that there are at least $k - 1$ elements of the basis distinct from $e_i$ and $e_j$ which square to 1; for simplicity, we may assume that $1, 2, \dots , k - 1, i, j$ are all pairwise distinct. Then by the identity \eqref{innerrotation}, we have the calculation
		\begin{align*}
			e_1e_2 \cdots e_{k-1}e_i\phi(e_j\emptyinput) &= \frac{1}{k!}\cdot\phi_{12\cdots(k - 1)i}(e_j\emptyinput) \\
			&= \frac{(-1)^k}{k!}\cdot \phi_{j12 \cdots (k-1)}(e_i\emptyinput) \\
			&= (-1)^ke_je_1e_2\cdots e_{k-1}\phi(e_i\emptyinput) \\
			&= -e_1e_2\cdots e_{k-1}e_j\phi(e_i\emptyinput),\\
		\end{align*}
		so left-multiplying by $e_{k-1}\cdots e_2e_1$ we conclude that 
		\[e_i\phi(e_j\emptyinput) = -e_j\phi(e_i\emptyinput).\]
		Then using this identity, we can compute: 
		\begin{align*}
			e_ie_j\phi &= e_ie_j\phi(e_1e_1\emptyinput) \\
			&= -e_ie_1\phi(e_je_1\emptyinput) \\
			&= -e_1e_i\phi(e_1e_j\emptyinput) \\
			&= e_1e_1\phi(e_ie_j\emptyinput) \\
			&= \phi(e_ie_j\emptyinput).
		\end{align*}
		It follows that this result holds for any $i \neq j$. \par 
		As a consequence, we see that the map \eqref{kerguessfiber} is a surjection onto the kernel of homomorphism \eqref{mapkernelfiber}, or equivalently a surjection 
		\[\textcl_{\text{even}} \simeq \cl_0 \otimes \kappa(s) \to \End(\cF|_s).\]
		By Corollary \ref{constancy}, $\dim \End(\cF(s)) \geq \dim \textcl_{\text{even}}$, so in fact this must be an isomorphism, proving the lemma.  
	\end{proof}
	Now the proof of the theorem is easy. 
	\begin{proof}[Proof of Theorem \ref{embeddingtheorem}]
		It is enough to check that the composition $\Psi_\cF \circ \Phi_\cF : \Db(S,\cl_0) \to \Db(S,\cl_0)$ is the identity functor. 
		We calculate: \begin{align*}
			\Psi_\cF \circ \Phi_\cF(\cG) &\simeq R\tau_*(\cF^\vee \otimes \cF \otimes^L_{\tau^*\cl_0} \tau^*\cG) \\
			&\simeq R\tau_*(\cF^\vee \otimes \cF) \otimes^L_{\cl_0} \cG \\
			&\simeq \cl_0 \otimes^L_{\cl_0} \cG \\
			&\simeq \cG,
		\end{align*}
		where we have used the projection formula and Lemma \ref{keylemma}. Then it follows from general category theory that the functor $\Phi_\cF$ is fully faithful. 
	\end{proof}
	\section{Semiorthogonality} \label{semiorthogonalitysection}
	We now specialize to $k = 2$, keeping our running assumptions that $2 \leq k \leq n / 2$ and that every fiber of $\cQ \to S$ has rank at least $2k - 1$. As a preliminary step in proving Theorem \ref{sodtheorem}, we must first show that the other components which arise in the decomposition arise from fully faithful embeddings $\Db(S) \hookrightarrow \Db(\OGr(2,\cQ))$. We therefore prove: 
	\begin{proposition} \label{embeddingprop}
		The functors \begin{equation}
			\begin{aligned}
				\Phi_{\Sym^p\cU_2^\vee} : \Db(S) &\to \Db(\OGr(2,\cQ)) \\
				\cG &\mapsto \tau^*\cG \otimes \Sym^p \cU_2^\vee
			\end{aligned}
		\end{equation} 
		are fully faithful when $p \leq n/2 - 2$. 
	\end{proposition}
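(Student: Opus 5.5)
Full faithfulness will follow exactly as in the proof of Theorem \ref{embeddingtheorem}: compose $\Phi_{\Sym^p\cU_2^\vee}$ with its right adjoint and show the unit is an isomorphism. Since $\Sym^p\cU_2^\vee$ is locally free, the right adjoint is
\[\Psi(\cG') = R\tau_*\bigl((\Sym^p\cU_2^\vee)^\vee \otimes \cG'\bigr).\]
By the projection formula,
\[\Psi\Phi(\cG) \simeq \cG \otimes R\tau_*\bigl(\Sym^p\cU_2 \otimes \Sym^p\cU_2^\vee\bigr).\]
So it suffices to show that the natural map $\cO_S \to R\tau_*(\Sym^p\cU_2 \otimes \Sym^p\cU_2^\vee)$, induced by the coevaluation $\cO \to \Endsheaf(\Sym^p\cU_2^\vee)$, is an isomorphism.

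\textbf{Step 1: the isotypic decomposition.} Use $\cU_2 \simeq \cU_2^\vee(-1)$ and the Pieri rule to write
\[\Sym^p\cU_2 \otimes \Sym^p\cU_2^\vee \simeq \bigoplus_{j=0}^{p} \Sym^{2j}\cU_2^\vee(-j).\]
The $j=0$ summand $\cO$ is exactly the image of the coevaluation.

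\textbf{Step 2: the summand $j=0$.} Here we need $R\tau_*\cO_{\OGr(2,\cQ)} \simeq \cO_S$. This is Lemma \ref{vanishinglemma2} with $p=q=0$, $t=0$ (valid since $0<n-4$), which gives vanishing of higher direct images. Identifying $\tau_*\cO$ with $\cO_S$ is immediate from the Koszul resolution: on a fiber, the terms $\bigwedge^i\Sym^2\cU_2$ with $i>0$ have no $H^0$ by the same Borel--Weil--Bott argument. Alternatively, this follows from the connectedness and properness of the fibers together with flatness (Lemma \ref{coranklemma}).

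\textbf{Step 3: the summands $1\le j\le p$.} Show $R\tau_*(\Sym^{2j}\cU_2^\vee(-j))=0$. Apply Lemma \ref{vanishinglemma2} with the two symmetric factors $\Sym^{j}\cU_2^\vee \otimes \Sym^{j}\cU_2^\vee$ and $t=j$. Since $\Sym^{2j}$ is a direct summand of this product, vanishing for the product suffices. The hypotheses hold: $t=j \le p \le n/2-2 < n-4$ once $n>4$, which rules out higher cohomology. For vanishing of $H^0$ we need $\min(j,j)<t=j$, which fails.

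So $H^0$ needs a direct check instead, and I expect this to be the main obstacle. The route is to use the explicit Borel--Weil--Bott computation from the proof of Lemma \ref{vanishinglemma2}. The Koszul terms are $\bbS^{\nu}\cU_2^\vee$ with
\[\nu=(2j-j+\text{stuff},\,-j+\text{stuff}),\]
and the second entry is negative. Adding $\rho$, the second entry becomes
\[n-1+\lambda_2 \in [\,n-1-j-3,\; n-2\,]\subset[1,n-2],\]
using $j\le n/2-2$. This repeats an entry, so the bundle is acyclic. Equivalently, I would invoke Lemma \ref{vanishinglemma4} with $(p,q,t)=(j,j,j)$, or more cleanly $R\tau_*(\Sym^{j}\cU_2 \otimes \Sym^{j}\cU_2^\vee \cdot)$. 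The cleanest citation is Lemma \ref{vanishinglemma3}/\ref{vanishinglemma4}, which covers $\Sym^{2j}\cU_2^\vee(-j)\simeq \Sym^{2j}\cU_2\,(j)$ only in some ranges, so I would instead redo the Borel--Weil--Bott check directly as above.

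\textbf{The edge case $n=4$.} Here $p=0$ and only Step 2 is needed; this case should be handled separately.

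Combining the three steps shows that the unit is an isomorphism, which gives full faithfulness.
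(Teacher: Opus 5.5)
For $n>4$ your argument is the paper's. The paper reduces to $R\tau_*(\Sym^p\cU_2\otimes\Sym^p\cU_2^\vee)\simeq\cO_S$ exactly as you do, then cites \eqref{vl5e1} of Lemma~\ref{vanishinglemma5} with $t=q=p$; the proof of that lemma is your Step~1 Pieri decomposition followed by Lemma~\ref{vanishinglemma2}.

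The obstacle in your Step~3 is self-inflicted. Apply Lemma~\ref{vanishinglemma2} to $\Sym^{2j}\cU_2^\vee\otimes\Sym^{0}\cU_2^\vee(-j)$, i.e.\ with the pair $(2j,0)$ and $t=j$. Then $\min(2j,0)=0<j$, so $R\tau_*$ vanishes outright. Your direct Borel--Weil--Bott check is also correct: the second entry of $\lambda+\rho$ lies in $[n-4-j,\,n-1-j]\subset[1,n-2]$. Only the side remark about Lemma~\ref{vanishinglemma4} with $(j,j,j)$ is wrong, since $\Sym^j\cU_2\otimes\Sym^j\cU_2^\vee(-j)\simeq\bigoplus_{i=0}^{j}\Sym^{2j-2i}\cU_2^\vee(i-2j)$ does not contain $\Sym^{2j}\cU_2^\vee(-j)$.

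The real problem is your claim that $n=4$ can be handled separately by Step~2 alone. At $n=4$ Step~2 is false, and so is the proposition. On $\Gr(2,4)$ the Koszul term $\bigwedge^2\Sym^2\cU_2\simeq\bbS^{(-1,-3)}\cU_2^\vee$ has $(-1,-3,0,0)+\rho=(3,0,2,1)$, whose entries are all distinct. This gives a one-dimensional $H^2$, which contributes to $H^0$ of the restriction. Hence $\tau_*\cO_{\OGr(2,\cQ)}$ is locally free of rank $2$, and $\tau^*$ is not fully faithful. Over a smooth fiber you can see this directly: $\OGr(2,Q_s)\cong\bbP^1\sqcup\bbP^1$, which also breaks your connectedness argument.

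So the statement needs $n>4$. The paper's proof tacitly assumes this too, since \eqref{vl5e1} requires $t=p<n-4$. In the proof of Theorem~\ref{sodtheorem}, the case $n=4$ is treated without this proposition.
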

	When $S = \Spec(\bbC)$ is a point, this is simply the statement that $\Sym^p \cU_k^\vee$ is exceptional, and in fact the proof will reduce to showing fiberwise exceptionality. 
	\begin{proof}
		As in the proof of Theorem \ref{embeddingtheorem}, we may calculate the composition with the adjoint, which is easily seen to be the functor which sends $\cG \in \Db(S)$ to 
		\begin{equation*}
			R\tau_*(\tau^*\cG \otimes \Sym^p \cU_2^\vee \otimes \Sym^p \cU_2) \simeq \cG \otimes R\tau_*(\Sym^p \cU_2^\vee \otimes \Sym^p \cU_2).
		\end{equation*}
		To show fully faithfulness it therefore suffices to check that $\cO\simeq R\tau_*(\Sym^p \cU_2 \otimes \Sym^p \cU_2^\vee)$, which follows from the isomorphism \eqref{vl5e1} of Lemma \ref{vanishinglemma5} by taking $t = q = p$.
	\end{proof}
	To check semiorthogonality between the various components, we will need to separate the cases between the Clifford components, which are twists of the embedding $\Phi_\cF$ of Theorem \ref{embeddingtheorem} by a line bundle, and the non-Clifford components, which are twists of the embeddings $\Phi_{\Sym^p \cU_2^\vee}$ of Proposition \ref{embeddingprop} by a line bundle. 
	\subsection{Non-Clifford component with non-Clifford component}
	The easiest case to consider is the following: 
	\begin{proposition} \label{noncliffnoncliffprop}
		Let $n > 4$. For any $\cG_1, \cG_2 \in \Db(S)$, if $0 \leq p, q \leq n - 2$ and $0 \leq t \leq n - 4$, then
		\begin{equation}
			R\Hom(\tau^*\cG_1 \otimes \Sym^p \cU_2^\vee(t), \tau^*\cG_2 \otimes \Sym^q \cU_2^\vee) = 0
		\end{equation}
		on $\OGr(2,\cQ)$, unless $p \leq q$ and $t = 0$ , or $p = q = n/2 - 2$ and $t = n/2 - 2$ or $n/2 - 1$. 
	\end{proposition}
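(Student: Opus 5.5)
We reduce the statement to a relative cohomology vanishing on $\OGr(2,\cQ)$ and then verify it fiberwise with the Koszul resolution and Borel--Weil--Bott, as in Lemma \ref{vanishinglemma4}. The new work is extending that lemma's range from $p,q \le n/2-2$ to $p,q \le n-2$. That extension is also the main risk: I have not checked that the statement holds throughout the enlarged range, and for some $(p,q,t)$ it may fail, in which case the proof would only go through for a smaller range.

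\textbf{Reduction.} By adjunction and the projection formula,
\[
R\Hom(\tau^*\cG_1 \otimes \Sym^p \cU_2^\vee(t), \tau^*\cG_2 \otimes \Sym^q \cU_2^\vee) \simeq R\Hom_S\bigl(\cG_1, \cG_2 \otimes R\tau_*(\Sym^p \cU_2 \otimes \Sym^q \cU_2^\vee(-t))\bigr).
\]
It therefore suffices to show $R\tau_*(\Sym^p \cU_2 \otimes \Sym^q \cU_2^\vee(-t)) = 0$ outside the excepted cases. The bundle is flat over $S$, and by Lemma \ref{coranklemma} each fiber $\OGr(2,\cQ_s)$ is a regular zero locus in $\Gr(2,\cE|_s)$. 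So by cohomology and base change it is enough to prove $H^\bullet = 0$ on every fiber $\OGr(2,Q)$.

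\textbf{Fiberwise setup.} On a fiber, write $\Sym^p\cU_2 \simeq \Sym^p\cU_2^\vee(-p)$. The Pieri rule splits the bundle into summands $\bbS^{(a-t-p,\,q-a-t)}\cU_2^\vee$ with $\max(p,q)\le a\le p+q$. Each summand is resolved by the four-term Koszul complex written out in Lemmas \ref{vanishinglemma3} and \ref{vanishinglemma4}. I will show that every term $\bbS^{(\lambda_1,\lambda_2)}\cU_2^\vee$ in these complexes is acyclic. By Borel--Weil--Bott this holds whenever one of $n+\lambda_1$ or $n-1+\lambda_2$ lies in $\{1,\dots,n-2\}$, or equals the other.

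\textbf{Serre duality.} The dualizing sheaf is $\cO(-n+3)$ by \eqref{dualizing}. This exchanges $(p,q,t)$ with $(q,p,n-3-t)$, and the exceptional set is stable under this exchange. So, as in Lemma \ref{vanishinglemma4}, I may assume $t \le (n-3)/2$ throughout.

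\textbf{Cases.} For the Koszul terms we always have $q-a-t-3 \le \lambda_2 \le q-a-t$ and $a-t-p-3 \le \lambda_1 \le a-t-p$. The upper bound $n-1+\lambda_2 \le n-1-t$ already places $n-1+\lambda_2$ at most $n-2$ whenever $t\ge 1$ or $a>q$. The remaining condition is the lower bound $n-1+\lambda_2 \ge 1$, i.e.\ $a+t \le n-5+q$, which is where the restriction on $p$ entered before. I would split into three cases:
\begin{enumerate}[(i)]
\item \emph{$a-q$ small.} The lower bound holds and the $\lambda_2$-coordinate alone gives acyclicity. This covers everything Lemma \ref{vanishinglemma4} covered.
\item \emph{$n-1+\lambda_2 \le 0$.} Then $\lambda_1 \ge \lambda_2$ has to supply the repetition. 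I would check whether $n+\lambda_1 \in \{1,\dots,n-2\}$ or $n+\lambda_1 = n-1+\lambda_2$. The useful bound here is $n+\lambda_1 \le n+a-t-p \le n+q-t$, which I expect to be at most $n-2$ only when $q<t+2$. So this case likely needs the extra input that $p$ and $q$ bounded by $n-2$ force $\lambda_1$ into range.
\item \emph{Boundary configurations.} These are the cases $n-1+\lambda_2 = 0$ and $n+\lambda_1 \in\{n-1,n\}$. I would check them directly against the excepted list, as in the last case of Lemma \ref{vanishinglemma4}.
\end{enumerate}

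\textbf{Main obstacle.} The whole difficulty is case (ii), for $p$ or $q$ in the range $(n/2-2,\,n-2]$. There the Borel--Weil--Bott sequences can have both moving entries outside $[1,n-2]$ and distinct, which would give genuine cohomology in degrees $n-2$ or $2n-4$.

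The first fallback is the hypercohomology spectral sequence. If nonvanishing terms appear in several Koszul degrees, they may cancel through the Koszul differentials, with the quadric equation acting as in Lemma \ref{vanishinglemma5}.

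Failing that, the second fallback is deformation. On smooth fibers the vanishing follows from the homogeneous computation of \cite{kuznetsoveven} and \cite{kuznetsovodd}. Since the Euler characteristic is constant in flat families, one can then argue as in Corollary \ref{constancy} and Proposition \ref{constancy2}. However, this only yields $\chi = 0$, and upgrading it to full vanishing would need an upper-semicontinuity bound from the Koszul analysis in at least one cohomological degree.

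If neither fallback works, the correct conclusion is that the stated range is too large and the vanishing holds only on a smaller range, plausibly the one of Lemma \ref{vanishinglemma4}.
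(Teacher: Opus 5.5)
Your reduction (adjunction and projection formula down to $R\tau_*(\Sym^p\cU_2\otimes\Sym^q\cU_2^\vee(-t))=0$, then Koszul resolution and Borel--Weil--Bott on each fiber) is exactly the paper's route. The paper's entire proof is that reduction plus a citation of Lemma~\ref{vanishinglemma4}. That lemma, however, is stated only for $0\le p,q\le n/2-2$, so the paper's argument covers only that range. The bound $n-2$ in the statement is evidently a slip for $n/2-2$, which is also the only range used in Theorem~\ref{sodtheorem}, where $p\le m-2$. In that range your argument is already complete once you cite the lemma; no new case analysis is needed.

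\textbf{The gap.} You leave case (ii) open and fall back on two strategies, neither of which can work, because the enlarged statement is false for every $n>4$.

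Take $(p,q,t)=(n-4,0,0)$. This has $p\le n-2$, $p>q$ and $t=0$, so it is not an excepted case. The bundle is $\bbS^{(0,-(n-4))}\cU_2^\vee$.

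Among the terms of its Koszul resolution, exactly one has a regular Borel--Weil--Bott sequence. It is the summand $\bbS^{(-1,-(n-1))}\cU_2^\vee$ of $\Sym^{n-4}\cU_2\otimes\bigwedge^2\Sym^2\cU_2$. Its sequence $(n-1,0,n-2,\dots,1)$ sorts with $\ell(w)=n-2$ and gives a one-dimensional space.

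Every other term that occurs has a repeated entry and is acyclic. The sequences are $(n,3,\dots)$, $(n,1,\dots)$, $(n-1,2,\dots)$, $(n-2,3,\dots)$, $(n-2,1,\dots)$, $(n-3,2,\dots)$ and $(n-3,0,\dots)$.

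Hence $H^\bullet(\OGr(2,Q),\Sym^{n-4}\cU_2)$ is one-dimensional, concentrated in degree $n-4$, for every fiber of rank $\ge 3$. So $R\tau_*(\Sym^{n-4}\cU_2)$ is a line bundle placed in degree $n-4$. The $R\Hom$ in the statement is therefore nonzero, for instance with $\cG_1=\cG_2$ a skyscraper sheaf at a point.

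\textbf{Why the fallbacks fail.} With only one non-acyclic Koszul term, there is nothing to cancel in the hypercohomology spectral sequence. The smooth fibers are already counterexamples, so deformation cannot help either. For instance, when $n=5$ we have $\OGr(2,5)\cong\bbP^3$ and $\cU_2\cong N(-1)$ with $N$ the null-correlation bundle, and $h^1(N(-1))=1$.

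Since $n-4>n/2-2$ for all $n>4$, this example lies inside the printed range but outside Lemma~\ref{vanishinglemma4}. The conclusion you reached only tentatively is therefore correct. It should be stated as a correction of the hypothesis to $0\le p,q\le n/2-2$, not as a possible failure of your proof.
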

	\begin{proof}
		Under the isomorphism 
		\[R\Hom(\tau^*\cG_1 \otimes \Sym^p \cU_2^\vee(t), \tau^*\cG_2 \otimes \Sym^q \cU_2^\vee) \simeq R\Hom(\cG_1,\cG_2 \otimes R\tau_*(\Sym^p \cU_2 \otimes \Sym^q \cU_2^\vee(-t))),\]
		this reduces to the statement of Lemma \ref{vanishinglemma4}.
	\end{proof}
	\subsection{Non-Clifford component with Clifford component}
	The second case is more subtle. As key input, we need an analogous vanishing statement between the kernels for the Clifford and non-Clifford components. 
	\begin{lemma} \label{cliffnonclifflemma}
		Suppose $n > 4$. For any $0 \leq q \leq n/2 - 2$ and any $0 \leq t \leq n - 4$, 
		\begin{equation}
			R\tau_*(\cF^\vee(-t) \otimes \Sym^q \cU_2^\vee) = 0.
		\end{equation}
	\end{lemma}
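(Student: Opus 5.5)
The plan is to compute $R\tau_*(\cF^\vee(-t)\otimes\Sym^q\cU_2^\vee)$ from the two resolutions of $\cF^\vee$ obtained by dualizing \eqref{leftres} and \eqref{rightres}. These trap the possible nonzero higher direct images in two adjacent degrees. The lower of the two is then killed by a fiberwise linear-algebra argument, and the upper one by constancy of Euler characteristic. By cohomology and base change it suffices throughout to work on a fiber over $s\in S$.

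\emph{Upper bound on degrees.} Dualizing \eqref{rightres} gives a left resolution of $\cF^\vee$ whose $(-p)$-th term is $\tau^*\cl_{p+2}^\vee\otimes\Sym^p\cU_2(-1)$. Tensoring with $\Sym^q\cU_2^\vee(-t)$, the $(-p)$-th term becomes $\tau^*\cl^\vee_{p+2}\otimes\Sym^p\cU_2\otimes\Sym^q\cU_2^\vee(-t-1)$. For $t+1\le n-4$, Lemma \ref{vanishinglemma3} applied with twist $t+1$ gives $R^i\tau_*=0$ for $i>p+t+1$. The same truncation and colimit argument as in Lemma \ref{keylemma} then yields $R^i\tau_*(\cF^\vee(-t)\otimes\Sym^q\cU_2^\vee)=0$ for $i>t+1$. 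The boundary value $t=n-4$ lies outside this range of Lemma \ref{vanishinglemma3}. I would handle it by the Serre-duality symmetry $t\leftrightarrow n-3-t$ coming from \eqref{dualizing}, as in the proof of Lemma \ref{vanishinglemma4}, or by a direct check.

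\emph{Lower bound on degrees.} Dualizing \eqref{leftres} gives a right resolution of $\cF^\vee$ whose $p$-th term is $\tau^*\cl_{-p}^\vee\otimes\Sym^p\cU_2^\vee$. After tensoring, the $p$-th term is $\tau^*\cl_{-p}^\vee\otimes\Sym^p\cU_2^\vee\otimes\Sym^q\cU_2^\vee(-t)$. By Lemma \ref{vanishinglemma2} these terms have no higher direct images, since $t<n-4$. Moreover their $\tau_*$ also vanishes when $\min(p,q)<t$, in particular for all $p<t$.

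\emph{Reduction to one degree.} Combining the two bounds, $R\tau_*$ is computed by a complex of locally free sheaves of the form $\tau_*(\cdots)$ concentrated in degrees $\ge t$. Hence $R^i\tau_*$ can be nonzero only for $i\in\{t,t+1\}$. When $t>q$ all terms vanish and we are done outright, so assume $0\le t\le q$. Then $R^t\tau_*$ is the kernel of the map
\[\cl_{-t}^\vee\otimes\tau_*(\Sym^t\cU_2^\vee\otimes\Sym^q\cU_2^\vee(-t))\to\cl_{-t-1}^\vee\otimes\tau_*(\Sym^{t+1}\cU_2^\vee\otimes\Sym^q\cU_2^\vee(-t)).\]
Lemma \ref{vanishinglemma5} identifies both pushforwards explicitly: \eqref{vl5e1} gives the first as $\Sym^{q-t}\cE^\vee$ modulo the quadric, and \eqref{vl5e2} gives the second as $\cE^\vee\otimes\Sym^{q-t}\cE^\vee$ modulo quadric relations. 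The map itself is coevaluation followed by Clifford multiplication, $\omega\otimes\phi\mapsto\sum_j x_j\omega\otimes\phi(e_j\,\cdot\,)$.

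\emph{Killing degree $t$, then degree $t+1$.} I expect the fiberwise injectivity of this map to be the main obstacle. To prove it, I would fix an orthogonal basis with $\lambda_1=\lambda_2=\lambda_3=1$ (possible since the rank is $\ge3$), write a kernel element as $\sum_m m\otimes\phi_m$ over monomials $m$ modulo the quadric, and read off the condition that each $\phi_m(e_j\,\cdot\,)$ is forced by the quadric relation in \eqref{vl5e2}. Using $e_je_j=1$ for some $j$ with $x_j$ absent from a chosen monomial, these conditions should force $\phi_m=0$. This is the analogue of the computation in Lemma \ref{keylemma}, but without any surviving endomorphisms. Granting it, $R^t\tau_*=0$ on every fiber. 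The Euler characteristic $\chi(\cF|_s(-t),\Sym^q\cU_2^\vee)$ is constant in flat families. By the degeneration argument of Corollary \ref{constancy}, together with Kuznetsov's semiorthogonality of spinor bundles against $\Sym^q\cU_2^\vee(t)$ on smooth fibers \cite{kuznetsovodd,kuznetsoveven}, it is $0$. Since the fiberwise cohomology is concentrated in degrees $t$ and $t+1$, with degree $t$ now zero, $h^{t+1}=0$ as well, and cohomology and base change gives the vanishing of $R\tau_*(\cF^\vee(-t)\otimes\Sym^q\cU_2^\vee)$.
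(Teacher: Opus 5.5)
For $0\le t\le n-5$ your outline is the paper's proof step for step: the same two dualized resolutions, Lemmas \ref{vanishinglemma3} and \ref{vanishinglemma2} to confine cohomology to degrees $t,t+1$, injectivity via \eqref{vl5e1}--\eqref{vl5e2}, and Proposition \ref{constancy2} for degree $t+1$. The genuine gap is the boundary case $t=n-4$.

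\emph{Why $t=n-4$ is not covered.} At $t=n-4$ both of your bounds fail, not only the upper one: your lower bound uses Lemma \ref{vanishinglemma2}, which also requires $t<n-4$. Your proposed repair does not work, because Serre duality is not a $t\leftrightarrow n-3-t$ symmetry of this statement. On a fiber, with $\omega\simeq\cO(-n+3)$ from \eqref{dualizing}, $R\Hom(\cF|_s(n-4),\Sym^q\cU_2^\vee)$ is dual, up to shift, to $R\Hom(\Sym^q\cU_2^\vee(1),\cF|_s)=R\Gamma(\cF|_s\otimes\Sym^q\cU_2(-1))$. Now $\cF$ appears instead of $\cF^\vee$, and the Clifford object sits in the target slot. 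So this is not the $t=1$ instance of the lemma. The trick in Lemma \ref{vanishinglemma4} works only because both arguments there are symmetric powers of $\cU_2^\vee$.

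\emph{How the paper handles it.} You need a separate but parallel computation for $\cF(-1)\otimes\Sym^q\cU_2\simeq\cF\otimes\Sym^q\cU_2^\vee(-q-1)$, using \eqref{leftres} and \eqref{rightres} themselves.
\begin{itemize}
\item The left resolution has terms $\tau^*\cl_{-p}\otimes\Sym^p\cU_2\otimes\Sym^q\cU_2^\vee(-q-1)$. These are controlled by Lemma \ref{vanishinglemma3} with twist $q+1\le n-4$.
\item The right resolution has terms $\tau^*\cl_{p+2}\otimes\Sym^p\cU_2^\vee\otimes\Sym^q\cU_2^\vee(-q)$. These are controlled by Lemma \ref{vanishinglemma2} with twist $q<n-4$.
\item Both inequalities use $q\le n/2-2$, and together they confine cohomology to degrees $q,q+1$.
\item By \eqref{vl5e1}--\eqref{vl5e2} with $q-t=0$, the degree-$q$ map is just $\cl_{q+2}\to\cE^\vee\otimes\cl_{q+3}$, $\xi\mapsto\sum_j x_j\otimes e_j\xi$. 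It is injective because some $e_j^2=\lambda_j\neq 0$.
\item Proposition \ref{constancy2} then kills degree $q+1$.
\end{itemize}

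\emph{A second gap: your injectivity sketch for $t\le n-5$ is incomplete as stated.} Normalize monomials modulo $Q$ using a variable $x_\ell$ with $\lambda_\ell\neq 0$. Comparing coefficients in the quotient \eqref{vl5e2} then gives $\phi_m(e_j\,\cdot\,)=\lambda_j\eta_{m/x_j}$ for $j\neq\ell$. Here the $\eta$'s are unknowns coming from the mixed relations $\sum_i\lambda_i x_i\otimes x_i m'$, and the right side is $0$ when $x_j\nmid m$. Choosing $j$ with $x_j\nmid m$ therefore only disposes of monomials that miss one of the two other good variables $x_j,x_k$. For monomials divisible by both you need an induction on $\deg_{x_k} m$, using $\eta_{m/x_k}=\lambda_j^{-1}\phi_{x_jm/x_k}(e_j\,\cdot\,)$. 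This is essentially the induction on the number of occurrences of $x_{n-1}$ in the paper's basis computation.
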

	\begin{proof}
		As in the proof of fully faithfulness for the Clifford component, we may exploit the left and right resolutions \eqref{leftres} and \eqref{rightres} (or rather their duals) in order to prove the claim. \par 
		Suppose first that $0 \leq t \leq n - 5$ (which is a non-empty condition since $n > 4$). Then $\cF^\vee(-t) \otimes \Sym^q \cU_2^\vee$ admits the left resolution 
		\begin{equation} \label{cliffnoncliffleftres}
			\cdots \to \tau^*\cl_{3}^\vee \otimes \cU_2 \otimes \Sym^q \cU_2^\vee(-t-1) \to \tau^*\cl_2^\vee \otimes \Sym^q \cU_2^\vee(-t-1) \to \cF^\vee \otimes \Sym^q \cU_2^\vee(-t)\to 0,
		\end{equation}
		which is in the $p$th degree given by $\tau^*\cl_{2+p}^\vee \otimes \Sym^p \cU_2 \otimes \Sym^q \cU_2^\vee(-t-1)$. By Lemma \ref{vanishinglemma3}, we know that $R^i\tau_*(\Sym^p \cU_2 \otimes \Sym^q\cU_2^\vee(-t-1)) = 0$ whenever $i > p + t + 1$. As in the proof of Lemma \ref{keylemma}, it follows that $R^i\tau_*(\cF^\vee(-t) \otimes \Sym^q \cU_2^\vee) = 0$ for all $i > t + 1$ by checking this for each stupid truncation of the resolution above. \par 
		Let us pass now to the right resolution of $\cF^\vee(-t) \otimes \Sym^q\cU_2^\vee(-t-1)$, which is given by 
		\begin{equation} \label{cliffnoncliffrightres}
			0 \to \cF^\vee \otimes \Sym^q \cU_2^\vee(-t) \to \tau^*\cl_0^\vee \otimes \Sym^q \cU_2^\vee(-t) \to \tau^*\cl_{-1}^\vee \otimes \cU_2^\vee \otimes \Sym^q \cU_2^\vee(-t) \to \cdots ,
		\end{equation}
		whose $p$th degree is $\tau^*\cl_{-p}^\vee \otimes \Sym^p \cU_2^\vee \otimes \Sym^q \cU_2^\vee(-t)$. Now by Lemma \ref{vanishinglemma2}, the higher derived pushforwards vanish for every term in the resolution, so it may be used to compute the derived pushforward of $\cF^\vee \otimes \Sym^q\cU_2^\vee(-t)$, and moreover $\tau_*(\Sym^p \cU_2^\vee \otimes \Sym^q \cU_2^\vee(-t)) = 0$ for $p < t$. We therefore conclude that $R^i\tau_*(\cF^\vee(-t) \otimes \Sym^q \cU_2^\vee) = 0$ for all $i \neq t, t+1$. \par 
		In fact, it is enough to show that $R^t\tau_*(\cF^\vee(-t) \otimes \Sym^q \cU_2^\vee) = 0$ as then Grothendieck's theorem for cohomology and base change combined with Proposition \ref{constancy2} will show that $R^{t+1}\tau_*(\cF^\vee(-t)\otimes \Sym^q\cU_2^\vee)$ vanishes as well. By considering the resolution \eqref{cliffnoncliffrightres}, it suffices to show that the map 
		\begin{equation}\label{cliffnoncliffmap}
			\cl_{-t}^\vee \otimes \tau_*(\Sym^t \cU_2^\vee \otimes \Sym^q \cU_2^\vee(-t)) \to \cl_{-t-1}^\vee \otimes\tau_*(\Sym^{t+1}\cU_2^\vee \otimes \Sym^q \cU_2^\vee(-t))
		\end{equation}
		is injective. \par 
		If $q < t$, then the second part of Lemma \ref{vanishinglemma2} implies that in fact $\tau_*(\Sym^t \cU_2^\vee \otimes \Sym^q \cU_2^\vee(-t)) = 0$, so in this case the result follows automatically. \par 
		If $t \leq q$, then we may use the isomorphisms \eqref{vl5e1} and \eqref{vl5e2} of Lemma \ref{vanishinglemma5} to rewrite the map \eqref{cliffnoncliffmap} as  the natural map 
		\begin{multline*}
				\cl_{-t}^\vee \otimes \Sym^{q-t} \cE^\vee/\im(\Sym^{q-t-2}\cE^\vee \otimes \tau^*\cL \to \Sym^{q-t}\cE^\vee) \to \\
				\cl_{-t-1}^\vee \otimes \cE^\vee \otimes \Sym^{q-t} \cE^\vee /  \im(\left(\substack{\cE^\vee \otimes\Sym^{q-t-2} \cE^\vee \\ \oplus \\ \Sym^{q-t-1}\cE^\vee}\right) \otimes \tau^*\cL \to \cE^\vee \otimes \Sym^{q-t}\cE^\vee)
		\end{multline*}
		given by Clifford multiplication. We will show that this map is fiberwise injective. \par 
		Fix any point $s \in S$, and let $V \coloneq \cE|_s$. To show fiberwise injectivity, it is enough to show that the map 
		\begin{multline} \label{fiberwisecliffnoncliffmap}
		\textcl^\vee \otimes \Sym^{q-t} V^\vee/\im(\Sym^{q-t-2}V^\vee \to \Sym^{q-t}V^\vee) \to \\
		\textcl^\vee \otimes V^\vee \otimes \Sym^{q-t} V^\vee /  \im(\left(\substack{V^\vee \otimes\Sym^{q-t-2} V^\vee \\ \oplus \\ \Sym^{q-t-1}V^\vee}\right)\to V^\vee \otimes \Sym^{q-t}V^\vee),
		\end{multline}
		where $\textcl$ is as before the full Clifford algebra; the Clifford multiplication map above naturally permutes the $\bbZ/2$-grading, so taking graded pieces will recover the desired fiberwise result.\par 
		Fix a basis $e_1, \dots, e_n$ of $V$ which diagonalizes the quadratic form $Q \coloneq \cQ_s$, and let $x_1,\dots, x_n$ be the dual basis, so that we may write the quadratic form as $\sum_{i=1}^n \lambda_i x_i^2$. By the assumption that the rank of the quadratic form is $\geq 3$, we may assume that $\lambda_{n-2} = \lambda_{n-1} = \lambda_n = 1$. In the quotient \[\Sym^{q-t} V^\vee / \im(\Sym^{q-t-2} V^\vee \xrightarrow{\cdot \sum_{i=1}^n\lambda_i x_i^2} \Sym^{q-t}V^\vee),\] it follows that elements of the form
		\[x_{i_1} \cdots x_{i_{q-t}} \text{ and } x_{j_1}\cdots x_{j_{q-t-1}}x_n,\]
		where $1 \leq i_1 \leq i_2 \leq \cdots i_{q-t} \leq n -1 $ and $1 \leq j_1 \leq j_2 \leq \cdots j_{q-t-1} \leq n - 1$, form a basis; indeed, by using the quadratic form it is possible to rewrite any terms with degree $\geq 2$ in the variable $x_n$. \par 
		On the other hand,
		\[V^\vee \otimes \Sym^{q-t} V^\vee /  \im(\left(\substack{V^\vee \otimes\Sym^{q-t-2} V^\vee \\ \oplus \\ \Sym^{q-t-1}V^\vee}\right)\to V^\vee \otimes \Sym^{q-t}V^\vee)\]
		may be understood explicitly as the quotient of $V^\vee \otimes \Sym^{q-t}V^\vee$ by the relations 
		\begin{equation}\label{tensorprodrels}
			\sum_{i=1}^n x_j \otimes x_{i_1}\cdots x_{i_{q-t-2}}\lambda_ix_i^2 \qquad \text{ and } \qquad  \sum_{i=1}^n \lambda_i x_i \otimes x_i x_{j_1}\cdots x_{j_{q-t-1}}
		\end{equation}
		where $1 \leq j \leq n$, $1 \leq {i_1} \leq \cdots \leq {i_{q-t-2}} \leq n$ and $1 \leq j_1 \leq \cdots \leq j_{q-t-1} \leq n$. Using these relations, this quotient space is spanned by terms of the following three types: 
		\begin{equation} \label{bigbasis}
		\begin{gathered}
			x_i \otimes x_{j_1}\cdots x_{j_{q-t}} \text{ for } 1 \leq j_1 \leq \cdots \leq j_{q-t} \leq n-1 \text{ and }1 \leq i \leq n, \\
			x_k \otimes x_{\ell_1} \cdots x_{\ell_{q-t-1}}x_n \text{ for } 1 \leq \ell_1 \leq \cdots \leq \ell_{q-t-1} \leq n-2 \text{ and }1 \leq k \leq n - 1,\\
			x_r \otimes x_{s_1} \cdots x_{s_{q-t-2}}x_{n-1}x_n \text{ for } 1 \leq s_1 \leq \cdots \leq s_{q-t-2} \leq n-1 \text{ and }1 \leq r \leq n - 2.
		\end{gathered}
		\end{equation}
		Indeed, using the first relation of \eqref{tensorprodrels} we may ensure that the second term of the tensor product has degree $\leq 1$ in the variable $x_n$; using the second, it is possible to rewrite terms of the form 
		\[x_n \otimes x_{\ell_1} \cdots x_{\ell_{q-t-1}}x_n\]
		to remove the occurrences of $x_n$; and combining both relations, it is possible to rewrite terms of the form 
		\[x_{n-1} \otimes x_{s_1}\cdots x_{s_{q-t-2}}x_{n-1}x_n\]
		to remove the occurrence of $x_{n-1}$ in the first term of the tensor product and the occurrence of $x_n$ in the second term of the tensor product. As a dimension count, one can observe that the predicted dimension of this quotient, by the Koszul resolution, is 
		\[n{n-1 + q - t \choose q - t} - {n-1+q-t-1 \choose q-t-1}-n{n-1+q-t-2\choose q-t-2} + {n-1+q-t-3\choose q-t-3}\]
		which iterated application of Pascal's identity coincides with the length of the spanning set above, \[n{n-2+q-t \choose q-t} + (n-1){n-3+q-t-1\choose q-t-1} + (n-2){n-3+q-t-2\choose q-t-2}.\]
		In particular this spanning set is a basis. To prove injectivity of \eqref{fiberwisecliffnoncliffmap}, we therefore write the map in terms of this basis. \par 
		Pick an arbitrary element 
		\[\sum_{1 \leq i_1 \leq \cdots \leq i_{q-t} \leq n - 1} \varphi_{i_1\cdots i_{q-t}} \otimes x_{i_1}\cdots x_{i_{q-t}} + \sum_{1 \leq j_1 \leq \cdots \leq j_{q-t-1} \leq n - 1} \psi_{j_1\cdots j_{q-t-1}}\otimes x_{j_1}\cdots x_{j_{q-t-1}}x_n\]
		in the kernel of \eqref{fiberwisecliffnoncliffmap}, and observe that under this map the element above is sent to 
		\[\sum_{\substack{1 \leq i_1 \leq \cdots \leq i_{q-t} \leq n - 1 \\1 \leq j \leq n}} \varphi_{i_1\cdots i_{q-t}}e_j \otimes x_j \otimes x_{i_1}\cdots x_{i_{q-t}} + \sum_{\substack{1 \leq j_1 \leq \cdots \leq j_{q-t-1} \leq n - 1 \\ 1 \leq j \leq n}} \psi_{j_1\cdots j_{q-t-1}}e_j \otimes x_j\otimes x_{j_1}\cdots x_{j_{q-t-1}}x_n.\]
		One can carefully rewrite this in terms of the basis above using relations \eqref{tensorprodrels}, to yield the following messy expression: 
		\begin{equation} \label{basisexpansion}
		\begin{gathered}
			\sum_{\substack{1 \leq i_1 \leq \cdots \leq i_{q-t} \leq n - 1 \\1 \leq j \leq n}} \varphi_{i_1\cdots i_{q-t}}e_j \otimes x_j \otimes x_{i_1}\cdots x_{i_{q-t}} \\
			+  \sum_{\substack{1 \leq j_1 \leq \cdots \leq j_{q-t-1} \leq n - 2 \\ 1 \leq j \leq n - 1}} \psi_{j_1\cdots j_{q-t-1}}e_j \otimes x_j\otimes x_{j_1}\cdots x_{j_{q-t-1}}x_n \\
			+ \sum_{\substack{1 \leq j_1 \leq \cdots \leq j_{q-t-2}\leq n -1 \\ 1 \leq j \leq n - 2}}\psi_{j_1\cdots j_{q-t-2},n-1}e_j \otimes x_j \otimes x_{j_1}\cdots x_{j_{q-t-2}}x_{n-1}x_n \\
			+ \sum_{\substack{1 \leq j_1 \leq \cdots \leq j_{q-t-1}\leq n -2 \\ 1 \leq j \leq n - 1}}-\lambda_j\psi_{j_1\cdots j_{q-t-1}}e_n \otimes x_j \otimes x_{j_1}\cdots x_{j_{q-t-1}}x_j \\
			+ \sum_{\substack{1 \leq j_1 \leq \cdots \leq j_{q-t-2}\leq n -1 \\ 1 \leq j \leq n - 2}}-\lambda_j\psi_{j_1\cdots j_{q-t-2},n-1}e_{n-1} \otimes x_j \otimes x_{j_1}\cdots x_{j_{q-t-2}}x_{j}x_n \\
			+ \sum_{\substack{1 \leq j_1 \leq \cdots \leq j_{q-t-2}\leq n -1 \\ 1 \leq j \leq n - 1}}-\lambda_j\psi_{j_1\cdots j_{q-t-2},n-1}e_{n} \otimes x_j \otimes x_{j_1}\cdots x_{j_{q-t-2}}x_{n-1}x_j \\
			+ \sum_{\substack{1 \leq j_1 \leq \cdots \leq j_{q-t-2}\leq n -1 \\ 1 \leq j \leq n - 2}}\lambda_j\psi_{j_1\cdots j_{q-t-2},n-1}e_{n-1} \otimes x_n \otimes x_{j_1}\cdots x_{j_{q-t-2}}x_j^2.
		\end{gathered}
		\end{equation}
		Since this element vanishes, it follows that it must vanish in each component coming from the basis \eqref{bigbasis}. \par We prove first that $\psi_{j_1\cdots j_{q-t-1}} = 0$ by induction on the number of occurrences of $n - 1$ in $j_1, \dots, j_{q-t-1}$. Observe first that the only terms attached to the tensor ${x_{n-1}}\otimes x_{j_1}\cdots x_{ j_{q-t-1}}x_n$ for $1 \leq j_1 \leq \cdots \leq j_{q-t-1} \leq n - 2$ arise in the second line of \eqref{basisexpansion}, so we find that $\psi_{j_1\cdots j_{q-t-1}}e_{n-1} = 0 \text{ for } 1 \leq j_1 \leq \cdots \leq j_{q-t-1} \leq n - 2$. Since $\lambda_{n-1} = e_{n-1}^2 = 1$, it follows that
		\[\psi_{j_1\cdots j_{q-t-1}} = 0 \text{ for } 1 \leq j_1 \leq \cdots \leq j_{q-t-1} \leq n - 2.\]
		On the other hand, observe that for any $0 \leq k \leq q - t - 2$, the only remaining terms which are attached to tensors of the form 
		$x_{n-2} \otimes x_{j_1}\cdots x_{j_k}x_{n-2}x_{n-1}^{q-t-2-k}x_n$ where $1 \leq j_1 \leq \cdots \leq j_k \leq n - 2$ are in the third and fifth lines of \eqref{basisexpansion}. If $k = q - t - 2$, then the only term attached to this tensor is 
		\[-\lambda_{n-2}\psi_{j_1\cdots j_{q-t-2},n-1}e_{n-1} \otimes x_{n-2} \otimes x_{j_1}\cdots x_{j_{q-t-2}}x_{n-2}x_n\]
		in the fifth row, so we find that $\lambda_{n-2}\psi_{j_1\cdots j_{q-t-2},n-1}e_{n-1} = 0$, and as $\lambda_{n-2} = \lambda_{n-1} = 1$ it follows that 
		\[\psi_{j_1\cdots j_{q-t-2},n-1} = 0 \text{ for } 1 \leq j_1 \leq \cdots \leq j_{q-t-2} \leq n - 2.\]
		If $k < q - t - 2$, then by examining terms attached to the tensor $x_{n-2} \otimes x_{j_1}\cdots x_{j_k}x_{n-2}x_{n-1}^{q-t-2-k}x_n$ we instead find 
		\[\psi_{j_1\cdots j_k,n-2,n-1,\cdots,n-1}e_{n-2} - \lambda_{n-2}\psi_{j_1\cdots,j_{k},n-1,\cdots,n-1}e_{n-1} = 0.\]
		Assuming by induction that the first term vanishes and using that $\lambda_{n-2} = \lambda_{n-1} = 1$, we may conclude that
		\[\psi_{j_1\cdots j_k,n-1,\cdots,n-1} = 0 \text{ for } 1 \leq j_1 \leq \cdots \leq j_k \leq n - 2.\]
		Hence it follows $\psi_{j_1,\cdots,j_{q-t-1}} = 0 $ whenever $1 \leq j_1 \leq \cdots \leq j_{q-t-1} \leq n - 1$. The expression \eqref{basisexpansion} therefore reduces simply to 
		\[\sum_{\substack{1 \leq i_1 \leq \cdots \leq i_{q-t}\leq n- 1 \\ 1 \leq j \leq n}}\varphi_{i_1\cdots i_{q-t}}e_j\otimes x_j \otimes x_{i_1\cdots i_{q-t}},\]
		and as every basis element occurs at most one, we may conclude that $\varphi_{i_1\cdots i_{q-t}}e_n = 0$. Since $\lambda_n = 1$, it follows that $\varphi_{i_1\cdots i_{q-t}} = 0$. In particular, the map \eqref{fiberwisecliffnoncliffmap} is injective, which proves the lemma whenever $0 \leq t \leq n - 5$. \par 
		We are left only to handle the case when $t = n - 4$. Then by Grothendieck duality and the calculation of the dualizing line bundle \eqref{dualizing}, there is a quasi-isomorphism 
		\[R\tau_*(\cF^\vee(-n+4) \otimes \Sym^q \cU_2^\vee) \simeq \left(R\tau_*(\cF(-1) \otimes \Sym^q \cU_2)\right)^\vee[-2n+7] \otimes \cL^{\otimes 3},\]
		so it suffices to show that 
		\begin{equation}
			R\tau_*(\cF(-1) \otimes \Sym^q \cU_2) = R\tau_*(\cF \otimes \Sym^q \cU_2^\vee(-q-1)) = 0
		\end{equation}
		whenever $0 \leq q \leq n/2 - 2$. Using the left and right resolution \eqref{leftres} and \eqref{rightres},  $\cF\otimes \Sym^q \cU_2^\vee(-q-1)$ admits left and right resolutions 
		\[\cdots \to \tau^*\cl_{-1} \otimes \cU_2 \otimes \Sym^q \cU_2^\vee(-q-1) \to \tau^*\cl_0 \otimes \Sym^q\cU_2^\vee(-q-1) \to \cF \otimes \Sym^q \cU_2^\vee(-q-1) \to 0\]
		and 
		\[0 \to \cF \otimes \Sym^q \cU_2^\vee(-q-1) \to \tau^*\cl_2 \otimes \Sym^q \cU_2^\vee(-q) \to \tau^*\cl_{3} \otimes \cU_2^\vee \otimes \Sym^q \cU_2^\vee(-q) \to \cdots.\]
		Comparing these resolutions to the resolutions \eqref{cliffnoncliffleftres} and \eqref{cliffnoncliffrightres} above, one can see that the same arguments go through, proving the lemma. 
	\end{proof}
	Semiorthogonality then follows easily.
	\begin{proposition} \label{cliffnoncliffprop}
		Suppose $n > 4$. For any $\cG_1 \in \Db(S,\cl_0)$ and $\cG_2 \in \Db(S)$, if $0 \leq q \leq n/2 - 2$ and $0 \leq t \leq n - 4$, then 
		\[R\Hom((\cF \otimes_{\tau^*\cl_0}\tau^*\cG_1)(t), \tau^*\cG_2 \otimes \Sym^q\cU_2^\vee) = 0\]
		on $\OGr(2,\cQ)$. 
	\end{proposition}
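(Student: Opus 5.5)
The plan is to reduce the statement to Lemma \ref{cliffnonclifflemma} by adjunction, exactly as Proposition \ref{noncliffnoncliffprop} reduces to Lemma \ref{vanishinglemma4}.

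First, rewrite the source object. Since $\cF$ is locally free over $\cO$, there is an isomorphism $(\cF \otimes_{\tau^*\cl_0}\tau^*\cG_1)(t) \simeq \Phi_\cF(\cG_1)\otimes\cO(t)$. We also have the right adjoint $\Psi_\cF(-) = R\tau_*(\cF^\vee\otimes -)$ of $\Phi_\cF$ from Section \ref{proofsection}. Twisting by $\cO(t)$ therefore gives
\[R\Hom\bigl((\cF \otimes_{\tau^*\cl_0}\tau^*\cG_1)(t), \tau^*\cG_2 \otimes \Sym^q\cU_2^\vee\bigr) \simeq R\Hom_{\cl_0}\bigl(\cG_1, R\tau_*(\cF^\vee(-t)\otimes \Sym^q\cU_2^\vee\otimes\tau^*\cG_2)\bigr).\]
Here the right-hand side is computed in $\Db(S,\cl_0)$, and the pushforward carries its natural left $\cl_0$-module structure coming from the right module structure on $\cF$.

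Next, apply the projection formula, which is valid because $\tau$ is proper and $\cG_2$ is pulled back from $S$:
\[R\tau_*(\cF^\vee(-t)\otimes \Sym^q\cU_2^\vee\otimes\tau^*\cG_2) \simeq R\tau_*(\cF^\vee(-t)\otimes \Sym^q\cU_2^\vee)\otimes^L \cG_2.\]
By Lemma \ref{cliffnonclifflemma}, under exactly the hypotheses $n>4$, $0\le q\le n/2-2$ and $0\le t\le n-4$, the first factor vanishes. Hence the whole $R\Hom$ vanishes.

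The only point needing care is the adjunction step. One must check that the left $\cl_0$-module structure is compatible, but vanishing as a complex of $\cO_S$-modules already forces vanishing as a $\cl_0$-module. The adjunction itself is the same derived tensor–hom plus pullback–pushforward adjunction used to identify $\Psi_\cF$, applied after twisting. I therefore expect no real obstacle: all the substantive content lies in Lemma \ref{cliffnonclifflemma}.
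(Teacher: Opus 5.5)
Your proposal is correct and takes the same approach as the paper. The paper likewise uses the tensor--hom adjunction over $\tau^*\cl_0$, then pullback--pushforward adjunction and the projection formula, to reduce the claim to the vanishing $R\tau_*(\cF^\vee(-t)\otimes\Sym^q\cU_2^\vee)=0$ of Lemma \ref{cliffnonclifflemma}.
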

	\begin{proof}
		Under the isomorphisms
		\begin{align*}
			R\Hom((\cF \otimes_{\tau^*\cl_0}\tau^*\cG_1)(t), \tau^*\cG_2 \otimes \Sym^q\cU_2^\vee) &\simeq R\Hom_{\tau^*\cl_0}(\tau^*\cG_1, \cF^\vee \otimes \tau^*\cG_2 \otimes \Sym^q \cU_2^\vee(-t)) \\
			&\simeq R\Hom_{\cl_0}(\cG_1, \tau_*(\cF^\vee \otimes \Sym^q \cU_2^\vee(-t)) \otimes \cG_2),
		\end{align*}
		this reduces to Lemma \ref{cliffnonclifflemma}.
	\end{proof}
	\subsection{Clifford component with Clifford component} The final case is very similar to the proof of fully faithfulness in Theorem \ref{embeddingtheorem}. As in the previous two cases, the meat of the argument will come from a vanishing statement for certain derived pushforwards. 
	\begin{lemma} \label{cliffclifflemma}
		For any $0 < t \leq n - 4$, 
		\begin{equation}
			R\tau_*(\cF^\vee \otimes \cF(-t))) = 0.
		\end{equation}
	\end{lemma}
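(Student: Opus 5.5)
The plan mirrors the proofs of Lemma \ref{keylemma} and Lemma \ref{cliffnonclifflemma}: tensor the resolutions \eqref{leftres} and \eqref{rightres} of $\cF$ with those of $\cF^\vee$, bound the degrees in which $R\tau_*$ can be nonzero from both sides, and then kill the remaining groups by a fiberwise injectivity check combined with the Euler characteristic vanishing of Proposition \ref{constancy2}.

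\textbf{Upper bound.} Using the left resolution \eqref{lefttensorres} twisted by $\cO(-t)$, the $\ell$th term is a finite sum of bundles
\[\tau^*\cl_i^\vee \otimes \Sym^p\cU_2 \otimes \Sym^q \cU_2(-t-1) \otimes \tau^*\cl_j, \qquad p+q=\ell.\]
Lemma \ref{vanishinglemma1}, with $t+1$ in place of $t$ and valid since $t+1 < n-2$, gives $R^i\tau_* = 0$ for $i > \ell + 2t + 1$. The stupid-truncation and colimit argument of Lemma \ref{keylemma} then gives $R^i\tau_*(\cF^\vee\otimes\cF(-t)) = 0$ for $i > 2t+1$. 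The bound $t+1$ that appeared in Lemma \ref{cliffnonclifflemma} should be sharper. To get it, I would mix the resolutions: use the left resolution of $\cF^\vee$, namely $\tau^*\cl_{2+p}^\vee\otimes\Sym^p\cU_2(-1)$, together with the right resolution of $\cF$, namely $\Sym^q\cU_2^\vee(1)\otimes\tau^*\cl_{2+q}$. This produces a double complex whose terms are $\Sym^p\cU_2\otimes\Sym^q\cU_2^\vee(-t)$. Lemma \ref{vanishinglemma3} then bounds cohomology in degree $\le p+t$, and one concludes vanishing above degree $t$ (or $t+1$).

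\textbf{Lower bound.} Use the right resolution of $\cF^\vee$, with terms $\tau^*\cl_{-p}^\vee \otimes \Sym^p\cU_2^\vee$, together with the right resolution of $\cF$, with terms $\Sym^q\cU_2^\vee(1)\otimes\tau^*\cl_{2+q}$. The terms of the resulting double complex are $\Sym^p\cU_2^\vee\otimes\Sym^q\cU_2^\vee(1-t)$. Lemma \ref{vanishinglemma2}, with $t-1 < n-4$, says these have no higher direct images and vanishing $\tau_*$ whenever $\min(p,q) < t-1$. So $R\tau_*$ can only live in degrees $\ge 2(t-1)$, and only terms with $p,q\ge t-1$ contribute to the lowest degree. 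Combining the two bounds should leave at most two adjacent degrees, hopefully $t$ and $t+1$ as in Lemma \ref{cliffnonclifflemma}.

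\textbf{Killing the remaining degrees.} The lowest surviving group is the kernel of a map of pushforwards computed by Lemma \ref{vanishinglemma5}. Using \eqref{vl5e1} and \eqref{vl5e2}, this map is Clifford multiplication
\[\cl^\vee\otimes\Sym^{m}\cE^\vee/(\cL)\otimes\cl \to (\cl^\vee\otimes \cE^\vee\otimes\Sym^m\cE^\vee/\sim\otimes\cl)\oplus(\cdots),\]
with the second summand coming from the $\cF$ side. I would show it is fiberwise injective by the same basis argument as in Lemma \ref{cliffnonclifflemma}: choose a diagonalizing basis with $\lambda_{n-2}=\lambda_{n-1}=\lambda_n=1$, expand in the basis \eqref{bigbasis}, and inductively kill coefficients using $e_n^2=1$. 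The top remaining degree then vanishes by cohomology and base change together with the vanishing $\chi(\cF|_s(-t),\cF|_s)=0$ from Proposition \ref{constancy2}. For $t$ near $n-4$, where Lemma \ref{vanishinglemma2} does not apply to $\cO(1-t)$ twists, I would instead use Grothendieck duality with \eqref{dualizing}. This turns $\cF^\vee\otimes\cF(-t)$ into $\cF^\vee\otimes\cF(t-n+3)$ up to shift, reducing to the range already handled, exactly as in the final step of Lemma \ref{cliffnonclifflemma}.

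\textbf{Main obstacle.} The hard part is making the degree bounds from the two sides meet. Tensoring two resolutions can leave a gap wider than one degree, and then the Euler characteristic argument alone is insufficient. If that happens, I would first try to choose for each range of $t$ the mix of left and right resolutions that leaves the narrowest window, and fall back on duality to symmetrize $t \leftrightarrow n-3-t$. The fiberwise injectivity computation is laborious but should follow the template of Lemma \ref{cliffnonclifflemma}.
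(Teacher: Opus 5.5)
There is a genuine gap: your degree bounds leave four cohomological degrees alive, not two, and your endgame can only dispose of two of them.

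\textbf{What matches the paper.} Your lower bound is the paper's. Tensoring the two right resolutions gives terms $\Sym^p\cU_2^\vee\otimes\Sym^q\cU_2^\vee(1-t)$. Lemma~\ref{vanishinglemma2} applies for every $0<t\le n-4$, since it only needs $t-1<n-4$; so no duality is needed at the top of the range. Nothing survives below degree $2t-2$. Your coarse upper bound, vanishing above $2t+1$ via Lemma~\ref{vanishinglemma1}, is also the paper's.

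\textbf{Where it fails: the sharpened upper bound.} Suppose you resolve $\cF^\vee$ on the left and $\cF$ on the right. The term $\Sym^p\cU_2\otimes\Sym^q\cU_2^\vee(-t)$ then sits in total degree $q-p$. Lemma~\ref{vanishinglemma3} only kills its contribution above total degree $q+t$, and this is unbounded in $q$. Worse, this double complex has infinitely many terms in every total degree. The trick works in Lemma~\ref{cliffnonclifflemma} only because there $\Sym^q\cU_2^\vee$ is a fixed bundle, not a resolution.

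Duality does not rescue it either. Grothendieck duality on the $(2n-7)$-dimensional fibers, with $\omega\simeq\cO(3-n)$ up to $\cL$, identifies $R^i$ at twist $-t$ with the dual of $R^{2n-7-i}$ at twist $-(n-3-t)$. This carries the window $[2t-2,2t+1]$ exactly onto itself.

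\textbf{What is missing.} So the four degrees $2t-2,\dots,2t+1$ genuinely remain. Your endgame is one injectivity check plus $\chi(\cF|_s(-t),\cF|_s)=0$ from Proposition~\ref{constancy2}. That handles at most the bottom and top degrees; a vanishing Euler characteristic cannot kill both $R^{2t-1}$ and $R^{2t}$.

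Write $A^\ell$ for the $\ell$th term of the right resolution of $\cF^\vee\otimes\cF(-t)$. The missing step is exactness of $\tau_*A^{2t-2}\to\tau_*A^{2t-1}\to\tau_*A^{2t}\to\tau_*A^{2t+1}$ at its first three terms. By Lemma~\ref{vanishinglemma5} these become, in order:
\begin{itemize}
\item $\cl_{1-t}^\vee\otimes\cl_{1+t}$;
\item two copies of $\cl^\vee\otimes\cE^\vee\otimes\cl$;
\item pieces built from $\Sym^2\cE^\vee$ and $\cE^\vee\otimes\cE^\vee$ modulo the quadric relation;
\item similar pieces in degree three.
\end{itemize}
The differentials are left and right Clifford multiplication.

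The bottom injectivity is trivial, since $\phi e_1=0$ forces $\phi=0$. It is not the $\Sym^{q-t}$ basis computation of Lemma~\ref{cliffnonclifflemma}, so your proposed reuse of that argument is aimed at the wrong map.

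The real content is $\ker d_1=\im d_0$ and $\ker d_2=\im d_1$. The paper proves these fiberwise in an orthogonal basis with $\lambda_1=\lambda_2=\lambda_3=1$: it solves the coefficient equations and exhibits explicit preimages under $d_0$ and $d_1$. Only after these three exactness statements does Proposition~\ref{constancy2}, with cohomology and base change, kill $R^{2t+1}$.
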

	\begin{proof}
%		By Grothendieck duality and the calculation of the dualizing line bundle in \eqref{dualizing} there is an isomorphism 
%		\[R\tau_*(\cF^\vee \otimes \cF(-t)) \simeq (R\tau_*(\cF^\vee \otimes \cF(t-n+3)))^\vee[-2n+7] \otimes \cL^{\otimes 3},\]
%		so to show vanishing we may assume that $t < n/2 - 1$. 
		By twisting the resolutions \eqref{lefttensorres} and \eqref{righttensorres} appearing in Lemma \ref{keylemma}, we observe that the above sheaf has resolutions given by
		\begin{gather}
			\cdots \to \begin{matrix}
				\tau^*\cl_{3}^\vee \otimes \cU_2(-t-1) \otimes \tau^*\cl_0 \\
				\oplus \\
				\tau^*\cl_2^\vee \otimes \cU_2(-t-1) \otimes \tau^*\cl_{-1}
			\end{matrix}\to \tau^*\cl_2^\vee \otimes \cO_{\OGr(2,\cQ)}(-t-1) \otimes \tau^* \cl_0 \to \cF^\vee \otimes \cF(-t) \to 0  \label{lefttwistedtensorres}\\
			0 \to \cF^\vee \otimes \cF(-t) \to \tau^*\cl_0^\vee \otimes \cO_{\OGr(2,\cQ)}(-t+1) \otimes \tau^*\cl_2 \to \begin{matrix}
				\tau^*\cl_{-1}^\vee \otimes \cU_2^\vee(-t+1) \otimes \tau^*\cl_2 \\
				\oplus \\
				\tau^*\cl_0^\vee \otimes \cU_2^\vee(-t+1) \otimes \tau^*\cl_{3}
			\end{matrix} \to \cdots \label{righttwistedtensorres}
		\end{gather}
		If we let $A^\bullet_{\text{left}}$ denote the left resolution \eqref{lefttwistedtensorres}, then the $\ell$th term $A^{-\ell}_{\text{left}}$ is always of the form
		\[\bigoplus_{\substack{\text{finitely}\\\text{many }i,j}}\tau^*\cl_i^\vee \otimes \Sym^p \cU_2 \otimes \Sym^q \cU_2(-t-1) \otimes \tau^*\cl_j\]
		where $p + q = \ell$, and using Lemma \ref{vanishinglemma1}, we find that $R^i\tau_*(A^{-\ell}_{\text{left}}) = 0$ for $i > \ell + 2t + 1$. As in the proof of Lemma \ref{keylemma}, by passing to the stupid truncations and then to the colimit, we see that $R^i\tau_*(\cF^\vee \otimes \cF(-t))$ whenever $i > 2t + 1$. \par 
		Examining the right resolution $A^\bullet_{\text{right}}$ of \eqref{righttwistedtensorres}, we see that 
		\[A^\ell_{\text{right}} = \bigoplus_{p=0}^{\ell}\tau^*\cl_{-p}^\vee \otimes \Sym^p \cU_2^\vee \otimes \Sym^{\ell-p} \cU_2^\vee(-t+1) \otimes \tau^*\cl_{2+\ell-p}.\]
		By Lemma \ref{vanishinglemma2}, $R^i\tau_*(A^\ell_{\text{right}}) = 0$ whenever $i > 0$, and $A^\bullet_{\text{right}}$ can be used to compute cohomology. If $\ell < 2t - 2$, then it follows that $\min(p,\ell-p) < t - 1$, so in this case $\tau_*(A^\ell_{\text{right}}) = 0$ vanishes completely. It follows that $R^i\tau_*(\cF^\vee \otimes \cF(-t)) = 0$ except for $i = 2t-2, 2t - 1, 2t, 2t + 1$. The lemma then amounts to showing the exactness of 
		\begin{equation*}
			0 \to \tau_*(A^{2t-2}_\text{right}) \to \tau_*(A^{2t-1}_\text{right}) \to \tau_*(A^{2t}_\text{right}) \to \tau_*(A^{2t+1}_{\text{right}})
		\end{equation*}
		when $0 < t \leq n - 4$, as this will imply that $R^i\tau_*(\cF^\vee \otimes \cF(-t)) = 0$ for $2t -2 \leq i \leq 2t$; then by Proposition \ref{constancy2} along with Grothendieck's theorems on cohomology and base change, $R^{2t+1}\tau_*(\cF^\vee \otimes \cF(-t))$ necessarily must vanish also. \par 
		As in the previous cases, showing vanishing for the remaining range will reduce to an explicit linear algebra calculation using $A^\bullet_{\text{right}}$. Examining the summands of $A^\ell_\text{right}$ for $\ell = 2t - 2, 2t - 1, 2t$ and $2t + 1$, using Lemma \ref{vanishinglemma2} again, and applying the projection formula, we see that we have identifications
		\begin{equation}
			\begin{aligned}
				\tau_*(A^{2t-2}_{\text{right}}) &\simeq \cl_{-t+1}^\vee \otimes \tau_*(\Sym^{t-1} \cU_2^\vee \otimes \Sym^{t-1}\cU_2^\vee(-t+1)) \otimes \cl_{1+t} \\ \\
				\tau_*(A^{2t-1}_{\text{right}}) &\simeq \begin{matrix}\cl_{-t+1}^\vee \otimes \tau_*(\Sym^{t-1} \cU_2^\vee \otimes \Sym^{t}\cU_2^\vee(-t+1)) \otimes \cl_{2+t} \\ \oplus \\ \cl_{-t}^\vee \otimes \tau_*(\Sym^t \cU_2^\vee \otimes \Sym^{t-1}\cU_2^\vee(-t+1)) \otimes \cl_{1+t}\end{matrix} \\ \\
				\tau_*(A^{2t}_{\text{right}}) &\simeq \begin{matrix}\cl_{-t+1}^\vee \otimes \tau_*(\Sym^{t-1} \cU_2^\vee \otimes \Sym^{t+1}\cU_2^\vee(-t+1)) \otimes \cl_{3+t} \\ \oplus \\ \cl_{-t}^\vee \otimes \tau_*(\Sym^t \cU_2^\vee \otimes \Sym^{t}\cU_2^\vee(-t+1)) \otimes \cl_{2+t} \\ \oplus \\ \cl_{-t-1}^\vee \otimes \tau_*(\Sym^{t+1}\cU_2^\vee \otimes \Sym^{t-1}\cU_2^\vee(-t+1)) \otimes \cl_{1+t} \end{matrix} \\\\ 
				\tau_*(A^{2t+1}_{\text{right}}) &\simeq \begin{matrix}\cl_{-t+1}^\vee \otimes \tau_*(\Sym^{t-1} \cU_2^\vee \otimes \Sym^{t+2}\cU_2^\vee(-t+1)) \otimes \cl_{4+t} \\ \oplus \\ \cl_{-t}^\vee \otimes \tau_*(\Sym^t \cU_2^\vee \otimes \Sym^{t+1}\cU_2^\vee(-t+1)) \otimes \cl_{3+t} \\ \oplus \\ \cl_{-t-1}^\vee \otimes \tau_*(\Sym^{t+1}\cU_2^\vee \otimes \Sym^{t}\cU_2^\vee(-t+1)) \otimes \cl_{2+t} \\ 
				\oplus \\ \cl_{-t-2}^\vee \otimes \tau_*(\Sym^{t+2}\cU_2^\vee \otimes \Sym^{t-1}\cU_2^\vee(-t+1)) \otimes \cl_{1+t}
			\end{matrix} 
			\end{aligned}
		\end{equation}
		Remarkably, under the isomorphisms of Lemma \ref{vanishinglemma5}, each of these simplifies significantly: 
		\begin{equation}
			\begin{gathered}
				\tau_*(A^{2t-2}_{\text{right}}) \simeq \cl_{-t+1}^\vee \otimes  \tau_*\cO \otimes \cl_{1+t} \qquad \qquad
				\tau_*(A^{2t-1}_{\text{right}}) \simeq \begin{matrix}\cl_{-t+1}^\vee \otimes \tau_*(\cU_2^\vee) \otimes \cl_{2+t} \\ \oplus \\ \cl_{-t}^\vee \otimes \tau_*(\cU_2^\vee) \otimes \cl_{1+t}\end{matrix} \\ \\
				\tau_*(A^{2t}_{\text{right}}) \simeq \begin{matrix}\cl_{-t+1}^\vee \otimes \tau_*(\Sym^2 \cU_2^\vee) \otimes \cl_{3+t} \\ \oplus \\ \cl_{-t}^\vee \otimes \tau_*(\cU_2^\vee \otimes \cU_2^\vee) \otimes \cl_{2+t} \\ \oplus \\ \cl_{-t-1}^\vee \otimes \tau_*(\Sym^2 \cU_2^\vee) \otimes \cl_{1+t} \end{matrix} \qquad\qquad 
				\tau_*(A^{2t+1}_{\text{right}}) \simeq \begin{matrix}\cl_{-t+1}^\vee \otimes \tau_*(\Sym^3 \cU_2^\vee) \otimes \cl_{4+t} \\ \oplus \\ \cl_{-t}^\vee \otimes \tau_*(\cU_2^\vee \otimes \Sym^2\cU_2^\vee) \otimes \cl_{3+t} \\ \oplus \\ \cl_{-t-1}^\vee \otimes \tau_*(\Sym^2\cU_2^\vee \otimes \cU_2^\vee) \otimes \cl_{2+t} \\ 
					\oplus \\ \cl_{-t-2}^\vee \otimes \tau_*(\Sym^{3}\cU_2^\vee) \otimes \cl_{1+t}
				\end{matrix} 
			\end{gathered}
		\end{equation}
		For convenience's sake, we define the notation
		\[\Sym^p_{\SO} \cE^\vee \coloneq \Sym^p \cE^\vee / \im(\Sym^{p-2}\cE^\vee \otimes \tau^*\cL\to \Sym^p \cE^\vee)\]
		and 
		\[(\cE^\vee \otimes \Sym^{p} \cE^\vee)_{\SO} \coloneq \cE^\vee \otimes \Sym^{p} \cE^\vee/\im(\left( \substack{\cE^\vee \otimes \Sym^{p-2}\cE^\vee \\ \oplus \\ \Sym^{p-1} \cE^\vee}
		\right) \otimes \tau^*\cL \to \cE^\vee \otimes \Sym^p \cE^\vee)\]
		by analogy to the orthogonal Schur functors (and to the notation used in the proof of Lemma \ref{keylemma}), so that by Lemma \ref{vanishinglemma5} we have $\tau_*(\Sym^p \cU_2^\vee) \simeq \Sym^p_{\SO} \cE^\vee$ and $\tau_*(\cU_2^\vee \otimes \Sym^p \cU_2^\vee) \simeq (\cE^\vee \otimes \Sym^p \cE^\vee)_{\SO}$. Then the proof of the lemma reduces to showing the exactness of the complex at the terms:
		\[ \small 0 \to \cl_{-t+1}^\vee \otimes \cl_{1+t} \xrightarrow{d_0} \begin{matrix}
			\cl_{-t+1}^\vee \otimes \cE^\vee \otimes \cl_{2+t} \\ 
			\oplus \\ 
			\cl_{-t}^\vee \otimes \cE^\vee \otimes \cl_{1+t} \\
			\end{matrix} \xrightarrow{d_1} \begin{matrix}
			\cl_{-t+1}^\vee \otimes \Sym^2_{\SO} \cE^\vee \otimes \cl_{3+t} \\
			\oplus \\
			\cl_{-t}^\vee \otimes (\cE^\vee \otimes \cE^\vee)_{\SO} \otimes \cl_{2+t} \\
			\oplus \\ 
			\cl_{-t-1}^\vee \otimes \Sym^2_{\SO} \cE^\vee \otimes \cl_{1+t} 
			\end{matrix} \xrightarrow{d_2} \begin{matrix}
			\cl_{-t+1}^\vee \otimes \Sym^3_{\SO} \cE^\vee \otimes \cl_{4+t} \\
			\oplus \\
			\cl_{-t}^\vee \otimes (\cE^\vee \otimes \Sym^2\cE^\vee)_{\SO} \otimes \cl_{3+t} \\
			\oplus \\ 
			\cl_{-t-1}^\vee \otimes (\Sym^2_{\SO} \cE^\vee \otimes \cE^\vee)\otimes \cl_{2+t} \\
			\oplus \\
			\cl_{-t-2}^\vee \otimes \Sym^3_{\SO} \cE^\vee \otimes \cl_{1+t}
		\end{matrix}\]
		where all maps are induced (up to sign, coming from the Koszul sign convention for the tensor product of complexes) by monoidal coevaluation followed by Clifford multiplication, either into the left or right factor. \par 
		It is enough to check the exactness fiberwise over points $s \in S$. We therefore fix a basis $e_1, \dots, e_n$ of $V \coloneq \cE|_s$ with dual basis $x_1,\dots,x_n$, such that the quadratic form diagonalizes as $Q \coloneq \cQ_s = \sum_{i=1}^n \lambda_i x_i^2$. By our assumption that the fibers of the quadratic form have rank at least 3, we may assume that \[\lambda_1 = \lambda_2 = \lambda_3  = 1.\] We proceed case-by-case: 
		\begin{itemize}
			\item \underline{Injectivity of $d_0$:} On fibers, we may realize $d_0$ as a map 
			\[d_0 : \Hom(\textcl_{\parity(-t+1)},\textcl_{\parity(1+t)}) \to \begin{matrix}
				\Hom(\textcl_{\parity(-t+1)}^\vee,\textcl_{\parity(2+t)})\otimes V^\vee  \\ 
				\oplus \\ 
				\Hom(\textcl_{\parity(-t)}^\vee,\textcl_{\parity(1+t)}) \otimes V^\vee
				\end{matrix}\]
				Suppose $\phi \in \Hom(\textcl_{\parity(-t+1)},\textcl_{\parity(1+t)})$, and observe that 
			\[d_0(\phi) = \left(\sum_{i=1}^n e_i\phi \otimes x_i, \sum_{i=1}^n \phi e_i \otimes x_i \right).\]
			If $d_0(\phi) = 0$, then evidently $\phi e_1 = 0$; but since $\lambda_1 = 1$, it follows immediately that $\phi = 0$. 
			\item \underline{$\im(d_0) = \ker(d_1)$}: On fibers, $d_1$ is the map 
			\[d_1 : \begin{matrix} \Hom(\textcl_{\parity(-t+1)},\textcl_{\parity(2+t)}) \otimes V^\vee \\ \oplus \\ \Hom(\textcl_{\parity(-t)},\textcl_{\parity(1+t)}) \otimes V^\vee \end{matrix} \to \begin{matrix}
				\Hom(\textcl_{\parity(-t+1)},\textcl_{\parity(3+t)} )\otimes \Sym^2_{\SO} V^\vee  \\
				\oplus \\
				\Hom(\cl_{-t},\textcl_{\parity(2+t)}) \otimes (V^\vee \otimes V^\vee)_{\SO} \\
				\oplus \\ 
				\Hom(\textcl_{\parity(-t-1)} ,\textcl_{\parity(1+t)})\otimes \Sym^2_{\SO} V^\vee 
			\end{matrix}\]
			where $\Sym^2_{\SO} V^\vee$ is the quotient of $\Sym^2 V^\vee$ by the element $\sum_{i=1}^n \lambda_ix_i^2$, and $(V^\vee \otimes V^\vee)_{\SO}$ is the quotient of $V^\vee \otimes V^\vee$ by $\sum_{i=1}^n \lambda_i x_i \otimes x_i$. Suppose
			\[\left(\sum_{i=1}^n \phi_i \otimes x_i , \sum_{i=1}^n \psi_i \otimes x_i\right) \in \begin{matrix} \Hom(\textcl_{\parity(-t+1)},\textcl_{\parity(2+t)}) \otimes V^\vee \\ \oplus \\ \Hom(\textcl_{\parity(-t)},\textcl_{\parity(1+t)}) \otimes V^\vee \end{matrix} \]
			is in $\ker(d_1)$. Applying $d_1$ to this element yields 
			\[\left(\sum_{1 \leq i,j \leq n}e_j\phi_i \otimes x_ix_j, \sum_{1 \leq i,j \leq n} (\phi_ie_j - e_i\psi_j) \otimes x_j \otimes x_i, \sum_{1 \leq i,j \leq n} \psi_ie_j \otimes x_ix_j\right).\]
			From the definition of the quotients $\Sym^2_{\SO} V^\vee$ and $(V^\vee \otimes V^\vee)_{\SO}$, it follows that as elements of 
			\[\begin{matrix}
				\Hom(\textcl_{\parity(-t+1)},\textcl_{\parity(3+t)} )\otimes \Sym^2 V^\vee  \\
				\oplus \\
				\Hom(\cl_{-t},\textcl_{\parity(2+t)}) \otimes (V^\vee \otimes V^\vee) \\
				\oplus \\ 
				\Hom(\textcl_{\parity(-t-1)} ,\textcl_{\parity(1+t)})\otimes \Sym^2 V^\vee 
			\end{matrix}\]
			we have 
			\begin{align*}
				\sum_{j=1}^n\sum_{i=1}^n e_j\phi_i\otimes x_ix_j &= \alpha \otimes \sum_{i=1}^n \lambda_i x_i^2, \\
				\sum_{j=1}^n\sum_{i=1}^n (\phi_ie_j - e_i\psi_j) \otimes x_j \otimes x_i&= \beta \otimes \sum_{i=1}^n \lambda_i x_i \otimes x_i, \\
				\sum_{j=1}^n \sum_{i=1}^n \psi_ie_j \otimes x_ix_j &= \gamma \otimes \sum_{i=1}^n \lambda_i x_i^2
			\end{align*}
			for appropriate $\alpha, \beta, \gamma$. \par 
			Examining the terms in the first equation, we see first that $e_j\phi_i = -e_i\phi_j$ for any $i \neq j$, and $e_i\phi_i = \lambda_i\alpha$. Since $\lambda_1 = 1$, we may observe that $\phi_1 = e_1\alpha$, and $-e_1\phi_j = e_j\phi_1 = e_je_1\alpha = -e_1e_j\alpha$, so that $\phi_j = e_j\alpha$ for $j > 1$ as well. \par 
			Examining the second equation, we see that $\phi_ie_j - e_i\psi_j = 0$ for $i \neq j$, while $\phi_ie_i - e_i\psi_i = \lambda_i\beta$. In particular, we see that $e_1\alpha e_j = \phi_1e_j = e_1\psi_j$ for $j \neq i$, and conclude since $\lambda_1 = 1$ that $\psi_j = \alpha e_j$ for $j \neq 1$. Similarly we may see that $e_2\alpha e_1 = \phi_2e_1 = e_2\psi_1$ and conclude that $\psi_1 = \alpha e_1$. In particular it follows that 
			\[\left(\sum_{i=1}^n \phi_i \otimes x_i , \sum_{i=1}^n \psi_i \otimes x_i\right) = \left(\sum_{i=1}^n e_i\alpha \otimes x_i, \sum_{i=1}^n \alpha e_i \otimes x_i\right) = d_0(\alpha).\]
			\item \underline{$\im(d_1) = \ker(d_2)$:} On fibers, 
			\[\small 
			d_2: \begin{matrix}
				\Hom(\textcl_{\parity(-t+1)},\textcl_{\parity(3+t)} )\otimes \Sym^2_{\SO} V^\vee \\
				\oplus \\
				\Hom(\textcl_{\parity(-t)}, \textcl_{\parity(2+t)}) \otimes (V^\vee \otimes V^\vee)_{\SO}\\
				\oplus \\ 
				\Hom(\textcl_{\parity(-t-1)},\textcl_{\parity(1+t)})\otimes \Sym^2_{\SO} V^\vee 
			\end{matrix} \to \begin{matrix}
				\Hom(\textcl_{\parity(-t+1)},\textcl_{\parity(4+t)}) \otimes \Sym^3_{\SO} V^\vee \\
				\oplus \\
				\Hom(\textcl_{\parity(-t)},\textcl_{\parity(3+t)})\otimes (V^\vee \otimes \Sym^2V^\vee)_{\SO} \\
				\oplus \\ 
				\Hom(\textcl_{\parity(-t-1)},\textcl_{\parity(2+t)})\otimes (\Sym^2 V^\vee \otimes V^\vee)_{\SO} \\
				\oplus \\
				\Hom(\textcl_{\parity(-t-2)},\textcl_{\parity(1+t)})\otimes \Sym^3_{\SO} V^\vee 
				\end{matrix}
				\]
				We therefore take an arbitrary element in the domain of $d_2$, which we may write as 
				\[\left(\sum_{1 \leq i \leq j \leq n} \phi_{ij} \otimes x_ix_j, \sum_{1 \leq i,j \leq n} \psi_{ij} \otimes x_i \otimes x_j, \sum_{1 \leq i \leq j \leq n } \varphi_{ij} \otimes x_{ij}\right),\]
				apply $d_2$, and check what relations are imposed upon our element. The result of applying $d_2$ is: 
				\begin{multline*}\small \left(\sum_{\substack{1 \leq i \leq j \leq n \\ 1 \leq k \leq n}} e_k\phi_{ij} \otimes x_ix_jx_k, \sum_{k=1}^n\left(\sum_{1 \leq i \leq j \leq n}\phi_{ij}e_k \otimes x_k \otimes x_i x_j - \sum_{1 \leq i,j \leq n}e_k\psi_{ij} \otimes x_i \otimes x_jx_k \right), \right. \\
				\left. \sum_{k=1}^n \left(\sum_{1 \leq i,j \leq n}\psi_{ij}e_k \otimes x_kx_{i} \otimes x_j + \sum_{1 \leq i \leq j \leq n} e_k\varphi_{ij} \otimes x_ix_j \otimes x_k\right), \sum_{1 \leq i \leq j \leq n}\varphi_{ij}e_k \otimes x_ix_jx_k\right).
				\end{multline*}
				If our element lies in $\ker(d_2)$, then it follows that as an element of 
				\[\begin{matrix}
					\Hom(\textcl_{\parity(-t+1)},\textcl_{\parity(4+t)}) \otimes \Sym^3 V^\vee \\
					\oplus \\
					\Hom(\textcl_{\parity(-t)},\textcl_{\parity(3+t)})\otimes (V^\vee \otimes \Sym^2V^\vee) \\
					\oplus \\ 
					\Hom(\textcl_{\parity(-t-1)},\textcl_{\parity(2+t)})\otimes (\Sym^2 V^\vee \otimes V^\vee) \\
					\oplus \\
					\Hom(\textcl_{\parity(-t-2)},\textcl_{\parity(1+t)})\otimes \Sym^3V^\vee 
				\end{matrix}
				\]
				it necessarily coincides with an element of the form 
				\begin{multline*}\left(\sum_{1 \leq i,j \leq n}\alpha_j \otimes \lambda_i x_i^2x_j, \sum_{1 \leq i,j \leq n}\gamma_j \otimes x_j \otimes \lambda_i x_i^2 + \sum_{1 \leq i,j \leq n} \delta_j \otimes \lambda_ix_i \otimes x_ix_j\right. \\ \left. \sum_{1 \leq i,j \leq n} \delta_j' \otimes \lambda_i x_ix_j \otimes x_i + \sum_{1 \leq i,j \leq n} \gamma_j' \otimes \lambda_i x_i^2 \otimes x_j, \sum_{1 \leq i,j \leq n}\beta_j \otimes  \lambda_i x_i^2x_j \right)
				\end{multline*}
				Examining the first component and using that $\lambda_1 = 1$, it is easy to check that $\phi_{11} = e_1\alpha_1$ by examining the coefficient of $x_1^3$. Then for any $i \neq 1$, by examining the coefficient of $x_1^2x_i$, we have an identification $e_1\phi_{1i} + e_i\phi_{11} = \alpha_i$, so $\phi_{1i} = e_1\alpha_i - e_1e_i\phi_{11} = e_1\alpha_i - e_1e_ie_1\alpha_1$, and we may conclude that $\phi_{1i} = e_1\alpha_i + e_i\alpha_1$. Finally, for any $1 < i < j$, examining the coefficient of $x_1x_ix_j$ gives $e_1\phi_{ij} + e_j\phi_{1i} + e_i\phi_{1j} = 0$, so that 
				\[e_1\phi_{ij} = -e_je_1\alpha_i-e_je_i\alpha_1 - e_ie_1\alpha_j - e_ie_j\alpha_1 = e_1e_j\alpha_i + e_1e_i\alpha_j,\]
				so that multiplication by $e_1$ gives 
				\begin{equation} \label{relation1}
					\phi_{ij} = e_i\alpha_j + e_j\alpha_i.
				\end{equation}
				Since $\lambda_2 = \lambda_3 = 1$ as well, repeating the same arguments shows that the same holds for any $i \neq j$ not necessarily distinct from 1. By looking at the coefficient to $x_i^2x_1$ or $x_i^2x_2$, a similar argument then shows that 
				\begin{equation} \label{relation2}
					\phi_{ii} = e_i\alpha_i
				\end{equation}
				for any $1 \leq i \leq n$. Examining the last component and repeating the same arguments, we see that \begin{equation} \label{relation3}
					\varphi_{ij} = \beta_ie_j + \beta_je_i
					\end{equation}
				for $1 \leq i \neq j \leq n$ and \begin{equation} \label{relation4}
					\varphi_{ii} = \beta_i e_i\end{equation}
				for $1 \leq i \leq n$. \par 
				Moving onto the second component and checking the component corresponding to $x_i \otimes x_1^2$ for $ i \neq 1$, we see that $\phi_{11}e_i-e_1\psi_{i1} = \gamma_i$, so that $\psi_{i1} = \alpha_1e_i - e_1\gamma_i$. Then for any $i \neq j$ distinct from 1, examining the component for $x_i \otimes x_1x_j$, we see $\phi_{1j}e_i - e_1\psi_{ij} - e_j\psi_{i1} = 0$, and conclude using left multiplication by $e_1$ that
				\begin{equation} \label{relation5}
					\psi_{ij} = \alpha_je_i - e_j\gamma_i.
				\end{equation}
				As before, since $\lambda_2 = \lambda_3 = 1$, the assumption that $ i, j$ are distinct from 1 can be removed. Now for any $i \neq 1$, examining the component associated to $x_i \otimes x_1x_i$, we see $\phi_{1i}e_i - e_i\psi_{i1} - e_1\psi_{ii} = \lambda_i\delta_1$, so 
				$\psi_{ii} = e_1\phi_{1i}e_i - e_1e_i\psi_{i1}-\lambda_ie_1\delta_1 = \alpha_ie_i - e_i\gamma_i - \lambda_ie_1\delta_1.$
				Looking at $x_1 \otimes x_1^2$, a similar argument shows that $\psi_{11} = \alpha_1e_1 - e_1\gamma_1 - \lambda_1e_1\delta_1$, so that 
				\begin{equation} \label{relation6}
					\psi_{ii} = \alpha_ie_i - e_i\gamma_i - \lambda_i e_1\delta_1
				\end{equation}
				for all $1 \leq i \leq n$. \par 
				We turn now to the third component. Repeating the arguments of the previous paragraph, we see that 
				\[\psi_{ij} = -e_j\beta_i + \gamma_j'e_i\]
				for $i \neq j$, and 
				\[\psi_{ii} =  -e_i\beta_i+\gamma_i'e_i+\lambda_i\delta_1'e_1.\]
				In particular, for any $i \neq 1$, 
				\[\alpha_1e_i - e_1\gamma_i = \psi_{i1} = -e_1\beta_i + \gamma_1'e_i,\]
				and by rearranging terms and right multiplying by $e_j$ for $j \neq 1,i$,  we find 
				\[e_1(-\beta_i+\gamma_i)e_j = (\alpha_1-\gamma_1')e_ie_j = -(\alpha_1-\gamma_1')e_je_i = -e_1(-\beta_j+\gamma_j)e_i.\]
				Left multiplication by $e_1$ and rearrangement then allows us to conclude that 
				\begin{equation} \label{relation7}
					\gamma_ie_j+\gamma_je_i = \beta_ie_j + \beta_je_i 
				\end{equation}
				for any $i \neq j$ distinct from $1$; as before we may remove this assumption by repeating the argument using that $\lambda_2 = 1$ or $\lambda_3 = 1$ instead. \par 
				On the other hand, right multiplying by $e_i$ rather than $e_j$ above before left multiplying by $e_1$ gives
				\[(-\beta_i+\gamma_i)e_i = \lambda_ie_1(\alpha_1-\gamma_1')\]
				for all $i \neq 1$. Since $\lambda_2 = 1 = \lambda_3$, we may in fact conclude that 
				\[(-\beta_2+\gamma_2)e_2 = e_1(\alpha_1-\gamma_1')=(-\beta_3+\gamma_3)e_3.\]
				However, the same argument starting with the relation coming from the two different expressions for $\psi_{i2}$ shows that 
				\[(-\beta_1+\gamma_1)e_1= e_2(\alpha_2-\gamma_2') = (-\beta_3+\gamma_3)e_3,\]
				so in fact it follows that 
				\[(-\beta_1+\gamma_1)e_1 = e_1(\alpha_1-\gamma_1'),\]
				and
				\begin{equation} \label{relation8}
					\gamma_ie_i = \beta_ie_i + \lambda_ie_1(\alpha_1-\gamma_1') 
				\end{equation}
				for all $1 \leq i \leq n$. \par 
				Consider now the image 
				\[d_1\left(\sum_{i=1}^n \alpha_i \otimes x_i, \sum_{i=1}^n \gamma_i \otimes x_i\right),\]
				which we may write as 
				\[\left(\sum_{1 \leq i,j \leq n}e_j\alpha_i \otimes x_ix_j, \sum_{1\leq i,j \leq n}(\alpha_je_i - e_j\gamma_i)\otimes x_i \otimes x_j, \sum_{1 \leq i,j \leq n}\gamma_ie_j \otimes x_ix_j\right),\]
				and observe that by \eqref{relation1} and \eqref{relation2}, the first term coincides with \[\sum_{1 \leq i \leq j \leq n}\phi_{ij} \otimes x_ix_j;\] 
				by \eqref{relation5} and \eqref{relation6}, we see that 
				\[\sum_{1 \leq i,j \leq n}(\alpha_je_i-e_j\gamma_i) \otimes x_i \otimes x_j = \sum_{1 \leq i,j \leq n} \psi_{ij} \otimes x_i \otimes x_j + e_1\delta_1 \otimes \sum_{1 \leq i \leq n}\lambda_i x_i \otimes x_i;\]
				and by combining relations \eqref{relation3}, \eqref{relation4}, \eqref{relation7} and \eqref{relation8}, 
				\begin{align*}
					\sum_{1 \leq i,j \leq n}\gamma_ie_j \otimes x_ix_j &= \sum_{1 \leq i,j \leq n}\beta_i e_j \otimes x_ix_j + e_1(\alpha_1-\gamma_1') \otimes \sum_{1 \leq i \leq n}\lambda_i x_i^2  \\
					&= \sum_{1 \leq i\leq j \leq n} \varphi_{ij} \otimes x_ix_j + e_1(\alpha_1-\gamma_1') \otimes \sum_{1 \leq i \leq n}\lambda_ix_i^2.
				\end{align*}
				In particular, up to the defining relations it follows that $\im(d_1) = \ker(d_2)$. 
%				Moving onto the second component, by examining coefficients for tensors of the form $x_i \otimes x_1^2$ for $i \neq 1$, we see that $\phi_{11}e_i - e_1\psi_{i1} = \beta_i,$ so that $e_1\alpha_1e_i - e_1\psi_{i1} = \beta_i$, so that we can conclude $\psi_{i1} = \alpha_1e_i - e_1\beta_i$. Examining tensors of the form $x_i \otimes x_1x_j$ for $i \neq j$ both distinct from $1$, we can see that $\phi_{1j}e_i - e_1\psi_{ij} - e_j\psi_{i1} = 0$, and so conclude that 
%				\[e_1\psi_{ij} = e_1\alpha_je_i + e_j\alpha_1e_i - e_j\alpha_1e_i + e_je_1\beta_i = e_1\alpha_je_i - e_1e_j\beta_i,\]
%				so that $\psi_{ij} = \alpha_je_i - e_j\beta_i$. Again, using that $\lambda_2 = \lambda_3 = 1$ as well and repeating the same arguments, the same holds for any $i \neq j$ not necessarily distinct from $1$. On the other hand, if $i \neq 1, 2$, then examining coefficients for $x_i \otimes x_ix_1$, we see that 
%				\[\phi_{1i}e_i - e_i\psi_{i1} - e_1\psi_{ii} = \gamma_1,\]
%				so that $e_1\psi_{ii} = e_1\alpha_ie_i + e_i\alpha_1e_i - e_i\alpha_1e_i+e_ie_1\beta_i - \gamma_1$, so that multiplying on the left by $e_1$ gives $\psi_{ii} = \alpha_ie_i - e_i\beta_i - e_1\gamma_1$. \par 
%				Skipping the third component and passing directly to the last component, using an argument similar to the last component we see that $\varphi_{ij} = \omega_ie_j + \omega_je_i$ for $i \neq j$, and $\varphi_{ii} = \omega_ie_i$. 
		\end{itemize}
	\end{proof}
	\begin{proposition} \label{cliffcliffprop}
		For any $\cG_1,\cG_2 \in \Db(S,\cl_0)$, if $0 < t \leq n - 4$, then 
		\[R\Hom((\cF \otimes_{\tau^*\cl_0} \tau^*\cG_1)(t),\cF \otimes_{\tau^*\cl_0}\tau^*\cG_2) = 0.\]
	\end{proposition}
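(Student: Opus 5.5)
The plan mirrors the proofs of Propositions \ref{noncliffnoncliffprop} and \ref{cliffnoncliffprop}: reduce the $R\Hom$ on $\OGr(2,\cQ)$ to an $R\Hom$ over $(S,\cl_0)$ involving a relative pushforward, and then apply Lemma \ref{cliffclifflemma}.

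First, since $\cF$ is locally free as an $\cO$-module, I use derived tensor-hom adjunction over $\tau^*\cl_0$, exactly as in the computation of the right adjoint $\Psi_\cF$ at the start of Section \ref{proofsection}. This gives
\[R\Hom\bigl((\cF \otimes_{\tau^*\cl_0}\tau^*\cG_1)(t), \cF \otimes_{\tau^*\cl_0} \tau^*\cG_2\bigr) \simeq R\Hom_{\tau^*\cl_0}\bigl(\tau^*\cG_1, \cF^\vee(-t) \otimes \cF \otimes_{\tau^*\cl_0}\tau^*\cG_2\bigr).\]
Here $\cF^\vee(-t)\otimes\cF$ carries a $\tau^*\cl_0$-bimodule structure: the left structure comes from the dual of the right module $\cF$, and the right structure from $\cF$ itself. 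Next I apply the pullback--pushforward adjunction for $\tau$ and then the projection formula. The projection formula is legitimate because $\tau^*\cG_2$ is pulled back from $S$; this is the same manipulation used to show $\Psi_\cF\circ\Phi_\cF\simeq\mathrm{id}$ in the proof of Theorem \ref{embeddingtheorem}. Together these identify the expression with
\[R\Hom_{\cl_0}\bigl(\cG_1, R\tau_*(\cF^\vee\otimes\cF(-t)) \otimes^L_{\cl_0} \cG_2\bigr).\]
For $0<t\le n-4$, Lemma \ref{cliffclifflemma} says $R\tau_*(\cF^\vee\otimes\cF(-t))=0$. The tensor product with $\cG_2$ therefore vanishes, and so does the whole $R\Hom$.

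The only point requiring care is the bookkeeping of bimodule structures. The pushforward $R\tau_*(\cF^\vee\otimes\cF(-t))$ has to be viewed as a $\cl_0$-bimodule, with $\cG_2$ tensored on the right factor, and the twist by $\cO(t)$ has to pass through the adjunctions. Both are formal: the twist is by a line bundle, so it commutes with all the tensor products and only changes which power of $\cO(1)$ appears. Since the vanishing concerns the underlying object and not just a module structure, no compatibility of structures needs to be checked beyond what was already used in Theorem \ref{embeddingtheorem}.

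I do not expect a real obstacle here, because all the substance sits in Lemma \ref{cliffclifflemma}. One remaining check is boundedness: $\cG_1$ and $\cG_2$ are bounded complexes, and $\Phi_\cF$ lands in $\Db(\OGr(2,\cQ))$ by the earlier proposition. This justifies using the adjunctions on bounded derived categories.
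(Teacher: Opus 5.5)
Your proposal is correct and follows the paper's argument. Tensor--hom adjunction (using that $\cF$ is locally free), the pullback--pushforward adjunction over $\cl_0$, and the projection formula reduce the claim to the vanishing $R\tau_*(\cF^\vee\otimes\cF(-t))=0$, which is Lemma \ref{cliffclifflemma}; your displayed isomorphisms are stated more carefully than the paper's one-line version, which writes $\tau^*\cG_2$ and $\tau^*\cG_1$ where $\cF\otimes_{\tau^*\cl_0}\tau^*\cG_2$ and $\cG_1$ are meant.
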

	\begin{proof}
		By applying various adjunctions, there is an isomorphism
		\[
			R\Hom((\cF \otimes_{\tau^*\cl_0}\tau^*\cG_1)(t),\tau^*\cG_2) \simeq R\Hom_{\cl_0}(\tau^*\cG_1,\tau_*(\cF^\vee \otimes \cF(-t)) \otimes \cG_2),\]
			and we conclude by Lemma \ref{cliffclifflemma}.
	\end{proof}
	Combining the various cases of semiorthogonality proven already yields the second main result of this paper. 
	\begin{proof}[Proof of Theorem \ref{sodtheorem}]
		When $n = 4$, the statement of Theorem \ref{sodtheorem} reduces to the embedding of Theorem \ref{embeddingtheorem}, so we can and do assume $n > 4$. All of the embeddings can be deduced up to twisting by a line bundle from Theorem \ref{embeddingtheorem} and Proposition \ref{embeddingprop}. By combining Propositions \ref{noncliffnoncliffprop}, \ref{cliffnoncliffprop}, and \ref{cliffcliffprop} and applying Serre (or Grothendieck) duality if necessary, the necessary semiorthogonality follows. 
	\end{proof}
	\section{Residual categories: pencils and smooth quadric fibrations} \label{residualsection}
	Recall that $p : \cQ \to S$ is the quadric fibration of relative dimension $n - 2$ and $\tau : \OGr(2,\cQ) \to S$ is the relative orthogonal Grassmannian fibration. \par In some situations, it is possible to understand the categories $\cR_{\text{even}}$ and $\cR_{\text{odd}}$ appearing in Theorem \ref{sodtheorem}. The first is the case of a smooth fibration, which fiber-by-fiber recovers the decompositions of \cite{kuznetsovodd} and \cite{kuznetsoveven}. \par 
	On the other hand, motivated by the conjectural semiorthogonal decompositions appearing in the author's previous work \cite{fanoschemequaddecomp} and \cite{flips}, we also consider the case where $S = \bbP^1$ and the quadric fibration $\cQ \to S$ arises as the total space of a pencil of quadrics $\langle Q_1, Q_2 \rangle$ with smooth base locus $Q_1 \cap Q_2$. 
	\subsection{Odd-dimensional case}
	Suppose that $n = 2m + 1$ is odd. \par 
	\begin{proposition}\label{oddresidual}
		When $\cQ \to S$ is a smooth quadric fibration of relative dimension $2m - 1$, the category $\cR_{\text{odd}}$ of Theorem \ref{sodtheorem} is trivial.
	\end{proposition}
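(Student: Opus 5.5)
The plan is to reduce to a single smooth fiber and invoke Kuznetsov's full exceptional collection on the odd orthogonal Grassmannian $\OGr(2,2m+1)$ from \cite{kuznetsovodd}. The decomposition of Theorem \ref{sodtheorem} is $S$-linear, and $\tau$ is smooth and projective. For such a decomposition, the residual component $\cR_{\text{odd}}$ is trivial as soon as its restriction to every fiber is trivial. Concretely, take $\cG \in \cR_{\text{odd}}$, so that $\cG$ is right orthogonal to all the other components. Base change compatibility of $S$-linear semiorthogonal decompositions (\cite{basechange}) then shows that the derived restriction $\cG|_s$ lies in the fiberwise residual category, for every closed point $s\in S$. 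If that fiberwise residual category vanishes, then $L i_s^*\cG = 0$ for all $s$. By Nakayama's lemma applied to the top cohomology sheaf of the bounded complex $\cG$, this forces $\cG = 0$.

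It therefore suffices to treat the case where $S$ is a point and $Q$ is a smooth quadric in $\bbP(V)$ with $\dim V = 2m+1$. In this case the components are as follows.
\begin{enumerate}[(a)]
\item The components $\tau^*\Db(S)\otimes \Sym^p\cU_2^\vee(j)$ are generated by the exceptional bundles $\Sym^p\cU_2^\vee(j)$, for $0\le p\le m-2$.
\item By the Morita equivalence of Proposition \ref{morita} and the identification \eqref{oddspinor}, the Clifford component $\Phi(\Db(\bbC,\textcl_{\even}))(j)$ is generated by the single exceptional object $\cS(j)$, where $\cS$ is the spinor bundle. Indeed $\textcl_{\even}\simeq \End(I_W^{\even})$ is a matrix algebra, so $\Db(\bbC,\textcl_{\even})\simeq\Db(\bbC)$, and $\Phi$ sends the simple module to $\cF\otimes_{\textcl_{\even}}I_W^{\even}\simeq\cS$.
\end{enumerate}
Hence, fiberwise, the sequence $\langle\cB,\cB(1),\dots,\cB(2m-3)\rangle$ becomes the exceptional collection
\[
\bigl(\cO,\cU_2^\vee,\dots,\Sym^{m-2}\cU_2^\vee,\cS\bigr)\otimes\cO(j),\qquad 0\le j\le 2m-3.
\]
This is exactly the Lefschetz collection of \cite{kuznetsovodd} for $\OGr(2,2m+1)$, whose fullness Kuznetsov proves. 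Its index is $2m-2 = n-3$, which matches the twists $0,\dots,2m-3$ and the formula \eqref{dualizing} for the canonical class. Fullness gives triviality of the fiberwise residual category.

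The main obstacle is matching conventions with \cite{kuznetsovodd} precisely. One has to check that the spinor bundle $\cS$ identified in \eqref{oddspinor} is Kuznetsov's spinor bundle up to dualization or twist. One also has to check that his collection uses $\Sym^p\cU^\vee$ with $p\le m-2$ together with the spinor bundle, and that there are exactly $2m-2$ blocks, each equal to the same $\cB$ (a rectangular Lefschetz decomposition). If the identification of $\cS$ is off by a twist or a dual, I would fix it using the highest weight $(1/2,\dots,1/2)$ computed in Section \ref{smoothsection} and, if needed, a mutation or Serre-functor argument. As a sanity check I would compare Euler characteristics via ranks of $K_0$. The relative base-change step is routine once $S$-linearity and smoothness of $\tau$ are in place.
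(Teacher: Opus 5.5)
Your proposal is correct and follows essentially the same route as the paper. Both use $S$-linearity to restrict to each fiber, identify the Clifford component there with the spinor bundle via \eqref{oddspinor}, and recognize $\langle\cB,\cB(1),\dots,\cB(2m-3)\rangle$ as Kuznetsov's full Lefschetz collection \cite[Theorem 7.1]{kuznetsovodd} on $\OGr(2,2m+1)$; your Nakayama argument, which passes from fiberwise vanishing of $\cR_{\text{odd}}$ to global vanishing, makes explicit a step the paper leaves implicit.
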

	\begin{proof}
	Since the decomposition of Theorem \ref{sodtheorem} is $S$-linear, pulling back to any point $s \in S$ yields a semiorthogonal decomposition, and for any $s \in S$ the isomorphism \eqref{oddspinor} discussed in Section \ref{kernelsection} shows that the embedding functor $\Db(\bbC)\simeq \Db(\textcl_{\text{odd}}) \hookrightarrow \Db(\OGr(2,Q_s))$ of Theorem \ref{embeddingtheorem} has image generated by the spinor bundle. In particular, the semiorthogonal sequence 
	\[\langle \cB, \cB(1), \cdots, \cB(2m-3)\rangle\]
	of Theorem \ref{sodtheorem} reduces to the full exceptional collection of \cite[Theorem 7.1]{kuznetsovodd}, and the fiber over $s \in S$ of the $S$-linear residual category $\cR_{\text{odd}}$ is trivial. It follows that $\cR_{\text{odd}}$ is trivial as well when $\cQ \to S$ is smooth.
	\end{proof}
	When $\cQ \to S$ is not smooth, the category $\cR_{\text{odd}}$ need not be trivial, and can have nonzero homological invariants. For the following, we suppose that $S = \bbP^1$, and $\cQ = \langle Q_1,Q_2 \rangle $ is the total space of a pencil of quadrics in $\bbP(V)$ with smooth base locus $X = Q_1 \cap Q_2$, where $\dim V = 2m + 1$. By \cite[Theorem 23]{flips}, there is a standard flip
	\begin{equation}
		\begin{tikzcd}
			\bbP(\Sym^2 \cU_2^\vee) \arrow[r, hook] \arrow[d] & X^{[2]} \arrow[r, leftrightarrow, dashed, "\text{flip}"] & \OGr(2,\cQ)  \arrow[d] & F_1(X) \times \bbP^1 \arrow[d] \arrow[l, hook'] \\
			F_1(X) & & \bbP^1 & F_1(X)
		\end{tikzcd}
	\end{equation}
	which by \cite[Corollary 3]{flips} induces a semiorthogonal decomposition (where we suppress the embedding functors)
	\[\Db(X^{[2]}) = \langle \Db(\OGr(2,\cQ)),\Db(F_1(X))\rangle.\]
	Using this geometric construction, we can in fact calculate invariants for $\cR_{\text{odd}}$. 
	\begin{proposition}
		When $\cQ \to \bbP^1$ is the total space of a pencil of quadrics in $\bbP^{2m}$ with smooth base locus, the category $\cR_{\text{odd}}$ has 
		\[\mathrm{HH}_0(\cR_{\text{odd}}) = 2(2m+1),\]
		and $\mathrm{HH}_i(\cR_{\text{odd}}) = 0$ for all $i \neq 0$. 
		Moreover, if \cite[Conjecture D]{fanoschemequaddecomp} holds for $F_1(X)$ then
		\[[\cR_{\text{odd}}] = 2(2m+1)[\Db(\bbC)]\]
		in the Grothendieck ring of $\bbC$-linear categories $K_0(\Cat_{\bbC})$. 
	\end{proposition}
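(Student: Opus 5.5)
The plan is to use additivity of Hochschild homology (and of classes in $K_0(\Cat_{\bbC})$) over semiorthogonal decompositions. I would compare two decompositions of $\Db(X^{[2]})$: the one from the flip, $\langle \Db(\OGr(2,\cQ)),\Db(F_1(X))\rangle$, and the one from Theorem \ref{sodtheorem}, $\Db(\OGr(2,\cQ)) = \langle \cR_{\text{odd}},\cB,\cB(1),\dots,\cB(2m-3)\rangle$. Together these give
\[\mathrm{HH}_\bullet(\cR_{\text{odd}}) = \mathrm{HH}_\bullet(X^{[2]}) - \mathrm{HH}_\bullet(F_1(X)) - (2m-2)\,\mathrm{HH}_\bullet(\cB),\]
where the minus signs are meant as complements of direct summands. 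Since $S = \bbP^1$, each copy of $\cB$ consists of $m-1$ copies of $\Db(\bbP^1)$ together with one copy of $\Db(\bbP^1,\cl_0)$. Hence $\mathrm{HH}_\bullet(\cB) = 2(m-1) + \mathrm{HH}_\bullet(\bbP^1,\cl_0)$, all in degree $0$ as far as the $\bbP^1$ pieces are concerned.

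To control $\mathrm{HH}_\bullet(\bbP^1,\cl_0)$ I would not compute it directly. Instead I would invoke Kuznetsov's description in \cite{quadfibs} for the pencil:
\[\Db(X) = \langle \Db(\bbP^1,\cl_0), \cO_X, \dots, \cO_X(2m-4)\rangle,\]
where $X$ has index $2m+1-4=2m-3$. This gives $\mathrm{HH}_\bullet(\bbP^1,\cl_0) = \mathrm{HH}_\bullet(X) - (2m-3)$. Since $X$ is an even-dimensional ($2m-2$) smooth complete intersection of two quadrics, its cohomology is entirely of Hodge type $(p,p)$. The only nontrivial piece is the primitive middle part, of rank $2m+1$, which gives $\chi(X) = 2(2m-1)+(2m+1)$. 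So HKR yields $\mathrm{HH}_i(X)=0$ for $i\neq 0$, and $\mathrm{HH}_0(\bbP^1,\cl_0) = \chi(X)-(2m-3) = 2m+4$. This matches the expected $2+(2m+1)$: a $\bbP^1$ plus one contribution from each of the $2m+1$ degenerate fibers.

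Next I would compute $\mathrm{HH}_\bullet(X^{[2]})$ from Göttsche's formula, $H^*(X^{[2]}) \simeq \Sym^2 H^*(X) \oplus H^*(X)\otimes H^*(\bbP^{2m-3})$ with appropriate shifts. This is again of pure $(p,p)$ type, so $\mathrm{HH}$ is concentrated in degree $0$ and has dimension $\chi(X^{[2]})$. For $F_1(X)$, the Fano variety of lines of the even-dimensional intersection of two quadrics, I would use the known structure: Reid's description for odd $n$, or the Białynicki-Birula/cell-type decomposition found in \cite{fanoschemequaddecomp}. This shows its cohomology is algebraic and gives $\chi(F_1(X))$. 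Vanishing of $\mathrm{HH}_i(\cR_{\text{odd}})$ for $i\neq 0$ then follows formally. The equality $\mathrm{HH}_0 = 2(2m+1)$ becomes a numerical identity among $\chi(X^{[2]})$, $\chi(F_1(X))$, $(2m-2)(2(m-1)+2m+4)$, and $\chi(X)$.

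The main obstacle is this bookkeeping, and in particular getting $\chi(F_1(X))$ correct. I would compute it by counting torus-fixed lines for a diagonal pencil, or by the fibration of $F_1(X)$ over the pencil. I would sanity-check the final count at $m=2$, where $X$ is a del Pezzo surface of degree $4$.

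For the statement in $K_0(\Cat_{\bbC})$, the same additivity applies to classes. I would use the identity $[\Db(X)] = [\Db(\bbP^1,\cl_0)] + (2m-3)[\Db(\bbC)]$. I would express $[\Db(X^{[2]})]$ through the blow-up and symmetric-square description, which writes it in terms of $[\Db(X)]$, $[\Db(X\times X)]$ and exceptional objects. Then I would substitute the decomposition of $[\Db(F_1(X))]$ predicted by \cite[Conjecture D]{fanoschemequaddecomp}, which is expressed via copies of $[\Db(\bbP^1,\cl_0)]$ (or the hyperelliptic-type pieces) and points. Assuming the terms involving $[\Db(\bbP^1,\cl_0)]$ cancel exactly as they do numerically, what remains is $2(2m+1)[\Db(\bbC)]$, consistent with the Hochschild computation.
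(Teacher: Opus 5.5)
Your overall route is the paper's: additivity across $\Db(X^{[2]})=\langle\Db(\OGr(2,\cQ)),\Db(F_1(X))\rangle$ and the decomposition of Theorem \ref{sodtheorem}. Computing $\mathrm{HH}_\bullet(X^{[2]})$ by G\"ottsche and Hodge theory, and $\mathrm{HH}_\bullet(\bbP^1,\cl_0)$ from Kuznetsov's decomposition of $\Db(X)$, would be an acceptable substitute for the Hochschild statement. However, the numbers you write down are wrong, and with them the final identity fails.

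Since $\dim X=2m-2$, there are $2m-1$ even degrees. Each has $b_{2i}=1$ except the middle one, which has $1+(2m+1)$. So $\chi(X)=(2m-1)+(2m+1)=4m$, not $2(2m-1)+(2m+1)$; for $m=2$ the quartic del Pezzo surface has $\chi=8$. Hence $\mathrm{HH}_0(\bbP^1,\cl_0)=4m-(2m-3)=2m+3=2+(2m+1)$. Your value $2m+4$ neither follows from your $\chi(X)$ nor equals the $2+(2m+1)$ you say it matches. With $2m+4$, each $\cB$ contributes $4m+2$, and even with correct values for the other terms you would obtain $\mathrm{HH}_0(\cR_{\text{odd}})=2m+4$ instead of $4m+2$.

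The bookkeeping you postponed goes as follows:
\begin{itemize}
\item $\chi(X^{[2]})=\binom{4m+1}{2}+4m(2m-3)=16m^2-10m$.
\item $\chi(F_1(X))=16\binom{m}{2}=8m^2-8m$; the paper takes this from \cite[Proposition 4.2]{fanoschemequaddecomp}.
\item Hence $\mathrm{HH}_0(\OGr(2,\cQ))=8m^2-2m$.
\item The $2m-2$ copies of $\cB$ contribute $(2m-2)(4m+1)=8m^2-6m-2$, leaving $4m+2$.
\end{itemize}

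The $K_0(\Cat_\bbC)$ part has a genuine missing idea. You cannot assume the $[\Db(\bbP^1,\cl_0)]$-terms ``cancel exactly as they do numerically'': equality of Hochschild dimensions says nothing about classes in $K_0(\Cat_\bbC)$. Moreover, writing $[\Db(X^{[2]})]$ through $\Db_{\bbZ/2}(X\times X)$ would require controlling $\Sym^2\Db(\bbP^1,\cl_0)$.

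What makes this work in the paper is that $\Db(\bbP^1,\cl_0)\simeq\Db(\cC)$, where $\cC$ is a root stack over $\bbP^1$ with $\bbZ/2$-stabilizers at the $2m+1$ degenerate points \cite[Corollary 3.16]{quadfibs}. This stack has a full exceptional collection of length $2m+3$ \cite[Theorem 1.6]{rootstackdecomp}. Consequently $\Db(X)$ has one of length $4m$. By \cite[Theorem B]{kps} and \cite[Corollary 1.3]{koseki}, $\Db(X^{[2]})$ then has one of length $16m^2-10m$.

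The paper uses \cite[Conjecture D]{fanoschemequaddecomp} in the form that $\Db(F_1(X))$ has a full exceptional collection of length $8m^2-8m$. With that, every class involved is an integer multiple of $[\Db(\bbC)]$, and the same arithmetic gives $[\cR_{\text{odd}}]=2(2m+1)[\Db(\bbC)]$. This also yields the Hochschild statement directly, with no Hodge theory needed.
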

	\begin{proof}
		By \cite[Theorem B]{kps}, there is a semiorthogonal decomposition
		\begin{align*}
			\Db(X^{[2]}) &= \langle \Db_{\bbZ/2}(X \times X), \underbrace{\Db(X),\cdots,\Db(X)}_{\dim X - 2 \text{ times}}\rangle \\
			&= \langle \Sym^2 \Db(X), \underbrace{\Db(X),\cdots,\Db(X)}_{\dim 2m- 4 \text{ times}}\rangle
		\end{align*}
		From \cite[Corollary 5.7]{quadfibs}, it is possible to write a semiorthogonal decomposition \[\Db(X) = \langle\Db(\cC),\underbrace{\Db(\bbC),\dots,\Db(\bbC)}_{2m-3 \text{ times}}\rangle\] where $\cC$ is a certain root stack with $\bbZ/2$-stabilizers at $2m+1$ points on $\bbP^1$. In particular, by the semiorthogonal decomposition for a root stack \cite[Theorem 1.6]{rootstackdecomp}, $\Db(X)$ has a full exceptional collection of length $4m$. \par 
		Using \cite[Corollary 1.3]{koseki}, it follows that $\Sym^2 \Db(X)$ also admits a full exceptional collection of length ${4m \choose 2} + 8m$, so we may conclude that $\Db(X^{[2]})$ has a full exceptional collection as well, which will be of length \[{4m \choose 2} + 8m + 4m(2m-4) = 16m^2 - 10m.\] 
		As a consequence, we deduce that the Hochschild homology of $X^{[2]}$ is precisely 
		\[\mathrm{HH}_0(X^{[2]}) = 16m^2 - 10m, \qquad \mathrm{HH}_i(X^{[2]}) = 0\text{ for all }i\neq 0.\]
		By \cite[Proposition 4.2]{fanoschemequaddecomp}, we may also calculate the Hochschild homology of the Fano variety of lines as 
		\[\mathrm{HH}_0(F_1(X)) = \chi(F_1(X)) = 16{m \choose 2} = 8m^2 - 8m, \qquad \mathrm{HH}_i(F_1(X)) = 0 \text{ for all }i \neq 0,\]
		and by using the additivity of Hochschild homology under semiorthogonal decomposition conclude that 
		\[\mathrm{HH}_0(\OGr(2,\cQ)) = 8m^2 - 2m, \qquad \mathrm{HH}_i(\OGr(2,\cQ)) = 0\text{ for all }i\neq 0.\]\par 
		On the other hand, the categories $\cB$ in the decomposition of Theorem \ref{sodtheorem} are composed of $m - 1$ copies of $\Db(\bbP^1)$ along with one copy of $\Db(\bbP^1,\cl_0)$, which by \cite[Corollary 3.16]{quadfibs} coincides with the derived category $\Db(\cC)$ of the stacky curve above. In particular, there is a semiorthogonal decomposition 
		\begin{align*}
			\Db(\OGr(2,\cQ)) &= \langle \cR_{\text{odd}}, \underbrace{\Db(\cC),\dots,\Db(\cC)}_{2m-2 \text{ times}}, \underbrace{\Db(\bbP^1),\dots,\Db(\bbP^1)}_{(2m-2)(m-1) \text{ times}} \rangle \\
			&= \langle \cR_{\text{odd}}, \underbrace{\Db(\bbC),\dots,\Db(\bbC)}_{(2m+3)(2m-2)+2(2m-2)(m-1) \text{ times}}\rangle.
		\end{align*}
		From the identity 
		\[(2m+3)(2m-2) + 2(2m-2)(m-1) = 8m^2 - 6m - 2,\]
		it follows again from the additivity of Hochschild homology that 
		\[\mathrm{HH}_0(\cR_{\text{odd}}) = 4m + 2, \qquad \mathrm{HH}_i(\cR_{\text{odd}}) = 0 \text{ for all }i\neq 0.\]
		If \cite[Conjecture D]{fanoschemequaddecomp} holds, then $\Db(F_1(X))$ admits a full exceptional collection of $8m^2 - 8m$ objects, and the rest of the argument can be promoted to hold on the level of $K_0(\Cat_{\bbC})$. 
	\end{proof}
	In particular, in this situation $\cR_{\text{odd}}$ is a $\bbP^1$-linear category which is generically trivial over $\bbP^1$ and supported over the ramification divisor of the pencil. We expect that each singular fiber contributes 2 exceptional objects to the full exceptional collection for the pencil. 
	\subsection{Even-dimensional case}
	Suppose that $n = 2m$ is even. In this case, the residual category $\cR_{\text{even}}$ is not trivial even for smooth fibrations. \par 
	When $S = \Spec(\bbC)$ is a point and the quadric $Q \subset \bbP(V)$ is smooth, \cite[Theorem 3.1]{kuznetsoveven} shows that $\cR_{\text{even}}$ consists of two exceptional $\Spin(2m)$-equivariant vector bundles $\cU^{2\omega_{m-1}}(-1)$ and $\cU^{2\omega_m}(-1)$ on $\OGr(2,Q)$, which by (3.16) and (3.26) in \textit{op. cit.} are the direct summands of $\bigwedge^{m-2}(\cU_2^\perp/\cU_2)$, where $\cU_2^\perp$ is the orthogonal complement of $\cU_2 \subset V$ with respect to $Q$. \par 
	For smooth fibrations, it is possible to relativize this construction in a natural way. For the relative orthogonal Grassmannian $\OGr(2,\cQ)$, we will endow $\bigwedge^{m-2}(\cU_2^\perp/\cU_2)$ with the structure of a module over the sheaf of algebras $\tau^*\cZ(\cl_0)$ and use it to prove the following result. 
	\begin{proposition}\label{evenresidual}
		Suppose $\cQ \to S$ is a smooth quadric fibration of relative dimension $2m - 2$, the category $\cR_{\text{even}}$ is equivalent to $\Db(S,\cZ(\cl_0))$, where $\cZ(\cl_0)$ is the center for the sheaf of even parts of the Clifford algebra of $\cQ$. 
	\end{proposition}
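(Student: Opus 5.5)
We construct a $\tau^*\cZ(\cl_0)$-module $\cM$ on $\OGr(2,\cQ)$ which is locally free of rank $2\cdot\on{rk}\cU^{2\omega_m}(-1)$ as an $\cO$-module and whose fibers recover $\cU^{2\omega_{m-1}}(-1)\oplus\cU^{2\omega_m}(-1)$. We then show that $\Phi_\cM : \cG \mapsto \cM\otimes_{\tau^*\cZ(\cl_0)}\tau^*\cG$ is a fully faithful functor $\Db(S,\cZ(\cl_0))\to\Db(\OGr(2,\cQ))$ whose image is $\cR_{\text{even}}$.

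\textbf{Step 1: construct $\cM$.} Set $\cM := \bigwedge^{m-2}(\cU_2^\perp/\cU_2)$. Here $\cU_2^\perp/\cU_2$ is a rank $2m-4$ bundle with a nondegenerate $\tau^*\cL^\vee$-valued form, and nondegeneracy uses smoothness of the fibration. For a rank $2r$ quadratic bundle, the middle exterior power $\bigwedge^r$ carries a Hodge star operator $\star$. After twisting by the appropriate power of $\cL$ and a local orientation, $\star$ squares to $1$. The sign ambiguity in the orientation is exactly the double cover $\Spec_S\cZ(\cl_0)\to S$.

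To make this canonical, we let the central element $d\in\cZ(\cl_0)$ from Section~\ref{smoothsection}, which is a normalized top product of an orthonormal frame of $\cE$, act on $\cM$ through the Clifford action of $\cl_0(\cU_2^\perp/\cU_2)$ on $\bigwedge^\bullet$. Concretely:
\begin{itemize}
\item A local orthonormal frame of $\cE$ can be chosen adapted to $\cU_2\subset\cU_2^\perp$.
\item In such a frame, $d$ factors as a product of a part from the hyperbolic plane pairs spanned by $\cU_2$ and a complement, which acts by a scalar, times the volume element of $\cU_2^\perp/\cU_2$.
\item The volume element acts on $\bigwedge^{m-2}$ as $\star$.
\end{itemize}
This gives a $\tau^*\cZ(\cl_0)$-module structure on $\cM$. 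On a fiber, $\Spec Z=\mathrm{pt}\sqcup\mathrm{pt}$, and the two eigensheaves are the summands $\cU^{2\omega_{m-1}}(-1)$ and $\cU^{2\omega_m}(-1)$ by \cite[(3.16),(3.26)]{kuznetsoveven}.

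\textbf{Step 2: full faithfulness.} The right adjoint of $\Phi_\cM$ is $R\tau_*(\cM^\vee\otimes -)$. It suffices to prove $R\tau_*(\cM^\vee\otimes\cM)\simeq\cZ(\cl_0)$ as $\cZ(\cl_0)$-bimodules, and more precisely that $R\tau_*\homsheaf_{\tau^*\cZ}(\cM,\cM)\simeq\cZ(\cl_0)$.
\begin{itemize}
\item There is a natural map $\cZ(\cl_0)\to\tau_*\Endsheaf_{\tau^*\cZ}(\cM)$ coming from the action.
\item On each fiber, the two summands are exceptional and mutually orthogonal by \cite[Theorem 3.1]{kuznetsoveven}. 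So fiberwise $R\End_Z(\cM_s)=\bbC\times\bbC=Z$, and the map is a fiberwise isomorphism.
\item The fibration is flat and the fiberwise higher cohomology vanishes, so cohomology and base change gives the global isomorphism. The projection formula then gives $\Psi_\cM\circ\Phi_\cM\simeq\mathrm{id}$, as in the proof of Theorem~\ref{embeddingtheorem}.
\end{itemize}

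\textbf{Step 3: semiorthogonality and generation.}
\begin{itemize}
\item \emph{Semiorthogonality.} We need $R\tau_*$ of $\cM^\vee$ tensored with each object of the other components, and its adjoint variants, to vanish. By base change this reduces to fiberwise vanishing, which is the semiorthogonality in \cite[Theorem 3.1]{kuznetsoveven}. Theorem~\ref{sodtheorem} and the isomorphism \eqref{evenspinor} identify the fibers of the components $\cA$ and $\cB$ with Kuznetsov's blocks.
\item \emph{Generation.} The decomposition is $S$-linear. By Theorem~\ref{sodtheorem}, $\cR_{\text{even}}$ is an $S$-linear admissible subcategory containing $\Phi_\cM(\Db(S,\cZ(\cl_0)))$. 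Let $\cR'$ be the orthogonal to this image inside $\cR_{\text{even}}$. It is $S$-linear, and its base change to each point $s$ is zero, because Kuznetsov's collection is full. By \cite{basechange}, an $S$-linear admissible subcategory with all fibers zero vanishes; concretely, an object with vanishing derived restrictions to all points is zero. Hence $\cR_{\text{even}}=\Phi_\cM(\Db(S,\cZ(\cl_0)))$.
\end{itemize}

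\textbf{Main obstacle.} The hardest step is Step 1: building the $\cZ(\cl_0)$-action on $\bigwedge^{m-2}(\cU_2^\perp/\cU_2)$ globally and canonically. We must check that the Clifford action of $d$ on the middle exterior power is well defined independent of frames, that it squares to $1$ with the correct $\cL$-twist, and that its eigen-decomposition matches Kuznetsov's summands. We would first try to write $d$ invariantly as the image of the canonical section of $\det\cE\otimes\cL^{m}\subset\cl_0$ in the Clifford algebra, normalized by the discriminant. We would then verify the compatibility with the Hodge star locally in an adapted orthonormal frame, where the computation reduces to the hyperbolic pairs spanned by $\cU_2$.
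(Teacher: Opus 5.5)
Your proposal is correct and follows essentially the same route as the paper. Both use the kernel $\bigwedge^{m-2}(\cU_2^\perp/\cU_2)$ made into a $\tau^*\cZ(\cl_0)$-module via the Hodge star, prove full faithfulness from the fiberwise identification of its endomorphisms with $Z(\textcl_{\text{even}})$ (using Kuznetsov's two exceptional summands together with cohomology and base change), and obtain semiorthogonality and fullness by restricting to fibers. The paper builds the action directly from the induced form on $\cU_2^\perp/\cU_2$ and the isomorphism $\det(\cE)\otimes\cL^{\otimes m}\simeq\det(\cU_2^\perp/\cU_2)^\vee\otimes\cL^{\otimes(m-2)}$, which is the invariant version of your volume-element description; note only that since $\cR_{\text{even}}$ is a right orthogonal, the vanishing to check is of $R\tau_*$ of the duals of the other kernels tensored with $\cM$, not of $\cM^\vee$ tensored with them.
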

	\begin{proof}
		Recall that the center of the Clifford algebra is given by the subalgebra \[\cZ(\cl_0) = \cO_S \oplus \det(\cE) \otimes \cL^{\otimes m} \subset \cl_0.\]
		There is a natural morphism 
		\begin{equation}\label{hodgestar}
			\det(\cE) \otimes \cL^{\otimes m} \otimes \bigwedge^{m-2} (\cU_2^\perp/\cU_2) \to \bigwedge^{m-2}(\cU_2^\perp/\cU_2) 
		\end{equation}
		called the \textit{Hodge star operator}, such that its square coincides with the tensor product of $\bigwedge^{m-2}(\cU_2^\perp/\cU_2)$ with the map
		\[\det(\cE)^{\otimes 2} \otimes \cL^{\otimes 2m} \to \cO\]
		defining the algebra structure on $\cZ(\cl_0)$; this is enough to define the structure of a module over $\tau^*\cZ(\cl_0)$ on $\bigwedge^{m-2}(\cU_2^\perp/\cU_2)$. We outline how to construct the Hodge star as follows: \par 
		Recall that the non-degenerate quadratic form $\Sym^2 \cE \to \cL^\vee$ induces an isomorphism $\cE \to \cL^\vee \otimes \cE^\vee$, and that by definition
		\[\cU^\perp_2 \coloneq \ker(\cE \iso \cL^\vee \otimes \cE^\vee \to \cL^\vee \otimes \cU_2^\vee).\]
		Then the restriction 
		\[\Sym^2 \cU_2^\perp  \hookrightarrow \Sym^2 \cE \to \cL^\vee\]
		naturally descends to a nondegenerate quadratic form $\Sym^2 (\cU_2^\perp/\cU_2) \to \cL^\vee$, and there is a canonical induced isomorphism $\cU_2^\perp/\cU_2 \to \cL^\vee \otimes  (\cU_2^\perp/\cU_2)^\vee$ so that the following diagram commutes: 
		\begin{equation*}
			\begin{tikzcd}
				\cU_2^\perp/\cU_2 \arrow[r,"\sim"]  & \cL^\vee \otimes (\cU_2^\perp/\cU_2)^\vee \arrow[d,hook]\\
				\cU_2^\perp \arrow[r] \arrow[d,hook] \arrow[u,two heads]& \cL^\vee \otimes (\cU_2^\perp)^\vee \\
				\cE \arrow[r,"\sim"] & \cL^\vee \otimes \cE^\vee \arrow[u,two heads]
			\end{tikzcd}
		\end{equation*}
		As a consequence, we deduce the existence of a map (in fact, an isomorphism)
		\[\cL^{\otimes {(m-2)}} \otimes \bigwedge^{m-2}(\cU_2^\perp/\cU_2) \to \bigwedge^{m-2}(\cU_2^\perp/\cU_2)^\vee.\]
		On the other hand, there is another perfect pairing arising from the determinant: 
		\[\bigwedge^{m-2}(\cU_2^\perp/\cU_2) \otimes \bigwedge^{m-2}(\cU_2^\perp/\cU_2) \to \det(\cU_2^\perp/\cU_2),\]
		inducing an isomorphism 
		\[\det(\cU_2^\perp/\cU_2) \otimes \bigwedge^{m-2}(\cU_2^\perp/\cU_2)^\vee \iso \otimes \bigwedge^{m-2} (\cU_2^\perp/\cU_2).\]
		Up to twisting, composition gives a map 
		\[\det(\cU_2^\perp/\cU_2)^\vee \otimes \cL^{\otimes(m-2)} \otimes \bigwedge^{m-2}(\cU_2^\perp/\cU_2) \to \bigwedge^{m-2}(\cU_2^\perp/\cU_2).\]
		In fact, there is an isomorphism $\det(\cE) \otimes \cL^{\otimes m} \to \det(\cU_2^\perp/\cU_2)^\vee \otimes \cL^{\otimes(m-2)}$: indeed, by construction there is a short exact sequence 
		\[0 \to \cU_2^\perp \to \cE \to \cL^\vee \otimes \cU_2^\vee \to 0,\]
		and taking determinants gives $\det(\cE) \simeq \det(\cU_2^\perp) \otimes \det(\cU_2^\vee) \otimes (\cL^\vee)^{\otimes2} \simeq \det(\cU_2^\perp/\cU_2) \otimes (\cL^\vee)^{\otimes 2}$. Composing with this isomorphism gives the desired map, which is fiberwise identified with the usual Hodge star operator; moreover, its square can be checked locally to coincide with the multiplication map $\det(\cE)^{\otimes 2} \otimes \cL^{2m} \to \cO$. \par
		As mentioned above, since the quadric fibration is smooth, the bundle $\bigwedge^{m-2}(\cU_2^\perp/\cU_2)$ fiberwise decomposes as a direct sum of two exceptional $\Spin(2m)$-equivariant irreducible vector bundles $\cU^{2\omega_{m-1}}(-1)$ and $\cU^{2\omega_m}(-1)$ \cite[(3.16) and (3.26)]{kuznetsoveven}. Since the Hodge star operator is naturally $\Spin(2m)$-equivariant by construction, the eigenspaces of the Hodge star operator $\bigwedge^{m-2}(\cU_2^\perp/\cU_2)$ are equivariant subbundles, which necessarily must coincide with the exceptional vector bundles $\cU^{2\omega_{m-1}}(-1)$ and $\cU^{2\omega_m}(-1)$. \par
		We next verify that the functor \begin{align*}
			\Phi_{\bigwedge^{m-2}(\cU_2^\perp/\cU_2)}: \Db(S,\cZ(\cl_0)) &\to \Db(\OGr(2,\cQ)) \\
			\cG &\mapsto \tau^*\cG \otimes_{\tau^*\cZ(\cl_0)} \bigwedge^{m-2} (\cU_2^\perp/\cU_2)
		\end{align*}
		defines a fully faithful functor. By the same trick of composing with the adjoint used in the proof of Theorem \ref{embeddingtheorem}, it suffices to show that there is an isomorphism 
		\[\cZ(\cl_0) \simeq R\tau_*\left(\bigwedge^{m-2}(\cU_2^\perp/\cU_2)^\vee \otimes \bigwedge^{m-2}(\cU_2^\perp/\cU_2)\right).\]
		By Grothendieck's theorem on cohomology and base change, to show that the higher derived pushforwards vanish, it is enough to observe that fiberwise over $s \in S$ this vector bundle has no higher cohomology. Using the decomposition into exceptional vector bundles $\cU^{2\omega_{m-1}}(-1) \oplus \cU^{2\omega_m}(-1)$, this follows from the proof of \cite[Lemma 3.12]{kuznetsoveven}. To conclude it suffices to show that the natural map 
		\[\cZ(\cl_0) \to \tau_*\left(\bigwedge^{m-2}(\cU_2^\perp/\cU_2)^\vee \otimes \bigwedge^{m-2}(\cU_2^\perp/\cU_2)\right) \simeq \tau_*\homsheaf\left(\bigwedge^{m-2}(\cU_2^\perp/\cU_2),\bigwedge^{m-2}(\cU_2^\perp/\cU_2)\right)\]
		induced by the $\cZ(\cl_0)$-module structure on $\bigwedge^{m-2}(\cU_2^\perp/\cU_2)$ is an isomorphism. We may also check this isomorphism fiberwise. But since the action of the Hodge star operator \eqref{hodgestar} on $\bigwedge^{m-2}(\cU_2^\perp/\cU_2)$ acts with different eigenvalues on the two eigenspaces, it follows that the action of the nontrivial element of the center $Z(\textcl_{\text{even}})$ is not the identity map. It follows that on the fibers, the map 
		\[Z(\textcl_{\text{even}}) \to \Hom\left(\bigwedge^{m-2}(\cU_2^\perp/\cU_2),\bigwedge^{m-2}(\cU_2^\perp/\cU_2)\right) \simeq \Hom\left(\cU^{2\omega_{m-1}},\cU^{2\omega_{m-1}}\right) \oplus \Hom(\cU_2^{\omega_m},\cU_2^{\omega_m})\]
		is an injective map of two-dimensional vector spaces, and therefore an isomorphism, so in particular the functor $\Phi_{\bigwedge^{m-2}(\cU_2^\perp/\cU_2)}$ is fully faithful. \par 
		Next observe that the image of this functor naturally lands inside of $\cR_{\text{even}}$. Indeed, for any object $\cG_1 \in \Phi_{\bigwedge^{m-2}(\cU_2^\perp/\cU_2)}(\Db(S,\cZ(\cl_0)))$ and any object $\cG_2 \in \langle \cA, \cB(1),\cdots \cB(m-2), \cA(m-1), \cdots \cA(2m-4) \rangle$ in the semiorthogonal sequence of Theorem \ref{sodtheorem}, we may observe that the restriction of $R\tau_*R\homsheaf(\cG_2,\cG_1)$ vanishes upon restriction to any smooth fiber, since under the equivalence $\Db(\bbC) \times \Db(\bbC) \simeq \Db(Z(\textcl_{\text{even}}))$ the category $\Phi_{\bigwedge^{m-2}(\cU_2^\perp/\cU_2)}(\Db(Z(\textcl_{\text{even}})))$ is generated by the two exceptional vector bundles $\cU_2^{2\omega_{m-1}}$ and $\cU_2^{2\omega_m}$ which are left orthogonal to the exceptional objects generating $\cG_1$ (e.g. the spinor bundles and symmetric powers of $\cU_2$), c.f. \cite[Theorem 3.1]{kuznetsoveven}. It follows that $R\Hom(\cG_2,\cG_1) = 0$. Likewise, the fullness of the resulting semiorthogonal sequence 
		\[\langle \Db(S,\cZ(\cl_0)), \cA,\cB(1),\cdots,\cB(m-2),\cA(m-1),\cdots,\cA(2m-4)\rangle\]
		follows from fiberwise restriction as in the proof of Proposition \ref{oddresidual}. 
	\end{proof}
	\begin{remark}
		Even when $\cQ \to S$ is a smooth fibration, the categories $\Db(S,\cZ(\cl_0))$ and $\Db(S,\cl_0)$ can differ. In general for a quadric fibration with at worst simple nodal degenerations, the category $\Db(S,\cl_0)$ can be identified with a Brauer twist of $\Db(\underline{\Spec}_S(\cZ(\cl_0))) \simeq \Db(S,\cZ(\cl_0))$, as $\cl_0$ is \'etale-locally a matrix algebra over its center \cite[Proposition 3.13]{quadfibs}.
	\end{remark}
	It is not clear to the author what is the best way to extend the embedding kernel of Proposition \ref{evenresidual} to singular fibers. However, if we turn to the case where $\cQ \to \bbP^1$ is the total space of a pencil of quadrics in $\bbP^{2m-1}$ with smooth base locus, then the standard flip of \cite[Theorem 23]{flips} as discussed above still holds, and a similar calculation to the odd case can be used to produce a prediction for the structure of $\cR_{\text{even}}$. 
	\begin{proposition}[Section 5 of \cite{flips}]
		When $\cQ \to \bbP^1$ is the total space of a pencil of quadrics in $\bbP^{2m-1}$ with smooth base locus, if \cite[Conjecture A]{fanoschemequaddecomp} holds for $F_1(X)$, then 
		\[[\cR_{\text{even}}] = [\Db(C)]\]
		in the Grothendieck ring of $\bbC$-linear categories $K_0(\Cat_\bbC)$, where $C$ is the double cover of $\bbP^1$ branched over the discriminant divisor of the pencil. 
	\end{proposition}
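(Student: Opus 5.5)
The plan is to repeat the Hochschild homology computation from the odd case, but at the level of the Grothendieck ring $K_0(\Cat_\bbC)$. Every piece of $\Db(\OGr(2,\cQ))$ will be expressed in terms of $[\Db(\bbC)]$, $[\Db(C)]$ and $[\Sym^2\Db(C)]$, and $[\cR_{\text{even}}]$ is then obtained by subtraction, which is legitimate in the abelian group $K_0(\Cat_\bbC)$. Throughout, write $X = Q_1\cap Q_2\subset\bbP^{2m-1}$, so $\dim X = 2m-3$ and $n = 2m$.

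\textbf{Step 1: the class $[\Db(X^{[2]})]$ in terms of $C$.} By \cite[Corollary 3]{flips}, the standard flip gives
\[
[\Db(X^{[2]})] = [\Db(\OGr(2,\cQ))] + [\Db(F_1(X))].
\]
By \cite[Theorem B]{kps},
\[
[\Db(X^{[2]})] = [\Sym^2\Db(X)] + (2m-5)[\Db(X)].
\]
By \cite[Corollary 5.7]{quadfibs},
\[
\Db(X)=\langle \Db(\bbP^1,\cl_0),\Db(\bbC),\dots,\Db(\bbC)\rangle ,
\]
with $2m-4$ exceptional objects. Since the fibration is a pencil with simple degenerations, $\cl_0$ is Azumaya over its center, and $\underline{\Spec}\,\cZ(\cl_0) = C$ is the double cover of $\bbP^1$ branched along the discriminant. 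The Brauer group of the curve $C$ vanishes by Tsen's theorem, so $\Db(\bbP^1,\cl_0)\simeq\Db(C)$ (compare the Remark following Proposition \ref{evenresidual}). This gives
\[
[\Db(X)] = [\Db(C)] + (2m-4)[\Db(\bbC)].
\]
Next I expand $[\Sym^2\Db(X)]$ using the symmetric power of a semiorthogonal decomposition, as in \cite[Corollary 1.3]{koseki}: $\Sym^2\langle\cA_1,\dots,\cA_r\rangle$ has components $\Sym^2\cA_i$ and $\cA_i\otimes\cA_j$ for $i<j$. With $N=2m-4$ exceptional objects, this yields
\[
[\Sym^2\Db(C)] + N[\Db(C)] + \left(\tbinom{N}{2}+2N\right)[\Db(\bbC)].
\]
Here I use $[\Sym^2\Db(\bbC)] = 2[\Db(\bbC)]$ and $[\Db(C)\otimes\Db(\bbC)] = [\Db(C)]$.

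\textbf{Step 2: Conjecture A and the known components.} \cite[Conjecture A]{fanoschemequaddecomp} should express $[\Db(F_1(X))]$ as an explicit combination of $[\Sym^2\Db(C)]$ (or $[\Db(\mathrm{Jac}\text{-type})]$ pieces), $[\Db(C)]$ and $[\Db(\bbC)]$. I will take it in exactly the form stated there and substitute it into the flip relation, so that $[\Db(\OGr(2,\cQ))]$ is determined. The decisive feature I expect is that the $[\Sym^2\Db(C)]$ contributions cancel between $X^{[2]}$ and $F_1(X)$. I then subtract the classes of the components of Theorem \ref{sodtheorem}:
\begin{itemize}
\item there are $m-1$ copies of $\cA$, namely $\cA,\cA(m-1),\dots,\cA(2m-4)$, each contributing $(m-2)[\Db(\bbP^1)] + [\Db(C)]$;
\item there are $m-2$ copies of $\cB$, each contributing $(m-1)[\Db(\bbP^1)] + [\Db(C)]$;
\item finally $[\Db(\bbP^1)] = 2[\Db(\bbC)]$.
\end{itemize}
This accounts for $2m-3$ copies of $[\Db(C)]$ and an explicit number of points.

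\textbf{Step 3: bookkeeping.} The remaining task is to check that what is left is exactly $[\Db(C)]$. That is, the coefficient of $[\Db(C)]$ must come to $1$ and the coefficient of $[\Db(\bbC)]$ must vanish; this is a polynomial identity in $m$ analogous to $(2m+3)(2m-2)+2(2m-2)(m-1)=8m^2-6m-2$ in the odd case.

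As a sanity check on the counts, I will first verify the Euler characteristic and $\mathrm{HH}_\bullet$ version unconditionally: $\chi(X^{[2]})$, $\chi(F_1(X))$ and the genus of $C$ (which has $2m$ branch points, so genus $m-1$) should be consistent with $\mathrm{HH}_\bullet(\cR_{\text{even}}) = \mathrm{HH}_\bullet(C)$. This check would catch any off-by-one errors in the number of copies of $\cA$ and $\cB$ or in $\dim X - 2$.

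\textbf{Main obstacle.} I expect the hard part to be matching the precise form of Conjecture A so that the non-exceptional pieces, namely $\Sym^2\Db(C)$ and the $\Db(C)$'s, cancel exactly. If Conjecture A is phrased via $\Db(\mathrm{Sym}^2 C)$ or $\Db(C)^{\otimes 2}$ rather than $\Sym^2\Db(C)$, I would first convert using the known decomposition
\[
\Sym^2\Db(C) = \langle \Db(\mathrm{Sym}^2C), \Db(C)\rangle
\]
(up to ordering), and then redo the bookkeeping.
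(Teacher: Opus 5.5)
Your plan is the same as the paper's. The paper gives no argument beyond citing Section 5 of \cite{flips} and remarking that the odd-case computation carries over: the flip relation, \cite{kps}, \cite{koseki} and \cite{quadfibs}, followed by subtracting the components of Theorem \ref{sodtheorem}. Your counts in Steps 1 and 2 are correct.

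The bookkeeping you defer does close. Step 1 gives $[\Db(X^{[2]})]=[\Sym^2\Db(C)]+(4m-9)[\Db(C)]+(m-2)(6m-11)[\Db(\bbC)]$. The components of Theorem \ref{sodtheorem} contribute $(2m-3)[\Db(C)]+4(m-1)(m-2)[\Db(\bbC)]$. So the claim is equivalent to $[\Db(F_1(X))]=[\Db(\Sym^2 C)]+(2m-6)[\Db(C)]+(m-2)(2m-7)[\Db(\bbC)]$, and this is precisely the input you need from \cite[Conjecture A]{fanoschemequaddecomp}. Two consistency checks: for $m=4$ it matches the Belmans--Galkin--Mukhopadhyay decomposition of $F_1(X)\cong\mathcal{SU}_C(2,\xi)$, and for $m=3$ it holds outright, since $\Sym^2 C$ is the blow-up of $F_1(X)\cong J(C)$ at a point.
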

	Together with Proposition \ref{evenresidual}, it seems likely that there should be an equivalence $\cR_{\text{even}} \simeq \Db(C)$ for pencils of quadrics with smooth base locus. That claim generalizes to the following.  
	\begin{conjecture}
		For any quadric fibration $\cQ \to S$ which is not too singular, there exists a natural extension of the embedding kernel of Proposition \ref{evenresidual} giving an equivalence $\Db(S,\cZ(\cl_0)) \simeq \cR_{\text{even}}$. 
	\end{conjecture}
	Optimistically, one might expect that the conjecture holds when the fibers of the quadric fibration $\cQ \to S$ all have rank $\geq 3$. We certainly expect it will hold when $\cQ \to S$ has at worst simple nodal degenerations, from which the case of a pencil of quadrics with smooth base locus would follow. 
	\printbibliography
\end{document}